\documentclass[11pt]{article}

\usepackage[margin=1in]{geometry}

\usepackage{amsmath,amsthm} 
\usepackage{amssymb}
\usepackage{mathrsfs}
\usepackage[all]{xypic} %For some commutative diagrams
\usepackage{amsbsy}
\usepackage{graphicx}  %To import figures, rotate   %and for scalebox
\usepackage{subfig} %To format figures side by side
\usepackage[all,cmtip]{xy} %For matrices in xymatrix
\usepackage{multirow}  %For table labels
\usepackage{multicol}
\usepackage{rotating} %for sideways tables
\usepackage{comment} %to comment large sections
\usepackage{relsize} %For making math symbols smaller
\usepackage{hyperref} %For email at the end
\usepackage{xcolor} %For colored text
\usepackage{lscape} %To turn some pages landscape
\usepackage{manfnt} %for the dbend  
\usepackage{fancyvrb}  %for fancy verbatim
 \usepackage{url}

\usepackage{tikz}
\usetikzlibrary{arrows, arrows.meta, decorations.markings, decorations.pathmorphing, shapes} 
\usetikzlibrary{calc}
\usepackage{graphicx}
\definecolor{vertexred}{RGB}{235,70,50}
\definecolor{stubblue}{RGB}{35,110,230}
\definecolor{partpurple}{RGB}{200,130,230}
\definecolor{partgreen}{RGB}{145,190,165}
\definecolor{pathorange}{RGB}{255,216,158}
\tikzset{
 graph edge/.style={draw=black,line width=1.2pt,line cap=round,line join=round},
 graph vertex/.style={circle,fill=vertexred,draw=none,minimum size=2.5mm,inner sep=0pt},
 purple vertex/.style={graph vertex,fill=partpurple},
 green vertex/.style={graph vertex,fill=partgreen},
 grey vertex/.style={graph vertex,fill=gray!60},
 grey edge/.style={draw=gray!60,line width=1.2pt,line cap=round},
 external stub/.style={draw=stubblue,line width=1.4pt,line cap=round},
 path mark/.style={draw=pathorange,line width=9pt,line cap=round,line join=round},
}
\newcommand{\GOuterCoords}{%
  \coordinate (TL) at (0.000,3.500);
  \coordinate (TM) at (1.300,3.500);
  \coordinate (TR) at (2.600,3.500);
  \coordinate (RM) at (2.600,1.750);
  \coordinate (BR) at (2.600,0.000);
  \coordinate (BM) at (1.300,0.000);
  \coordinate (BL) at (0.000,0.000);
  \coordinate (LM) at (0.000,1.750);
}
\newcommand{\GCoordsH}{%
  \GOuterCoords
  \coordinate (u) at (1.300,2.550);
  \coordinate (v) at (1.300,0.970);
}
\newcommand{\GCoordsHone}{%
  \GOuterCoords
  \coordinate (u) at (1.300,2.580);
  \coordinate (v) at (1.300,1.700);
  \coordinate (l) at (0.660,1.100);
  \coordinate (r) at (1.940,1.100);
}
\newcommand{\GCoordsHtwo}{%
  \GOuterCoords
  \coordinate (a) at (0.900,2.660);
  \coordinate (b) at (1.900,2.660);
  \coordinate (c) at (1.500,2.050);
  \coordinate (d) at (0.920,1.400);
  \coordinate (e) at (1.720,1.200);
  \coordinate (f) at (0.600,0.670);
}
\newcommand{\GCoordsHthree}{%
  \GOuterCoords
  \coordinate (a) at (0.760,2.600);
  \coordinate (b) at (1.860,2.570);
  \coordinate (c) at (1.100,2.030);
  \coordinate (d) at (1.550,1.480);
  \coordinate (e) at (0.770,1.060);
  \coordinate (f) at (1.910,1.060);
}
\newcommand{\GOuterEdges}{%
 \draw[graph edge] (TL)--(TM)--(TR)--(RM)--(BR)--(BM)--(BL)--(LM)--cycle;
 \draw[external stub] (TL)--++(-.45,0) (TL)--++(0,.45);
 \draw[external stub] (TR)--++(.45,0) (TR)--++(0,.45);
 \draw[external stub] (BL)--++(-.45,0) (BL)--++(0,-.45);
 \draw[external stub] (BR)--++(.45,0) (BR)--++(0,-.45);
}
\newcommand{\GEdgesH}{%
  \GOuterEdges
  \draw[graph edge]
    (TM)--(u)
    (LM)--(u)
    (RM)--(u)
    (LM)--(v)
    (RM)--(v)
    (BM)--(v)
  ;
}
\newcommand{\GNodesH}{%
  \foreach \n in {TL,TM,TR,RM,BR,BM,BL,LM,u,v}{\node[graph vertex] at (\n) {};}
}
\newcommand{\GEdgesHone}{%
  \GOuterEdges
  \draw[graph edge]
    (TM)--(u)
    (LM)--(u)
    (RM)--(u)
    (u)--(v)
    (LM)--(l)
    (l)--(v)
    (v)--(r)
    (r)--(RM)
    (l)--(BM)
    (r)--(BM)
  ;
}
\newcommand{\GNodesHone}{%
  \foreach \n in {TL,TM,TR,RM,BR,BM,BL,LM,u,v,l,r}{\node[graph vertex] at (\n) {};}
}
\newcommand{\GEdgesHtwo}{%
  \GOuterEdges
  \draw[graph edge]
    (TM)--(b)
    (b)--(RM)
    (a)--(b)
    (a)--(c)
    (a)--(d)
    (c)--(RM)
    (c)--(e)
    (d)--(e)
    (LM)--(d)
    (LM)--(f)
    (f)--(BM)
  ;
  \draw[graph edge,line width=1.6pt] (LM) .. controls (0.15,3.08) and (1.05,3.26) .. (b);
  \draw[graph edge,line width=1.6pt] (f) .. controls (0.84,0.88) and (1.3,0.96) .. (e);
  \draw[graph edge,line width=1.6pt] (f) .. controls (1.05,0.38) and (2.4,0.42) .. (RM);
}
\newcommand{\GNodesHtwo}{%
  \foreach \n in {TL,TM,TR,RM,BR,BM,BL,LM,a,b,c,d,e,f}{\node[graph vertex] at (\n) {};}
}
\newcommand{\GEdgesHthree}{%
  \GOuterEdges
  \draw[graph edge]
    (TM)--(a)
    (TM)--(b)
    (LM)--(a)
    (a)--(c)
    (c)--(b)
    (b)--(RM)
    (b)--(d)
    (c)--(e)
    (e)--(d)
    (d)--(f)
    (LM)--(e)
    (e)--(BM)
    (RM)--(f)
    (f)--(BM)
  ;
}
\newcommand{\GNodesHthree}{%
  \foreach \n in {TL,TM,TR,RM,BR,BM,BL,LM,a,b,c,d,e,f}{\node[graph vertex] at (\n) {};}
}
\newcommand{\GMarks}[1]{%
 \foreach \aaa/\bbb in {#1}{\draw[path mark] (\aaa)--(\bbb);}%
}
\newcommand{\GGrayEdges}[1]{%
 \foreach \aaa/\bbb in {#1}{\draw[grey edge] (\aaa)--(\bbb);}%
}
\newcommand{\GGrayNodes}[1]{%
 \foreach \nn in {#1}{\node[grey vertex] at (\nn) {};}%
}
\newcommand{\GBipartiteH}{%
 \foreach \v in {TM,LM,RM,BM}{\node[purple vertex] at (\v) {};}%
 \foreach \v in {TL,TR,BL,BR,u,v}{\node[green vertex] at (\v) {};}%
}
\newcommand{\GBipartiteHone}{%
 \foreach \v in {TM,LM,RM,BM,v}{\node[purple vertex] at (\v) {};}%
 \foreach \v in {TL,TR,BL,BR,u,l,r}{\node[green vertex] at (\v) {};}%
}
\newcommand{\GBipartiteHtwo}{%
 \foreach \v in {TM,LM,RM,BM,a,e}{\node[purple vertex] at (\v) {};}%
 \foreach \v in {TL,TR,BL,BR,b,c,d,f}{\node[green vertex] at (\v) {};}%
}
\newcommand{\GBipartiteHthree}{%
 \foreach \v in {TM,LM,RM,BM,c,d}{\node[purple vertex] at (\v) {};}%
 \foreach \v in {TL,TR,BL,BR,a,b,e,f}{\node[green vertex] at (\v) {};}%
}

\numberwithin{equation}{section}

\newtheorem{thm}{Theorem}[section]
\newtheorem{cor}[thm]{Corollary}
\newtheorem{conj}[thm]{Conjecture}
\newtheorem{lem}[thm]{Lemma}
\newtheorem{prop}[thm]{Proposition}
\theoremstyle{definition} 
\newtheorem{defn}[thm]{Definition}
\newtheorem{defn*}{Definition}
\newtheorem{rem}[thm]{Remark}

\newtheorem{exam*}{Example}

\newtheorem{exercise*}{Solution to Exercise}

\newcommand{\bC}{\mathbb{C}} 
\newcommand{\bF}{\mathbb{F}} 
\newcommand{\bN}{\mathbb{N}} 
\newcommand{\bQ}{\mathbb{Q}} 
\newcommand{\bR}{\mathbb{R}}
\newcommand{\bZ}{\mathbb{Z}}
\newcommand{\nz}[1]{\mathbb{Z} \! \hspace{.03cm} / \! \hspace*{.03cm}  #1\mathbb{Z}}
\newcommand{\Hom}{\text{Hom}}
\newcommand{\Sym}{\text{Sym}}
\newcommand{\Rep}{\text{Rep}}
\newcommand{\Mat}{\text{Mat}}
\newcommand{\GL}{\text{GL}}
\newcommand{\SL}{\text{SL}}
\newcommand{\Aut}{\text{Aut}}
\newcommand{\Diag}{\text{Diag}}
\newcommand{\tr}{\text{tr}}

\begin{document}

\title{Local modifications in planar hypohamiltonian graphs}
\author{Samantha Durias, Sabrina Guerrero, Briana Jaiyen,
Daniel Kaplan, \\ Austin Lapine, Caleb Peralta, Akul Shrivastav
}%In mathematics names are listed alphabetically, not based on seniority nor contribution size.
\date{October 2026}
\maketitle

\begin{abstract}

We prove that for $n \geq 36$ there exists a planar hypohamiltonian graph of order $n$. As a corollary, we prove that for $n \geq 33$ there exists an almost hypohamiltonian planar graph of order $n$. Further, we exhibit over 100 planar hypohamiltonian graphs of order less than 40, including new graphs of orders 36, 37, 38, and 39. 

The proof utilizes three local modifications that alter a graph $G$ by removing a subgraph $H$ and replacing it with $H'$ to form a new graph $G'$: a plus two modification, a plus three modification, and Thomassen's iterable plus four modification. The plus two modification preserves hypohamiltonicity in general, and the plus three modification preserves hypohamiltonicity in our cases of interest. 
This establishes the main result beginning with Tsai's order 34 examples. 
%We prove that if $G$ is hypohamiltonian then so is $G'$, except possibly at . Modifying a certain 4-gon configuration increases the order by two and modifying a certain 5-gon configuration increases the order by three. Together with Thomassen's iterable modification that increases the order by four, this proves the result beginning with Tsai's order 34 examples. 

Further, we conjecture that among planar hypohamiltonian graphs, Tsai's  34 vertex examples are the smallest order. The conjecture is based on previous results, constraints on the degree modulo 3, and experimentation with planar graphs consisting entirely of 4-gons and 5-gons. 

\end{abstract}

\section{Introduction}
Let $G =(V, E)$ be a graph with vertex set $V$ and edge set $E$.
A \emph{Hamiltonian cycle} $C$ in $G$ is a cycle that visits each vertex exactly once. A graph is \emph{Hamiltonian} if it has a Hamiltonian cycle. Write $G-v$ for the graph obtained from $G$ by removing the vertex $v$ and all edges adjacent to $v$. 

\begin{defn}
A graph $G =(V, E)$ is \emph{hypohamiltonian} if
\begin{itemize}
    \item[(I)] $G$ is not Hamiltonian, and
    \item[(II)] $G - v$ is Hamiltonian for all $v \in V$.
\end{itemize}
\end{defn}

In 1973, Chv\'{a}tal asked whether planar hypohamiltonian graphs exist. 
Thomassen answered the question in the affirmative, producing infinitely many examples, the smallest of which has order 105 \cite{T76}

Following this work, there have been several successful attempts to find smaller planar hypohamiltonian graphs:
\begin{itemize}
    \item Hatzel in 1979 found a graph of order 57 \cite{H79}.
    \item Araya and Wiener in 2011 found a graph of order 42 \cite{AW11}.
    \item Jooyandeh, McKay, \"{O}stergard, Pettersson, and Zamfirescu in 2017 found 25 such graphs of order 40 \cite{JMOPZ17}.
    \item Tsai in 2025 found two graphs of order 34 and six of order 37 \cite{Tsai25}.
\end{itemize}

Additionally, Tsai, building on \cite{JMOPZ17} and a construction of Thomassen, proved there exists a planar hypohamiltonian graph for all orders $n \geq 40$. Tsai finishes his paper saying: ``Whether there exist planar hypohamiltonian graphs on 35, 36, and 39 vertices remains an open question." \cite[page 806]{Tsai25}

\begin{thm}
    There exists a planar hypohamiltonian graph for all orders $n \geq 36$.
\end{thm}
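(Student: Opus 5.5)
The plan is to start from Tsai's two planar hypohamiltonian graphs of order 34 \cite{Tsai25} and apply three local replacement operations, each of which removes a subgraph $H$ and inserts $H'$. The first is a plus two modification, which replaces a suitable $4$-gon configuration. The second is a plus three modification, which replaces a suitable $5$-gon configuration. The third is Thomassen's plus four modification. Tsai has already handled orders $n \geq 40$, so it suffices to produce planar hypohamiltonian graphs of orders $36, 37, 38, 39$. One could also get all $n\ge 36$ directly: the plus four operation is iterable, so orders $36, 37, 38, 39$ together with repeated plus four steps cover every $n \geq 36$.

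\textbf{Orders 36 and 37.} Apply the plus two modification to an order 34 graph to get order $36$, and the plus three modification to get order $37$; Tsai's order 37 examples also give 37 directly. The plus two modification is shown to preserve hypohamiltonicity in general, via the following local analysis.

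\textbf{Non-Hamiltonicity of $G'$.} Suppose $G'$ has a Hamiltonian cycle $C'$. Then $C'$ enters and leaves $H'$ through its boundary attachment vertices in one of finitely many ``traversal patterns''. Each pattern, meaning a system of disjoint paths covering $V(H')$ with prescribed endpoints, corresponds to a pattern in $H$ with the same endpoints covering $V(H)$. Replacing the pattern in $H'$ by the one in $H$ gives a Hamiltonian cycle of $G$, a contradiction. So the key lemma is that every boundary path-cover pattern realizable in $H'$ is realizable in $H$.

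\textbf{Hamiltonicity of $G'-v$.} For $v$ outside $H'$, take a Hamiltonian cycle of $G-v$ and replace its restriction to $H$ by a realization in $H'$. This needs the converse direction: every pattern realizable in $H$ is realizable in $H'$. For $v$ inside $H'$, choose a vertex $w$ of $H$ whose deletion induces a pattern on $H-w$ that is realizable in $H'-v$, and transport a Hamiltonian cycle of $G-w$. Both directions are a finite check on the small gadget. Planarity is preserved because the replacement is drawn inside the same face region, as in the figures with the external stubs.

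\textbf{Orders 38 and 39.} Apply the plus four modification to the order 34 graphs to get order 38, or apply plus two twice. For order 39, apply plus two to an order 37 graph, or plus three to an order 36 graph.

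\textbf{Main obstacle.} The plus three modification only preserves hypohamiltonicity in the cases of interest, so the local argument alone will not suffice there. Its equivalence of traversal patterns fails for some boundary pattern. I would handle this by verifying directly, by hand or by computer on the specific graphs, that the offending pattern does not occur in any Hamiltonian cycle of $G-v$ (for the needed $v$), and that it does not create a Hamiltonian cycle in $G'$. Equally delicate is confirming that the modified graphs still contain the required $4$-gon or $5$-gon configuration, so that modifications can be chained to reach 38 and 39.
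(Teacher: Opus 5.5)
Your skeleton matches the paper's: Tsai for $n\ge 40$ and for $37$; Theorem~\ref{thm:hypoham}, with the endpoint-decomposition bookkeeping you describe, for $34\to 36$ and $37\to 39$; and a $+4$ step for $38$. The gap is the order-$38$ step.

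\textbf{Plus four needs a hypothesis.} Thomassen's plus four modification does not preserve hypohamiltonicity at an arbitrary 4-gon of a non-cubic graph; see Remark~\ref{rem:Tsai_mistake} and Figure~\ref{fig:Thomassen_Failure}. The available criteria are Proposition~\ref{prop:Thom_hypo}, which needs two \emph{adjacent} cubic vertices on the 4-gon, and Proposition~\ref{prop:Thom_hypo_paths}, which needs extra exterior Hamiltonian paths. The central 4-gon of $H$ fails the first criterion: its cubic vertices are the two interior vertices, and they are opposite. That is why Corollary~\ref{cor:Tsai38} has to supply paths. So you must name the 4-gon. A non-central one works, e.g.\ the face through corner $1$, the top-middle vertex, the upper interior vertex and the left-middle vertex, since the top-middle and upper interior vertices are adjacent and cubic. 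Alternatively, replace $H$ by $H_2$ or $H_3$, which add four vertices and are covered by Theorem~\ref{thm:hypoham}.

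\textbf{Plus two twice fails.} The exterior of $H$ in Tsai's graphs consists of pentagons, so the 4-faces of $G_1$ are exactly the seven 4-faces of $H_1$. None of them has vertex degrees alternating $3,4,3,4$, whereas the central face of any copy of $H$ with its prescribed attachments must. Hence $G_1$ contains no copy of $H$, and the paper introduces $H_3$ precisely as the variant that allows iteration.

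\textbf{Two smaller points.} First, in the internal-vertex step you cannot choose which fragment a Hamiltonian cycle of $G-w$ induces. You therefore need \emph{every} endpoint decomposition realizable in $H-w$ to be realizable in $H'-v$, not just one; this is how the appendix figures are organized.

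Second, the plus three modification is off the critical path, since Tsai's order $37$ graphs still contain $H$ and plus two applies to them. Where the paper does use plus three, the issue is not a failed pattern equivalence. Theorem~\ref{thm:path_ext} only transports Hamiltonian paths from $G$ to $G'$, so it gives neither non-Hamiltonicity of $G'$ nor Hamiltonicity of $G'-v$ for $v$ in $Q'$. The paper obtains these by computer verification at order $37$, and transfers them to order $39$ via Corollary~\ref{cor:G1prime_hypoexc}.
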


We prove the result by modifying patches of existing hypohamiltonian graphs while preserving the hypohamiltonian property. In the process, we construct the first such planar graphs of order 36 and 39 and find new graphs of order 37 and 38. In total, we construct over 100 planar hypohamiltonian graphs of order less than 40. 

%Let $G$ be a graph with subgraph $H$. Let $H'$ be a graph with the same boundary as $H$. Write $G'$ for the graph obtained from $G$ by removing $H$ and replacing it with $H'$. We refer to this construction as a modification of $G$ at $H$. We discover modifications with $|G'| - |G| = 2$ and $3$ and combine it with Thomassen's known modification with $|G'| - |G| = 4$.

Let $G$ be a graph with subgraph $H$. We apply a sequence of elementary modifications that removes the subgraph $H$ and replaces it with a different graph $H'$ with the same boundary. The elementary modifications include:
\begin{itemize}
    \item[(1)] the \emph{plus two} modification if $H$ is on the left and $H'$ is on the right here:
    \[
\centering
\begin{tikzpicture}[scale=0.65]

%========================================
% Figure 1, left panel
%========================================
\begin{scope}[xshift=0cm]

  % vertices
  \node[graph vertex] (a) at (0,4.0) {};
  \node[graph vertex] (b) at (1.8,4.0) {};
  \node[graph vertex] (c) at (3.6,4.0) {};

  \node[graph vertex] (d) at (0,2.25) {};
  \node[graph vertex] (e) at (1.8,3.1) {};
  \node[graph vertex] (f) at (3.6,2.05) {};

  \node[graph vertex] (j) at (1.8,1.1) {};

  \node[graph vertex] (g) at (0,0.0) {};
  \node[graph vertex] (h) at (1.8,0.0) {};
  \node[graph vertex] (i) at (3.6,0.0) {};

  % edges
  \draw[graph edge] (a) -- (b);
  \draw[graph edge] (b) -- (c);

  \draw[graph edge] (a) -- (d);
  \draw[graph edge] (c) -- (f);

  \draw[graph edge] (b) -- (e);

  \draw[graph edge] (d) -- (e);
  \draw[graph edge] (e) -- (f);

  \draw[graph edge] (d) -- (j);
  \draw[graph edge] (f) -- (j);

  \draw[graph edge] (d) -- (g);
  \draw[graph edge] (f) -- (i);

  \draw[graph edge] (g) -- (h);
  \draw[graph edge] (h) -- (i);

  \draw[graph edge] (j) -- (h);

  % external stubs
  \draw[external stub] (a) -- ++(-0.7,0);
  \draw[external stub] (a) -- ++(0,0.7);

  \draw[external stub] (c) -- ++(0.7,0);
  \draw[external stub] (c) -- ++(0,0.7);

  \draw[external stub] (g) -- ++(-0.7,0);
  \draw[external stub] (g) -- ++(0,-0.7);

  \draw[external stub] (h) -- ++(0,-0.7);

  \draw[external stub] (i) -- ++(0.7,0);
  \draw[external stub] (i) -- ++(0,-0.7);

\end{scope}

%========================================
% Figure 1, right panel
%========================================
\begin{scope}[xshift=7.2cm]

  % vertices
  \node[graph vertex] (a) at (0.0,4.0) {};
  \node[graph vertex] (b) at (1.9,4.0) {};
  \node[graph vertex] (c) at (3.8,4.0) {};

  \node[graph vertex] (d) at (0.3,2.55) {};
  \node[graph vertex] (e) at (1.9,2.95) {};
  \node[graph vertex] (f) at (3.5,2.65) {};

  \node[graph vertex] (g) at (1.9,1.85) {};
  \node[graph vertex] (h) at (1.1,1.45) {};
  \node[graph vertex] (i) at (2.8,1.40) {};

  \node[graph vertex] (j) at (1.9,0.40) {};
  \node[graph vertex] (k) at (0.0,0.25) {};
  \node[graph vertex] (l) at (4.2,0.35) {};

  % edges
  \draw[graph edge] (a) to[out=0,in=180] (b);
  \draw[graph edge] (b) -- (c);

  \draw[graph edge] (a) -- (d);
  \draw[graph edge] (c) -- (f);

  \draw[graph edge] (b) -- (e);
  \draw[graph edge] (e) -- (g);

  \draw[graph edge] (d) -- (e);
  \draw[graph edge] (e) -- (f);

  \draw[graph edge] (d) -- (h);
  \draw[graph edge] (h) -- (g);
  \draw[graph edge] (g) -- (i);
  \draw[graph edge] (i) -- (f);

  \draw[graph edge] (h) -- (j);
  \draw[graph edge] (i) -- (j);

  \draw[graph edge] (d) -- (k);
  \draw[graph edge] (f) -- (l);

  \draw[graph edge] (k) -- (j);
  \draw[graph edge] (j) -- (l);

  % external stubs
  \draw[external stub] (a) -- ++(-0.7,0);
  \draw[external stub] (a) -- ++(0,0.7);

  \draw[external stub] (c) -- ++(0.7,0);
  \draw[external stub] (c) -- ++(0,0.7);

  \draw[external stub] (k) -- ++(-0.7,0);
  \draw[external stub] (k) -- ++(0,-0.7);

  \draw[external stub] (l) -- ++(0.7,0);
  \draw[external stub] (l) -- ++(0,-0.7);

\end{scope}

\end{tikzpicture}
    \]
(2) the \emph{plus three} modification if $H$ is on the left and $H'$ is on the right here:
\[
\centering
\begin{tikzpicture}[scale=0.75]

%========================================
% Figure 2, left panel
%========================================
\begin{scope}[xshift=0cm]

  % boundary vertices, clockwise
  \node[graph vertex] (v1)  at (1.8,4.25) {};  % top
  \node[graph vertex] (v2)  at (3.2,4.20) {};
  \node[graph vertex] (v3)  at (4.6,3.75) {};
  \node[graph vertex] (v4)  at (5.25,2.75) {};
  \node[graph vertex] (v5)  at (5.00,1.45) {};
  \node[graph vertex] (v6)  at (4.05,0.45) {};
  \node[graph vertex] (v7)  at (2.60,0.20) {};
  \node[graph vertex] (v8)  at (1.05,0.65) {};
  \node[graph vertex] (v9)  at (0.15,1.70) {};
  \node[graph vertex] (v10) at (0.05,2.95) {};
  \node[graph vertex] (v11) at (0.75,3.85) {};

  % boundary edges
  \draw[graph edge] (v1) -- (v2);
  \draw[graph edge] (v2) -- (v3);
  \draw[graph edge] (v3) -- (v4);
  \draw[graph edge] (v4) -- (v5);
  \draw[graph edge] (v5) -- (v6);
  \draw[graph edge] (v6) -- (v7);
  \draw[graph edge] (v7) -- (v8);
  \draw[graph edge] (v8) -- (v9);
  \draw[graph edge] (v9) -- (v10);
  \draw[graph edge] (v10) -- (v11);
  \draw[graph edge] (v11) -- (v1);

  % chords
  \draw[graph edge] (v1) -- (v8);
  \draw[graph edge] (v2) -- (v6);

  % external stubs
  \draw[external stub] (v1) -- ++(0,0.7);
  \draw[external stub] (v3) -- ++(0.55,0.45);
  \draw[external stub] (v4) -- ++(0.7,0);
  \draw[external stub] (v5) -- ++(0.55,-0.45);
  \draw[external stub] (v7) -- ++(0,-0.7);
  \draw[external stub] (v9) -- ++(-0.55,-0.45);
  \draw[external stub] (v10) -- ++(-0.7,0);
  \draw[external stub] (v11) -- ++(-0.55,0.45);

\end{scope}

%========================================
% Figure 2, right panel
%========================================
\begin{scope}[xshift=8.0cm]

  % same boundary vertices
  \node[graph vertex] (v1)  at (1.8,4.25) {};
  \node[graph vertex] (v2)  at (3.2,4.20) {};
  \node[graph vertex] (v3)  at (4.6,3.75) {};
  \node[graph vertex] (v4)  at (5.25,2.75) {};
  \node[graph vertex] (v5)  at (5.00,1.45) {};
  \node[graph vertex] (v6)  at (4.05,0.45) {};
  \node[graph vertex] (v7)  at (2.60,0.20) {};
  \node[graph vertex] (v8)  at (1.05,0.65) {};
  \node[graph vertex] (v9)  at (0.15,1.70) {};
  \node[graph vertex] (v10) at (0.05,2.95) {};
  \node[graph vertex] (v11) at (0.75,3.85) {};

  % boundary edges
  \draw[graph edge] (v1) -- (v2);
  \draw[graph edge] (v2) -- (v3);
  \draw[graph edge] (v3) -- (v4);
  \draw[graph edge] (v4) -- (v5);
  \draw[graph edge] (v5) -- (v6);
  \draw[graph edge] (v6) -- (v7);
  \draw[graph edge] (v7) -- (v8);
  \draw[graph edge] (v8) -- (v9);
  \draw[graph edge] (v9) -- (v10);
  \draw[graph edge] (v10) -- (v11);
  \draw[graph edge] (v11) -- (v1);

  % interior vertices
  \node[graph vertex] (x) at (1.95,2.95) {};
  \node[graph vertex] (y) at (3.00,2.65) {};
  \node[graph vertex] (z) at (4.00,2.35) {};

  % interior edges
  \draw[graph edge] (v10) -- (x);
  \draw[graph edge] (x) -- (y);
  \draw[graph edge] (y) -- (z);

  \draw[graph edge] (x) -- (v2);
  \draw[graph edge] (y) -- (v8);
  \draw[graph edge] (z) -- (v3);
  \draw[graph edge] (z) -- (v6);

  % external stubs
  \draw[external stub] (v1) -- ++(0,0.7);
  \draw[external stub] (v3) -- ++(0.55,0.45);
  \draw[external stub] (v4) -- ++(0.7,0);
  \draw[external stub] (v5) -- ++(0.55,-0.45);
  \draw[external stub] (v7) -- ++(0,-0.7);
  \draw[external stub] (v9) -- ++(-0.55,-0.45);
  \draw[external stub] (v10) -- ++(-0.7,0);
  \draw[external stub] (v11) -- ++(-0.55,0.45);

\end{scope}

\end{tikzpicture}
    \]
(3) Thomassen's \emph{plus four} modification if $H$ is on the left and $H'$ is on the right here:
    \[
\centering
\begin{tikzpicture}[
  graph edge/.style={
    draw=black,
    line width=1.2pt,
    line cap=round
  },
  graph vertex/.style={
    circle,
    fill={rgb,255:red,235;green,70;blue,50},
    draw=none,
    minimum size=2.5mm,
    inner sep=0pt
  },
  external stub/.style={
    draw={rgb,255:red,35;green,110;blue,230},
    line width=1.4pt,
    line cap=round
  }
][scale = 0.9]

% Panel separation (in cm).
\def\panelshift{5.2}

% Identical outer coordinates in both panels.
\foreach \shift/\p in {0/L,\panelshift/R}{
  \begin{scope}[xshift=\shift cm]
    \coordinate (\p a) at (0,2);
    \coordinate (\p b) at (2.4,2);
    \coordinate (\p c) at (2.4,0);
    \coordinate (\p d) at (0,0);

    % Blue external stubs.
    \draw[external stub] (\p a) -- ++(-0.35,0.35);
    \draw[external stub] (\p b) -- ++(0.35,0.35);
    \draw[external stub] (\p c) -- ++(0.35,-0.35);
    \draw[external stub] (\p d) -- ++(-0.35,-0.35);
  \end{scope}
}

% Left panel: original four-cycle.
\draw[graph edge]
  (La)--(Lb)--(Lc)--(Ld)--cycle;

% Right panel: replacement.
\draw[graph edge] (Ra)--(Rd);
\draw[graph edge] (Rb)--(Rc);

% Inner vertices of the replacement.
\begin{scope}[xshift=\panelshift cm]
  \coordinate (ap) at (0.77,1.35);
  \coordinate (bp) at (1.63,1.35);
  \coordinate (cp) at (1.63,0.65);
  \coordinate (dp) at (0.77,0.65);

  % Inner four-cycle.
  \draw[graph edge]
    (ap)--(bp)--(cp)--(dp)--cycle;

  % Connections to the outer vertices.
  \draw[graph edge]
    (Ra)--(ap)
    (Rb)--(bp)
    (Rc)--(cp)
    (Rd)--(dp);
\end{scope}

% Draw all vertices last, above the edges.
\foreach \p in {L,R}{
  \foreach \v in {a,b,c,d}{
    \node[graph vertex] at (\p\v) {};
  }
}

\foreach \v in {ap,bp,cp,dp}{
  \node[graph vertex] at (\v) {};
}

\end{tikzpicture}
\]
\end{itemize}

Beginning with Wiener's almost hypohamiltonian graph of order 31 from \cite{W18}, the plus three modification can be used to reconstruct Tsai's graphs of order 34 and 37 in \cite{Tsai25} and two new graphs of order 37. From the 34 and 37 our plus two modification gives graphs of order 36 and 39. And Thomassen's plus four modification in \cite{T76} (or our variant below) gives graphs of order 38. 

As an application, we prove the following corollaries:

\begin{cor}
   There exists an almost hypohamiltonian graph of order $n$ for all $n \geq 33$. 
\end{cor}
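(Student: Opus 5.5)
The natural route is the standard bridge between hypohamiltonian and almost hypohamiltonian graphs due to Zamfirescu and Wiener. Recall that $G$ is \emph{almost hypohamiltonian} if $G$ is non-Hamiltonian, there is a unique \emph{exceptional} vertex $w$ with $G-w$ non-Hamiltonian, and $G-v$ is Hamiltonian for every $v\neq w$. (The abstract places the corollary among planar graphs; I assume planarity is intended.)

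The key construction goes as follows. Take a planar hypohamiltonian graph $G$ containing a cubic vertex $u$ with neighbours $a,b,c$. Delete $u$, add a new vertex $w$ adjacent to $a,b,c$, and glue in a second such structure. Concretely, take two planar hypohamiltonian graphs $G_1,G_2$ with cubic vertices $u_1,u_2$. Form $G=(G_1-u_1)\cup(G_2-u_2)$, identify nothing, and join the three neighbours of $u_1$ and of $u_2$ to a new vertex $w$ of degree $6$. Alternatively, use the known one-graph version: identify $u_1$ and $u_2$ into a single vertex $w$ (Zamfirescu's operation). The order is then $|G_1|+|G_2|-1$.

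I then verify the three conditions. First, $G-w$ is disconnected, hence non-Hamiltonian. Second, $G$ is non-Hamiltonian: a Hamiltonian cycle passes through the cut vertex $w$ at most once, but $G-w$ has two components, so no Hamiltonian cycle exists. Third, for $v\neq w$, say $v\in G_1-u_1$, I need $G-v$ to be Hamiltonian. That would require $w$ to be visited twice, which fails.

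So identification at a cut vertex does not work. I would instead use Wiener's operation, which replaces a cubic vertex by a triangle, or the standard lemma from \cite{W18}: if $G$ is planar hypohamiltonian with a cubic vertex, then a specific gadget gives a planar almost hypohamiltonian graph of order $|G|-3$ (or some fixed offset). I would state that lemma precisely and check the two Hamiltonicity conditions, using $G-v$ Hamiltonian for each $v$ and the fact that Hamiltonian paths of $G-u$ enter and leave $u$'s neighbourhood in controlled ways.

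With the offset fixed at $-3$, the theorem (orders $\ge36$) gives all $n\ge33$, provided each graph produced has a cubic vertex. I would check this from the figures: the plus-four and plus-two modifications create cubic vertices, and Tsai's graphs have them. Small cases such as $n=33$ come from the specific order-36 graph, and the others from the order $n+3$ graphs.

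The main obstacle is pinning down the exact operation and its offset, which must be exactly $3$ to match $36\mapsto33$. I would first try Zamfirescu's lemma: for planar hypohamiltonian $G$ with a cubic vertex, there is a planar almost hypohamiltonian graph of order $|G|-3$. I would then verify the required cubic vertex in every graph of the families from the theorem's proof.
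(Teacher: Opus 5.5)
Your proposal has a genuine gap. The entire proof rests on a lemma you never state or prove, and each concrete operation you name fails.

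Gluing at a single vertex $w$ makes $w$ a cut vertex, so every $G-v$ stays non-Hamiltonian, as you observed. Replacing a cubic vertex $x$ with neighbours $a,b,c$ by a triangle $t_at_bt_c$ also fails. In $G'-t_a$ the vertices $t_b,t_c$ have degree $2$, so any Hamiltonian cycle contains $b\,t_b\,t_c\,c$. Contracting $t_bt_c$ to $x$ then gives a Hamiltonian cycle of the hypohamiltonian graph $G$, which is impossible. Hence all three triangle vertices are exceptional rather than one, and the order rises by $2$ instead of falling by $3$.

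The offset $-3$ is chosen to fit $36\mapsto 33$ rather than produced by any construction, and the paper uses no general lemma of that kind. The only ``minus three'' step in the paper reverses the plus three modification on one of Tsai's order-$37$ graphs. It served only to \emph{discover} the order-$34$ example, whose almost hypohamiltonicity was then checked by computer. As written, your argument proves nothing.

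The missing idea is to propagate almost hypohamiltonicity directly, instead of manufacturing it from hypohamiltonian graphs. The proof of Theorem~\ref{thm:hypoham} shows three things about the plus two modification. First, $G$ non-Hamiltonian forces $G_1$ non-Hamiltonian. Second, for $v\notin H$, $G-v$ is Hamiltonian exactly when $G_1-v$ is. Third, the case $G_1-v$ with $v\in H_1$ is settled using only the Hamiltonicity of $G-w'$ for vertices $w'\in H$. So if $G$ is almost hypohamiltonian with its exceptional vertex outside $H$, then $G_1$ is almost hypohamiltonian with the same exceptional vertex.

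The paper applies this to Wiener's planar order-$31$ example to get order $33$. It applies it again to a computer-verified order-$34$ example containing $H$ to get order $36$. Thomassen's plus four modification applied to the seeds $31,33,34,36$ then covers every residue class modulo $4$, and hence every $n\ge 33$.
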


\begin{cor}
    There exists a hypotraceable graph of order $n$ for all $n \geq 132$. 
\end{cor}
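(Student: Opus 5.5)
The plan is to combine the main Theorem with Thomassen's classical gluing construction, which produces hypotraceable graphs from hypohamiltonian ones. I will use Thomassen's construction in the following form; its exact bookkeeping has to be checked against the source. Take four hypohamiltonian graphs $G_1,\dots,G_4$, each with a distinguished vertex $x_i$ of degree $3$. Delete each $x_i$ and join the resulting three-vertex "stubs" together in Thomassen's prescribed pattern, which identifies some of the neighbours of the $x_i$. The result is a hypotraceable graph $T(G_1,G_2,G_3,G_4)$ of order $|G_1|+|G_2|+|G_3|+|G_4|-6$. The proof that $T$ is non-traceable but every vertex-deleted subgraph is traceable uses only two facts about each $G_i$: that $G_i$ is non-Hamiltonian, and that every $G_i-v$ is Hamiltonian. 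The second fact is what supplies the Hamiltonian paths between prescribed pairs of neighbours of $x_i$. Planarity of the $G_i$ is not needed.

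\textbf{Step 1.} I will verify that the graphs supplied by the main Theorem can be chosen to have a cubic vertex. For the base graphs (Tsai's order-$34$ graphs and the order-$37$ graphs obtained by the plus three modification) this is a direct inspection. For graphs produced by the plus two and plus four modifications it is automatic. For example, in the plus four modification each new inner vertex has exactly three neighbours: two on the inner $4$-cycle and one outer vertex. The same holds for the appropriate new vertices of the plus two gadget. Since the modifications are applied iteratively, every graph of order $n\ge 36$ in the family contains a new degree-$3$ vertex.

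\textbf{Step 2.} I will realise every target order. Given $n\ge 132$, set $k=n-132+36=n-96\ge 36$. Choose $G_1=G_2=G_3$ to be an order-$34$ planar hypohamiltonian graph of Tsai, and $G_4$ a planar hypohamiltonian graph of order $k$, which exists by the Theorem. Then
\[
|T|=34+34+34+k-6=n .
\]
This parametrisation sidesteps the missing order $35$. It also explains the threshold: the smallest admissible sum $3\cdot 34+36$ gives exactly $132$. By contrast, four copies of order $34$ would give $130$, and no choice of orders would give $131$.

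\textbf{Main obstacle.} The main obstacle is to confirm the precise hypotheses and the order count in Thomassen's construction. Some versions require the three neighbours of $x_i$ to satisfy extra conditions, or require $G_i-x_i$ to have Hamiltonian paths with specified ends. These conditions follow from hypohamiltonicity, but they must be matched against the exact gluing. If the constant turns out to be different from $6$, the same argument goes through after shifting the parameter $k$; the threshold $132$ is what pins the constant down. I would first re-derive the order formula from the explicit construction, and then check the degree-$3$ requirement on the order-$34$ graphs.
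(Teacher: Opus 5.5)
Your proposal is correct and is essentially the paper's proof: the paper likewise takes three copies of Tsai's order-$34$ graph and one planar hypohamiltonian graph of order $m\ge 36$, and applies the four-graph gluing theorem (Theorem~4.4 of Jooyandeh et al.) to get order $3\cdot 34+m-6=96+m\ge 132$. Your Step~1 is harmless but not needed, because the paper quotes the gluing theorem for arbitrary planar hypohamiltonian graphs with no separate cubic-vertex hypothesis, so the only real input is the main existence theorem for orders $m\ge 36$.
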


The main theory goes into proving that if $G$ is hypohamiltonian, then $G'$ is hypohamiltonian in the plus two modification and a variant with one fewer edge. For this, we consider all possible intersections that a hypothetical Hamiltonian cycle in $G$ (or $G-v$) could have with $H$ (or $H-v$). We call these intersections cycle fragments and show that they have restricted endpoints if $H$ is planar and bipartite. Then we demonstrate that $H'$ has fragments with the same endpoints. Therefore one such fragment can necessarily be extended to all of $G'$, since $G - H = G - H'$ and some fragment in $H$ extends to all of $G$. The arguments are elementary and self-contained.\\
\\
For the plus three modification, $G-v$ Hamiltonian only implies $G'-v$ Hamiltonian for $v$ not in $H'$, in general. For $v \in H'$, we prove that $G'-v$ is Hamiltonian by appealing to specific information about $G$, namely the existence of certain paths in the exterior $G-H$.  \\
\\
From a theoretical standpoint, we link the Hamiltonian condition for (I) two vertex removals within the same graph and (II) for two vertex removals in different graphs. %Hence most of our proofs that a given graph is hypohamiltonian do not require exhibiting a single cycle 
 
% We hope that the theory here can aid the search for additional hypohamiltonian graphs as well as the proof of their non-existence for low orders. 

\begin{rem}
    The majority of our results do not require a computer. The exceptions include a plantri search in Proposition \ref{prop:Tsai_min}, and verifications of the hypohamiltonian (resp. almost hypohamiltonian) property in Corollary \ref{cor:order37} (resp. Corollary \ref{cor:almost}).  
    %building on (I) an almost hypohamiltonian graph of order 34, and (II) a hypohamiltonian graph of order 37.
    %
    %and (III) hypohamiltonian graphs of order 39 built from the one of order 37, our hypohamiltonian results do not require a computer. It suffices to exhibit four Hamiltonian paths, rather than thousands of Hamiltonian cycles.  
\end{rem}

Finally, we give evidence towards the following conjecture:

\begin{conj}
    The smallest planar hypohamiltonian graph has order 34. 
\end{conj}

The conjecture is based on a series of simplifying heuristics, previous results including in \cite{JMOPZ17}, and an analysis of planar graphs consisting solely 4-gons and 5-gons. In particular, we explain why such a graph likely contains no 3-gons and a configuration of 4-gons. Then we prove the following:

\begin{prop}
    The 4-gon configuration in Tsai's 34 vertex examples cannot appear in a smaller order planar hypohamiltonian graph. 
\end{prop}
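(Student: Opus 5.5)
We would first pin down exactly what ``the 4-gon configuration in Tsai's 34 vertex examples'' is. Concretely: the subgraph $K$ formed by the union of the 4-faces of Tsai's graphs together with the faces adjacent to them, as they sit in the plane. We expect this to resemble the $H$ in the plus two and plus four modifications: a bipartite planar patch with a prescribed boundary cycle and prescribed boundary vertices of degree $3$. The proposition then says: no planar hypohamiltonian graph $G$ with $|V(G)| \le 33$ contains $K$ as a patch in this way.

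The strategy is a reduction to a finite enumeration, which is the plantri search mentioned in the Remark. We would collect necessary conditions satisfied by any planar hypohamiltonian $G$, each taken from the literature or proved elementarily:
\begin{itemize}
    \item $G$ is $3$-connected, has minimum degree $3$, and has no two adjacent degree-$3$ vertices lying on a common $4$-gon in the forbidden ways.
    \item A $4$-face cannot have all four vertices of degree $3$, and faces obey the standard Thomassen/Grinberg-type local restrictions.
    \item From \cite{JMOPZ17}, the order is at least $23$, with their exhaustive results covering small orders and small numbers of degree-$4$ vertices.
    \item The degree-modulo-$3$ constraints discussed before the conjecture.
\end{itemize}
With $K$ fixed, the rest of $G$ is a planar graph filling the outer region of $K$ with at most $33-|V(K)|$ additional vertices, attached to the stubs of $K$. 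We would then use plantri to generate all $3$-connected planar graphs of order $\le 33$ containing $K$ as a patch. To keep this feasible, we would generate only the complement: the near-triangulation-dual or disc fillings with the given boundary degree sequence.

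For each generated graph, we would test non-Hamiltonicity and then vertex-deleted Hamiltonicity, discarding quickly with a SAT/backtracking Hamiltonian-cycle check. A cheaper filter can come first. By the fragment analysis used for the plus two modification, since $K$ is planar and bipartite, Hamiltonian cycles of $G-v$ meet $K$ only in fragments with restricted endpoint pairs. This translates into necessary path conditions in the exterior $G-K$, which prune most candidates before any full Hamiltonicity check. The proposition follows once every candidate of order $\le 33$ fails.

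The main obstacle is making the enumeration finite and tractable. Plantri cannot directly enumerate all planar graphs up to 33 vertices. We must therefore exploit the fixed patch, so that only the exterior, with roughly $33-|V(K)|$ vertices, is generated. We must also use the face-size restriction (only $4$- and $5$-gons outside $K$, or else a separate argument excluding larger faces). If the exterior is still too large, we would first try a parity/counting argument: Euler's formula with faces of size $4$ and $5$, plus the degree constraints, should bound the number of degree-$4$ vertices and faces. That would reduce the search to a few face-vector cases, each small enough to enumerate.
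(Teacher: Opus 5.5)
Your overall strategy, fixing the patch and letting plantri enumerate completions, is the same in spirit as the paper's. As written, though, it does not reduce to a finite, sound search, and the step that does this is missing.

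The statement is proved (Proposition~\ref{prop:Tsai_min}) only for graphs obtained from the five-4-gon configuration $H$ of Figure~\ref{fig:local_config} by \emph{pentagulation}. That means each corner carries exactly its two prescribed external edges, the five 4-faces of $H$ are the only 4-faces, and every other face is a pentagon. Your $K$ (the 4-faces together with their adjacent faces in Tsai's graphs) is a larger patch than the configuration in question, so excluding it would prove a weaker statement. Your plan also leaves open extra 4-gons and larger faces outside $K$, to be handled by ``a separate argument'' that you do not supply. The paper does not have one either; Heuristic 2 is only a heuristic. Extra 4-gons would change the configuration and destroy the congruence below. Without the pentagulation hypothesis, enumerating all $3$-connected planar graphs of order at most $33$ containing the patch is exactly the intractable search you flag as the main obstacle.

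With the hypothesis in place, the reduction is immediate rather than something that ``should'' work. Euler's formula gives $2f_4+3f_5=2v-4$, so $f_4=5$ forces $3 \mid 2v-14$, i.e.\ $v\equiv 1 \pmod 3$. The excess degree is $2e-3v=(4f_4+5f_5)-3v=(v-10)/3$. It must be at least $6$, because $H$ already has six vertices of degree $4$: the four corners and the two side vertices adjacent to both interior vertices. Hence $v\ge 28$, and the only orders below $34$ are $28$ and $31$. A plantri search of pentagulations of $H$ at these two orders returns only Wiener's almost hypohamiltonian graph of order $31$, which is not hypohamiltonian. No SAT checks or fragment-based pruning are needed.

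Be careful with your filters. The condition that ``a 4-face cannot have all four vertices of degree 3'' is false for hypohamiltonian graphs. Thomassen's plus four construction creates the cubic 4-face $a'b'c'd'$ while preserving hypohamiltonicity, as in the order-38 graphs of Corollary~\ref{cor:Tsai38}. Pruning with it would make your ``exhaustive'' search unsound whenever extra 4-faces are allowed. Also, the lower bound $23$ is due to Goedgebeur and Zamfirescu, not Jooyandeh et al.
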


\subsection*{Acknowledgements}
Daniel Kaplan thanks Tom Kelly for numerous discussions about graph theory and for suggesting several improvements to an earlier draft of this manuscript. The House of Graphs was integral to this work and we are particularly grateful to Cheng-Chen Tsai for adding many relevant graphs to the database. We thank Gunnar Brinkmann and Brendan D. McKay whose plantri program allowed for exhaustive search techniques.

Finally, we thank California State University Long Beach for ideal working conditions.

\subsection*{AI Disclosure}
The main approach was devised by the authors and the examples were generated by hand. Further, we wrote the paper in a series of slowly improving drafts, without tools beyond spellcheck. 

We prompted ChatGPT 5.5 with questions regarding the existing literature. We used Claude for programming a Hamiltonian cycle finder, to test graphs after they were generated. We asked ChatGPT 5.6 to generate code in plantri and to create TikZ versions of our hand-drawn graphs.   

The majority of the new graphs were discovered by undergraduate students at California State University Long Beach, as part of a summer research project, led by the fourth author.

\section{Local Modifications}

We give three constructions to modify a graph $G$ formed by taking a subgraph $H$ and replacing it with $H'$. The first construction is due to Thomassen, and well-known, but we include it both for completeness and as an illustration of our thought process.  

\subsection{Thomassen's Plus Four Modification}

Let $G$ be a graph with subgraph $H$, shown in Figure \ref{fig:Thomassen}, where the blue arrow represents an edge in $G$ but not $H$. Note that each vertex in $\{ a, b, c, d \}$ has degree 3 in $G$. Let $G'$ be a modification of $G$ formed by replacing $H$ with $H'$.

%\begin{figure} \label{fig:Thomassen}
%    \centering
%    \includegraphics[width=0.5\linewidth]{Fig04-Thomassen.jpg}
%    \caption{The graph $H$ on the left is modified to form $H'$ on the right.}
%    \label{fig:Thomassen}
%\end{figure}

\begin{figure}[ht]
\centering
\begin{tikzpicture}

% Outer vertices
\coordinate (La) at (0,2);
\coordinate (Lb) at (2.4,2);
\coordinate (Lc) at (2.4,0);
\coordinate (Ld) at (0,0);

\coordinate (Ra) at (5.2,2);
\coordinate (Rb) at (7.6,2);
\coordinate (Rc) at (7.6,0);
\coordinate (Rd) at (5.2,0);

% Inner square
\coordinate (ap) at (5.97,1.35);
\coordinate (bp) at (6.83,1.35);
\coordinate (cp) at (6.83,0.65);
\coordinate (dp) at (5.97,0.65);

% External blue stubs
\foreach \v in {La,Ra}{
  \draw[external stub] (\v) -- ++(-0.35,0.35);
}
\foreach \v in {Lb,Rb}{
  \draw[external stub] (\v) -- ++(0.35,0.35);
}
\foreach \v in {Lc,Rc}{
  \draw[external stub] (\v) -- ++(0.35,-0.35);
}
\foreach \v in {Ld,Rd}{
  \draw[external stub] (\v) -- ++(-0.35,-0.35);
}

% Left panel: original four-cycle
\draw[graph edge] (La)--(Lb)--(Lc)--(Ld)--cycle;

% Right panel: replacement
\draw[graph edge] (Ra)--(Rd) (Rb)--(Rc);
\draw[graph edge] (ap)--(bp)--(cp)--(dp)--cycle;
\draw[graph edge]
  (Ra)--(ap) (Rb)--(bp)
  (Rc)--(cp) (Rd)--(dp);

% Draw all vertices last
\foreach \v in {
  La,Lb,Lc,Ld,Ra,Rb,Rc,Rd,ap,bp,cp,dp}{
  \node[graph vertex] at (\v) {};
}

% Outer vertex labels
\foreach \v in {La,Ra}{
  \node[above=5pt,font=\small,text=vertexred]
    at (\v) {$a$};
}
\foreach \v in {Lb,Rb}{
  \node[above=5pt,font=\small,text=vertexred]
    at (\v) {$b$};
}
\foreach \v in {Lc,Rc}{
  \node[below=5pt,font=\small,text=vertexred]
    at (\v) {$c$};
}
\foreach \v in {Ld,Rd}{
  \node[below=5pt,font=\small,text=vertexred]
    at (\v) {$d$};
}

% Inner labels
\node[above=3pt,font=\small,text=vertexred]
  at (ap) {$a'$};
\node[above=3pt,font=\small,text=vertexred]
  at (bp) {$b'$};
\node[below=3pt,font=\small,text=vertexred]
  at (cp) {$c'$};
\node[below=3pt,font=\small,text=vertexred]
  at (dp) {$d'$};

\end{tikzpicture}
   \caption{The graph $H$ on the left is modified to form $H'$ on the right.}
    \label{fig:Thomassen}
\end{figure}

Thomassen realized that $G$ and $G'$ are either both Hamiltonian or neither is. Up to symmetry, there are two ways that a Hamiltonian cycle can traverse $H$: $abcd$ or $ab$ and then later $cd$. Both ways extend to a traversing of $H'$: simply travel from $a$ to $b$ using $aa'b'b$ and from $c$ to $d$ using $cc'd'd$.

Conversely, any Hamiltonian cycle of $G'$ can traverse $H'$, up to symmetry, using the path $aa'b'bcc'd'd$ or the union of two paths $aa'd'd \cup bb'c'c$ or $ad \cup bb'a'd'c'c$. In each case, this gives rise to a unique path of $H$ by omitting the vertices in $H'$ but not in $H$. 

In fact, more is true. $G$ is hypohamiltonian if and only if $G'$ is hypohamiltonian. To see this, notice that:
\begin{itemize}
    \item If $G$ is not Hamiltonian then neither is $G'$, and vice versa. 
    \item If $G - v$ is Hamiltonian for $v \not \in H$ then so is $G' - v$, and vice versa. 
    \item If $G - a$ is Hamiltonian then the cycle must contain $bcd$. So the corresponding path $bcc'b'a'd'd$ can be completed to a Hamiltonian cycle in $G'$. 
    \item If $G' - a'$ is Hamiltonian then the cycle must contain $add'c'b'bc$, in which case $adcb$ can be completed to a Hamiltonian cycle in $G$. 
\end{itemize}

From the perspective of the Hamiltonian cycle, which of $H$ or $H'$ connects to the four external vertices is irrelevant. 

We apply a version of this local modification argument below, where the enumeration of paths becomes more complicated. To remedy this, we only consider the possible endpoints of paths, which is far more manageable. 

This approach is already fruitful here, where one can see that, e.g., adding the edge $ab$ in $H'$ increases the possible paths through $H'$, but not the enumeration of their endpoints. One can similarly, add the edge $cd$ in $H'$ or both edges simultaneously. In this way, Thomassen's construction produces four hypohamiltonian graphs of order $n+4$, given one of order $n$. 

%Similar arguments imply that one could replace $H$ with any of the graphs in Figure \ref{fig:square_modified}

\subsection{The Plus Two Modification}
Next let $H$, $H_1$, $H_2$, and $H_3$ be the local configurations in Figure \ref{fig:local_config}.

\begin{figure}[ht]
\centering
\begin{tikzpicture}[scale=.9]

% Identical outer boundary for all panels
\def\outerframe{%
  \coordinate (TL) at (0,3.5);
  \coordinate (TM) at (1.3,3.5);
  \coordinate (TR) at (2.6,3.5);
  \coordinate (RM) at (2.6,1.75);
  \coordinate (BR) at (2.6,0);
  \coordinate (BM) at (1.3,0);
  \coordinate (BL) at (0,0);
  \coordinate (LM) at (0,1.75);

  \draw[graph edge]
    (TL)--(TM)--(TR)--(RM)--(BR)--(BM)--(BL)--(LM)--cycle;

  \draw[external stub]
    (TL)--++(-0.45,0) (TL)--++(0,0.45);
  \draw[external stub]
    (TR)--++(0.45,0) (TR)--++(0,0.45);
  \draw[external stub]
    (BL)--++(-0.45,0) (BL)--++(0,-0.45);
  \draw[external stub]
    (BR)--++(0.45,0) (BR)--++(0,-0.45);
}

\def\outervertices{%
  \foreach \v in {TL,TM,TR,RM,BR,BM,BL,LM}{
    \node[graph vertex] at (\v) {};
  }
}

% ======================================
% H
% ======================================
\begin{scope}[xshift=0cm,name prefix=fiveH-]
  \outerframe

  \coordinate (u) at (1.30,2.55);
  \coordinate (v) at (1.30,0.97);

  \draw[graph edge]
    (TM)--(u) (LM)--(u) (RM)--(u);
  \draw[graph edge]
    (LM)--(v) (RM)--(v) (BM)--(v);

  \outervertices
  \foreach \v in {u,v}{
    \node[graph vertex] at (\v) {};
  }
  \node at (1.30,-0.73) {$H$};
\end{scope}

% ======================================
% H_1
% ======================================
\begin{scope}[xshift=3.85cm,name prefix=fiveOne-]
  \outerframe

  \coordinate (u) at (1.30,2.58);
  \coordinate (v) at (1.30,1.70);
  \coordinate (l) at (0.66,1.10);
  \coordinate (r) at (1.94,1.10);

  \draw[graph edge]
    (TM)--(u) (LM)--(u) (RM)--(u) (u)--(v);
  \draw[graph edge]
    (LM)--(l) (l)--(v) (v)--(r) (r)--(RM);
  \draw[graph edge]
    (l)--(BM) (r)--(BM);

  \outervertices
  \foreach \v in {u,v,l,r}{
    \node[graph vertex] at (\v) {};
  }
  \node at (1.30,-0.73) {$H_1$};
\end{scope}

% ======================================
% H_2
% ======================================
\begin{scope}[xshift=7.70cm,name prefix=fiveTwo-]
  \outerframe

  \coordinate (a) at (0.90,2.66);
  \coordinate (b) at (1.90,2.66);
  \coordinate (c) at (1.50,2.05);
  \coordinate (d) at (0.92,1.40);
  \coordinate (e) at (1.72,1.20);
  \coordinate (f) at (0.60,0.67);

  % Ordinary edges
  \draw[graph edge]
    (TM)--(b) (b)--(RM)
    (a)--(b) (a)--(c) (a)--(d)
    (c)--(RM) (c)--(e) (d)--(e)
    (LM)--(d) (LM)--(f) (f)--(BM);

  % Three distinguished curved edges
  \draw[graph edge,line width=1.6pt]
    (LM) .. controls (0.15,3.08)
                 and (1.05,3.26) .. (b);

  \draw[graph edge,line width=1.6pt]
    (f) .. controls (0.84,0.88)
                and (1.30,0.96) .. (e);

  \draw[graph edge,line width=1.6pt]
    (f) .. controls (1.05,0.38)
                and (2.40,0.42) .. (RM);

  \outervertices
  \foreach \v in {a,b,c,d,e,f}{
    \node[graph vertex] at (\v) {};
  }
  \node at (1.30,-0.73) {$H_2$};
\end{scope}

% ======================================
% H_3
% ======================================
\begin{scope}[xshift=11.55cm,name prefix=fiveThree-]
  \outerframe

  \coordinate (a) at (0.76,2.60);
  \coordinate (b) at (1.86,2.57);
  \coordinate (c) at (1.10,2.03);
  \coordinate (d) at (1.55,1.48);
  \coordinate (e) at (0.77,1.06);
  \coordinate (f) at (1.91,1.06);

  \draw[graph edge]
    (TM)--(a) (TM)--(b)
    (LM)--(a) (a)--(c) (c)--(b)
    (b)--(RM) (b)--(d)
    (c)--(e) (e)--(d) (d)--(f)
    (LM)--(e) (e)--(BM)
    (RM)--(f) (f)--(BM);

  \outervertices
  \foreach \v in {a,b,c,d,e,f}{
    \node[graph vertex] at (\v) {};
  }
  \node at (1.30,-0.73) {$H_3$};
\end{scope}

\end{tikzpicture}
  \caption{Four local configurations with $H$ consisting of five 4-gons, $H_1$ consisting of seven 4-gons, and $H_2$ and $H_3$ consisting of nine 4-gons.}
    \label{fig:local_config}
\end{figure}

%\begin{figure}
%    \centering
%    \includegraphics[width=0.75\linewidth]{Fig05-LocConfig2.jpg}
%    \caption{Four local configurations with $H$ consisting of five 4-gons, $H_1$ consisting of seven 4-gons, and $H_2$ and $H_3$ consisting of nine 4-gons.}
%    \label{fig:local_config}
%\end{figure}

Let $G$ be any graph containing $H$, with the prescribed blue edges. Let $G_i$ be the local modification formed by replacing $H$ with $H_i$ for $i = 1, 2, 3$.

\begin{rem} \label{rem:loc_config}
    The phrase local configuration is not synonymous with subgraph. Rather, $G$ contains $H$ as a subgraph \emph{and} the $G$ has the prescribed blue edges. Moreover, $G$ does not contain any additional edges to vertices in $H$. 
\end{rem}

Notice that $H$ and each $H_i$ are bipartite with external vertices colored green and two more green vertices than purple vertices, see Figure \ref{fig:bipartite} 

% Figure 06 
\begin{figure}[h]
\centering
\resizebox{0.7\linewidth}{!}{%
\begin{tikzpicture}
  % Panel 1 (H), row 1, col 1
  \begin{scope}[shift={(0.000,0.000)}]
    \GCoordsH
    \GEdgesH 
    \GNodesH
    \GBipartiteH
  \end{scope}
  % Panel 2 (H1), row 1, col 2
  \begin{scope}[shift={(4.300,0.000)}]
    \GCoordsHone
    \GEdgesHone
    \GNodesHone
    \GBipartiteHone
  \end{scope}
  % Panel 3 (H2), row 1, col 3
  \begin{scope}[shift={(8.600,0.000)}]
    \GCoordsHtwo
    \GEdgesHtwo
    \GNodesHtwo
    \GBipartiteHtwo
  \end{scope}
  % Panel 4 (H3), row 1, col 4
  \begin{scope}[shift={(12.900,0.000)}]
    \GCoordsHthree
    \GEdgesHthree
    \GNodesHthree
    \GBipartiteHthree
  \end{scope}
\end{tikzpicture}%
}
\caption{Each configuration is bipartite, with the external vertices colored green.}
\label{fig:bipartite}
\end{figure}

%\begin{figure}
%    \centering
%    \includegraphics[width=0.75\linewidth]{Fig06-Bipartite3.jpg}
%    \caption{Each configuration is bipartite, with the external vertices colored green.}
%    \label{fig:bipartite}
%\end{figure}

\begin{prop} \label{prop:Ham}
    For each $i$, $G$ is Hamiltonian if and only if $G_i$ is Hamiltonian. 
\end{prop}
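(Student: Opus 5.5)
The plan is the fragment argument sketched in the introduction. Suppose $C$ is a Hamiltonian cycle of $G$. By Remark \ref{rem:loc_config}, the only edges leaving $H$ are the blue stubs, and these all sit at the four corners $TL, TR, BR, BL$. Hence $C \cap H$ is a disjoint union of paths covering $V(H)$, and every endpoint of such a path is a corner. A corner may occur as a one-vertex path, when $C$ uses both of its blue edges. The same holds for $G_i$ and $H_i$. Since $G - H = G_i - H_i$, it suffices to show that the set of possible \emph{endpoint patterns} of such fragments is the same for $H$ and for $H_i$. By an endpoint pattern I mean the set of corners that are endpoints, how they are paired, and which corner (if any) is a trivial path. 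Once this is shown, the fragment of $C$ inside $H$ can be swapped for a fragment of $H_i$ with the same endpoint data, and the result is a Hamiltonian cycle of $G_i$; the converse direction is identical.

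\textbf{Step 1: counting via the bipartition.} Use the $2$-colouring of Figure \ref{fig:bipartite}: all corners are green, and there are exactly two more green vertices than purple ones. Every path component has both endpoints at corners, so both endpoints are green. Its vertices therefore alternate with one extra green vertex, so each component contributes exactly $+1$ to (green $-$ purple); a one-vertex corner path also contributes $+1$. Summing over components, $C \cap H$ has exactly two components. The same count applies verbatim to each $H_i$.

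\textbf{Step 2: planarity.} The corners lie on the outer $8$-cycle in the cyclic order $TL, TR, BR, BL$, and $H$ is embedded in a disk with this cycle as its boundary. Two vertex-disjoint paths in the disk cannot join $TL$ to $BR$ and $TR$ to $BL$ simultaneously. So the admissible patterns are:
\begin{itemize}
\item[(a)] two nontrivial paths pairing the corners as $\{TL,TR\},\{BL,BR\}$ or as $\{TL,BL\},\{TR,BR\}$;
\item[(b)] one corner $X$ as a trivial path, together with a Hamiltonian path of $H - X$. Its endpoints are two of the remaining corners, and the fourth corner is an interior vertex of the path.
\end{itemize}
In (b), a corner that is interior to a path uses both of its $H$-edges, which are edges of the outer cycle. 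Using this, one can cut down which triples $X, Y, Z$ actually need checking.

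\textbf{Step 3: the main work, a finite check.} Rather than enumerating all fragments, I only need to know which of the finitely many patterns in (a) and (b) are realizable in $H$, and which in each $H_i$. This comes to about two patterns of type (a) and at most twelve of type (b), reduced further by the symmetries of the configurations. For each pattern I would either exhibit a realizing pair of paths, or prove impossibility by a short forcing argument: degree-$2$ vertices force their edges, and the colour count forbids certain subpaths. The expected obstacle is this case analysis, especially showing that $H_2$ and $H_3$ realize no pattern that $H$ fails to realize. I would try to reduce this by observing that $H$ itself realizes every pattern in (a) and (b), so that only realizability in each $H_i$ needs to be exhibited. If some pattern of type (b) fails in $H$, I would check directly, by forcing at the degree-$2$ vertices $TM, BM, LM, RM$, that it fails in $H_i$ too.
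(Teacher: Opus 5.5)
Your plan is correct and is essentially the paper's proof. The colour count forces exactly two path components, and planarity excludes the pairing $1-3 \ \ 2-4$, leaving the same 14 endpoint decompositions. Since $H$ realizes all of them (Figure \ref{fig:H-Ham}), the converse direction is automatic, and the forward direction reduces to exhibiting each decomposition in $H_1, H_2, H_3$. The paper does this in Figures \ref{fig:H1-Ham}--\ref{fig:H3-Ham}, using the symmetries of each $H_i$.

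One small slip: $TM, BM, LM, RM$ are not degree-$2$ vertices of $H$; they have degrees $3, 3, 4, 4$, and the corners are the vertices of $H$-degree $2$. This does not affect the argument, because your fallback forcing step is never needed once $H$ realizes every admissible pattern.
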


The idea of the proof is to dissect the Hamiltonian cycle for $G$ into a piece within $H$ and a piece outside of $H$. Then modify the piece inside $H$ to work in $H_i$.

\begin{defn}
    Let $C$ denote a Hamiltonian cycle in the graph $G$. Let $H \subset G$ be a subgraph. Then we call the intersection $H \cap C$ a \emph{cycle fragment}. 
    
    The cycle fragment is necessarily a union of paths $\gamma_i$ and we refer to its collection of endpoints as an \emph{endpoint decomposition}. For example, a fragment consisting of a path starting at 1 visiting 2 and ending at 3 and a path only visiting 4 has the endpoint decomposition $1-3 \ 4$. Note that this data is slightly more than the set of endpoints since we regard $1-2 \ \ 3 \ \ 4$ as different from $1 \ \ 2 \ \ 3-4$.
\end{defn}

%Heuristically, the key component of each graph is the existence of ``nearly independent" cycles around each connecting vertex. Since the local configurations have a small number of vertices, rather than make this heuristic precise, we give a brute force argument. 

\begin{proof}
Assume that $G$ has a Hamiltonian cycle, $C$. Notice that $H$ is bipartite with two more green vertices than purple, and every path through $H$ has one more green than purple vertex. We conclude that the cycle fragment $C \cap H$ is a union of exactly two paths. In the appendix, Figure \ref{fig:H-Ham} depicts 14 possible hypothetical subgraphs $C \cap H$.
%\begin{figure} 
%    \centering
%    \includegraphics[width=0.9\linewidth]{Fig07-H-Ham.jpg}
%    \caption{A Hamiltonian cycle in $G$ intersects $H$ along one of the following orange paths.} 
%    \label{fig:H-Ham}
%\end{figure}

Each possibility for $C \cap H$ determines an endpoint decomposition of the four vertices $\{ 1, 2, 3, 4 \}.$ There are 14 possible endpoint decompositions:
\[
1-2 \ \  4, \quad 1-3 \  \ 4, \quad 2-3 \ \ 4, \quad 1-2 \  \ 3, \quad 1-4 \  \ 3, \quad 2-4 \ \ 3, 
\]
\[
1-3 \  \ 2, \quad 1-4 \  \ 2, \quad 3-4 \  \ 2, \quad 2-3 \  \ 1, \quad 2-4 \  \ 1, 3-4 \  \ 1,
\]
\[
 1-2 \  \ 3-4, \quad 1-4 \ \ 2-3.
\]
Since $H$ is planar the endpoint configuration $1-3 \ \ 2-4$ is not possible. 

Each endpoint decomposition is achievable by some cycle fragment in Figure \ref{fig:H-Ham}. 

By Figures \ref{fig:H1-Ham}, \ref{fig:H2-Ham}, and \ref{fig:H3-Ham} in the appendix, each cycle fragment in $H$ corresponds to one in $H_i$ with the same endpoint decomposition, for $i = 1, 2, 3$. Note that we use the horizontal symmetry of $H_1$, the $\pi$ radians rotational symmetry of $H_2$, and the diagonal reflectional symmetry to achieve all endpoint decompositions in $H_3$.

%\begin{figure}
%    \centering
%    \includegraphics[width=0.6\linewidth]{Fig08-H1-Ham.jpg}
%    \caption{For each fragment in $H$, there is one in $H_1$ with the same endpoints} 
%    \label{fig:H1-Ham}
%\end{figure}

Therefore, the cycle fragment corresponding to $C \cap H$ extends to a Hamiltonian cycle in $G_i$ using the complement $C - H$. We conclude that $G_i$ is Hamiltonian. 
 
Conversely, suppose that $G_i$ has a Hamiltonian cycle $C_i$. The intersection $C_i \cap H_i$ gives some endpoint decomposition. Since $H_i$ is planar and bipartite with two more green vertices than purple, $C_i \cap H_i$ is a cycle fragment consisting of a union of two paths. Hence each $H_i$ has at most the 14 endpoint decompositions. Figure \ref{fig:H-Ham} proves that $H$ has a cycle fragment with the same endpoints. And that fragment can be extended to $G$ using the rest of the cycle, $C_i - H_i$.  

%Lastly, none of the fragments considered in $H_2$ utilize either of the curved edges. Therefore, each one gives fragment in $H_3$. While the additional edges in $H_3$ could in theory create a Hamiltonian cycle in $G_3$ but not $G_2$, the bipartite condition on $H_3$ ensures that these edges do not yield additional endpoint decompositions. 
\end{proof}

%Rather than compute each possible intersection $C \cap H_i$, we simply observe that each planar-permitting endpoint combination is possible. For example, for there is a path going from vertex 1 to 3 to 4 and then separately passing through 2. But there is no path from vertex 1 to 3 and then from 2 to 4 since that would violate planarity. 

%For each endpoint combination (e.g., 1-2-3 4, 1-2-4 3, 1-3-2 4), one can use the symmetry in $H$ to find a corresponding path in Figure \ref{fig:pathsH}. We conclude that some fragment in $H$ extends to a Hamiltonian cycle in $G$. 

%\begin{defn}
%Let $G$ be a graph with $W \subset V$ a subset of vertices. We say $G$ is \emph{hypohamiltonian except at $W$} if $G$ is not Hamiltonian and $G - v$ is Hamiltonian for all $v \not \in W$. 
%\end{defn}

%One should think of this as except \emph{possibly} at $W$ as the statement makes no claim about the Hamiltonian property for $G - w$ for $w \in W$.  

%\begin{thm} \label{thm:hypoham}
%   If $G$ hypohamiltonian then:
%   \begin{itemize}
%       \item[(1)] $G_1$ is hypohamiltonian except at $\{ u_1, v_1, w_1 \}$.
%       \item[(2)] $G_2$ is hypohamiltonian except at $\{ u_2, v_2 \}$.
%       \item[(3)] $G_3$ is hypohamiltonian except at $\{ u_3, v_3, w_3 \}$.
%   \end{itemize}
%   Moreover, the potential failure of $G_i - z$ to be Hamiltonian for $z \in \{ u_1, v_1, w_1, u_3, v_3, w_3\}$ are related, depending on the existence of a single spanning subgraph of the complement of $H_i$. 
%\end{thm}

\begin{thm} \label{thm:hypoham}
   If $G$ hypohamiltonian then $G_i$ is hypohamiltonian for all $i$. 
\end{thm}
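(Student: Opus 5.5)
Condition (I) for $G_i$ is exactly Proposition \ref{prop:Ham}: $G$ is not Hamiltonian, so neither is $G_i$. For condition (II) I split the vertices of $G_i$ into those outside $H_i$, the boundary vertices of $H_i$ (the eight vertices of the outer 8-cycle), and the interior vertices of $H_i$. Throughout I use the endpoint-decomposition language of the proof of Proposition \ref{prop:Ham}, with $\{1,2,3,4\}$ the four external (corner) vertices carrying the blue edges.

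\emph{Vertices outside $H_i$.} If $v\notin H_i$, then $v\notin H$, and $G-v$ has a Hamiltonian cycle $C$. The bipartite count still applies, since removing an exterior vertex does not change $H$. So $C\cap H$ is a union of exactly two paths whose endpoint decomposition is one of the 14 listed, and planarity still excludes $1-3\ \ 2-4$. By Figures \ref{fig:H1-Ham}--\ref{fig:H3-Ham}, $H_i$ has a fragment with the same decomposition. Gluing it to $C-H$ gives a Hamiltonian cycle of $G_i-v$.

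\emph{Vertices inside $H_i$.} For $w\in H_i$, I take the fragment approach with $H_i-w$ in place of $H_i$. Deleting a purple vertex changes the colour balance to three more green than purple. Deleting a green vertex changes it to one more green than purple. So a spanning collection of paths of $H_i-w$ with ends among the corners consists of three paths if $w$ is purple, and one path if $w$ is green.

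\begin{itemize}
\item[(a)] If $w$ is a green corner, say $1$, the single path runs between two of $2,3,4$, and the remaining corner is interior to the path.
\item[(b)] If $w$ is a green interior vertex, the single path joins two corners, with the other two corners interior to the path.
\item[(c)] If $w$ is purple, there are three paths, so some corner is a singleton and the other ends pair up.
\end{itemize}

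For each such $w$ I would pick a vertex $v\in H$ whose deletion forces the same type of fragment in $H-v$. A corner of $H$ serves for (a), one of $u,v$ in $H$ for (b), and a purple vertex of $H$ for (c). Since $G$ is hypohamiltonian, $G-v$ has a Hamiltonian cycle, and its intersection with $H-v$ has some endpoint decomposition $D$ of the right type. The exterior part $C-H$ then completes any fragment of $H_i-w$ that realises $D$.

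The main obstacle is that hypohamiltonicity of $G$ tells us only that \emph{some} decomposition $D$ occurs for $G-v$, not which one. So for each $w\in H_i$ I must choose a single $v\in H$ with two properties: every decomposition achievable in $H-v$ of the relevant type is also achievable in $H_i-w$, and the planarity restrictions on the two sides match. I would first enumerate, for each vertex $v$ of $H$ up to symmetry, the achievable decompositions of $H-v$. There are few, since $H$ has only ten vertices.

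Then, for each vertex $w$ of $H_i$ up to the symmetries noted in the proof of Proposition \ref{prop:Ham}, I would check that the decompositions of $H_i-w$ contain those of some $H-v$. Where no single $v$ works, I would combine two choices $v,v'$. The extra condition that $C$ must use the blue edges at each corner in a consistent way should cut down the possibilities in the hard cases. If even this fails for some interior vertex, I would fall back on the dependence observed for the plus three modification, namely the existence of specific exterior paths in $G-H$. I expect this matching step to be the genuinely laborious part, and the one where a figure-based case check is unavoidable.
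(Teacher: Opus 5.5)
Your plan is essentially the paper's proof. Condition (I) and deletions outside $H_i$ come from Proposition~\ref{prop:Ham}, and a deleted corner of $H_i$ is matched with the same corner of $H$. Each interior green (resp.\ purple) vertex $w$ of $H_i$ is matched with one fixed deletion from $H$ whose possible decompositions are all realized in $H_i-w$: the paper takes $H-u$ (single paths $1-3$, $1-4$, $2-3$, $2-4$, $3-4$) and $H$ minus a purple boundary vertex adjacent to both $u$ and $v$ (one path $1-3$, $1-4$, $2-3$ or $2-4$, with the other two corners as singletons), verified in Figures~\ref{fig:hypogreen} and~\ref{fig:hypopurple}.

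A single reference, with $H-v$ in place of $H-u$ where needed, suffices in every case, so your fallbacks are unnecessary. Combining two references could not do better than the better of the two anyway, since you only learn that \emph{some} decomposition of each occurs.
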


%%Note that the proof below is likely more tedious than necessary. Rather than an ad hoc mapping between cycle fragments, a holistic relationship should possible. 

%be one should be able to In summary, each Hamiltonian cycle in $G - v$ can again be decomposed into a cycle fragment within $H$ and a complementary piece outside of $H$. Each fragment in $H$ corresponds to one in $H_i$ with the same endpoint decomposition. Unfortunately, our proof relies on an ad-hoc, element-wise bijection between cycle fragments, rather than a holistic formula for the bijection.

\begin{proof}
Assume that $G$ is hypohamiltonian. Then, by definition, $G$ is not Hamiltonian. By Proposition \ref{prop:Ham} neither is $G_i$.

Additionally, since $G - v$ is Hamiltonian for $v \in G - H$, Proposition \ref{prop:Ham} applied to the graphs $G -v$\footnote{If $v$ is adjacent to $H$, then not all endpoint decompositions remain viable after removing $v$. However, the proof above continues to hold because when an endpoint decomposition does not appear among fragments of $H$, the corresponding endpoint decomposition in $H_i$ is similarly excluded.} and $G_i-v$ imply that $G_i - v$ is Hamiltonian. So it remains to consider the removal of vertices in $H_i$, where we build Hamiltonian cycles using the removal of corresponding vertices in $H$.  

We have already seen that if $v$ is one of the four endpoints, i.e., $v \in \{ 1, 2, 3, 4 \}$ then all fragments in $H-v$ and $H_i - v$ are Hamiltonian paths, with all endpoint decompositions possible. 

So it remains to remove an internal vertex $v \in H$ and we separately consider the cases $v$ is green versus purple. 

If $v$ is green, then the resulting fragment has a Hamiltonian path, and the options for $H$, $H_1$, $H_2$, and $H_3$ are shown in Figure \ref{fig:hypogreen}, in the appendix.

The key idea is that each cycle fragment in $H_1 - v$ corresponds to a cycle fragment in $H-w$ for some $w \in H$. Since $G-w$ is Hamiltonian, one such fragment can be completed to a Hamiltonian cycle $C$ in $G$. And the exterior $C - H$ can be glued to the corresponding fragment in $H_1 - v$, to give a Hamiltonian cycle, $C_1$.

Similarly, if we remove a purple vertex $v$ from $H_1$, $H_2$, or $H_3$ then the resulting fragments again cover the options in some $H - w$, as shown in Figure \ref{fig:hypopurple}, in the appendix.

%Let $w$ be a vertex in $H$. By assumption, $G - w$ has a Hamiltonian cycle, $C$. By symmetry $C \cap H$ is depicted in Figure \ref{fig:hypoH}.
%
%\begin{figure}
%    \centering
%    \includegraphics[width=0.8\linewidth]{Hypo_PathsH.jpg}
%    \caption{Each possible cycle fragment, for each $H -v$}
%    \label{fig:hypoH}
%\end{figure}
%
%Using horizontal and vertical reflections, we get
%Therefore, its endpoint decomposition is 
%\[
%3-2-4, \quad 2 - 3- 4, \quad 1 - 2 \ \ 3 \ \ 4, \quad 1 - 3 \ \ 2 \ \ 4, 
%\]
%\[
%1 \ \ 2-3 \ \ 4, \quad 1 - 3 \ \ 2 \ \ 4, \quad 2 - 1 - 4 - 3,  \quad 3 - 2 - 1 - 4.
%\]
%
%Since there are two fragments for each $H - v$, we can only assume that one extends to a Hamiltonian cycle in $G - v$.
%It suffices to show that each $H_i - w_i$ has a cycle fragment with both endpoint decompositions in a column of Figure \ref{fig:hypoH}, up to symmetry. 
%
%For $H_1$, we have the cycle fragments
%
%\begin{figure}
%    \centering
%    \includegraphics[width=0.5\linewidth]{PathsH1.jpg}
%    \caption{}
%    \label{fig:PathsH1}
%\end{figure}
%
%
%
%with $w_i \in H_i$ 
\end{proof}

\begin{cor}
    There are at least 66 hypohamiltonian planar graphs of order 38. 
\end{cor}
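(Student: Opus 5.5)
The plan is to start from Tsai's planar hypohamiltonian graphs of order 34 and apply Thomassen's plus four modification, including the variants with the extra edges $ab$ and/or $cd$. Each such modification produces a graph of order 38. Planarity is preserved because $H'$ is drawn inside the face bounded by the 4-cycle $abcd$. Hypohamiltonicity is preserved by the argument of Section 2.1.

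We also allow chains of modifications. The plus two modification (Theorem \ref{thm:hypoham}) applied to an order 36 graph also gives order 38, so we use two routes:
\begin{itemize}
\item $34 \to 38$ via a single plus four modification;
\item $34 \to 36 \to 38$ via two plus two modifications with $H_1$, $H_2$ or $H_3$.
\end{itemize}
Since Theorem \ref{thm:hypoham} holds for every $i$, each occurrence of the configuration $H$ yields three offspring.

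The key steps are as follows.

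First, in each base graph, enumerate the 4-faces whose four vertices all have degree 3 and whose external neighbours are distinct. These are exactly the faces where the plus four configuration of Figure \ref{fig:Thomassen} occurs as a local configuration in the sense of Remark \ref{rem:loc_config}. Each such face, combined with the four edge-variants, gives up to four graphs.

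Second, enumerate the occurrences of the configuration $H$ (five 4-gons with the prescribed stubs). Replace each occurrence by $H_1$, $H_2$ or $H_3$. Then repeat the enumeration on the resulting order 36 graphs.

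Third, collect all resulting order 38 graphs and reduce modulo isomorphism. This can be done with a canonical labelling, such as nauty or the House of Graphs, or by hand with invariants such as degree sequences and face-size counts. The claim follows once at least 66 pairwise non-isomorphic graphs remain. Hypohamiltonicity and planarity of every graph are guaranteed by the theory, so no Hamiltonicity computation is needed.

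The main obstacle is the count, not the theory. Distinct modifications can produce isomorphic graphs, because of symmetries of Tsai's graphs and because the $34\to36\to38$ route can coincide with the $34\to38$ route. Proving that 66 graphs are pairwise non-isomorphic is therefore the delicate part.

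I would first try to separate the graphs with cheap invariants: the number of 4-faces, the number of 5-faces, the number of degree-4 vertices, and the multiset of face degrees around high-degree vertices. Any classes these invariants fail to separate would be settled by a canonical-form computation.

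A secondary check is needed for each claimed occurrence of $H$ or of the square. We must confirm that the local configuration hypothesis holds, namely that there are no extra edges into internal vertices. Otherwise Proposition \ref{prop:Ham} does not apply.
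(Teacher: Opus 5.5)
Your proposal does not prove the statement. A lower bound of this kind requires exhibiting $66$ pairwise non-isomorphic graphs. You defer exactly that step to a computation, and the inputs you feed into it are either vacuous or misidentified.

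In Tsai's order $34$ graphs the only $4$-gons are the five faces of $H$. Each of them contains a vertex of degree $4$: a corner of $H$, or one of the two boundary vertices adjacent to both interior vertices. So no $4$-face has four cubic vertices, and your plus four route produces nothing. Thomassen's modification is usable at the four non-central $4$-gons. But there the justification is Proposition \ref{prop:Thom_hypo}, which needs only a pair of adjacent cubic vertices on the $4$-gon, not the all-cubic argument of Section 2.1. The edge variants you borrow from the cubic case are not covered by that proposition.

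Your chained route rests on a misreading. $H_2$ and $H_3$ have $14$ vertices against $10$ for $H$, so they take order $34$ directly to $38$; only $H_1$ gives order $36$. Moreover, the order $36$ graph $G_1$ contains no second copy of $H$. The central face of $H$ has two opposite cubic vertices and two opposite vertices of degree $4$, and no $4$-face of $H_1$ has this pattern. So a second plus two step is unavailable. What your enumeration actually delivers is the substitution of $H_2$ or $H_3$ for $H$ in their inequivalent placements on the boundary. There are $4$ placements each, by the $\pi$-rotation symmetry of $H_2$ and the diagonal symmetry of $H_3$. That gives at most $16$ graphs from Tsai's two examples, far short of $66$.

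The paper reaches $66$ from two sources of freedom that your proposal does not use.

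\emph{Edge variations in $H_2$.} The curved edges of $H_2$ are used by no fragment of $H_2$ or of $H_2-v$, so either or both may be deleted. Further edges may be added as long as the configuration stays planar and bipartite with the external vertices green. Since $H_2$ already realizes every endpoint decomposition admissible for such a configuration, no new decomposition, and hence no Hamiltonian cycle, can appear. Meanwhile the fragments needed for each $G_2-v$ survive. Four choices at each of two places, together with reflection, rotation by $\pi/2$ and removal of duplicates, give $4\cdot 4+3\cdot 3+3\cdot 3=34$ graphs.

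\emph{Thomassen at the corner $4$-gons.} Applying Thomassen's modification at each corner $4$-gon of $H$, via Proposition \ref{prop:Thom_hypo}, creates a $6$-gon and a $7$-gon. Optional chords there (three choices or none in the $6$-gon, one choice or none in the $7$-gon) give $4\cdot 4\cdot 2=32$ graphs. These are separated from the first family by an interior cubic vertex adjacent only to boundary vertices.

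These two mechanisms, not iterated $H_1$ substitutions, are what produce $34+32=66$.
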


\begin{proof}
    The curved edges in $H_2$ are not utilized for any fragment in $H_2$ nor $H_2 - v$ for any $v$. Therefore, the hypohamiltonian result holds after removing either or both edges.

    Moreover, one can arbitrarily add edges that preserve the bipartite and planar conditions  and the hypohamiltonian result still holds. Notice that the additional edges cannot lead to a cycle fragment with a new endpoint configuration since $H_2$ has all allowable endpoint configurations possible for a bipartite (with two more green vertices) and planar graph. 

    %\begin{figure}
    %\centering
    %\includegraphics[width=0.8\linewidth]{Fig17-H2-mods.jpg}
    %\caption{Each modification of $H_2$ gives a new local configuration that admits fragments with the same endpoint decompositions.}
    %\label{fig:H2-modification}
    %\end{figure}

    % Complete figure: requires only \usepackage{tikz} and \usepackage{graphicx}.
% No custom commands or styles are needed in the document preamble.
\begin{figure}[tbp]
\centering
\resizebox{.8\linewidth}{!}{%
\begin{tikzpicture}[line cap=round,line join=round]
\definecolor{figRed}{HTML}{ED4138}
\definecolor{figBlue}{HTML}{1670EA}
\definecolor{figGreen}{HTML}{19834D}
\definecolor{figOrange}{HTML}{F6BF65}
\definecolor{figViolet}{HTML}{B023B9}
\definecolor{figLime}{HTML}{55B752}
\tikzset{
 fedge/.style={black,line width=1.05pt},
 fboundary/.style={figBlue,line width=1.15pt},
 fstubs/.style={figBlue,line width=1.15pt},
 fvertex/.style={circle,fill=figRed,inner sep=0pt,minimum size=2.3mm},
 fgreen/.style={fvertex,fill=figGreen},
 fnew/.style={fvertex,fill=figRed},
 fhighlight/.style={figOrange,line width=1.8pt,opacity=1,line cap=round,line join=round},
 fviolet/.style={figViolet,line width=1.8pt,line cap=round},
 flime/.style={figLime,line width=1.8pt,line cap=round},
}
\begin{scope}[shift={(0,0)}]
\coordinate (TL) at (0.000,3.900);
\coordinate (TM) at (1.720,3.910);
\coordinate (TR) at (3.490,3.900);
\coordinate (LM) at (0.020,2.170);
\coordinate (RM) at (3.490,2.170);
\coordinate (BL) at (0.000,0.140);
\coordinate (BM) at (1.720,0.140);
\coordinate (BR) at (3.490,0.140);
\coordinate (a) at (1.720,3.130);
\coordinate (b) at (0.800,2.060);
\coordinate (c) at (1.490,2.410);
\coordinate (d) at (2.750,2.180);
\coordinate (e) at (1.780,1.040);
\coordinate (f) at (2.220,1.730);
\draw[fedge] (TL)--(TM);
\draw[fedge] (TM)--(TR);
\draw[fedge] (TL)--(LM);
\draw[fedge] (LM)--(BL);
\draw[fedge] (TR)--(RM);
\draw[fedge] (RM)--(BR);
\draw[fedge] (BL)--(BM);
\draw[fedge] (BM)--(BR);
\draw[fedge] (TM)--(a);
\draw[fedge] (TM)--(d);
\draw[fedge] (LM)--(b);
\draw[fedge] (b)--(c);
\draw[fedge] (c)--(a);
\draw[fedge] (c)--(e);
\draw[fedge] (b)--(BM);
\draw[fedge] (a)--(f);
\draw[fedge] (f)--(e);
\draw[fedge] (e)--(BM);
\draw[fedge] (f)--(d);
\draw[fedge] (d)--(RM);
\draw[fhighlight] (LM) .. controls (0.4, 2.6) and (1.3, 2.9) .. (a);
\draw[fhighlight] (e) .. controls (2.2, 1.25) and (3.25, 1.7) .. (RM);
\draw[fviolet] (TM) .. controls (1.3, 3.2) and (0.9, 2.53) .. (b);
\draw[fviolet] (BM) .. controls (2.1, 0.8) and (2.5, 1.5) .. (d);
\draw[flime] (TL) -- (c);
\draw[flime] (BR) -- (f);
%.. controls (0.63, 2.9) and (3.1, 0.7) .. (BR);
\draw[fstubs] (TL)--++(-.45,0) (TL)--++(0,.45);
\draw[fstubs] (TR)--++(.45,0) (TR)--++(0,.45);
\draw[fstubs] (BL)--++(-.45,0) (BL)--++(0,-.45);
\draw[fstubs] (BR)--++(.45,0) (BR)--++(0,-.45);
\node[fvertex] at (TL) {};
\node[fvertex] at (TM) {};
\node[fvertex] at (TR) {};
\node[fvertex] at (LM) {};
\node[fvertex] at (RM) {};
\node[fvertex] at (BL) {};
\node[fvertex] at (BM) {};
\node[fvertex] at (BR) {};
\node[fvertex] at (a) {};
\node[fvertex] at (b) {};
\node[fvertex] at (c) {};
\node[fvertex] at (d) {};
\node[fvertex] at (e) {};
\node[fvertex] at (f) {};
\node[font=\footnotesize\bfseries] at (1.75,4.80) {$4\times4$};
\end{scope}
% Exact 90-degree clockwise rotation of the FIRST panel, with its orange edges omitted.
% Colored endpoints are LM-to-lower-interior and upper-interior-to-RM, as marked.
\begin{scope}[shift={(5.0,0)}]
  \begin{scope}[rotate around={-90:(1.745,2.02)}]
  \coordinate (TL) at (0.000,3.900);
  \coordinate (TM) at (1.720,3.910);
  \coordinate (TR) at (3.490,3.900);
  \coordinate (LM) at (0.020,2.170);
  \coordinate (RM) at (3.490,2.170);
  \coordinate (BL) at (0.000,0.140);
  \coordinate (BM) at (1.720,0.140);
  \coordinate (BR) at (3.490,0.140);
  \coordinate (a) at (1.720,3.130);
  \coordinate (b) at (0.800,2.060);
  \coordinate (c) at (1.490,2.410);
  \coordinate (d) at (2.750,2.180);
  \coordinate (e) at (1.780,1.040);
  \coordinate (f) at (2.220,1.730);
  \draw[fedge] (TL)--(TM);
  \draw[fedge] (TM)--(TR);
  \draw[fedge] (TL)--(LM);
  \draw[fedge] (LM)--(BL);
  \draw[fedge] (TR)--(RM);
  \draw[fedge] (RM)--(BR);
  \draw[fedge] (BL)--(BM);
  \draw[fedge] (BM)--(BR);
  \draw[fedge] (TM)--(a);
  \draw[fedge] (TM)--(d);
  \draw[fedge] (LM)--(b);
  \draw[fedge] (b)--(c);
  \draw[fedge] (c)--(a);
  \draw[fedge] (c)--(e);
  \draw[fedge] (b)--(BM);
  \draw[fedge] (a)--(f);
  \draw[fedge] (f)--(e);
  \draw[fedge] (e)--(BM);
  \draw[fedge] (f)--(d);
  \draw[fedge] (d)--(RM);
  \draw[fviolet] (TM) .. controls (1.3, 3.2) and (0.9, 2.53) .. (b);
  \draw[fviolet] (BM) .. controls (2.1, 0.8) and (2.5, 1.5) .. (d);
  % Each original hexagon receives a single diagonal, giving four quadrilaterals.
  % Hexagon TL-TM-a-c-b-LM: chord TL--c.
  \draw[flime] (TL) .. controls (0.46,3.44) and (1.06,2.77) .. (c);
  % Hexagon RM-BR-BM-e-f-d: chord BR--f.
  \draw[flime] (BR) .. controls (3.23,0.62) and (2.51,1.49) .. (f);
  \draw[fstubs] (TL)--++(-.45,0) (TL)--++(0,.45);
  \draw[fstubs] (TR)--++(.45,0) (TR)--++(0,.45);
  \draw[fstubs] (BL)--++(-.45,0) (BL)--++(0,-.45);
  \draw[fstubs] (BR)--++(.45,0) (BR)--++(0,-.45);
  \node[fvertex] at (TL) {};
  \node[fvertex] at (TM) {};
  \node[fvertex] at (TR) {};
  \node[fvertex] at (LM) {};
  \node[fvertex] at (RM) {};
  \node[fvertex] at (BL) {};
  \node[fvertex] at (BM) {};
  \node[fvertex] at (BR) {};
  \node[fvertex] at (a) {};
  \node[fvertex] at (b) {};
  \node[fvertex] at (c) {};
  \node[fvertex] at (d) {};
  \node[fvertex] at (e) {};
  \node[fvertex] at (f) {};
  \end{scope}
  \node[font=\footnotesize\bfseries] at (1.75,4.80) {$3\times3$};
\end{scope}
\begin{scope}[shift={(10.0,0)}]
  \begin{scope}[rotate around={-90:(1.745,2.02)}]
  \coordinate (TL) at (0.000,3.900);
  \coordinate (TM) at (1.720,3.910);
  \coordinate (TR) at (3.490,3.900);
  \coordinate (LM) at (0.020,2.170);
  \coordinate (RM) at (3.490,2.170);
  \coordinate (BL) at (0.000,0.140);
  \coordinate (BM) at (1.720,0.140);
  \coordinate (BR) at (3.490,0.140);
  \coordinate (a) at (1.720,3.130);
  \coordinate (b) at (0.800,2.060);
  \coordinate (c) at (1.490,2.410);
  \coordinate (d) at (2.750,2.180);
  \coordinate (e) at (1.780,1.040);
  \coordinate (f) at (2.220,1.730);
  \draw[fedge] (TL)--(TM);
  \draw[fedge] (TM)--(TR);
  \draw[fedge] (TL)--(LM);
  \draw[fedge] (LM)--(BL);
  \draw[fedge] (TR)--(RM);
  \draw[fedge] (RM)--(BR);
  \draw[fedge] (BL)--(BM);
  \draw[fedge] (BM)--(BR);
  \draw[fedge] (TM)--(a);
  %\draw[fedge] (TM)--(d);
  \draw[fedge] (LM)--(b);
  \draw[fedge] (b)--(c);
  \draw[fedge] (c)--(a);
  \draw[fedge] (c)--(e);
  %\draw[fedge] (b)--(BM);
  \draw[fedge] (LM) -- (e);
  \draw[fedge] (RM) -- (a);
  \draw[fedge] (a)--(f);
  \draw[fedge] (f)--(e);
  \draw[fedge] (e)--(BM);
  \draw[fedge] (f)--(d);
  \draw[fedge] (d)--(RM);
  \draw[fviolet] (TM) .. controls (1.3, 3.2) and (0.9, 2.53) .. (b);
  \draw[fviolet] (BM) .. controls (2.1, 0.8) and (2.5, 1.5) .. (d);
  % Each original hexagon receives a single diagonal, giving four quadrilaterals.
  % Hexagon TL-TM-a-c-b-LM: chord TL--c.
  \draw[flime] (TL) .. controls (0.46,3.44) and (1.06,2.77) .. (c);
  % Hexagon RM-BR-BM-e-f-d: chord BR--f.
  \draw[flime] (BR) .. controls (3.23,0.62) and (2.51,1.49) .. (f);
  \draw[fstubs] (TL)--++(-.45,0) (TL)--++(0,.45);
  \draw[fstubs] (TR)--++(.45,0) (TR)--++(0,.45);
  \draw[fstubs] (BL)--++(-.45,0) (BL)--++(0,-.45);
  \draw[fstubs] (BR)--++(.45,0) (BR)--++(0,-.45);
  \node[fvertex] at (TL) {};
  \node[fvertex] at (TM) {};
  \node[fvertex] at (TR) {};
  \node[fvertex] at (LM) {};
  \node[fvertex] at (RM) {};
  \node[fvertex] at (BL) {};
  \node[fvertex] at (BM) {};
  \node[fvertex] at (BR) {};
  \node[fvertex] at (a) {};
  \node[fvertex] at (b) {};
  \node[fvertex] at (c) {};
  \node[fvertex] at (d) {};
  \node[fvertex] at (e) {};
  \node[fvertex] at (f) {};
  \end{scope}
  \node[font=\footnotesize\bfseries] at (1.75,4.80) {$3\times3$};
\end{scope}
\begin{scope}[shift={(15.0,0)}]
\coordinate (TL) at (0.000,3.900);
\coordinate (TM) at (1.730,3.900);
\coordinate (TR) at (3.510,3.900);
\coordinate (LM) at (0.000,2.110);
\coordinate (RM) at (3.510,2.110);
\coordinate (BL) at (0.000,0.140);
\coordinate (BM) at (1.730,0.140);
\coordinate (BR) at (3.510,0.140);
\coordinate (a) at (0.570,3.120);
\coordinate (b) at (1.220,3.120);
\coordinate (c) at (0.570,2.370);
\coordinate (d) at (1.220,2.370);
\coordinate (e) at (1.450,1.570);
\coordinate (f) at (2.720,2.220);
%\draw[fedge] (TL)--(TM);
\draw[fedge] (TM)--(TR);
\draw[fedge] (TL)--(LM);
\draw[fedge] (LM)--(BL);
\draw[fedge] (TR)--(RM);
\draw[fedge] (RM)--(BR);
\draw[fedge] (BL)--(BM);
\draw[fedge] (BM)--(BR);
\draw[fedge] (TL)--(a);
\draw[fedge] (TM)--(b);
\draw[fedge] (a)--(b);
\draw[fedge] (a)--(c);
\draw[fedge] (b)--(d);
\draw[fedge] (c)--(d);
\draw[fedge] (c)--(LM);
\draw[fedge] (d)--(e);
\draw[fedge] (TM)--(e);
\draw[fedge] (TM)--(f);
\draw[fedge] (f)--(RM);
%\draw[fedge] (f)--(BR);
\draw[fedge] (e)--(BM);
%\draw[fedge] (e)--(LM);
% Distinguished choices exactly as in the approved fourth-panel sketch.
\draw[fviolet] (TL)--(TM);
\draw[fviolet] (LM) .. controls (0.55, 1.99) and (1.15, 1.75) .. (e);
\draw[flime] (BL) .. controls (0.55, 1.0) and (1.05, 1.95) .. (d);
\draw[fhighlight] (c) .. controls (0.95, 1.34) and (1.28, 0.65) .. (BM);
\draw[fedge] (BM) -- (f);
\draw[fstubs] (TL)--++(-.45,0) (TL)--++(0,.45);
\draw[fstubs] (TR)--++(.45,0) (TR)--++(0,.45);
\draw[fstubs] (BL)--++(-.45,0) (BL)--++(0,-.45);
\draw[fstubs] (BR)--++(.45,0) (BR)--++(0,-.45);
\node[fvertex] at (TL) {};
\node[fvertex] at (TM) {};
\node[fvertex] at (TR) {};
\node[fvertex] at (LM) {};
\node[fvertex] at (RM) {};
\node[fvertex] at (BL) {};
\node[fvertex] at (BM) {};
\node[fvertex] at (BR) {};
\node[fvertex] at (a) {};
\node[fvertex] at (b) {};
\node[fvertex] at (c) {};
\node[fvertex] at (d) {};
\node[fvertex] at (e) {};
\node[fvertex] at (f) {};
\node[font=\footnotesize\bfseries] at (1.75,4.80) {$4\times4\times2$};
\end{scope}
\end{tikzpicture}%
}
\caption{Here are 66 planar hypohamiltonian graphs of order 38 obtained by adding the colored edges independently while preserving the planar condition}
    \label{fig:hypo38}
\end{figure}

  %  \begin{figure}
  %  \centering
  %  \includegraphics[width=0.8\linewidth]{Fig17-Hypo38.jpg}
  %  \caption{Here are 66 planar hypohamiltonian graphs of order 38 obtained by adding the colored edges independently while preserving the planar condition}
  %  \label{fig:hypo38}
  %  \end{figure}
    
    In total, this gives four choices for an edge (including no edge) at two places in the graph. Additionally, we obtain new graphs by reflecting over the vertical axis and rotating by $\pi/2$ clockwise, see Figure \ref{fig:hypo38}. Subtracting out the symmetric graphs, one gets a total of $34 = 4 \times 4 + 3 \times 3 + 3 \times 3$ hypohamiltonian planar graphs of order 38. 

    Next, by Proposition \ref{prop:Thom_hypo} we can apply Thomassen's plus four construction to any of the four non-central 4-gons. Then one arrives at a hypohamiltonian planar graph with an $n$-gon for $n = 4, 5, 6, 7$. Additionally, one can add three possible edges in the 6-gon for a total of 4 such graphs. And one can independently add an edge in the 7-gon. This gives $4 \times 4 \times 2 = 32$ graphs. None of these graphs are isomorphic to any of the ones above, because they each contain an internal degree three vertex with only edges to boundary vertices.  
\end{proof}

\begin{rem}
    We do not attempt an exhaustive search of hypohamiltonian planar graphs of order 38, and we suspect there are many others. The goal of this example was to illustrate the large number of modifications possible from a single such graph.   
\end{rem}

\begin{defn} \label{defn:plustwo}
We refer to the modification replacing $H$ with $H_1$ as the \emph{plus two modification}, since it increases the number of vertices by two. 
\end{defn}

We view the plus two modification as the main modification in this section, whereas $H_2$ illustrates the large number of possible modifications, and $H_3$ demonstrates that the construction could preserve the existence of a subgraph $H$, allowing for iteration. Note that the modification replacing $H$ with $H_3$ is an alternative plus four modification.

\subsubsection{Imperfect Modifications}

We begin by defining a weakening of the hypohamiltonian condition. 

\begin{defn}
    Let $G = (V, E)$ be a graph and let $W \subset V$ be a subset of vertices.
    We say $G$ is \emph{hypohamiltonian except at $W$} if $G$ is not Hamiltonian and $G-v$ is Hamiltonian for all $v \in V - W$. 
\end{defn}

We remark that this definition makes no claim about the Hamiltonian property of $G - w$ for any $w \in W$. 

In this section, we explore modifications where the cycle fragment analysis is insufficient to conclude that the modification is hypohamiltonian. Nonetheless, such analysis links the Hamiltonian condition on the vertex deleted graphs, so the existence of two Hamiltonian paths is sufficient for the hypohamiltonian property. 

Let $H$ denote the first configuration in Figure \ref{fig:local_config}. Let $G$ be any graph containing $H$. Let $H_1$ denote the second configuration in Figure \ref{fig:local_config} and let $G_1$ be the graph obtained by replacing $H$ with $H_1$. Let $H_1'$ be the graph in Figure \ref{fig:H1prime}, obtained from $H_1$ by removing a single edge. Let $G_1'$ denote the new graph obtained by replacing $H$ with $H_1'$. 

% Figure 18: H_1' — complete paste-ready figure.
\begin{figure}[!htbp]
\centering
\begin{tikzpicture}[scale=0.75]
  \GCoordsHone
  \draw[graph edge] (TL)--(TM)--(TR)--(RM)--(BR)--(BM)--(BL)--(LM)--cycle;
  \draw[graph edge]
    (TM)--(u) (LM)--(u) (u)--(v) (LM)--(l)
    (l)--(v) (v)--(r) (r)--(RM) (l)--(BM) (r)--(BM);
  \draw[external stub]
    (TL)--++(-0.42,0) (TL)--++(0,0.42)
    (TR)--++(0.42,0) (TR)--++(0,0.42)
    (BL)--++(-0.42,0) (BL)--++(0,-0.42)
    (BR)--++(0.42,0) (BR)--++(0,-0.42);
  \GNodesHone
\end{tikzpicture}
 \caption{The graph $H_1'$ is derived from $H_1$ by removing a single edge.}
    \label{fig:H1prime}
\end{figure}

 %\begin{figure}
 %   \centering
 %   \includegraphics[width=0.3\linewidth]{Fig18-H1prime.jpg}
 %   \caption{The graph $H_1'$ depicted is derived from $H_1$ by removing a single edge.}
 %   \label{fig:H1prime}
%\end{figure}

In Theorem \ref{thm:hypoham}, we prove that $G$ hypohamiltonian implies $G_1$ hypohamiltonian. While the argument does not extend to $G_1'$ in total, pieces of the argument can be salvaged. Namely, we have the following: 

\begin{cor} \label{cor:G1prime_hypoexc}
    If $G$ is hypohamiltonian, then $G_1'$ is hypohamiltonian except at $H_1' - \{ 1, 2, 3, 4 \}$
\end{cor}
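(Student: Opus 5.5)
We must show two things: $G_1'$ is not Hamiltonian, and $G_1'-v$ is Hamiltonian for every $v$ outside the six internal vertices of $H_1'$, i.e.\ for $v \in G - H$ and for $v \in \{1,2,3,4\}$. The strategy is the cycle-fragment bookkeeping of Proposition \ref{prop:Ham} and Theorem \ref{thm:hypoham}. The difference is that removing an edge only shrinks the set of fragments, so the two directions behave differently and must be treated separately.

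\emph{Non-Hamiltonicity.} Since $H_1'$ is a subgraph of $H_1$ with the same vertex set and the same boundary edges to $G - H$, the graph $G_1'$ is a spanning subgraph of $G_1$. Any Hamiltonian cycle of $G_1'$ would therefore be one of $G_1$. By Theorem \ref{thm:hypoham} (or Proposition \ref{prop:Ham}), $G_1$ is not Hamiltonian because $G$ is not, so $G_1'$ is not Hamiltonian. This step needs no new fragment analysis. (Note that $H_1'$ is still bipartite with two more green vertices than purple, but we will not need this here.)

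\emph{Deleting $v \in G - H$.} I would rerun the forward direction of Proposition \ref{prop:Ham} with $H_1'$ in place of $H_1$. A Hamiltonian cycle of $G - v$ meets $H$ in a fragment that is a union of two paths, with one of the 14 planar endpoint decompositions. It suffices to check, from Figure \ref{fig:H1-Ham}, that each of the 14 decompositions is realized by a fragment of $H_1$ avoiding the deleted edge. If some realizing fragment does use that edge, I would look for an alternative spanning two-path system in $H_1'$ with the same endpoints; this is a finite check on a 12-vertex graph. The footnote to Theorem \ref{thm:hypoham} handles the case that $v$ is adjacent to $H$: only decompositions that actually occur for $H$ need to be matched. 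Gluing the fragment to $C - H$ then gives a Hamiltonian cycle of $G_1' - v$.

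\emph{Deleting $v \in \{1,2,3,4\}$.} By the remark in the proof of Theorem \ref{thm:hypoham}, a Hamiltonian cycle of $G - v$ meets $H - v$ in a Hamiltonian path of $H - v$ between two of the remaining external vertices, or possibly between an external vertex and a neighbor structure dictated by the blue edges. For each endpoint pattern that occurs in $H - v$, I would exhibit a Hamiltonian path of $H_1' - v$ with the same endpoints. By the left–right symmetry, only two choices of $v$ need checking. Unlike $H_1$, the graph $H_1'$ has lost one symmetry, so the vertices $v$ on the side of the removed edge must be checked separately.

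I expect this last step to be the main obstacle. The removed edge may be forced in some Hamiltonian path of $H_1 - v$, so I would first try to re-route through the other neighbor of the affected degree-reduced vertex. If some pattern genuinely fails in $H_1'-v$, the fallback is to argue, as in the footnote, that this pattern cannot arise from $H - v$ inside $G - v$.
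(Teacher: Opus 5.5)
Your proposal is correct and follows the paper's argument. $G_1'$ is not Hamiltonian because it is a spanning subgraph of $G_1$, and both deletion cases are handled by matching each endpoint decomposition of $H$ (resp.\ $H-i$) with a fragment of $H_1'$ (resp.\ $H_1'-i$) that avoids the removed edge, read off from Figure~\ref{fig:H1-Ham}. Your caution about the lost left--right symmetry is warranted, and sharper than the paper's one-line justification. The mirrored fragments that supply $2,\ 1\!-\!3$, $2,\ 1\!-\!4$ and $2,\ 3\!-\!4$ (equivalently, the case of deleting vertex $2$) do use the removed edge, the one from the top interior vertex to the midpoint of side $2$--$3$. However, in $H_1'-2$ both boundary neighbours of $2$ have degree two, which forces enough of the route that Hamiltonian paths with endpoints $1$--$3$, $1$--$4$ and $3$--$4$ avoiding that edge are readily found. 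So your plan closes without the fallback, which could not have worked from local information alone, since all three patterns are realized in $H-2$.
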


\begin{proof}
    We need to show the following three claims:
\begin{itemize}
    \item[(I)] If $G$ is not Hamiltonian, then $G_1$ and hence $G_1'$ is not Hamiltonian.
    \item[(II)] If $G - v$ is Hamiltonian for $v \not \in H$, then $G_1'-v$ is Hamiltonian.
    \item[(III)] If $G - i$ is Hamiltonian for $i \in \{ 1, 2, 3, 4 \}$ an external vertex in $H$ then $G_1' - i$ is Hamiltonian. 
\end{itemize}

Statement (I) is clear since removing an edge gives less flexibility to build a Hamiltonian cycle. Statement (II) follows from observing that the cycle fragments in $H_1$ in Figure \ref{fig:H1-Ham} do not make use of the removed edge. Statement (III) is also a consequence of the cycle fragments in Figure \ref{fig:H1-Ham} not using the removed edge, because they provide all possible fragments in $H_1 - i$, for each $i$. 

\end{proof}

%Above we have shown that $G_1'$ is hypohamiltonian, except at $H_1'$. Further, the four external vertices of $H_1'$, $\{ 1, 2, 3, 4 \}$, each have $G_1' -i$ Hamiltonian. %The green vertices removed in $H_1$ above have cycle fragments not using the removed edge. And $G_1' - v$ is Hamiltonian for both vertices containing the removed edge. This leaves the five highlighted vertices above. 

\begin{cor} \label{cor:paths}
    If $G$ is hypohamiltonian, with Hamiltonian paths from $1$ to $3$ in both $(G - H) \cup \{ 1, 2, 3, 4 \}$ and $(G - H) \cup \{ 1, 3 \}$, then $G_1'$ is hypohamiltonian. 
\end{cor}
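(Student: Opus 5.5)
By Corollary \ref{cor:G1prime_hypoexc}, $G_1'$ is already known to be non-Hamiltonian, and $G_1'-v$ is Hamiltonian for every $v$ outside $H_1'-\{1,2,3,4\}$. So the plan is to handle only the six internal vertices of $H_1'$: the four boundary midpoints (top, left, right, bottom middle) together with $u,v,l,r$, excluding whichever of these are external. For each such internal vertex $w$ I will exhibit an explicit Hamiltonian cycle of $G_1'-w$. Each cycle is glued from a path inside $H_1'-w$ and one of the two hypothesized exterior Hamiltonian paths.

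The first step is to interpret the hypotheses as fragment data. A Hamiltonian path from $1$ to $3$ in $(G-H)\cup\{1,2,3,4\}$ covers all exterior vertices and all four external vertices. Here $1$ and $3$ are the ends and $2,4$ are traversed as interior vertices via their two exterior edges. It therefore closes up with any Hamiltonian path of $H_1'-w-\{2,4\}$ that runs from $1$ to $3$. Similarly, a Hamiltonian path from $1$ to $3$ in $(G-H)\cup\{1,3\}$ covers the exterior together with $1,3$. It closes up with any path in $H_1'-w$ from $1$ to $3$ that covers every vertex of $H_1'-w$, including $2$ and $4$. In the endpoint-decomposition language of the paper, the two hypotheses supply the exterior complements to the decompositions ``$1-3$ with $2,4$ used externally'' and ``$1-\cdots-3$ passing through $2$ and $4$'' respectively.

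Next, I sort the internal vertices by colour using the bipartition of Figure \ref{fig:bipartite}. Removing a purple vertex leaves four more green than purple vertices. Hence any spanning path of $H_1'-w$ with endpoints $1,3$ that also covers $2,4$ is impossible by parity, and such $w$ must use the first path type, which avoids $2,4$ and has net one surplus green at each end. I will check the count carefully, since the parity decides which hypothesis each vertex needs. Removing a green internal vertex leaves two more green than purple, so the second path type fits. For each $w$ I will then draw by hand the required path in $H_1'-w$, using the horizontal symmetry of $H_1'$ only where it is preserved by the missing edge. Because vertices $1$ and $3$ sit in symmetric positions, I expect the paths to come in mirror pairs.

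The main obstacle is the vertices incident to the removed edge. Near that edge $H_1'$ has lost flexibility: the vertex $u$ now has degree two inside $H_1'$, so any path through it is forced, and this is presumably exactly why Corollary \ref{cor:G1prime_hypoexc} could not handle these cases. For these vertices I will start by forcing both edges at every degree-two vertex of $H_1'-w$ and propagating. If the forced structure still admits a $1$--$3$ spanning path of the appropriate type, I am done. If it does not, I will instead look for a two-path fragment, for instance $1-2\ \ 3-4$, and check whether the given exterior path can be rerouted to match it, although the hypotheses seem designed so that this fallback is unnecessary.
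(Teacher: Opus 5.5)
Your reduction and gluing are exactly the paper's argument. Corollary~\ref{cor:G1prime_hypoexc} leaves only the internal vertices of $H_1'$. A green deletion is closed up by a spanning $1$--$3$ path of $H_1'-w$ together with the exterior path in $(G-H)\cup\{1,3\}$. A purple deletion is closed up by a fragment $1-3\ \ 2\ \ 4$ together with the exterior path in $(G-H)\cup\{1,2,3,4\}$. The paper reads these fragments off Figure~\ref{fig:H1Prime-Hypo}.

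What your proposal does not yet contain is the content of that figure: a fragment of the right type for every internal vertex. That is the only substantive step, and you leave open that it might fail. Your fallback would not rescue it: a fragment such as $1-2\ \ 3-4$ glued to a Hamiltonian $1$--$3$ exterior path gives vertex $2$ degree one or three.

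Several details you intend to rely on also need correcting. Write $m_T,m_L,m_R,m_B$ for the boundary midpoints.
\begin{itemize}
\item There are eight internal vertices, not six: purple $m_T,m_L,m_R,m_B,v$ and green $u,l,r$.
\item $H_1'$ has $7$ green and $5$ purple vertices. So a purple deletion leaves a green surplus of three, and a green deletion a surplus of one. With your counts of four and two, neither fragment type would pass the parity test; the correct counts give exactly your assignment.
\item The deleted edge is $um_R$, so $u$ (like $m_R$) still has degree three in $H_1'$, not two.
\item The left--right symmetry of $H_1$ is lost, but your mirror-pair guess is right for another reason. Reflection in the diagonal through $2$ and $4$ ($1\leftrightarrow 3$, $m_T\leftrightarrow m_R$, $m_L\leftrightarrow m_B$, $u\leftrightarrow r$, fixing $l$ and $v$) is an automorphism of $H_1'$, though not of $H_1$.
\end{itemize}
No fallback is needed, since the required fragments all exist:
\[
\begin{array}{ll}
w=u: & 1,\ m_T,\ 2,\ m_R,\ r,\ v,\ l,\ m_L,\ 4,\ m_B,\ 3\\
w=l: & 1,\ m_L,\ 4,\ m_B,\ r,\ v,\ u,\ m_T,\ 2,\ m_R,\ 3\\
w=r: & 1,\ m_L,\ 4,\ m_B,\ l,\ v,\ u,\ m_T,\ 2,\ m_R,\ 3\\
w=m_T: & 1,\ m_L,\ u,\ v,\ l,\ m_B,\ r,\ m_R,\ 3\\
w=m_L: & 1,\ m_T,\ u,\ v,\ l,\ m_B,\ r,\ m_R,\ 3\\
w=m_R: & 1,\ m_T,\ u,\ m_L,\ l,\ v,\ r,\ m_B,\ 3\\
w=v: & 1,\ m_T,\ u,\ m_L,\ l,\ m_B,\ r,\ m_R,\ 3\\
w=m_B: & 1,\ m_T,\ u,\ m_L,\ l,\ v,\ r,\ m_R,\ 3
\end{array}
\]
In the last five rows, $2$ and $4$ are the singleton components, traversed by the exterior path. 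The diagonal reflection exchanges the rows for $u$ and $r$, for $m_T$ and $m_R$, and for $m_L$ and $m_B$.
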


\begin{proof}
    By Figure \ref{fig:H1Prime-Hypo}, removing each green vertex gives a fragment with the endpoint decomposition $1-3$.  Therefore, if $(G - H) \cup \{1, 3 \}$ has a Hamiltonian path from $1$ to $3$ then we can extend this fragment to a Hamiltonian cycle.  

Similarly, removing each purple vertex gives a fragment with the endpoint decomposition $1-3 \ 2 \ 4$, by Figure \ref{fig:H1Prime-Hypo}. Hence a Hamiltonian path in $(G - H) \cup \{ 1, 2, 3, 4 \}$ from $1$ to $3$ can be glued to the cycle fragment to give a Hamiltonian cycle. 
\end{proof}

\begin{cor} \label{cor:G1prime_hypo}
    Let $G$ be Tsai's 34 vertex example. Then $G_1'$ is hypohamiltonian. 
\end{cor}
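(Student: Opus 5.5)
The plan is to apply Corollary \ref{cor:paths} directly, taking $G$ to be Tsai's 34 vertex example. Tsai's graph is hypohamiltonian, so two things remain. First, we must locate a copy of the local configuration $H$ of Figure \ref{fig:local_config} inside $G$, in the strict sense of Remark \ref{rem:loc_config}: the prescribed blue edges are present and there are no other edges to vertices of $H$. Second, we must exhibit Hamiltonian paths from $1$ to $3$ in the two exterior graphs $(G-H)\cup\{1,2,3,4\}$ and $(G-H)\cup\{1,3\}$. With these in hand, Corollary \ref{cor:paths} gives that $G_1'$ is hypohamiltonian.

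For the first step, I would go through the planar embedding of Tsai's graph and find the block of five 4-gons. This is a central 4-gon $u\,LM\,v\,RM$ surrounded by four 4-gons, with the eight boundary vertices arranged as in the figure. The four corner vertices $1,2,3,4$ should be the only vertices carrying edges to the rest of $G$, each with two blue edges.

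I expect this to be the 4-gon configuration that the conjectural discussion attributes to Tsai's examples. The configuration has some symmetry, but removing the single edge in $H_1'$ breaks the horizontal symmetry of $H_1$. So the labelling of $1,2,3,4$ must be chosen to match Figure \ref{fig:H1prime}, so that the required paths run between the correct diagonal pair $1,3$. If one orientation fails, I would try the reflected orientation of $H_1'$; it is also legitimate to delete the mirror-image edge instead.

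For the second step, I would exhibit the two paths explicitly as vertex sequences, checking each edge against Tsai's adjacency list. The path in $(G-H)\cup\{1,3\}$ must cover all $34-10+2 = 26$ vertices. The path in $(G-H)\cup\{1,2,3,4\}$ must cover $28$ vertices, with $2$ and $4$ as interior vertices.

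A natural source for these paths is existing Hamiltonian cycles of $G-w$ for suitable $w\in H$. A Hamiltonian cycle of $G-w$ meets $H-w$ in a fragment, and its complement is a spanning linear forest of the exterior with prescribed endpoints. Choosing $w$ so that the fragment has endpoint decomposition $1-3$, or $1-3 \ 2 \ 4$, and $H-w$ is traversed in one piece yields exactly the required exterior path. Using the green and purple removals of Figure \ref{fig:H1Prime-Hypo}, I would therefore search among the Hamiltonian cycles of $G-w$ for $w$ an internal vertex of $H$.

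The main obstacle is precisely this step: the hypotheses are existence statements about specific spanning paths in a 34-vertex graph, and they do not follow from the general theory. If extracting the paths from cycles of $G-w$ fails, I would construct them by hand, or with the paper's Hamiltonian cycle finder, and record them explicitly so the reader can verify them.
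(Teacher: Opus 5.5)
Your plan is the paper's argument. The paper also reduces, via Corollary~\ref{cor:paths}, to exhibiting Hamiltonian paths from $1$ to $3$ in $(G-H)\cup\{1,2,3,4\}$ and $(G-H)\cup\{1,3\}$, and its entire proof is drawing those two paths in Tsai's graph in Figure~\ref{fig:H1Prime-paths}; the only substantive content is that concrete check, which you identify correctly, including the orientation of $H_1'$, but still have to carry out.
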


\begin{proof}
    By Corollary \ref{cor:paths}, it suffices to find Hamiltonian paths from $1$ to $3$ in both $(G - H) \cup \{ 1, 2, 3, 4 \}$ and $(G - H) \cup \{ 1, 3 \}$. Indeed, Figure \ref{fig:H1Prime-paths} depicts such paths. 

 \begin{figure}
    \centering
    \includegraphics[width=0.8\linewidth]{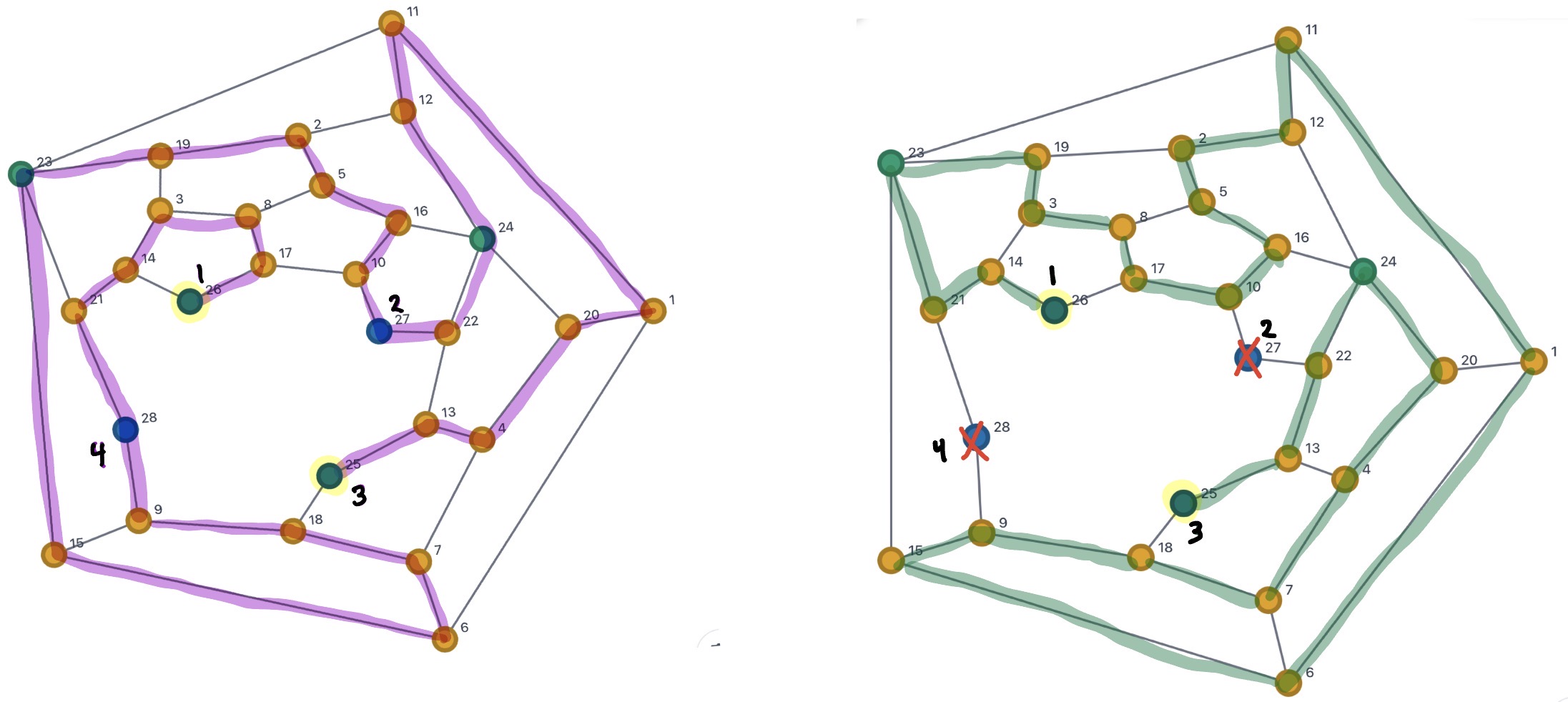}
    \caption{Hamiltonian paths in the complement of $H$ that allow cycle fragments with endpoint decomposition $1-3$ and $1-3 \ 2 \ 4$ to extend to a Hamiltonian cycle.}
    \label{fig:H1Prime-paths}
\end{figure}

\end{proof}

%%This is possibly not a central point. Commented out for now
%\begin{rem}
%    If a Hamiltonian graph has a single 6-gon, two 4-gons, and the remaining faces 5-gons, then Grinberg's theorem (explained below) modulo 3 says that any Hamiltonian cycle must separate the 6-gon from the two 4-gons. Therefore, the only viable cycle fragments in $H$ have endpoint configurations $1-3$ and $2-4$. Since $G_1' - v$ for each green vertex $v \in H_1'$ has cycle fragments with those endpoint configurations, $G_1' -v$ is Hamiltonian. In other words, one need not find the green Hamiltonian paths above. 
%\end{rem}

\begin{cor}  \label{cor:order36}
    There are at least 12 planar hypohamiltonian graphs of order 36.
\end{cor}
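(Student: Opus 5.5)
The plan is to start from Tsai's two planar hypohamiltonian graphs of order 34 and apply the plus two modification of Definition \ref{defn:plustwo}, together with its one-edge-weaker variant $H_1'$. Order $34+2=36$ is exactly what we need, and every graph produced is planar, since $H_1$ and $H_1'$ are planar with the same boundary as $H$. The argument then splits into three steps: locating copies of the local configuration $H$, certifying hypohamiltonicity, and counting non-isomorphic outputs.

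\textbf{Locating $H$.} In each of Tsai's order 34 graphs I would find every occurrence of the configuration $H$ of Figure \ref{fig:local_config}, in the sense of Remark \ref{rem:loc_config}. Such an occurrence is the block of five 4-gons with two internal degree three vertices $u,v$, and the prescribed external edges at the four corner vertices. Each occurrence comes with the four endpoint labels $1,2,3,4$ and with a choice of orientation. $H_1$ is not symmetric under the quarter turn, so the ways of placing $H_1$ inside a given $H$ can give different graphs: both the horizontal and the vertical orientation of $H_1$ are candidates. The same holds for $H_1'$, together with the choice of which of the symmetric edges $uRM$ or $uLM$ to delete.

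\textbf{Certifying hypohamiltonicity.} For every replacement by $H_1$, Theorem \ref{thm:hypoham} gives directly that $G_1$ is planar hypohamiltonian of order 36. For replacements by $H_1'$ I would invoke Corollary \ref{cor:G1prime_hypo}. That result is stated for a particular labelling, so for each further orientation or occurrence I would instead apply Corollary \ref{cor:paths}. This requires exhibiting, in each case, Hamiltonian 1--3 paths in $(G-H)\cup\{1,2,3,4\}$ and in $(G-H)\cup\{1,3\}$, as in Figure \ref{fig:H1Prime-paths}.

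\textbf{Counting.} The total count is (occurrences of $H$) $\times$ (orientations) $\times$ (with or without the deleted edge), summed over Tsai's two graphs. The goal is to reach at least 12 after removing isomorphic duplicates. To separate graphs I would use simple invariants: the degree sequence (deleting an edge changes it), the face-size multiset, and the number of 4-gons.

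The main obstacle is this last bookkeeping step. Both hypohamiltonicity and the face-size multiset are preserved by symmetric choices, so distinct placements may well give isomorphic graphs. The count of 12 therefore really depends on the automorphisms of Tsai's graphs and on how many copies of $H$ they contain. If the hand count falls short, I would first add the $H_1'$ variants certified by Corollary \ref{cor:paths}, and then check non-isomorphism with a canonical-form computation.
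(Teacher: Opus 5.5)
Your certification step is sound. Theorem~\ref{thm:hypoham} covers every placement of $H_1$, and Corollary~\ref{cor:paths} covers $H_1'$ once exterior paths are supplied. The gap is the count, which is the whole content of the statement.

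\textbf{What fails in the counting step.} You never determine how many pairwise non-isomorphic graphs you get, and the invariants you propose cannot determine it. Inside a fixed exterior, every placement of $H_1$ gives the same degree sequence, the same face-size multiset and the same number of 4-gons, because the boundary 8-cycle and all outer faces are untouched. The same holds for every placement of $H_1'$. So these invariants separate at most the $H_1$-type from the $H_1'$-type graph (via the edge count) in each of Tsai's two exteriors, i.e.\ at most four classes. Falling back on ``a canonical-form computation if the count falls short'' is not an argument.

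Two further problems:
\begin{itemize}
\item Your orientation count is off. When the exterior has no symmetry, $H_1$ has four distinct placements, not two: its stabiliser in the dihedral group of the boundary is just the left--right reflection. $H_1'$, which has trivial stabiliser, has eight.
\item For Tsai's second graph the paper supplies no exterior paths, so the $H_1'$ cases there remain uncertified as you state them.
\end{itemize}

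\textbf{The missing ingredient} is the one the paper invokes: Tsai's exterior has no nontrivial symmetry. The graphs are $3$-connected and planar, so any isomorphism preserves faces. It therefore maps the union of the 4-gons (and the 6-gon), which is the local configuration together with its boundary 8-cycle, onto itself. It then restricts to a symmetry of the exterior, hence is trivial there. So distinct placed configurations give non-isomorphic graphs, told apart by the degrees along the boundary.

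With this, one Tsai graph suffices. Every placement of $H_1'$ needs only the $1$--$3$ or the $2$--$4$ pair of exterior paths, and both are exhibited in Figures~\ref{fig:H1Prime-paths} and~\ref{fig:H1Prime_Path24}. Your own scheme would then give $4+8=12$.

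\textbf{The paper's route.} It takes four placements of $H_1'$ and adds one of the three diagonals of its 6-gon. The enlarged configuration is still planar and bipartite with two more green than purple vertices, so non-Hamiltonicity survives by the endpoint argument of Proposition~\ref{prop:Ham}. Vertex-deleted Hamiltonicity only becomes easier when edges are added. The diagonals ending at a boundary midpoint reproduce $H_1$ in its four rotations, and the diagonal from $v$ to a corner gives four new configurations. This yields $4+3+3+2=12$.
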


\begin{proof}
    
    One can rotate the previous argument $\pi/2$, $\pi$, or $3 \pi/2$ radians to obtain four such graphs. 
    
    By Figure \ref{fig:H1Prime_Path24} the exterior has Hamiltonian paths from $2$ to $4$ in both $(G - H) \cup \{ 1, 2, 3, 4 \}$ and $(G - H) \cup \{ 2, 4 \}$, as desired.
    
    \begin{figure}
    \centering
    \includegraphics[width=0.8\linewidth]{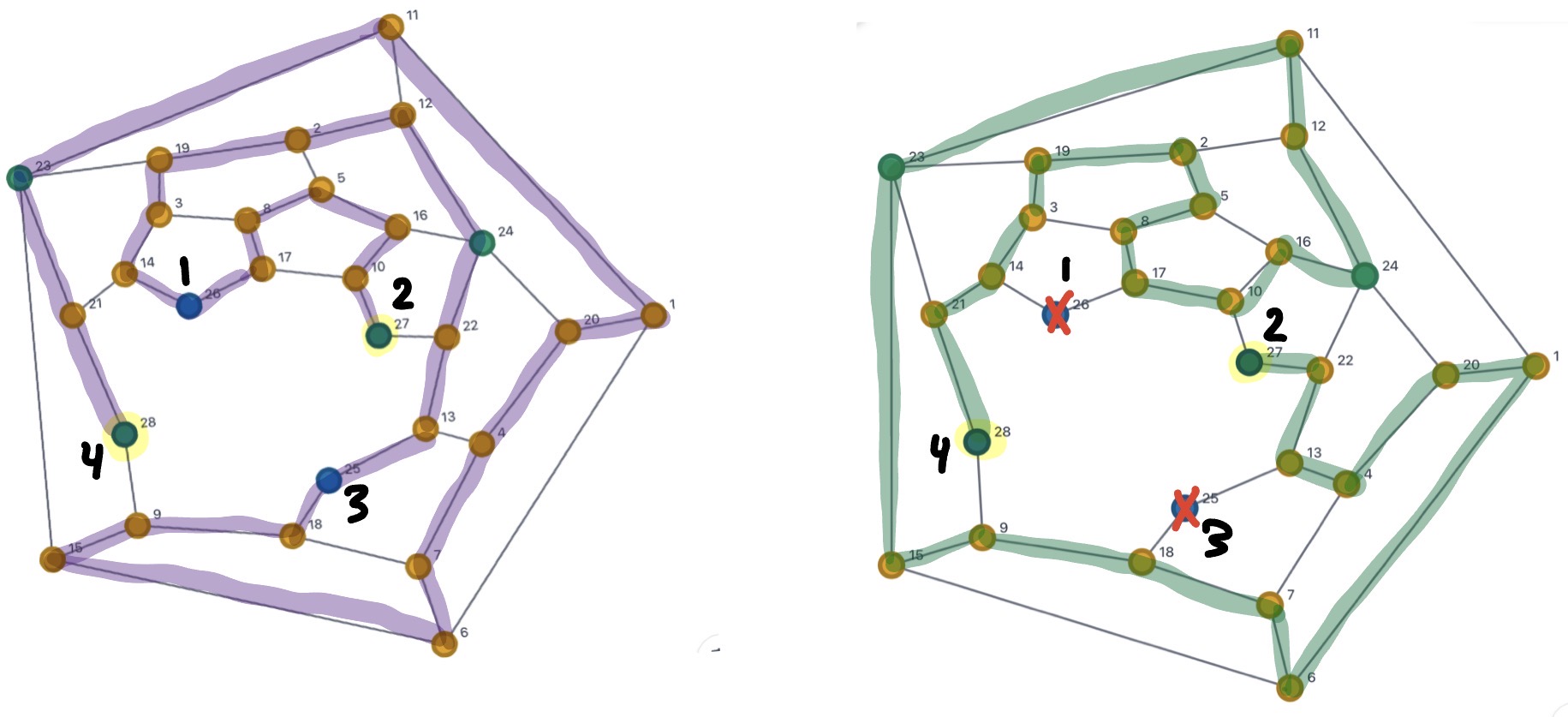}
    \caption{Hamiltonian paths in the complement of $H$ that allow cycle fragments with endpoint decomposition $2-4$ and $2-4 \ 1 \ 3$ to extend to a Hamiltonian cycle.}
    \label{fig:H1Prime_Path24}
    \end{figure}
    
    Additionally, one can add one of three possible edges to the 6-gon to obtain $4 \times 4 = 16$ more graphs. Many such graphs are isomorphic, leaving the following 12 pairwise non-isomorphic graphs.

    % Four new local configurations (transcribed from Fig22-Hypo36_Exam2.jpg).
% Complete TikZ figure: requires only \usepackage{tikz,graphicx}.
\begin{figure}[tbp]
\centering
\resizebox{0.8\linewidth}{!}{%
\begin{tikzpicture}[line cap=round,line join=round]
\definecolor{newRed}{HTML}{ED4138}
\definecolor{newBlue}{HTML}{1670EA}
\definecolor{newGreen}{HTML}{46AE56}
\definecolor{newPurple}{HTML}{9D22B7}
\definecolor{newOrange}{HTML}{F39B21}
\tikzset{
  graph line/.style={black,line width=1.1pt},
  external halfedge/.style={newBlue,line width=1.35pt},
  graph dot/.style={circle,fill=newRed,inner sep=0pt,minimum size=2.3mm},
  candidate green/.style={newGreen,line width=2.25pt},
  candidate purple/.style={newPurple,line width=2.25pt},
  candidate orange/.style={newOrange,line width=2.25pt}
}
  % Panel 1
  \begin{scope}[shift={(0.00,0)}]
    \coordinate (TL) at (0.000,2.850);
    \coordinate (TM) at (1.570,2.850);
    \coordinate (TR) at (3.140,2.850);
    \coordinate (LM) at (0.000,1.500);
    \coordinate (RM) at (3.140,1.500);
    \coordinate (BL) at (0.000,0.000);
    \coordinate (BM) at (1.570,0.000);
    \coordinate (BR) at (3.140,0.000);
    \coordinate (a) at (1.570,2.030);
    \coordinate (b) at (1.570,1.470);
    \coordinate (c) at (0.680,0.670);
    \coordinate (d) at (2.350,0.680);
    \draw[graph line] (TL)--(TM);
    \draw[graph line] (TM)--(TR);
    \draw[graph line] (TR)--(RM);
    \draw[graph line] (RM)--(BR);
    \draw[graph line] (BR)--(BM);
    \draw[graph line] (BM)--(BL);
    \draw[graph line] (BL)--(LM);
    \draw[graph line] (LM)--(TL);
    \draw[graph line] (TM)--(a);
    \draw[graph line] (a)--(b);
    \draw[graph line] (LM)--(a);
    \draw[graph line] (LM)--(c);
    \draw[graph line] (c)--(b);
    \draw[graph line] (c)--(BM);
    \draw[graph line] (b)--(d);
    \draw[graph line] (d)--(BM);
    \draw[graph line] (d)--(RM);
    \draw[candidate green] (TM) .. controls (2.02,2.26) and (1.97,1.20) .. (d);
    \draw[candidate purple] (TR) .. controls (2.79,2.82) and (2.05,1.91) .. (b);
    \draw[candidate orange] (a) .. controls (2.03,1.87) and (2.70,1.78) .. (RM);
    % Two short blue half-edges at every outer attachment vertex.
    \draw[external halfedge] (TL)--++(-.42,0) (TL)--++(0,.42);
    \draw[external halfedge] (TR)--++(.42,0) (TR)--++(0,.42);
    \draw[external halfedge] (BL)--++(-.42,0) (BL)--++(0,-.42);
    \draw[external halfedge] (BR)--++(.42,0) (BR)--++(0,-.42);
    \node[graph dot] at (TL) {};
    \node[graph dot] at (TM) {};
    \node[graph dot] at (TR) {};
    \node[graph dot] at (LM) {};
    \node[graph dot] at (RM) {};
    \node[graph dot] at (BL) {};
    \node[graph dot] at (BM) {};
    \node[graph dot] at (BR) {};
    \node[graph dot] at (a) {};
    \node[graph dot] at (b) {};
    \node[graph dot] at (c) {};
    \node[graph dot] at (d) {};
  \end{scope}
  % Panel 2
  \begin{scope}[shift={(4.12,0)}]
    \coordinate (TL) at (0.000,2.850);
    \coordinate (TM) at (1.570,2.850);
    \coordinate (TR) at (3.140,2.850);
    \coordinate (LM) at (0.000,1.500);
    \coordinate (RM) at (3.140,1.500);
    \coordinate (BL) at (0.000,0.000);
    \coordinate (BM) at (1.570,0.000);
    \coordinate (BR) at (3.140,0.000);
    \coordinate (a) at (1.570,2.030);
    \coordinate (b) at (1.570,1.470);
    \coordinate (c) at (0.680,0.670);
    \coordinate (d) at (2.350,0.680);
    \draw[graph line] (TL)--(TM);
    \draw[graph line] (TM)--(TR);
    \draw[graph line] (TR)--(RM);
    \draw[graph line] (RM)--(BR);
    \draw[graph line] (BR)--(BM);
    \draw[graph line] (BM)--(BL);
    \draw[graph line] (BL)--(LM);
    \draw[graph line] (LM)--(TL);
    \draw[graph line] (TM)--(a);
    \draw[graph line] (a)--(b);
    \draw[graph line] (LM)--(c);
    \draw[graph line] (c)--(b);
    \draw[graph line] (c)--(BM);
    \draw[graph line] (b)--(d);
    \draw[graph line] (d)--(BM);
    \draw[graph line] (d)--(RM);
    \draw[graph line] (a)--(RM);
    \draw[candidate green] (c) .. controls (0.94,1.42) and (1.25,2.48) .. (TM);
    \draw[candidate purple] (TL) .. controls (0.45,2.32) and (1.27,1.78) .. (b);
    % Two short blue half-edges at every outer attachment vertex.
    \draw[external halfedge] (TL)--++(-.42,0) (TL)--++(0,.42);
    \draw[external halfedge] (TR)--++(.42,0) (TR)--++(0,.42);
    \draw[external halfedge] (BL)--++(-.42,0) (BL)--++(0,-.42);
    \draw[external halfedge] (BR)--++(.42,0) (BR)--++(0,-.42);
    \node[graph dot] at (TL) {};
    \node[graph dot] at (TM) {};
    \node[graph dot] at (TR) {};
    \node[graph dot] at (LM) {};
    \node[graph dot] at (RM) {};
    \node[graph dot] at (BL) {};
    \node[graph dot] at (BM) {};
    \node[graph dot] at (BR) {};
    \node[graph dot] at (a) {};
    \node[graph dot] at (b) {};
    \node[graph dot] at (c) {};
    \node[graph dot] at (d) {};
  \end{scope}
  % Panel 3
  \begin{scope}[shift={(8.24,0)}]
    \coordinate (TL) at (0.000,2.850);
    \coordinate (TM) at (1.570,2.850);
    \coordinate (TR) at (3.140,2.850);
    \coordinate (LM) at (0.000,1.500);
    \coordinate (RM) at (3.140,1.500);
    \coordinate (BL) at (0.000,0.000);
    \coordinate (BM) at (1.570,0.000);
    \coordinate (BR) at (3.140,0.000);
    \coordinate (a) at (0.850,2.050);
    \coordinate (b) at (1.420,1.460);
    \coordinate (c) at (2.340,1.470);
    \coordinate (d) at (0.690,0.580);
    \draw[graph line] (TL)--(TM);
    \draw[graph line] (TM)--(TR);
    \draw[graph line] (TR)--(RM);
    \draw[graph line] (RM)--(BR);
    \draw[graph line] (BR)--(BM);
    \draw[graph line] (BM)--(BL);
    \draw[graph line] (BL)--(LM);
    \draw[graph line] (LM)--(TL);
    \draw[graph line] (TM)--(a);
    \draw[graph line] (a)--(LM);
    \draw[graph line] (a)--(b);
    \draw[graph line] (TM)--(c);
    \draw[graph line] (b)--(c);
    \draw[graph line] (c)--(RM);
    \draw[graph line] (LM)--(d);
    \draw[graph line] (d)--(b);
    \draw[graph line] (d)--(BM);
    \draw[candidate green] (d) .. controls (1.34,0.76) and (2.51,1.22) .. (RM);
    \draw[candidate purple] (b) .. controls (1.91,1.15) and (2.83,0.60) .. (BR);
    % Two short blue half-edges at every outer attachment vertex.
    \draw[external halfedge] (TL)--++(-.42,0) (TL)--++(0,.42);
    \draw[external halfedge] (TR)--++(.42,0) (TR)--++(0,.42);
    \draw[external halfedge] (BL)--++(-.42,0) (BL)--++(0,-.42);
    \draw[external halfedge] (BR)--++(.42,0) (BR)--++(0,-.42);
    \node[graph dot] at (TL) {};
    \node[graph dot] at (TM) {};
    \node[graph dot] at (TR) {};
    \node[graph dot] at (LM) {};
    \node[graph dot] at (RM) {};
    \node[graph dot] at (BL) {};
    \node[graph dot] at (BM) {};
    \node[graph dot] at (BR) {};
    \node[graph dot] at (a) {};
    \node[graph dot] at (b) {};
    \node[graph dot] at (c) {};
    \node[graph dot] at (d) {};
  \end{scope}
  % Panel 4
  \begin{scope}[shift={(12.36,0)}]
    \coordinate (TL) at (0.000,2.850);
    \coordinate (TM) at (1.570,2.850);
    \coordinate (TR) at (3.140,2.850);
    \coordinate (LM) at (0.000,1.500);
    \coordinate (RM) at (3.140,1.500);
    \coordinate (BL) at (0.000,0.000);
    \coordinate (BM) at (1.570,0.000);
    \coordinate (BR) at (3.140,0.000);
    \coordinate (p) at (0.750,1.480);
    \coordinate (q) at (1.400,1.470);
    \coordinate (r) at (2.520,2.200);
    \coordinate (s) at (2.390,0.640);
    \draw[graph line] (TL)--(TM);
    \draw[graph line] (TM)--(TR);
    \draw[graph line] (TR)--(RM);
    \draw[graph line] (RM)--(BR);
    \draw[graph line] (BR)--(BM);
    \draw[graph line] (BM)--(BL);
    \draw[graph line] (BL)--(LM);
    \draw[graph line] (LM)--(TL);
    \draw[graph line] (LM)--(p);
    \draw[graph line] (p)--(q);
    \draw[graph line] (TM)--(p);
    \draw[graph line] (TM)--(r);
    \draw[graph line] (r)--(q);
    \draw[graph line] (r)--(RM);
    \draw[graph line] (q)--(s);
    \draw[graph line] (s)--(RM);
    \draw[graph line] (s)--(BM);
    \draw[candidate purple] (BL) .. controls (0.41,0.50) and (0.85,1.10) .. (q);
    % Two short blue half-edges at every outer attachment vertex.
    \draw[external halfedge] (TL)--++(-.42,0) (TL)--++(0,.42);
    \draw[external halfedge] (TR)--++(.42,0) (TR)--++(0,.42);
    \draw[external halfedge] (BL)--++(-.42,0) (BL)--++(0,-.42);
    \draw[external halfedge] (BR)--++(.42,0) (BR)--++(0,-.42);
    \node[graph dot] at (TL) {};
    \node[graph dot] at (TM) {};
    \node[graph dot] at (TR) {};
    \node[graph dot] at (LM) {};
    \node[graph dot] at (RM) {};
    \node[graph dot] at (BL) {};
    \node[graph dot] at (BM) {};
    \node[graph dot] at (BR) {};
    \node[graph dot] at (p) {};
    \node[graph dot] at (q) {};
    \node[graph dot] at (r) {};
    \node[graph dot] at (s) {};
  \end{scope}
\end{tikzpicture}%
}
  \caption{For each graph drawn, consider the graph obtained by removing one or both colored edges. This gives $4 + 3 + 3 + 2 = 12$ possible graphs of order 36, obtained by putting the same 5-gon exterior around each local configuration.}
    \label{fig:hypo36_exam}
\end{figure}

 %   \begin{figure}
 %   \centering
 %   \includegraphics[width=0.8\linewidth]{Fig22-Hypo36_Exam2.jpg}
 %   \caption{For each graph drawn, consider the graph obtained by removing one or both colored edges. This gives $4 + 3 + 3 + 2 = 12$ possible graphs of order 36, obtained by putting the same 5-gon exterior around each local configuration.}
 %   \label{fig:hypo36_exam}
 %   \end{figure}

    The fact that they are pairwise non-isomorphic follows from Tsai's exterior having no symmetry and the differing degree sequences along the boundary of the local configuration. 
\end{proof}

\subsection{The Plus Three Modification}

To preserve the bipartite condition with two more green vertices, one needs to add an \emph{even} number of vertices. Therefore, our previous modifications increase the number of vertices in $G$ by an even number. 

Next, we define a modification that increases the number of vertices by three. Let $G$ be a graph containing $Q$, the three 5-gon configuration in Figure \ref{fig:plus3}. Let $G'$ be the graph obtained by replacing $Q$ with $Q'$, the five 5-gon configuration in Figure \ref{fig:plus3}.

% Complete figure: requires only \usepackage{tikz} and \usepackage{graphicx}.
% No custom commands or styles are needed in the document preamble.
\begin{figure}[tbp]
\centering
\resizebox{.60\linewidth}{!}{%
\begin{tikzpicture}[line cap=round,line join=round]
\definecolor{figRed}{HTML}{ED4138}
\definecolor{figBlue}{HTML}{1670EA}
\definecolor{figGreen}{HTML}{19834D}
\definecolor{figOrange}{HTML}{F6BF65}
\definecolor{figViolet}{HTML}{B023B9}
\definecolor{figLime}{HTML}{55B752}
\tikzset{
 fedge/.style={black,line width=1.05pt},
 fboundary/.style={black,line width=1.15pt},
 fstubs/.style={figBlue,line width=1.15pt},
 fvertex/.style={circle,fill=figRed,inner sep=0pt,minimum size=2.3mm},
 fgreen/.style={fvertex,fill=figGreen},
 fnew/.style={fvertex,fill=figRed},
 fhighlight/.style={figOrange,line width=1.8pt,opacity=1,line cap=round,line join=round},
 fviolet/.style={figViolet,line width=1.8pt,line cap=round},
 flime/.style={figLime,line width=1.8pt,line cap=round},
}
\begin{scope}[shift={(0,0)}]
\begin{scope}
\coordinate (A) at (1.640,4.520);
\coordinate (B) at (2.820,4.470);
\coordinate (C) at (3.760,4.080);
\coordinate (D) at (4.400,2.990);
\coordinate (E) at (4.390,1.410);
\coordinate (F) at (3.720,0.630);
\coordinate (G) at (2.760,0.250);
\coordinate (H) at (1.700,0.250);
\coordinate (I) at (0.640,0.790);
\coordinate (J) at (0.190,2.320);
\coordinate (K) at (0.560,3.670);
\draw[fboundary] (A)--(B);
\draw[fboundary] (B)--(C);
\draw[fboundary] (C)--(D);
\draw[fboundary] (D)--(E);
\draw[fboundary] (E)--(F);
\draw[fboundary] (F)--(G);
\draw[fboundary] (G)--(H);
\draw[fboundary] (H)--(I);
\draw[fboundary] (I)--(J);
\draw[fboundary] (J)--(K);
\draw[fboundary] (K)--(A);
\draw[fstubs] (A)--(1.480,5.080);
\draw[fstubs] (C)--(4.170,4.510);
\draw[fstubs] (D)--(4.990,3.140);
\draw[fstubs] (E)--(4.910,1.150);
\draw[fstubs] (G)--(2.760,-0.350);
\draw[fstubs] (I)--(0.330,0.210);
\draw[fstubs] (J)--(-0.330,1.990);
\draw[fstubs] (K)--(-0.100,4.000);
\draw[fedge] (A)--(H);
\draw[fedge] (B)--(F);
\node[fgreen] at (A) {};
\node[fvertex] at (B) {};
\node[fvertex] at (C) {};
\node[fvertex] at (D) {};
\node[fvertex] at (E) {};
\node[fvertex] at (F) {};
\node[fvertex] at (G) {};
\node[fvertex] at (H) {};
\node[fvertex] at (I) {};
\node[fvertex] at (J) {};
\node[fvertex] at (K) {};
\end{scope}
\end{scope}
\begin{scope}[shift={(6.25,0)}]
\begin{scope}
\coordinate (A) at (1.640,4.520);
\coordinate (B) at (2.820,4.470);
\coordinate (C) at (3.760,4.080);
\coordinate (D) at (4.400,2.990);
\coordinate (E) at (4.390,1.410);
\coordinate (F) at (3.720,0.630);
\coordinate (G) at (2.760,0.250);
\coordinate (H) at (1.700,0.250);
\coordinate (I) at (0.640,0.790);
\coordinate (J) at (0.190,2.320);
\coordinate (K) at (0.560,3.670);
\coordinate (P) at (1.670,3.010);
\coordinate (Q) at (2.480,2.270);
\coordinate (R) at (3.480,1.730);
\draw[fboundary] (A)--(B);
\draw[fboundary] (B)--(C);
\draw[fboundary] (C)--(D);
\draw[fboundary] (D)--(E);
\draw[fboundary] (E)--(F);
\draw[fboundary] (F)--(G);
\draw[fboundary] (G)--(H);
\draw[fboundary] (H)--(I);
\draw[fboundary] (I)--(J);
\draw[fboundary] (J)--(K);
\draw[fboundary] (K)--(A);
\draw[fstubs] (A)--(1.480,5.080);
\draw[fstubs] (C)--(4.170,4.510);
\draw[fstubs] (D)--(4.990,3.140);
\draw[fstubs] (E)--(4.910,1.150);
\draw[fstubs] (G)--(2.760,-0.350);
\draw[fstubs] (I)--(0.330,0.210);
\draw[fstubs] (J)--(-0.330,1.990);
\draw[fstubs] (K)--(-0.100,4.000);
\draw[fedge] (J)--(P);
\draw[fedge] (B)--(P);
\draw[fedge] (P)--(Q);
\draw[fedge] (Q)--(H);
\draw[fedge] (Q)--(R);
\draw[fedge] (C)--(R);
\draw[fedge] (R)--(F);
\node[fgreen] at (A) {};
\node[fvertex] at (B) {};
\node[fvertex] at (C) {};
\node[fvertex] at (D) {};
\node[fvertex] at (E) {};
\node[fvertex] at (F) {};
\node[fvertex] at (G) {};
\node[fvertex] at (H) {};
\node[fvertex] at (I) {};
\node[fvertex] at (J) {};
\node[fvertex] at (K) {};
\node[fvertex] at (P) {};
\node[fvertex] at (Q) {};
\node[fvertex] at (R) {};
\end{scope}
\end{scope}
\end{tikzpicture}%
}
   \caption{Replace $Q$, the three 5-gon configuration on the left, with $Q'$, the five 5-gon configuration on the right by adding three vertices}
    \label{fig:plus3}
\end{figure}

 %\begin{figure}
 %   \centering
 %   \includegraphics[width=0.75\linewidth]{Fig23-Plus3.jpg}
 %   \caption{Replace $Q$, the three 5-gon configuration on the left, with $Q'$, the five 5-gon configuration on the right by adding three vertices}
 %   \label{fig:plus3}
%\end{figure}

Notice that $Q$ has a single vertex of degree 4 in $G$ whereas $Q'$ has two vertices of degree 4 in $G'$. 

\begin{thm} \label{thm:path_ext}
    If $G$ is Hamiltonian then $G'$ is Hamiltonian. More generally, if $G$ has a path visiting each vertex of $Q$ with endpoints not in $Q$ then $G'$ has a path visiting each vertex of $Q'$ with the same endpoints.
\end{thm}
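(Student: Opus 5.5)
The plan is the cycle-fragment method of Proposition \ref{prop:Ham}, applied to the path. Let $\gamma$ be the given path in $G$, and consider the fragment $\gamma \cap Q$. Label the boundary $11$-cycle of $Q$ clockwise as $v_1, \dots, v_{11}$, with $v_1$ the top (green) vertex. Then the chords of $Q$ are $v_1v_8$ and $v_2v_6$. The vertices $v_2, v_6, v_8$ carry no external edge. Since the endpoints of $\gamma$ lie outside $Q$, each of $v_2, v_6, v_8$ is an interior vertex of $\gamma$ and uses exactly two of its three edges in $Q$. Every path component of $\gamma \cap Q$ leaves $Q$ through the external edges at $v_1,v_3,v_4,v_5,v_7,v_9,v_{10},v_{11}$. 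So the endpoint decomposition of $\gamma\cap Q$ is a non-crossing matching datum on these eight attachment vertices, exactly as in Section 2. It suffices to exhibit, for each possible fragment of $Q$, a fragment of $Q'$ that covers all of $V(Q')$, realises the same endpoint decomposition, and uses the same external edges at each attachment vertex. Gluing this fragment to $\gamma - Q$ then gives the required path in $G'$, and the Hamiltonian case is the special case where $\gamma$ closes up into a cycle. Rather than work from the endpoints alone, I would case-split on which chords $\gamma$ uses and edit the fragment directly.

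\emph{Case 1: $\gamma$ uses $v_2v_6$.} In $Q'$ there is the path $v_2PQRv_6$ through all three new vertices, so I replace the chord $v_2v_6$ by this path. If $v_1v_8$ is not also used, nothing else changes and every edge of the fragment remains present in $Q'$.

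\emph{Case 2: neither chord is used.} Then $v_2$ must use $v_1v_2$ and $v_2v_3$, and I replace $v_2v_3$ by $v_2PQRv_3$.

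\emph{Case 3: $\gamma$ uses $v_1v_8$.} This is the main obstacle, because $v_1$ and $v_8$ are no longer adjacent in $Q'$ and the detour has to be absorbed somewhere else. The vertex $v_8$ uses $v_1$ together with one of $v_7$ or $v_9$; by the reflection-like structure I would enumerate the few possibilities and write down an explicit replacement for each. The available tools are the new edges $Qv_8$, $Pv_{10}$, $Pv_2$, $Rv_3$ and $Rv_6$.

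A representative subcase is $v_2v_6$ unused, which forces the segment $v_3v_2v_1v_8$. Here I would reroute as $\dots v_3 R Q v_8\dots$ and pick up $v_2, v_1, P$ along $v_1v_2P$. The extra end at $P$ is then attached through $Pv_{10}$ by splicing into the neighbourhood of $v_{10}$ on the $v_9v_{10}v_{11}$ stretch. If $v_{10}$ uses its external edge, this splice may change which boundary edge $v_{10}$ uses, and one has to check that the endpoint decomposition is unchanged.

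The subcase where both chords are used combines this rerouting with Case 1. There the three new vertices must be split between replacing $v_2v_6$ (via $v_2PQRv_6$, or the shorter $v_2P\cdots$) and replacing $v_1v_8$ (via $Qv_8$). I expect this subcase to need the most care.

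In every subcase, the check to make is that each attachment vertex still uses its external edge exactly when it did before, so that the endpoint decomposition is preserved. I would present the case list as a figure in the style of the appendix figures, one panel per fragment type, with planarity ruling out the crossing decompositions just as in Proposition \ref{prop:Ham}.
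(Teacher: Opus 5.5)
Your setup and Cases 1 and 2 are correct. They are exactly the two substitutions in the paper's proof: $v_2v_3\mapsto v_2PQRv_3$ and $v_2v_6\mapsto v_2PQRv_6$. You are also right that this cannot be the whole argument. Since $v_1v_8$ is not an edge of $Q'$, a fragment using it is not handled by either substitution, and the paper's one-line proof says nothing about this.

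Case 3 is where the real content lies, and your proposal does not supply it. For the subcase with both chords used you give no construction at all. In your representative subcase the splice at $v_{10}$ is not pinned down: you do not say which edge is removed or where the second edge of $v_1$ goes, so the endpoint check you flag stays open. As written, this is a genuine gap.

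It closes with two uniform substitutions. When $v_1v_8$ is used, split on whether $v_1v_2$ is used.

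(a) Suppose $v_1v_2$ is used. Then $v_1$ is saturated by $v_2$ and $v_8$. The cubic vertex $v_{11}$ must therefore use $v_{10}v_{11}$ together with its single external edge. The other edge at $v_2$ goes to some $x\in\{v_3,v_6\}$, and both candidates are adjacent to $R$. Replace the vertex-disjoint subpaths $xv_2v_1v_8$ and $v_{11}v_{10}$ by $xRQv_8$ and $v_{11}v_1v_2Pv_{10}$.

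(b) Suppose $v_1v_2$ is not used. Then $v_2$ uses $v_3$ and $v_6$. Replace the disjoint subpaths $v_3v_2v_6$ and $v_1v_8$ by $v_3Rv_6$ and $v_1v_2PQv_8$.

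In both cases you swap disjoint subpaths for subpaths of $Q'$ with the same ends, whose interior vertices are not used elsewhere. So every attachment vertex keeps exactly the same external edges, the endpoint pairing is preserved, and $P,Q,R$ are all absorbed. No enumeration of endpoint decompositions or planarity argument is needed.

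Case (a) with $x=v_3$ is your representative subcase. Your worry about $v_{10}$ disappears because the edge replaced there is the forced edge $v_{10}v_{11}$, so the external usage at $v_{10}$ is untouched.

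The double-chord case is (a) with $x=v_6$ together with (b), and it does not go the way you anticipate (replacing $v_2v_6$ by something starting $v_2P\cdots$). In both (a) and (b) the working move is to take $v_2$ off its segment and reinsert it next to $v_1$, with $R$ taking its place beside $v_3$ or $v_6$.
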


\begin{proof}
    A path in $G$ visiting each vertex of $Q$ (and not beginning nor ending in $Q$) needs to use one of the two edges depicted in Figure \ref{fig:plus3Ham}. Replacing each edge in $G$ with the shown path in $G'$, builds a path in $G'$ visiting each vertex of $Q'$ with the same endpoints. 

% Figures 24 and 25 together: four drawings in two side-by-side pairs.
% Corrected highlighted edge in the leftmost 11-gon: B--C, NOT A--B.
% Complete figure: needs only \usepackage{tikz,graphicx} in the preamble.
\begin{figure}[tbp]
\centering
\resizebox{.98\linewidth}{!}{%
\begin{tikzpicture}[line cap=round,line join=round]
\definecolor{figRed}{HTML}{ED4138}
\definecolor{figBlue}{HTML}{1670EA}
\definecolor{figGreen}{HTML}{19834D}
\definecolor{figOrange}{HTML}{F6BF65}
\definecolor{figViolet}{HTML}{B023B9}
\definecolor{figLime}{HTML}{55B752}
\tikzset{
 fedge/.style={black,line width=1.05pt},
 fboundary/.style={black,line width=1.15pt},
 fstubs/.style={figBlue,line width=1.15pt},
 fvertex/.style={circle,fill=figRed,inner sep=0pt,minimum size=2.3mm},
 fgreen/.style={fvertex,fill=figGreen},
 fnew/.style={fvertex,fill=figRed},
 fhighlight/.style={figOrange,line width=5.2pt,opacity=.67,line cap=round,line join=round},
 fviolet/.style={figViolet,line width=1.8pt,line cap=round},
 flime/.style={figLime,line width=1.8pt,line cap=round},
}

% (a) First replacement: original, then modified graph.
\begin{scope}[shift={(0,0)}]
\begin{scope}[shift={(0,0)}]
\begin{scope}
\coordinate (A) at (1.640,4.520);
\coordinate (B) at (2.820,4.470);
\coordinate (C) at (3.760,4.080);
\coordinate (D) at (4.400,2.990);
\coordinate (E) at (4.390,1.410);
\coordinate (F) at (3.720,0.630);
\coordinate (G) at (2.760,0.250);
\coordinate (H) at (1.700,0.250);
\coordinate (I) at (0.640,0.790);
\coordinate (J) at (0.190,2.320);
\coordinate (K) at (0.560,3.670);
\draw[fhighlight] (B)--(C);
\draw[fboundary] (A)--(B);
\draw[fboundary] (B)--(C);
\draw[fboundary] (C)--(D);
\draw[fboundary] (D)--(E);
\draw[fboundary] (E)--(F);
\draw[fboundary] (F)--(G);
\draw[fboundary] (G)--(H);
\draw[fboundary] (H)--(I);
\draw[fboundary] (I)--(J);
\draw[fboundary] (J)--(K);
\draw[fboundary] (K)--(A);
\draw[fstubs] (A)--(1.480,5.080);
\draw[fstubs] (C)--(4.170,4.510);
\draw[fstubs] (D)--(4.990,3.140);
\draw[fstubs] (E)--(4.910,1.150);
\draw[fstubs] (G)--(2.760,-0.350);
\draw[fstubs] (I)--(0.330,0.210);
\draw[fstubs] (J)--(-0.330,1.990);
\draw[fstubs] (K)--(-0.100,4.000);
\draw[fedge] (A)--(H);
\draw[fedge] (B)--(F);
\node[fgreen] at (A) {};
\node[fvertex] at (B) {};
\node[fvertex] at (C) {};
\node[fvertex] at (D) {};
\node[fvertex] at (E) {};
\node[fvertex] at (F) {};
\node[fvertex] at (G) {};
\node[fvertex] at (H) {};
\node[fvertex] at (I) {};
\node[fvertex] at (J) {};
\node[fvertex] at (K) {};
\end{scope}
\end{scope}
\begin{scope}[shift={(6.25,0)}]
\begin{scope}
\coordinate (A) at (1.640,4.520);
\coordinate (B) at (2.820,4.470);
\coordinate (C) at (3.760,4.080);
\coordinate (D) at (4.400,2.990);
\coordinate (E) at (4.390,1.410);
\coordinate (F) at (3.720,0.630);
\coordinate (G) at (2.760,0.250);
\coordinate (H) at (1.700,0.250);
\coordinate (I) at (0.640,0.790);
\coordinate (J) at (0.190,2.320);
\coordinate (K) at (0.560,3.670);
\coordinate (P) at (1.670,3.010);
\coordinate (Q) at (2.480,2.270);
\coordinate (R) at (3.480,1.730);
\draw[fhighlight] (B)--(P);
\draw[fhighlight] (P)--(Q);
\draw[fhighlight] (Q)--(R);
\draw[fhighlight] (R)--(C);
\draw[fboundary] (A)--(B);
\draw[fboundary] (B)--(C);
\draw[fboundary] (C)--(D);
\draw[fboundary] (D)--(E);
\draw[fboundary] (E)--(F);
\draw[fboundary] (F)--(G);
\draw[fboundary] (G)--(H);
\draw[fboundary] (H)--(I);
\draw[fboundary] (I)--(J);
\draw[fboundary] (J)--(K);
\draw[fboundary] (K)--(A);
\draw[fstubs] (A)--(1.480,5.080);
\draw[fstubs] (C)--(4.170,4.510);
\draw[fstubs] (D)--(4.990,3.140);
\draw[fstubs] (E)--(4.910,1.150);
\draw[fstubs] (G)--(2.760,-0.350);
\draw[fstubs] (I)--(0.330,0.210);
\draw[fstubs] (J)--(-0.330,1.990);
\draw[fstubs] (K)--(-0.100,4.000);
\draw[fedge] (J)--(P);
\draw[fedge] (B)--(P);
\draw[fedge] (P)--(Q);
\draw[fedge] (Q)--(H);
\draw[fedge] (Q)--(R);
\draw[fedge] (C)--(R);
\draw[fedge] (R)--(F);
\node[fgreen] at (A) {};
\node[fvertex] at (B) {};
\node[fvertex] at (C) {};
\node[fvertex] at (D) {};
\node[fvertex] at (E) {};
\node[fvertex] at (F) {};
\node[fvertex] at (G) {};
\node[fvertex] at (H) {};
\node[fvertex] at (I) {};
\node[fvertex] at (J) {};
\node[fvertex] at (K) {};
\node[fvertex] at (P) {};
\node[fvertex] at (Q) {};
\node[fvertex] at (R) {};
\end{scope}
\end{scope}
\node[font=\small] at (5.05,5.80) {\textbf{(a)}};
\end{scope}
% (b) Second replacement: original, then modified graph.
\begin{scope}[shift={(12.75,0)}]
\begin{scope}[shift={(0,0)}]
\begin{scope}
\coordinate (A) at (1.640,4.520);
\coordinate (B) at (2.820,4.470);
\coordinate (C) at (3.760,4.080);
\coordinate (D) at (4.400,2.990);
\coordinate (E) at (4.390,1.410);
\coordinate (F) at (3.720,0.630);
\coordinate (G) at (2.760,0.250);
\coordinate (H) at (1.700,0.250);
\coordinate (I) at (0.640,0.790);
\coordinate (J) at (0.190,2.320);
\coordinate (K) at (0.560,3.670);
\draw[fhighlight] (B)--(F);
\draw[fboundary] (A)--(B);
\draw[fboundary] (B)--(C);
\draw[fboundary] (C)--(D);
\draw[fboundary] (D)--(E);
\draw[fboundary] (E)--(F);
\draw[fboundary] (F)--(G);
\draw[fboundary] (G)--(H);
\draw[fboundary] (H)--(I);
\draw[fboundary] (I)--(J);
\draw[fboundary] (J)--(K);
\draw[fboundary] (K)--(A);
\draw[fstubs] (A)--(1.480,5.080);
\draw[fstubs] (C)--(4.170,4.510);
\draw[fstubs] (D)--(4.990,3.140);
\draw[fstubs] (E)--(4.910,1.150);
\draw[fstubs] (G)--(2.760,-0.350);
\draw[fstubs] (I)--(0.330,0.210);
\draw[fstubs] (J)--(-0.330,1.990);
\draw[fstubs] (K)--(-0.100,4.000);
\draw[fedge] (A)--(H);
\draw[fedge] (B)--(F);
\node[fgreen] at (A) {};
\node[fvertex] at (B) {};
\node[fvertex] at (C) {};
\node[fvertex] at (D) {};
\node[fvertex] at (E) {};
\node[fvertex] at (F) {};
\node[fvertex] at (G) {};
\node[fvertex] at (H) {};
\node[fvertex] at (I) {};
\node[fvertex] at (J) {};
\node[fvertex] at (K) {};
\end{scope}
\end{scope}
\begin{scope}[shift={(6.25,0)}]
\begin{scope}
\coordinate (A) at (1.640,4.520);
\coordinate (B) at (2.820,4.470);
\coordinate (C) at (3.760,4.080);
\coordinate (D) at (4.400,2.990);
\coordinate (E) at (4.390,1.410);
\coordinate (F) at (3.720,0.630);
\coordinate (G) at (2.760,0.250);
\coordinate (H) at (1.700,0.250);
\coordinate (I) at (0.640,0.790);
\coordinate (J) at (0.190,2.320);
\coordinate (K) at (0.560,3.670);
\coordinate (P) at (1.670,3.010);
\coordinate (Q) at (2.480,2.270);
\coordinate (R) at (3.480,1.730);
\draw[fhighlight] (B)--(P);
\draw[fhighlight] (P)--(Q);
\draw[fhighlight] (Q)--(R);
\draw[fhighlight] (R)--(F);
\draw[fboundary] (A)--(B);
\draw[fboundary] (B)--(C);
\draw[fboundary] (C)--(D);
\draw[fboundary] (D)--(E);
\draw[fboundary] (E)--(F);
\draw[fboundary] (F)--(G);
\draw[fboundary] (G)--(H);
\draw[fboundary] (H)--(I);
\draw[fboundary] (I)--(J);
\draw[fboundary] (J)--(K);
\draw[fboundary] (K)--(A);
\draw[fstubs] (A)--(1.480,5.080);
\draw[fstubs] (C)--(4.170,4.510);
\draw[fstubs] (D)--(4.990,3.140);
\draw[fstubs] (E)--(4.910,1.150);
\draw[fstubs] (G)--(2.760,-0.350);
\draw[fstubs] (I)--(0.330,0.210);
\draw[fstubs] (J)--(-0.330,1.990);
\draw[fstubs] (K)--(-0.100,4.000);
\draw[fedge] (J)--(P);
\draw[fedge] (B)--(P);
\draw[fedge] (P)--(Q);
\draw[fedge] (Q)--(H);
\draw[fedge] (Q)--(R);
\draw[fedge] (C)--(R);
\draw[fedge] (R)--(F);
\node[fgreen] at (A) {};
\node[fvertex] at (B) {};
\node[fvertex] at (C) {};
\node[fvertex] at (D) {};
\node[fvertex] at (E) {};
\node[fvertex] at (F) {};
\node[fvertex] at (G) {};
\node[fvertex] at (H) {};
\node[fvertex] at (I) {};
\node[fvertex] at (J) {};
\node[fvertex] at (K) {};
\node[fvertex] at (P) {};
\node[fvertex] at (Q) {};
\node[fvertex] at (R) {};
\end{scope}
\end{scope}
\node[font=\small] at (5.05,5.80) {\textbf{(b)}};
\end{scope}
\end{tikzpicture}%
}
  \caption{One can modify a Hamiltonian cycle in $G$ to get one in $G'$ by employing one of the two local modifications shown.}
    \label{fig:plus3Ham}
\end{figure}

 %\begin{figure}
 %   \centering
 %   \includegraphics[width=0.4\linewidth]{Fig24-3Plus_Ham1.jpg} \hspace*{1.5cm}
 %   \includegraphics[width=0.4\linewidth]{Fig25-3Plus_Ham2.jpg}
 %   \caption{One can modify a Hamiltonian path in $G$ to get one in $G'$ by employing one of the two local modifications shown.}
 %   \label{fig:plus3Ham}
%\end{figure}
\end{proof}

Unlike the previous subsections, we do not prove that if $G$ is hypohamiltonian then so is $G'$. Rather, we observe that this construction can be used when $G$ is Wiener's almost hypohamiltonian graph of order 31 to produce two hypohamiltonian graphs of order 34 and eight of order 37. This recovers all of Tsai's examples in \cite{Tsai25} and produces two new examples. Although we should remark that we only discovered the plus three modification by carefully comparing Tsai's order 34 and 37 examples. 

To begin the construction in a given graph $G$, one needs a degree 4 vertex. Remove one of its edges and then an edge of its neighbor that creates an 11-gon. In Wiener's example, there is a single degree 4 vertex outside the 4-gon configuration. And for each edge, there are two choices for the second edge to remove. For the two choices in Figure \ref{fig:3Plus34}, we recover Tsai's order 34 examples. All other choices give graphs isomorphic to one of these two. 

 \begin{figure}
    \centering
    \includegraphics[width=0.8\linewidth]{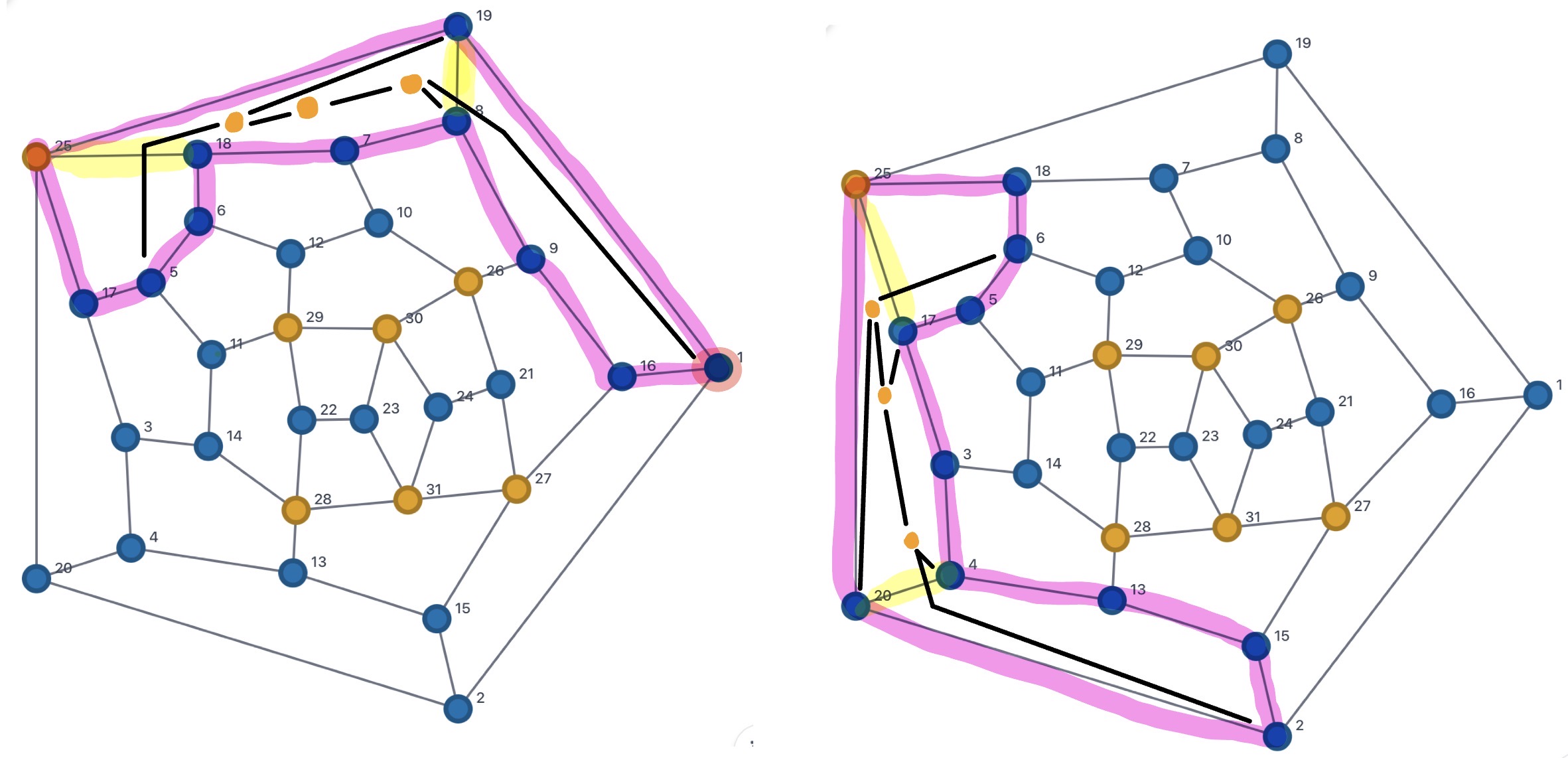}
    \caption{The plus three modification for the indicated $Q$ give Tsai's order 34 hypohamiltonian graphs}
    \label{fig:3Plus34}
\end{figure}

Now notice that for the four choices shown in Figure \ref{fig:3Plus37}, we recover Tsai's order 37 examples (up to rotating the central copy of $H$) and produce an additional example.

 \begin{figure}
    \centering
    \includegraphics[width=0.8\linewidth]{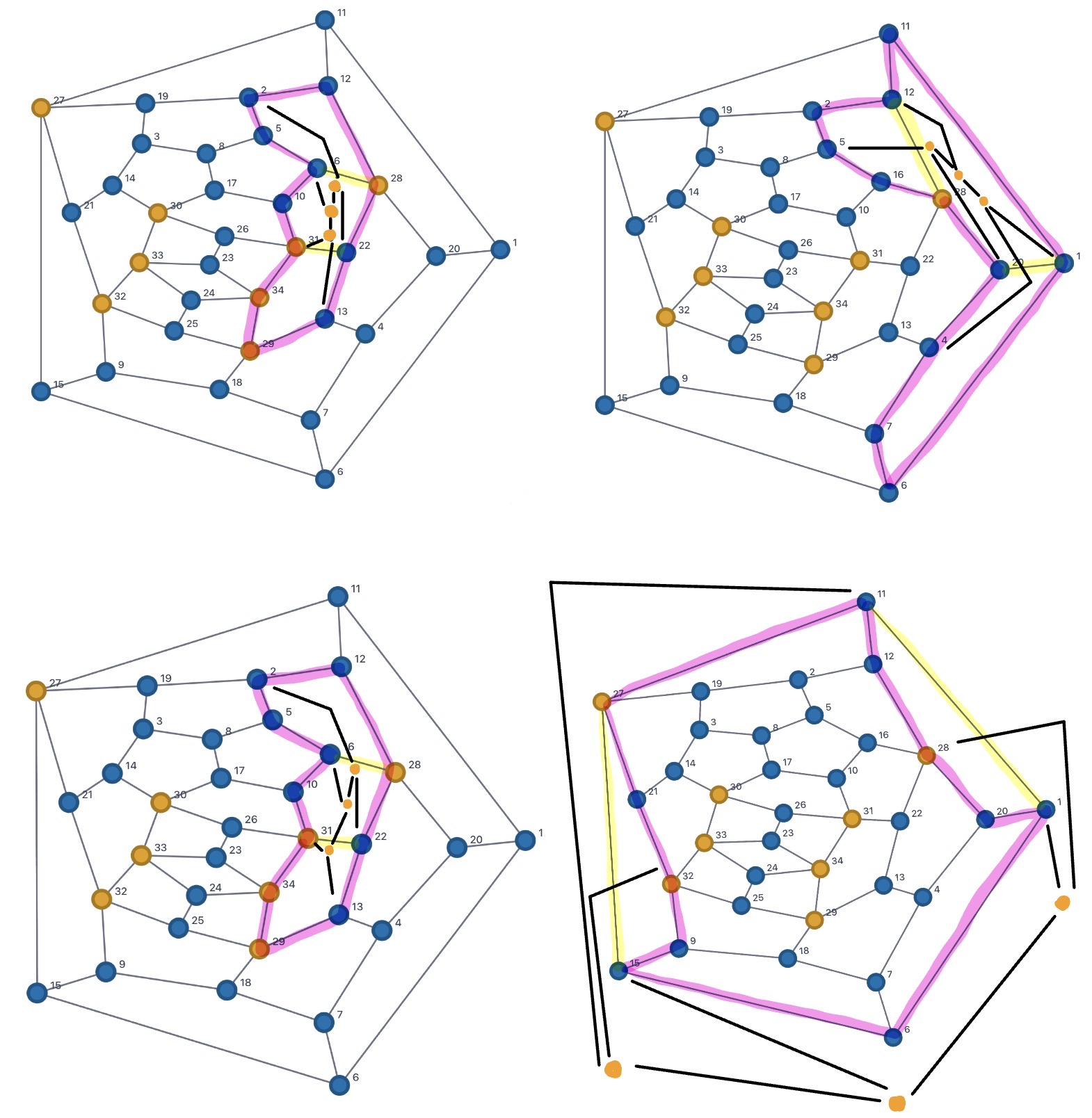}
    \caption{The plus three modification for the indicated $Q$ gives Tsai's order 37 hypohamiltonian graphs and one new example.}
    \label{fig:3Plus37}
\end{figure}

Up to checking the hypohamiltonian property for vertices in $Q'$, we have shown the following: 

\begin{cor} \label{cor:order37}
    There are at least 8 planar hypohamiltonian graphs of order 37. 
\end{cor}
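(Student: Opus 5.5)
The plan is to take Wiener's almost hypohamiltonian graph $W$ of order 31 and apply the plus three modification twice. The first application, at each choice of $Q$ in Figure \ref{fig:3Plus34}, gives Tsai's order 34 graphs. A second application, at the four choices of $Q$ in Figure \ref{fig:3Plus37}, together with rotating the central copy of $H$, yields eight graphs $G'$ of order 37. Three things must be checked for each $G'$: planarity, non-Hamiltonicity, and that $G'-v$ is Hamiltonian for every $v$.

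Planarity is immediate, because $Q'$ is drawn in the same face as $Q$. For non-Hamiltonicity I would not appeal to Theorem \ref{thm:path_ext}, since it only goes from $G$ to $G'$. Instead I would argue the converse directly. By the bipartite/planarity cycle fragment argument of Proposition \ref{prop:Ham}, any Hamiltonian cycle in the order 34 or 37 graph must meet the 4-gon configuration $H$ in one of the 14 endpoint decompositions. Each of these is realised in the smaller graph, so a Hamiltonian cycle would pull back along the chain of modifications. In practice, though, it is more reliable to confirm non-Hamiltonicity with the Hamiltonian cycle finder mentioned in the Remark, since the almost hypohamiltonian base graph $W$ complicates the purely theoretical route.

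For the vertex-deleted graphs I would split into three cases.

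\emph{Case $v \notin Q'$ and $v$ not the exceptional vertex of $W$.} Here $G-v$ is Hamiltonian, and Theorem \ref{thm:path_ext}, applied to a Hamiltonian cycle of $G-v$, gives one in $G'-v$.

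\emph{Case $v$ is the exceptional vertex of $W$.} This vertex lies in the 4-gon configuration, and the point is that the new graph's deleted graph is rescued by the new 5-gons. The cleanest approach is to exhibit one explicit Hamiltonian cycle for this deletion.

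\emph{Case $v \in Q'$ (the three new vertices and the boundary vertices of the 11-gon).} Theorem \ref{thm:path_ext} says nothing here. I would proceed as in Corollary \ref{cor:paths}: for each such $v$, list the possible fragments in $Q'-v$ and their endpoint decompositions on the eight stub vertices. Then find a small number of Hamiltonian paths in the exterior $G'-Q'$ that realise a complementary decomposition. Symmetry of $Q'$ and reuse of one exterior path for several $v$ should keep the count small.

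The main obstacle is this last case, together with the exceptional vertex. No general argument is available, because $Q'$ is not bipartite, so the endpoint bookkeeping does not close up as it did for $H_1$. I expect to rely on explicit exterior paths, and for the full set of eight graphs on a computer verification, which the Remark already flags for this corollary. Finally, non-isomorphism of the eight graphs would be certified by comparing degree sequences and face-size counts around the two degree 4 vertices.
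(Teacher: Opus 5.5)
Your overall route is the paper's: two plus-three moves starting from Wiener's graph, Theorem \ref{thm:path_ext} for deletions of vertices off $Q'$, and explicit or computer verification for everything else. The paper's Remark says the hypohamiltonian check for this corollary is computational, and the paper offers nothing further for $v\in Q'$. However, your theoretical non-Hamiltonicity argument does not work. The moves replace $Q$ by $Q'$ inside the pentagonal exterior of $H$, and $H$ itself is untouched. Matching the 14 endpoint decompositions in $H$ is therefore automatic and gives nothing. The part of a Hamiltonian cycle that changed is $C'-H$, which runs through $Q'$ and cannot be reused in the smaller graph. Pulling it back would need a converse of Theorem \ref{thm:path_ext}, which is not available. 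A correct, computer-free argument comes from the paper's own Grinberg discussion. $Q$ and $Q'$ both consist of pentagons, so every graph in the chain is $H$ with all other faces pentagons. The two cubic interior vertices of $H$ force $g_4,h_4\ge 2$, as in the discussion of the right panel of Figure \ref{fig:3config}. Hence $\{g_4,h_4\}=\{2,3\}$, and $2(g_4-h_4)+3(g_5-h_5)=0$ fails modulo $3$.

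Two further points are wrong as stated.

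First, Wiener's exceptional vertex lies in the exterior of $H$, not in the 4-gon configuration; the paper uses exactly this in Corollary \ref{cor:almost}. It also matters only in the step $31\to 34$. At the second step the base graph is the hypohamiltonian order-34 graph, so your first case covers every $v\notin Q'$ with no exception.

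Second, your non-isomorphism certificate cannot work. By the Euler count, every order-37 graph in this family has five 4-gons, twenty pentagons and nine degree-4 vertices: six in $H$ and three (not two) outside $H$. Every face at a vertex outside $H$ is a pentagon. So degree sequences and face sizes around those vertices coincide across all eight graphs. You need a finer invariant, for example the positions of the three outer degree-4 vertices relative to $H$, or a canonical-labelling computation.
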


We can use hypohamiltonian graphs of order 36 and 37 to build hypohamiltonian graphs of order 39. 

\begin{cor} \label{cor:order39}
    There are at least 48 planar hypohamiltonian graphs of order 39. 
\end{cor}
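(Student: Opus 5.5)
The plan is to start from the order 37 graphs of Corollary \ref{cor:order37} and apply the plus two modification of Definition \ref{defn:plustwo}, together with its variants $H_2$, $H_3$ and $H_1'$. Each order 37 example contains a copy of the five 4-gon configuration $H$ of Figure \ref{fig:local_config}, the central copy inherited from Wiener's graph. The plus three modification only alters an 11-gon region away from this copy, so the local configuration hypotheses of Remark \ref{rem:loc_config} still hold there.

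For each of the 8 graphs $G$ of order 37, replacing $H$ by $H_1$ gives a graph $G_1$ of order 39. By Theorem \ref{thm:hypoham}, $G_1$ is hypohamiltonian, and it is planar since $H_1$ is a planar patch with the same boundary. To reach 48 graphs, I would add the variants used in Corollary \ref{cor:order36}:
\begin{itemize}
\item[(i)] Rotate $H$ by $\pi/2$, $\pi$ and $3\pi/2$ before replacing it, giving up to four orientations per graph.
\item[(ii)] Use $H_1'$ in place of $H_1$. Corollary \ref{cor:paths} reduces hypohamiltonicity of $G_1'$ to exhibiting Hamiltonian paths from $1$ to $3$ (respectively $2$ to $4$) in $(G-H)\cup\{1,2,3,4\}$ and in $(G-H)\cup\{1,3\}$ (respectively $\{2,4\}$), which I would check by hand in each exterior.
\item[(iii)] Add the optional edges inside the 6-gon, as in Figure \ref{fig:hypo36_exam}. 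These edges preserve planarity and bipartiteness, so by the argument in the corollary on order 38 they create no new endpoint decompositions.
\end{itemize}
Since the exterior $G-H$ is identical to that of the order 36 construction up to the plus three patch, the same 12 local configurations of Figure \ref{fig:hypo36_exam} should be available. Counting $12$ configurations across enough of the order 37 exteriors, for example $4 \times 12 = 48$ using four exteriors, would give the bound.

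An alternative route for some of the count is to start from the order 36 graphs of Corollary \ref{cor:order36} and apply the plus three modification. The difficulty there is that Theorem \ref{thm:path_ext} only yields non-Hamiltonicity-free transfer of paths for vertices outside $Q'$. Vertices in $Q'$ would require the computer check mentioned in the remark on computer use. I would use this route only if the first one falls short of 48.

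I expect two steps to be the main obstacles. The first is verifying the exterior Hamiltonian path conditions of Corollary \ref{cor:paths} for each order 37 exterior, since these depend on the specific graph and must be drawn explicitly. The second is certifying that the resulting graphs are pairwise non-isomorphic. For the latter I would argue as in Corollary \ref{cor:order36}: compare the degree sequences along the boundary of the local configuration, and use the fact that the distinct order 37 exteriors are non-isomorphic with trivial symmetry, falling back on a canonical-labelling computation if needed.
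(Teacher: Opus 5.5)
Your overall structure matches half of the paper's argument, but the step that carries the weight is left undone. Viewing each order 39 graph as an (imperfect) plus two modification of a hypohamiltonian order 37 graph (Corollary~\ref{cor:order37}) is exactly what the paper does. Corollary~\ref{cor:G1prime_hypoexc} then gives non-Hamiltonicity and Hamiltonicity of $G'-v$ for every $v$ except the internal vertices of $H_1'$. This is supplemented by Theorem~\ref{thm:hypoham} for $H_1$ and by the bipartite endpoint argument for the extra 6-gon edges. For those internal vertices you invoke Corollary~\ref{cor:paths}. That corollary needs Hamiltonian paths from $1$ to $3$ (and from $2$ to $4$) in the order 37 exteriors, and you only say you would check these by hand. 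The configurations built on a rotation of $H_1'$, which are most of the 12, rest entirely on this unverified step. As written, the bulk of the 48 graphs is not yet proved.

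The missing idea is to apply Theorem~\ref{thm:path_ext} to the exterior of $H$, not to $G-v$. Take your four order 37 exteriors to be the plus three modifications of the specific Tsai exterior used in Corollaries~\ref{cor:G1prime_hypo} and~\ref{cor:order36}. The four paths in Figures~\ref{fig:H1Prime-paths} and~\ref{fig:H1Prime_Path24} are Hamiltonian in $(G-H)\cup\{1,2,3,4\}$, in $(G-H)\cup\{1,3\}$, and in their $2$--$4$ analogues. So they pass through every vertex of $Q\subset G-H$ and end at corners of $H$, which lie outside $Q$. Theorem~\ref{thm:path_ext} therefore extends each of them to a Hamiltonian path of the modified exterior with the same endpoints, and Corollary~\ref{cor:paths} finishes the internal vertices.

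This also shows you have misread the role of Theorem~\ref{thm:path_ext}, and that your ``alternative route'' is not really an alternative. Its worry about vertices of $Q'$ is exactly what the order 37 description settles. The order 36 description is what supplies the exterior paths. The paper's proof uses both descriptions of the same 48 graphs at once: one for the internal vertices of $H_1'$, the other for everything else. Your planned non-isomorphism check goes beyond what the paper records; it simply counts $12\times 4$.
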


\begin{proof}
    We want to show that the plus three modification preserves the hypohamiltonian condition for the 12 planar hypohamiltonian graphs of order 36 in Corollary \ref{cor:order36}, giving four choices of exterior for a total of 48 graphs. 
    
    For the removal of the internal vertices, Theorem \ref{thm:path_ext} ensures that the four paths in Corollary \ref{cor:order36} and Corollary \ref{cor:G1prime_hypo} extend under the plus three modification. 

    Next, view each graph as obtained from the (imperfect) plus two modification of a graph of size 37. Then Corollary \ref{cor:G1prime_hypoexc} implies that each graph is hypohamiltonian except possibly at the internal vertices.  
\end{proof}

\subsection{Thomassen's construction revisited}

While Thomassen's original construction was in the setting of cubic graphs, it is commonly applied more generally to polyhedral graphs containing a 4-gon. Note that these applications are at times inconsistent or not fully justified. To remedy this, we proof several well-known theorems using cycle fragment endpoint analysis. 

Let $H$ and $H'$ be as in Figure \ref{fig:Thomassen}, but with each vertex of arbitrary degree at least three. Let $G$ be any graph containing $H$ and $G'$ the modification of $G$ by replacing $H$ with $H'$.

\begin{prop} \label{prop:Thom_Ham}
    If $G'$ is Hamiltonian then $G$ is Hamiltonian. Conversely, if $G$ is Hamiltonian and $H$ has a cubic vertex, then $G'$ is Hamiltonian.
\end{prop}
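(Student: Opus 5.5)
The plan is a cycle fragment analysis in the style of Proposition \ref{prop:Ham}, applied to the edge set of the square. Here $H$ is the $4$-cycle $abcd$. $H'$ consists of the edges $ad$, $bc$, the inner $4$-cycle $a'b'c'd'$, and the spokes $aa'$, $bb'$, $cc'$, $dd'$; note that $ab,cd\notin H'$. I would not enumerate whole Hamiltonian cycles. I only record which edges inside $H$ (resp.\ $H'$) a Hamiltonian cycle uses, and replace that local piece by one with the same endpoint decomposition on $\{a,b,c,d\}$. The complement $C-H$ is unchanged, since $G-H=G'-H'$ as graphs on the external vertices.

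\emph{From $G'$ to $G$.} Let $C'$ be a Hamiltonian cycle of $G'$. Each inner vertex $x'$ has degree $3$, and $C'$ must visit all of them. Hence $C'$ uses an even, nonzero number of spokes, so either exactly two or all four.
\begin{itemize}
\item[(i)] If exactly two spokes $xx'$, $yy'$ are used, the inner vertices are traversed by a Hamiltonian path of the inner $4$-cycle from $x'$ to $y'$. This forces $x'y'$ to be an inner edge, so $x,y$ are consecutive on the square. I replace the subpath $x x' \cdots y' y$ by the single edge $xy$ of $H$.
\item[(ii)] If all four spokes are used, the inner cycle contributes a perfect matching, either $\{a'b',c'd'\}$ or $\{a'd',b'c'\}$. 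I replace the resulting two paths by the edges $ab,cd$ or $ad,bc$ respectively.
\end{itemize}
In both cases any use of the edges $ad$, $bc$ of $H'$ is kept, since these are also edges of $H$.

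One must check that the replacement does not produce a repeated edge or a short closed cycle. For example, in case (i) with $\{x,y\}=\{a,d\}$, the cycle $C'$ might also use the edge $ad$. But then $C'$ would be the $6$-cycle $a a' b' c' d' d a$, forcing $|G'|=6$. I would rule this out by the degree hypothesis, since $a,b,c,d$ have external neighbours. The result is a Hamiltonian cycle of $G$.

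\emph{From $G$ to $G'$.} Let $C$ be a Hamiltonian cycle of $G$ and assume, by symmetry, that $a$ is cubic. The key point is that $a$ has a single external neighbour, so $C$ must use $ab$ or $ad$. Thus $C\cap E(H)\neq\emptyset$, and this is exactly where the cubic hypothesis enters. Without it, $C$ could avoid the square entirely and the inner vertices would be unreachable. The remaining possibilities for $C\cap E(H)$, up to the reflections of $H'$ swapping $a\leftrightarrow d,\ b\leftrightarrow c$ and $a\leftrightarrow b,\ c\leftrightarrow d$, are as follows. For each I exhibit a replacement in $H'$ with the same endpoint decomposition covering $a',b',c',d'$.
\begin{itemize}
\item[(1)] A single edge: $ab\mapsto a a' d' c' b' b$ and $ad\mapsto a a' b' c' d' d$.
\item[(2)] Two disjoint edges: $\{ab,cd\}\mapsto a a' b' b,\ c c' d' d$ and $\{ad,bc\}\mapsto a a' d' d,\ b b' c' c$.
\item[(3)] Two adjacent edges $xyz$: keep an edge of $H'$ if one is available (that is, $ad$ or $bc$), and replace the other by the full detour from case (1). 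If the two edges are $ab$ and $cd$-free, e.g.\ $dab$, use $d a a' d' c' b' b$.
\item[(4)] A three-edge path: $abcd\mapsto a a' b' b c c' d' d$ and $bcda\mapsto b c c' b' a' d' d a$, together with the remaining rotations.
\end{itemize}
Gluing each replacement to $C-H$ gives a Hamiltonian cycle of $G'$.

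The main obstacle is bookkeeping. I must make the case list in the second direction genuinely exhaustive, since the rotations of $H$ are not symmetries of $H'$ and so every rotation of the two- and three-edge patterns needs its own replacement. I must also dispose of the degenerate small-order situations in the first direction. I would handle both by listing the $C\cap E(H)$ patterns as subsets of the four square edges directly, excluding the empty set and the full square, and checking each one explicitly.
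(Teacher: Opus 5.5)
Your proposal is correct and uses the same cycle-fragment replacement argument as the paper. For $G' \Rightarrow G$ you replace the part of $C'$ inside $H'$ by edges of $H$ with the same endpoints; for the converse you use the cubic vertex to force $C \cap E(H) \neq \emptyset$ and then substitute detours through $a',b',c',d'$. Your case analysis is also more complete than the paper's sketch, which only illustrates $aa'b'c'd'd \mapsto ad$, $aa'd'd \mapsto ad$ and $ad \mapsto aa'b'c'd'd$: you additionally handle the edges $ab$, $cd$ that are absent from $H'$ and check that no edge is used twice.
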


\begin{proof}
    If $G'$ has Hamiltonian cycle $C'$, then any fragment of $C'$ in $H'$ can be modified to a fragment in $H$ by replacing e.g., $aa'b'c'd'd$ or $aa'd'd$ with $ad$. Hence $G$ is Hamiltonian.

    Conversely, if $C$ is a Hamiltonian cycle in $G$ and a vertex $v \in H$ has degree three, then $C$ must use an edge in $H$. Suppose the edge is $ad$. Replacing it with $aa'b'c'd'd$ gives a fragment in $H'$ that extends to a Hamiltonian cycle $C'$ in $G'$.
 \end{proof}

Figure \ref{fig:Thomassen_Failure} demonstrates that the existence of a cubic vertex in $H$ is necessary.

% Complete TikZ figure, preserving the two configurations in the sketch.
% Requires \usepackage{tikz} and \usepackage{graphicx}.
\begin{figure}[tbp]
\centering
\resizebox{0.65\linewidth}{!}{%
\definecolor{figRed}{HTML}{ED4138}
\begin{tikzpicture}[line cap=round,line join=round,
  graph edge/.style={draw=black,line width=1.3pt},
  graph vertex/.style={circle,fill=figRed,inner sep=0pt,minimum size=3.0mm}]

% Left panel: eight boundary vertices and four interior edges.
\begin{scope}
  \coordinate (t) at (2.00,5.15);
  \coordinate (ul) at (1.05,4.20);
  \coordinate (ur) at (2.95,4.20);
  \coordinate (ml) at (0.00,3.15);
  \coordinate (mr) at (4.00,3.15);
  \coordinate (ll) at (1.05,2.10);
  \coordinate (lr) at (2.95,2.10);
  \coordinate (b) at (2.00,1.08);
  % Outer eight-cycle.
  \draw[graph edge] (t)--(ul)--(ml)--(ll)--(b)--(lr)--(mr)--(ur)--cycle;
  % Four chords (including the two vertical chords).
  \draw[graph edge] (ul)--(ur) (ul)--(ll) (ur)--(lr) (ll)--(lr);
  \foreach \v in {t,ul,ur,ml,mr,ll,lr,b}{\node[graph vertex] at (\v) {};}
\end{scope}

% Right panel: same outside, with the two horizontal chords
% replaced by a four-cycle connected to its four neighbours.
\begin{scope}[xshift=6.20cm]
  \coordinate (t) at (2.00,5.15);
  \coordinate (ul) at (1.05,4.20);
  \coordinate (ur) at (2.95,4.20);
  \coordinate (ml) at (0.00,3.15);
  \coordinate (mr) at (4.00,3.15);
  \coordinate (ll) at (1.05,2.10);
  \coordinate (lr) at (2.95,2.10);
  \coordinate (b) at (2.00,1.08);
  \coordinate (p) at (1.58,3.57);
  \coordinate (q) at (2.42,3.57);
  \coordinate (r) at (1.58,2.74);
  \coordinate (s) at (2.42,2.74);
  % The outer octagon and two unchanged vertical chords.
  \draw[graph edge] (t)--(ul)--(ml)--(ll)--(b)--(lr)--(mr)--(ur)--cycle;
  \draw[graph edge] (ul)--(ll) (ur)--(lr);
  % Inserted four-cycle and the four attachment edges.
  \draw[graph edge] (p)--(q)--(s)--(r)--cycle;
  \draw[graph edge] (ul)--(p) (ur)--(q) (ll)--(r) (lr)--(s);
  \foreach \v in {t,ul,ur,ml,mr,ll,lr,b,p,q,r,s}{\node[graph vertex] at (\v) {};}
\end{scope}
\end{tikzpicture}%
}
 \caption{The left graph is Hamiltonian, while the right graph, obtained by applying Thomassen's construction to the central 4-gon, is not.}
   \label{fig:Thomassen_Failure}
\end{figure}

%\begin{figure}
%    \centering
%    \includegraphics[width=0.60\linewidth]{Fig28-Thomassen_Failure.jpg}
%    \caption{The left graph is Hamiltonian, while the right graph, obtained by applying Thomassen's construction, is not.}
%    \label{fig:Thomassen_Failure}
%\end{figure}

The following result and proof are due to C.T. Zamfirescu's in his PhD thesis. We only record them here for completeness. 

%The next result is due to  \cite{Z15}
\begin{prop} \label{prop:Thom_hypo} \cite[Prop 2.11]{Z16}
    If $G'$ is hypohamiltonian and $H$ has a cubic vertex, then $G$ is hypohamiltonian. Conversely, if $G$ is hypohamiltonian and $H$ has adjacent cubic vertices, then $G'$ is hypohamiltonian.
\end{prop}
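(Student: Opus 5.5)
We prove both directions by comparing cycle fragments in $H$ and $H'$, using Proposition \ref{prop:Thom_Ham} for the non-Hamiltonicity part. Write $H=abcd$ (the 4-cycle) and $H'$ as in Figure \ref{fig:Thomassen}. Since $G-H$ and $G'-H'$ agree, it suffices to match fragments with the same endpoint behaviour at $\{a,b,c,d\}$.

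\emph{Forward direction.} Assume $G'$ is hypohamiltonian and, say, $a$ is cubic in $G$.
\begin{itemize}
\item By Proposition \ref{prop:Thom_Ham}, $G$ Hamiltonian would give $G'$ Hamiltonian, so $G$ is not Hamiltonian.
\item For $v\notin H$, a Hamiltonian cycle of $G'-v$ restricts on $H'$ to a fragment. Contracting the detours through $\{a',b',c',d'\}$ to the corresponding edges of $abcd$ (as in the first half of Proposition \ref{prop:Thom_Ham}) gives a Hamiltonian cycle of $G-v$.
\item For $v=a$, take a Hamiltonian cycle of $G'-a'$. It must traverse $b'c'd'$, and $b',d'$ are then forced to exit to $b$ and $d$. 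So the fragment contains the path $bb'c'd'd$, possibly together with $c$-edges, and replacing $b'c'd'$ by $bcd$ shifted appropriately gives a Hamiltonian cycle of $G-a$. More carefully, $c'$ is covered by $b'c'd'$, so $c$ is visited from outside or via $cb$ or $cd$, and a short case check converts each case into a cycle of $G-a$.
\end{itemize}
Here the cubic hypothesis is used only to ensure non-Hamiltonicity transfers.

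\emph{Converse.} Assume $G$ is hypohamiltonian and $a,b$ are adjacent cubic vertices.
\begin{itemize}
\item Non-Hamiltonicity of $G'$ follows from the first half of Proposition \ref{prop:Thom_Ham}.
\item For $v\notin H'$, a Hamiltonian cycle $C$ of $G-v$ must use an edge of $H$, since $a$ is cubic and has only one external neighbour. Each fragment type ($abcd$; $ab\cup cd$; a single edge plus isolated vertices; and so on) extends to $H'$ by routing through $a'b'c'd'$, exactly as Thomassen's list does. This needs the fragment to contain at least one edge of $H$, which cubicity guarantees unless $v$ is the external neighbour of $a$. In that case $a$ has degree two in $G-v$, so both $ab$ and $ad$ are forced, and the fragment still contains edges.
\item For $v\in\{a,b,c,d\}$, say $v=a$, the cycle of $G-a$ contains $bc$ or $bcd$ (since $b$ is cubic and $a$ is removed, $b$ must use $bc$ and its external edge). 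Replace it by $bcc'b'a'd'd$ or its analogue. The analogue when $c$ exits externally is $bb'a'd'c'c$.
\item For $v=a'$, use a Hamiltonian cycle of $G-a$ and convert it into a fragment in $H'-a'$ covering $b',c',d'$. Take $bb'c'c$ and $dd'$, with $d'$ joined via $c'$ as needed, choosing among $b'c'd'$ detours according to which of $bc$, $cd$ the cycle uses.
\item For $v=c$ or $v=d$, use the adjacent cubic vertex: the removed vertex is adjacent to one of $a,b$, or the cycle in $G-c$ is forced through $ab$ and $ad$ or $ba$ and $bc$. Symmetry handles the rest.
\end{itemize}

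The main obstacle is the case analysis for removing an inner vertex $a',\dots,d'$ (and, in the forward direction, the conversion back). There the fragment in $H'-x'$ must cover three inner vertices, and this is possible only for certain endpoint decompositions. The plan is to enumerate the endpoint decompositions of fragments of $H-x$ for $x\in\{a,b,c,d\}$ in $G-x$. Adjacency of the cubic vertices $a,b$ forces each fragment to contain a path of length at least two inside $H$, for example $b$--$c$--$d$ when $x=a$. I then check case by case that each such forced fragment admits a replacement in $H'-x'$ with the same endpoint decomposition. I expect removing $c'$ or $d'$, opposite the cubic pair, to be the delicate case, which I would handle by pairing $c'$ with $d$ and $d'$ with $c$ and reusing the cubicity of $a$ and $b$.
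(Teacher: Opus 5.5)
Your architecture matches the paper's: contract the detours through $a',b',c',d'$ for the forward direction, extend fragments of $H$ to $H'$ for deletions of vertices of $G$, and handle $G'-x'$ by pairing it with some $G-y$. Two concrete steps fail, however.

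In the forward direction you obtain a Hamiltonian cycle of $G-a$ from one of $G'-a'$. This cannot work. In $H'-a'$ the only internal neighbour of $a$ is $d$, so every Hamiltonian cycle of $G'-a'$ uses an external edge at $a$. Its exterior part can therefore never be completed by a fragment of $H-a$. Use $G'-a$ instead. There $a'$ has degree two, and the path through $a',b',c',d'$ is $c\,c'\,b'\,a'\,d'\,d$ or $b\,b'\,a'\,d'\,c'\,c$. Replacing it by $cd$ or $bc$ is the same contraction as for $v\notin H$; the paper handles all $v\in V(G)$ at once this way.

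The same obstruction breaks your converse step for $G'-a'$. A Hamiltonian cycle of $G-a$ uses no external edge at $a$, while one of $G'-a'$ must, so no fragment of $H'-a'$ fits its exterior. The fragment you describe does not exist either: $b'$ and $d'$ have degree two in $H'-a'$, which forces $b\,b'\,c'\,d'\,d$ and saturates $c'$.

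The correct partner is $G-d$.
\begin{itemize}
\item In $G-d$ the cubic vertex $a$ is forced along its external edge and $ab$. So a Hamiltonian cycle of $G-d$ meets $H$ in $abc$ or in $ab\cup c$.
\item In $G'-a'$ the cubic vertex $a$ is forced along its external edge and $ad$. This gives the forced path $a\,d\,d'\,c'\,b'\,b$, which continues either by $bc$ or by the external edge at $b$.
\item The resulting fragments $a\,d\,d'\,c'\,b'\,b\,c$ and $a\,d\,d'\,c'\,b'\,b\cup c$ use exactly the same external edges as $abc$ and $ab\cup c$.
\end{itemize}
The same $G-d$ handles $G'-c'$, and $G-c$ handles $G'-b'$ and $G'-d'$. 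So your guesses for $c'$ and $d'$ are right, but ``symmetry handles the rest'' only transports $a'$ to $b'$.

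It is worth knowing how the paper does this step. With cubic pair $\{c,d\}$, it pairs $G'-d'$ with $G-a$, which is fine. It pairs $G'-a'$ with $G-d$, which only covers the fragment $abc$. When $\deg a\ge 4$ the fragment $a\cup bc$ of $H-d$ may occur, and it has no counterpart: every fragment of $H'-a'$ contains the forced path $c\,b\,b'\,c'\,d'\,d$, in which $b$ is interior. There one needs $G-b$, which is the analogue of your pairing $c'\leftrightarrow d$.

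Finally, like the paper, you only treat a cubic pair spanning a deleted edge of $H$. A cubic pair spanning $ad$ or $bc$ is not equivalent to it under the symmetries of $H'$, and needs its own, similar, check.
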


\begin{proof}
Suppose that $G'$ is hypohamiltonian and $H$ has a cubic vertex. Then by Proposition \ref{prop:Thom_Ham}, $G$ is not Hamiltonian. 

By assumption $G'-v$ is Hamiltonian for all $v$. We want to show that $G-v$ is Hamiltonian for $v \in G$ and hence $v \not \in V' := \{ a', b', c', d' \}$.  Let $C'_v$ denote the Hamiltonian cycle in $G'-v$. Observe that $C_v'$ contains, up to cycle permutation of the vertices, the fragments $aa'b'c'd'd$ or $aa'd'd \cup bb'c'c$. Hence we can modify $C_v'$ by removing the instances of elements in $V'$ to get a Hamiltonian cycle $C_v$ in $G - v$.\\
\\
Conversely, suppose that $c$ and $d$ are adjacent cubic vertices in $G$. Assume that $G-v$ is Hamiltonian for all $v$. Then each Hamiltonian cycle must include one of the three edges in $H$: $bc$, $cd$, or $da$. Hence it can be extended to a Hamiltonian cycle in $G' - v$.\\
\\
It remains to show that $G' - v'$ is Hamiltonian for each $v' \in V'$. By symmetry, it suffices to show this for $a'$ and $d'$.

The left side of Figure \ref{fig:Thom_fragment} shows the two possible cycle fragments in $H - a$: $bcd$ and $b \cup cd$. Since $G-a$ is Hamiltonian, one of them extends to a Hamiltonian cycle in $G$. Consequently, one of the fragments shown in $G'$, with the same endpoint decompositions, extends to a Hamiltonian cycle in $G' - d'$. 

 \begin{figure}
    \centering
    \includegraphics[width=0.4\linewidth] {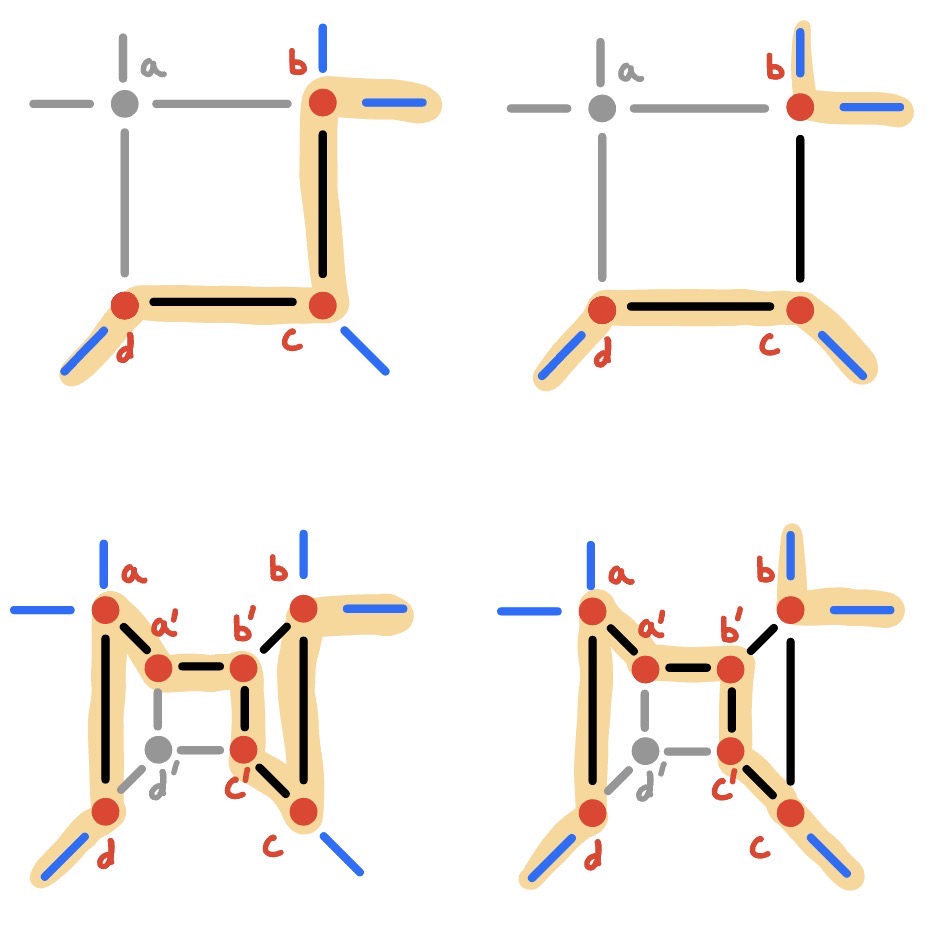} \hspace*{1cm}
     \includegraphics[width=0.4\linewidth]{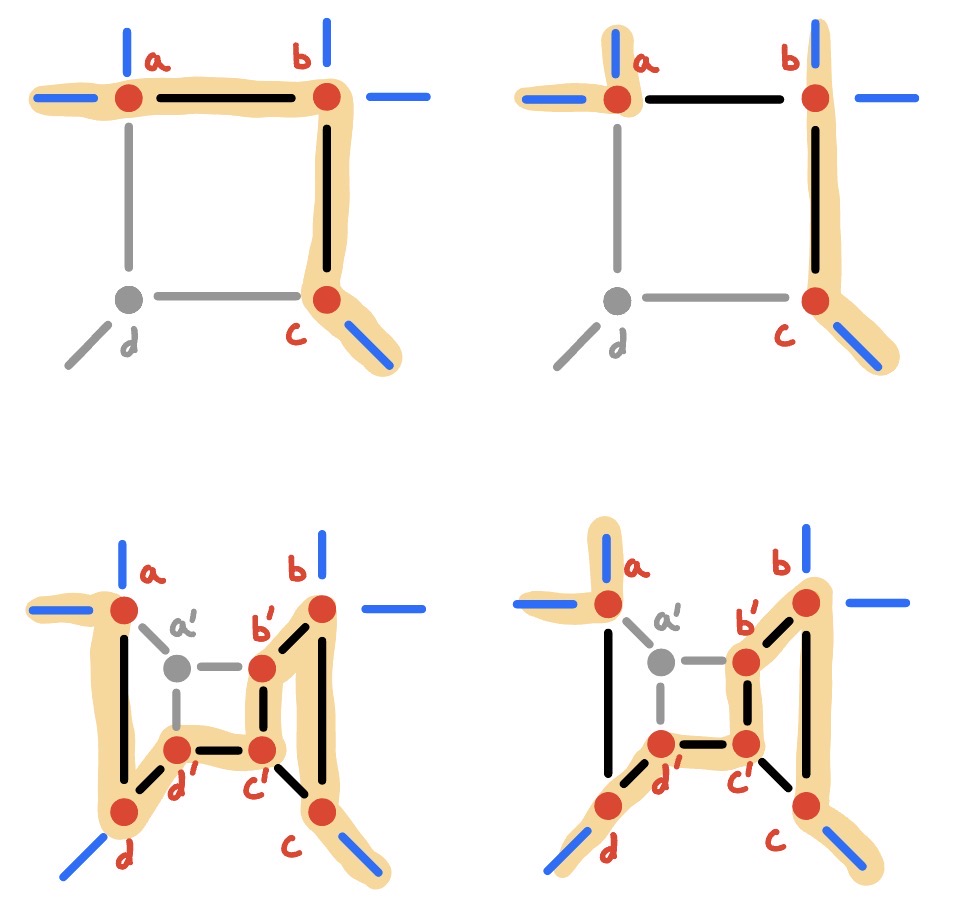}
    \caption{Endpoint decompositions in $G'-a'$ and $G'-d'$ match those in $G-d$ and $G-a$ respectively}
    \label{fig:Thom_fragment}
\end{figure}

Similarly, the right side of Figure \ref{fig:Thom_fragment} shows the two possible cycle fragments in $H - d$: $abc$ or $a \cup bc$. Since $G-d$ is Hamiltonian, this implies that one of these fragments extends to a Hamiltonian cycle. Therefore, the same is true of the two possible fragments in $H' - a'$. We conclude that $G'-a'$ is Hamiltonian. 
\end{proof}

\begin{rem} \label{rem:Tsai_mistake}
In the discussion following Lemma 3.3 in \cite{Tsai25}, Tsai asserts the following stronger claim: If $H$ has a cubic vertex and $G$ is hypohamiltonian, then $G'$ is hypohamiltonian. But his cited reference appears to prove the converse: $G'$ hypohamiltonian implies $G$ hypohamiltonian. 
\end{rem}

\begin{prop} \label{prop:Thom_hypo_paths}
    Suppose $G$ is hypohamiltonian and $H$ has a cubic vertex. Further assume that $G - \{b, d \}$ has a Hamiltonian path from $a$ to $c$ and $G - \{ a, c \}$ has a Hamiltonian path from $b$ to $d$. Then $G'$ is hypohamiltonian.
\end{prop}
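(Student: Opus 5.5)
We follow the structure of the proof of Proposition \ref{prop:Thom_hypo}, replacing the hypothesis of \emph{adjacent} cubic vertices by the two assumed Hamiltonian paths. There are three things to check: $G'$ is not Hamiltonian, $G'-v$ is Hamiltonian for $v \notin V' = \{a',b',c',d'\}$, and $G'-v'$ is Hamiltonian for $v' \in V'$.

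\textbf{Non-Hamiltonicity and removals outside $V'$.} First, $G'$ is not Hamiltonian. A Hamiltonian cycle of $G'$ would, by the first half of Proposition \ref{prop:Thom_Ham}, yield one of $G$ after contracting the fragment in $H'$ (for instance $aa'b'c'd'd \mapsto ad$, or $aa'd'd \cup bb'c'c \mapsto ad \cup bc$). This contradicts $G$ being hypohamiltonian.

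Next take $v \notin H'$. Then $G-v$ has a Hamiltonian cycle $C_v$, and we try to use the cubic vertex argument of Proposition \ref{prop:Thom_Ham}. Say $c$ is cubic, so $C_v$ must use at least one of the edges $bc$, $cd$ of $H$. Replacing that single edge by the corresponding path through $a',b',c',d'$ (e.g.\ $cd \mapsto cc'b'a'd'd$) gives a Hamiltonian cycle of $G'-v$. If $v$ is adjacent to $c$, the vertex $c$ may become degree two in $G-v$, but $C_v$ still passes through $c$ using edges incident to it, and at least one of them lies in $H$ unless both external edges are used. Since $c$ has only one external edge, this cannot happen. The same argument handles $v \in \{a,b,c,d\}$: in $G-a$, say, the cycle still uses an edge of $H-a$ at the cubic vertex (when the cubic vertex is $a$ itself, at any remaining cubic-free vertex $c$ the cycle uses an $H$-edge because $c$ has at most one external edge if cubic...). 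Here I would be careful. For $v=a$, the fragments in $H-a$ are $bcd$, $b \cup cd$, or $bc \cup d$. In each case some $H$-edge is present, and we replace it by a route through all four new vertices avoiding $a$, e.g.\ $cd \mapsto cc'b'a'd'd$. For $v=a$ with fragment $b\cup c \cup d$ (no $H$-edge), planarity and degree considerations must be invoked, or we note the cubic vertex forces an $H$-edge. This is the point to double-check.

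\textbf{Removals of $a',b',c',d'$.} This is the main obstacle, and it is exactly where the extra paths are used. In $H'-a'$, the only route covering $b',c',d'$ with endpoints on the outer vertices is $bb'c'd'd$, possibly together with other $H$-edges. So we need a Hamiltonian cycle of $G$ that, restricted to $H$, becomes a path from $b$ to $d$ and covers $a$ and $c$ externally or via $H$-edges. The hypothesis that $G-\{a,c\}$ has a Hamiltonian $b$--$d$ path $P$ gives this directly. Glue $P$ with $b b' a'$? No: glue $P$ with the path $b\,b'c'\ldots$; concretely, for $G'-a'$ use the cycle $P \cup \{b b' \ldots\}$ routed through $a, c$ inside $H'$, namely $b\,b'\,c'\,c\,\ldots$. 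I would write out the explicit closing path: for $G'-a'$ take $P$ from $b$ to $d$ and close it by $d\,a\,\ldots$. The cleanest choice is $d\,d'\,c'\,c\,b\,b'$? That revisits $b$. So I would instead close with $d\,a$ (edge $ad \in H'$), then $a$'s neighbor\ldots{} I expect that the right closing route is $d\,d'\,c'\,b'\,b$ combined with $a$ and $c$ traversed by $a d$ and $b c$. Hence I would check on the picture which of $P$ (the $b$--$d$ path avoiding $a,c$) and $Q$ (the $a$--$c$ path avoiding $b,d$) pairs with which removed vertex. The expected pairing is that $G'-a'$ and $G'-c'$ use $Q$ closed by $a\,d\,d'\,c'\,b'\,b\,c$ (respectively its mirror), while $G'-b'$ and $G'-d'$ use $P$ closed by $b\,c\,c'\,\ldots$. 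The symmetry of $H'$ reduces this to two explicit closings, one for each path, which is the whole difficulty.
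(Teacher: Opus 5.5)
Your skeleton matches the paper's: non-Hamiltonicity from Proposition~\ref{prop:Thom_Ham}, rerouting an $H$-edge at the cubic vertex, and the two assumed paths for the remaining deletions. It has a genuine gap at the deletion of the cubic vertex itself.

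You claim the cubic-vertex argument ``handles $v\in\{a,b,c,d\}$''. But with $c$ cubic and $v=c$, nothing forces the Hamiltonian cycle of $G-c$ to use an edge of $H-c$. Only one cubic vertex is assumed, so $a,b,d$ may all have degree at least $4$, and the fragment may be $a\cup b\cup d$. From such a cycle no Hamiltonian cycle of $G'-c$ can be built, since $a',b',c',d'$ would have no edge to the rest of the cycle. Your suggestion that ``the cubic vertex forces an $H$-edge'' is unavailable because that vertex has been deleted, and no planarity or degree count repairs this.

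This is why the paper says the rerouting only gives hypohamiltonicity except at $V'$ and the cubic vertex. It treats the cubic vertex as a fifth case that uses one of the hypotheses. In your labelling, take the Hamiltonian $b$--$d$ path $P$ of $G-\{a,c\}$ and close it with $b\,b'\,c'\,d'\,a'\,a\,d$, which is a Hamiltonian path of $H'-c$. In the paper's labelling, with $d$ cubic, this is the $a$--$c$ path closed by $a\,a'\,d'\,c'\,b'\,b\,c$.

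For $V'$, the pairing you end with is correct, but it is stated only as an expectation. Your first attempt cannot work: in $H'-a'$ the vertex $a$ has $d$ as its only neighbour, so with the exterior $P$ (which avoids $a$) the vertex $a$ cannot be covered. The four closings to write down are:
\begin{itemize}
\item for $G'-a'$, $Q$ with $a\,d\,d'\,c'\,b'\,b\,c$;
\item for $G'-c'$, $Q$ with $a\,d\,d'\,a'\,b'\,b\,c$;
\item for $G'-b'$, $P$ with $b\,c\,c'\,d'\,a'\,a\,d$;
\item for $G'-d'$, $P$ with $b\,c\,c'\,b'\,a'\,a\,d$.
\end{itemize}
Each is a Hamiltonian path of $H'-v'$ using only edges of $H'$, and it glues to $P$ or $Q$ through the unchanged external edges.

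A smaller point: ``replacing that single edge'' is too quick when $C_v$ also uses $ab$ or $cd$, which are not edges of $H'$. Reroute those as well, e.g.\ $ab\cup cd\mapsto aa'b'b\cup cc'd'd$. Every fragment containing at least one $H$-edge does extend, so this is only bookkeeping.
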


\begin{proof}
    The proof of Proposition \ref{prop:Thom_hypo} implies that if $G$ is hypohamiltonian and $H$ has a cubic vertex, then $G'$ is hypohamiltonian, except possibly at $V' = \{ a', b', c', d' \}$ or at the cubic vertex in $H$, call it $d$. 
    
    The fragments in Figure \ref{fig:Thom_fragments2} can each be completed to a cycle using either a Hamiltonian path from $a$ to $c$ in $G - \{b, d \}$ or a Hamiltonian path from $b$ to $d$ in $G - \{ a, c \}$.

    \begin{figure}
        \centering
        \includegraphics[width=0.9\linewidth]{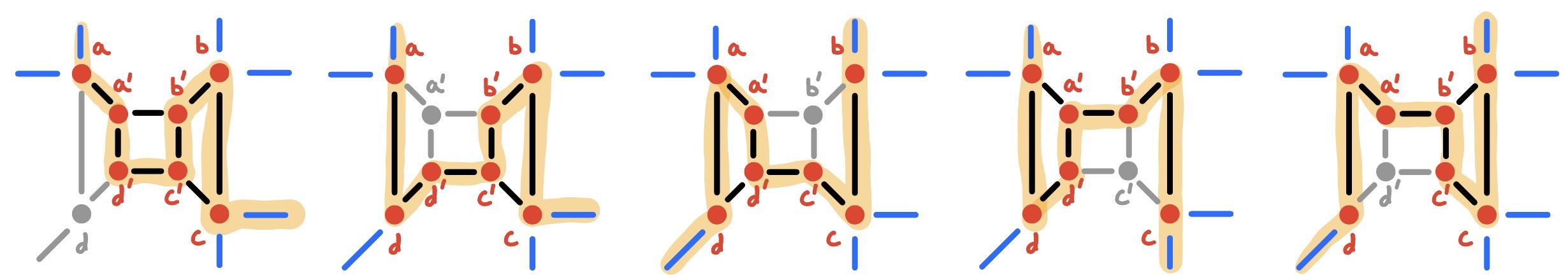}
        \caption{For each $v \in \{ a', b', c', d', d \}$, the graph $G'-v$ has a Hamiltonian cycle, if these fragments can be extended.}
        \label{fig:Thom_fragments2}
    \end{figure}
    
\end{proof}

\begin{rem}
    We are unsure if the existence of such Hamiltonian paths is automatic for hypohamiltonian graphs.     
\end{rem}

\begin{cor} \label{cor:Tsai38}
    In Tsai's 34 vertex hypohamiltonian graphs, one can apply Thomassen's plus four construction to the central 4-gon to obtain a hypohamiltonian graph with 38 vertices. 
\end{cor}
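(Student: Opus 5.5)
The plan is to apply Proposition \ref{prop:Thom_hypo_paths} with $H$ the central 4-gon of Tsai's 34 vertex graph $G$. Label its vertices $a,b,c,d$ cyclically. Three hypotheses must be checked.

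First, $G$ is hypohamiltonian; this is given by \cite{Tsai25}, and it is also recovered via the plus three modification of Wiener's graph above.

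Second, $H$ has a cubic vertex. For this I will read off the degrees in Tsai's graph. The central 4-gon is the middle face of the configuration $H$ of Figure \ref{fig:local_config}, namely the 4-gon containing the two interior degree-3 vertices $u$ and $v$ together with the boundary vertices (in our earlier labeling) $LM$ and $RM$. By Remark \ref{rem:loc_config}, $u$ and $v$ have no further edges, so they are cubic. In fact they are opposite vertices of the 4-gon, not adjacent, which is exactly why Proposition \ref{prop:Thom_hypo} does not apply directly and we need the path version. So set $b=u$ and $d=v$, and take $a,c$ to be the two vertices $LM, RM$.

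Third, we need the two Hamiltonian paths:
\begin{itemize}
\item[(i)] a Hamiltonian path from $a$ to $c$ in $G-\{b,d\}=G-\{u,v\}$;
\item[(ii)] a Hamiltonian path from $u$ to $v$ in $G-\{LM,RM\}$.
\end{itemize}

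For (i), the endpoint-decomposition bookkeeping of the plus two section reduces the search. A Hamiltonian path $LM\to RM$ avoiding $u,v$ must traverse the remaining boundary 8-cycle of $H$ together with the exterior $G-H$. So it suffices to find suitable Hamiltonian paths in the exterior, in the style of Figures \ref{fig:H1Prime-paths} and \ref{fig:H1Prime_Path24}. Those figures already exhibit Hamiltonian paths from $1$ to $3$ in $(G-H)\cup\{1,2,3,4\}$. Combining such a path with the segments of the 8-cycle through $TM$, $BM$ and the midpoints should give the required path.

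For (ii), in $G-\{LM,RM\}$ the vertex $u$ has remaining neighbour $TM$ and $v$ has remaining neighbour $BM$. So the problem becomes finding a Hamiltonian path from $TM$ to $BM$ covering the rest of $H$'s boundary and the exterior. This is the rotated analogue, and it should come from the $2$–$4$ exterior paths of Figure \ref{fig:H1Prime_Path24}.

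The main obstacle is this last step: the required paths are properties of Tsai's specific exterior and cannot be deduced from hypohamiltonicity alone (compare the remark after Proposition \ref{prop:Thom_hypo_paths}). I would first try to splice the already-exhibited exterior paths with the corner vertices of the 8-cycle. If the splicing fails for a corner, I would exhibit the two paths directly in a figure and verify them by hand. With all three hypotheses in place, Proposition \ref{prop:Thom_hypo_paths} gives that $G'$, of order $38$, is hypohamiltonian. Planarity is preserved because the plus four modification is drawn inside the face.
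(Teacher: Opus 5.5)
Your strategy is the paper's: the same labelling $a=LM$, $b=u$, $c=RM$, $d=v$, Proposition~\ref{prop:Thom_hypo_paths}, and a reduction to the exterior paths already drawn in Figures~\ref{fig:H1Prime-paths} and~\ref{fig:H1Prime_Path24}. Your (ii) is fine. Either $u\,TM\,TL\cdots BR\,BM\,v$ (the paper's second fragment in Figure~\ref{fig:Thom_Tsai} with $LM,RM$ removed) or your $2$--$4$ variant $u\,TM\,TR\cdots BL\,BM\,v$ works, each closed up by an exterior Hamiltonian path of $(G-H)\cup\{1,2,3,4\}$.

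The splice you propose for (i), however, fails as written.
\begin{itemize}
\item In $G-\{u,v\}$ the vertex $TM$ has only the neighbours $TL,TR$, and $BM$ only $BL,BR$. So a Hamiltonian $LM$--$RM$ path must contain the segments $TL\,TM\,TR$ and $BL\,BM\,BR$.
\item Since $LM$ and $RM$ are endpoints, the trace of the path on $H$ is then forced to be $LM\,BL\,BM\,BR \cup TL\,TM\,TR\,RM$, or the mirror image $LM\,TL\,TM\,TR \cup BL\,BM\,BR\,RM$.
\item In the first case corners $2$ and $4$ are already saturated inside $H$, so the exterior piece must be a Hamiltonian path of $(G-H)\cup\{1,3\}$. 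In the mirror case it must be one of $(G-H)\cup\{2,4\}$.
\item The $1$--$3$ path of $(G-H)\cup\{1,2,3,4\}$ that you cite passes through corners $2$ and $4$, which would give them degree greater than two.
\end{itemize}
The correct path is also in Figure~\ref{fig:H1Prime-paths}, so the repair is only to swap which exterior path feeds which hypothesis.

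For comparison, the paper's first fragment $TL\,LM\,BL\,BM\,v \cup u\,TM\,TR\,RM\,BR$ uses this same $(G-H)\cup\{1,3\}$ path. It assembles a Hamiltonian $u$--$v$ path of all of $G$ that avoids the two deleted $4$-gon edges, which is exactly what $G'-a'$ and $G'-c'$ need. Your literal check of the proposition's hypothesis works equally well.
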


Of course Tsai already proved this computationally. We merely observe that it follows theoretically, from the existence of the same Hamiltonian paths that we found earlier. 

\begin{proof}
    Figure \ref{fig:Thom_Tsai} shows how to begin building Hamiltonian cycles from $b$ to $d$ in $G- \{a, c\}$ and from $a$ to $c$ to $G- \{b, d \}$. Each cycle fragment extends to a cycle in the exterior from Figure \ref{fig:H1Prime-Hypo}.  

% Complete Figure 31 -- only TikZ and graphicx packages are required.
% The two panels use EXACTLY the earlier graph H (the original unrotated H).
% External corner vertices 1,2,3,4 each have TWO blue dangling stubs.
\begin{figure}[tbp]
\centering
\resizebox{.4\linewidth}{!}{%
\begin{tikzpicture}[line cap=round,line join=round]
\definecolor{graphRed}{HTML}{ED4138}
\definecolor{stubBlue}{HTML}{1670EA}
\definecolor{paleOrange}{HTML}{F6C77D}
\tikzset{
 graphline/.style={draw=black,line width=1.15pt},
 external/.style={draw=stubBlue,line width=1.5pt},
 vertex/.style={circle,fill=graphRed,draw=none,inner sep=0pt,minimum size=2.6mm},
 orange path/.style={draw=paleOrange,line width=7pt,opacity=.70,line cap=round,line join=round},
 terminal label/.style={font=\normalsize,text=black},
 inner label/.style={font=\large,text=graphRed},
}
% Original H coordinates / graph from the prior figures, not a new interpretation.
\def\HCoordinates{%
 \coordinate (TL) at (0,3.50);  \coordinate (TM) at (1.30,3.50);
 \coordinate (TR) at (2.60,3.50); \coordinate (RM) at (2.60,1.75);
 \coordinate (BR) at (2.60,0);    \coordinate (BM) at (1.30,0);
 \coordinate (BL) at (0,0);      \coordinate (LM) at (0,1.75);
 \coordinate (u) at (1.30,2.55); \coordinate (v) at (1.30,0.97);
}
\def\HEdgesAndStubs{%
 % The original eight-cycle and the six interior edges of H.
 \draw[graphline] (TL)--(TM)--(TR)--(RM)--(BR)--(BM)--(BL)--(LM)--cycle;
 \draw[graphline] (TM)--(u) (LM)--(u) (RM)--(u)
                  (LM)--(v) (RM)--(v) (BM)--(v);
 % Exactly TWO blue dangling edges at EACH attachment vertex.
 \draw[external] (TL)--++(-.42,0) (TL)--++(0,.42);
 \draw[external] (TR)--++(.42,0) (TR)--++(0,.42);
 \draw[external] (BR)--++(.42,0) (BR)--++(0,-.42);
 \draw[external] (BL)--++(-.42,0) (BL)--++(0,-.42);
}
\def\HVerticesAndLabels{%
 \foreach \n in {TL,TM,TR,RM,BR,BM,BL,LM,u,v}{\node[vertex] at (\n) {};}
 \node[terminal label] at (-.35,4.04) {$1$};
 \node[terminal label] at (2.95,4.04) {$2$};
 \node[terminal label] at (2.95,-.58) {$3$};
 \node[terminal label] at (-.35,-.58) {$4$};
 \node[inner label] at (-.37,1.75) {$a$};
 \node[inner label] at (.95,2.65) {$b$};
 \node[inner label] at (2.98,1.75) {$c$};
 \node[inner label] at (1.72,.97) {$d$};
}
% First highlighted decomposition.
\begin{scope}[shift={(0,0)}]
 \HCoordinates
 \draw[orange path] (TL)--(LM)--(BL)--(BM)--(v);
 \draw[orange path] (u)--(TM)--(TR)--(RM)--(BR);
 \HEdgesAndStubs
 \HVerticesAndLabels
\end{scope}
% Second highlighted decomposition; same graph and placement.
\begin{scope}[shift={(5.0,0)}]
 \HCoordinates
 \draw[orange path] (TL)--(TM)--(u)--(RM);
 \draw[orange path] (LM)--(v)--(BM)--(BR);
 \HEdgesAndStubs
 \HVerticesAndLabels
\end{scope}
\end{tikzpicture}%
}
\caption{The existence of the requisite Hamiltonian paths follow from earlier analysis of the exterior together with these cycle fragments.}
        \label{fig:Thom_Tsai}
\end{figure}

  %  \begin{figure}
  %      \centering
  %      \includegraphics[width=0.5\linewidth]{Fig31-Thom_fragments_Tsai.jpg}
  %      \caption{The existence of the requisite Hamiltonian paths follow from earlier analysis of the exterior together with these cycle fragments.}
  %      \label{fig:Thom_Tsai}
  %  \end{figure}
\end{proof}

\subsection{Corollaries}
One value of hypohamiltonian graphs is in their relationship with hypotraceable graphs. In this subsection we record a few corollaries of our work, following \cite{Tsai25}. 

\begin{thm} \cite[Theorem 4.4]{JMOPZ17}
    Let $G_i = (V_i, E_i)$ for $i = 1, \dots, 4$ be four planar hypohamiltonian graphs. Then there is a planar hypotraceable graph of order $|V_1| + |V_2| + |V_3| + |V_4| - 6$.
\end{thm}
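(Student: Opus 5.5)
We would follow Thomassen's classical construction of hypotraceable graphs and check that it preserves planarity.

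\textbf{The construction.}
\begin{enumerate}
\item First we recall that every planar hypohamiltonian graph contains a cubic vertex (Thomassen). For each $i$ choose a cubic vertex $x_i \in G_i$ with neighbours $a_i, b_i, c_i$, and set $G_i^- := G_i - x_i$.
\item We glue $G_1^-$ and $G_2^-$ by identifying $a_1$ with $a_2$ into a single vertex $a$, and do the same for $G_3^-$ and $G_4^-$, producing a vertex $a'$.
\item We then add the four edges $b_1b_3$, $c_1c_3$, $b_2b_4$, $c_2c_4$. (The precise pairing is the one in \cite{JMOPZ17}; any pairing joining the $\{1,2\}$ side to the $\{3,4\}$ side with four edges should do.)
\item The resulting graph $P$ has order $\sum |V_i| - 4 - 2$, as required.
\item For planarity, embed each $G_i$ with $x_i$ on the outer face. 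Then $a_i, b_i, c_i$ lie on the outer face of $G_i^-$ in a cyclic order, possibly after reflecting the embedding. The identifications and added edges can then be drawn in the outer region without crossings, arranging the four pieces around a circle.
\end{enumerate}

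\textbf{Key lemmas about $G_i^-$.} Both come directly from hypohamiltonicity.
\begin{enumerate}
\item[(A)] $G_i^-$ has no Hamiltonian path joining two of $a_i, b_i, c_i$. Otherwise, adding $x_i$ would close it into a Hamiltonian cycle of $G_i$.
\item[(B)] For each $y \in \{a_i, b_i, c_i\}$, the graph $G_i - y$ has a Hamiltonian cycle. That cycle must pass through $x_i$ using its two other neighbours. Hence $G_i^- - y$ has a Hamiltonian path between the other two neighbours.
\item[(C)] For any other vertex $v \ne x_i$, a Hamiltonian cycle of $G_i - v$ passes through $x_i$. This gives a Hamiltonian path of $G_i^- - v$ between two of $a_i, b_i, c_i$.
\end{enumerate}
Also, $G_i^-$ itself is Hamiltonian-path-connected in the weak sense needed: it has a Hamiltonian path ending in a prescribed neighbour, obtained from $G_i - v$ for suitable $v$.

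\textbf{Non-traceability.} Suppose $P$ had a Hamiltonian path $\pi$.
\begin{itemize}
\item The edge cut between the $\{1,2\}$ side and the $\{3,4\}$ side consists of four edges, and the cut vertices $a$ and $a'$ further separate each side into two pieces.
\item Restricting $\pi$ to each $G_i^-$, at most two of the four pieces can contain an endpoint of $\pi$. So at least two pieces are traversed entirely by a single subpath entering and leaving through two of $a_i, b_i, c_i$.
\item A piece traversed this way gives a Hamiltonian path of $G_i^-$ between two neighbours of $x_i$, contradicting (A).
\end{itemize}
The care needed here is counting how many times $\pi$ can enter each piece through the cut vertex $a$ and the pendant edges, since a piece visited in several segments also yields a contradiction by (A) after accounting for parity. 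We expect a short case analysis on where the endpoints of $\pi$ lie.

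\textbf{Vertex-deleted traceability.} For each vertex $v$ of $P$ we must build a Hamiltonian path of $P - v$.
\begin{itemize}
\item If $v \in G_1^-$ is not $a$, use (C) in $G_1^-$ to get a spanning path through that piece between two neighbours. Combine it with paths from (B) in the other pieces: for example, a $b_j$–$c_j$ path avoiding $a_j$, with $a$ or $a'$ covered by one adjacent piece that ends there.
\item If $v = a$, use (B) in both $G_1^-$ and $G_2^-$.
\end{itemize}

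\textbf{Main obstacle.} The main difficulty is this last bookkeeping. Case (C) only guarantees a path between \emph{some} two of the three neighbours, not a prescribed pair. So we must check that every possible pair can be routed through the four connecting edges and the identified vertices. This is where the specific pairing of edges in the construction matters, and we would handle it by enumerating the three possible pairs $\{a_i b_i\}, \{a_i c_i\}, \{b_i c_i\}$ and exhibiting a completion in each case.
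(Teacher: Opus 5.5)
The paper gives no proof of this statement; it quotes \cite{JMOPZ17}, where it follows from Thomassen's construction together with the fact that every planar hypohamiltonian graph has a cubic vertex. Thomassen's construction is the graph $P$ you build, with the pairing $b_1b_3, c_1c_3, b_2b_4, c_2c_4$. Your construction, order count, planarity sketch and lemmas (A)--(C) are fine.

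\textbf{The gap is in the non-traceability step.} A piece containing no endpoint of $\pi$ does not contradict (A). The path $\pi$ can enter it at $b_i$, leave at $c_i$, and never touch the shared vertex, which is then covered from the neighbouring piece. That gives only a Hamiltonian path of $G_i^- - a_i$ from $b_i$ to $c_i$, and such a path exists by your own (B). So ``two endpoint-free pieces'' is not a contradiction.

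Your argument never looks at the pairing, and it cannot work without it. Your remark that any pairing should do is false. With $b_1b_3, c_1b_4, b_2c_3, c_2c_4$ the graph is traceable:
\begin{enumerate}
\item take a Hamiltonian path of $G_1^-$ ending at $b_1$, cut from a Hamiltonian cycle of $G_1-x_1$;
\item then $b_3\to c_3$ in $G_3^- - a'$;
\item then $b_2\to c_2$ in $G_2^- - a$;
\item then a Hamiltonian path of $G_4^-$ starting at $c_4$.
\end{enumerate}

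\textbf{What is missing.} Write $W_i=V(G_i^-)\setminus\{a,a'\}$.
\begin{enumerate}
\item[(i)] If $W_i$ contains no endpoint, then $\pi\cap W_i$ is exactly such a $b_i$--$c_i$ path avoiding the shared vertex. Every other use of the boundary edges either closes a cycle through the shared vertex, or glues through it into a Hamiltonian path of $G_i^-$ joining two neighbours of $x_i$, which (A) forbids.
\item[(ii)] $W_1$ and $W_2$ cannot both be endpoint-free, since then $a$ would have no $\pi$-edge; the same holds for $W_3,W_4$. So each side holds exactly one endpoint, and $a,a'$ are interior vertices of $\pi$.
\item[(iii)] By symmetry let $W_2$ be endpoint-free. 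Then $W_4$ holds the endpoint, since otherwise $b_2\cdots c_2c_4\cdots b_4b_2$ is a cycle in $\pi$. So $W_3$ is endpoint-free. Now $W_1$ meets two $\pi$-edges at $a$, both edges $b_1b_3$ and $c_1c_3$, and one endpoint. That is five segment ends, an odd number, which is impossible.
\end{enumerate}
Step (iii) is where the parallel pairing is used.

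\textbf{The vertex-deleted part} goes through as you plan, and the enumeration is short.
\begin{enumerate}
\item If $G_1^- - v$ has its path from $b_1$ to $c_1$ (this also covers $v=a$): start at $b_3$, then $b_1\to c_1$, then $c_3\to a'$ in $G_3^- - b_3$, then $a'\to b_4$ in $G_4^- - c_4$, then $b_2\to c_2$ in $G_2^- - a$, and end at $c_4$.
\item If the path runs from $a$ to $b_1$: take a Hamiltonian path of $G_3^-$ ending at $b_3$, then $b_1\to a$, then $a\to b_2$ in $G_2^- - c_2$, then $b_4\to c_4$ in $G_4^- - a'$, and end at $c_2$.
\item The $a$--$c_1$ case is symmetric.
\end{enumerate}
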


\begin{cor} \label{cor:hypotrace}
    There exists a hypotraceable graph of order $n$ for $n = 130$ and  all $n \geq 132$.
\end{cor}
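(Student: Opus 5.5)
We apply the theorem of \cite[Theorem 4.4]{JMOPZ17} to quadruples of planar hypohamiltonian graphs. It produces planar hypotraceable graphs of every order $|V_1|+|V_2|+|V_3|+|V_4|-6$. So the task reduces to determining which integers can be written as $n_1+n_2+n_3+n_4-6$, where each $n_i$ is the order of some planar hypohamiltonian graph.

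First we record the set $S$ of available orders. By Tsai's examples, order $34$ lies in $S$. By our main theorem, every order $n\geq 36$ lies in $S$; concretely this comes from Corollaries \ref{cor:order36}, \ref{cor:order37} and \ref{cor:order39}, the order-$38$ construction, and Tsai's result for $n\ge 40$. Order $35$ is not known to be in $S$, so $S\supseteq\{34\}\cup\{n: n\geq 36\}$.

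Next we compute the attainable sums of four elements of $S$:
\begin{itemize}
\item The minimum sum is $4\cdot 34=136$, giving $n=130$.
\item Three $34$'s and one element $m\ge 36$ give every sum $102+m\geq 138$, covering all $n\geq 132$.
\item The sum $137$ would require three $34$'s and a $35$ (or replacing $34$'s by $\geq 36$, which overshoots). Since $35\notin S$ as far as we know, $137$ is not attained, and $n=131$ is excluded.
\end{itemize}
Hence every $n\ge132$ and $n=130$ are realized. Since the theorem in \cite{JMOPZ17} outputs planar hypotraceable graphs, we obtain slightly more than the statement requires.

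The only point needing care is the bookkeeping: confirming that each order $36,37,38,39$ is indeed certified earlier in the paper. Order $36$ follows from Corollary \ref{cor:order36}, order $37$ from Corollary \ref{cor:order37}, order $38$ from Corollary \ref{cor:Tsai38} or the $66$-graph corollary, order $39$ from Corollary \ref{cor:order39}, and orders $\ge 40$ from Tsai. No new graph-theoretic argument is needed beyond this arithmetic.
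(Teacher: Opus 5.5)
Your proposal is correct and is essentially the paper's proof: plugging four copies of Tsai's order-$34$ graph into \cite[Theorem 4.4]{JMOPZ17} gives order $130$, and replacing one copy by a planar hypohamiltonian graph of order $m\ge 36$ gives order $96+m$, which covers all $n\ge 132$. Your remark on $n=131$ only shows that this construction does not reach that order, not that a hypotraceable graph of order $131$ cannot exist, so "not covered" would be more accurate than "excluded".
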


\begin{proof}
    Letting each $G_i$ be Tsai's order 34 graph gives a hypotraceable graph of order 130. 

    Since there exists a hypohamiltonian of order $m$ for all $m \geq 36$ one can replace $G_1$ with a graph of order $m$ to get a hypotraceable graph of order $130 + (m - 34)$.
\end{proof}

A graph $G$ is \emph{almost hypohamiltonian} if there exists a vertex $w \in V$ with $G$ hypohamiltonian except at $\{ w \}$ and $G-w$ not Hamiltonian. 

\begin{cor} \label{cor:almost}
    There exists an almost hypohamiltonian planar graph of order $n$ for all $n = 31$ and $n \geq 33$.
\end{cor}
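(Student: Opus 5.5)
The plan is to start from Wiener's almost hypohamiltonian planar graph of order 31 \cite{W18}, which handles $n=31$ directly. Then we push the order upward with the local modifications of this section, checking that each one preserves \emph{almost} hypohamiltonicity rather than hypohamiltonicity. The key observation is that all arguments in Proposition \ref{prop:Ham}, Theorem \ref{thm:hypoham}, and Proposition \ref{prop:Thom_hypo} are purely local. Suppose $G$ is hypohamiltonian except at $\{w\}$, with $G-w$ non-Hamiltonian, and $w$ lies outside the modified configuration $H$. Then the same fragment-matching arguments show that $G_i$ is hypohamiltonian except at $\{w\}$. The argument for the converse direction of Proposition \ref{prop:Ham}, applied to $G-w$ and $G_i-w$, shows that $G_i - w$ is still non-Hamiltonian. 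The footnote to Theorem \ref{thm:hypoham} covers the case that $w$ is adjacent to $H$. The same reasoning applies to Thomassen's plus four modification via Propositions \ref{prop:Thom_Ham} and \ref{prop:Thom_hypo}, provided $H$ has adjacent cubic vertices and $w\notin H$.

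The key steps, in order, are as follows.
\begin{itemize}
\item[(1)] Record the lemma that the plus two modification ($H\to H_1$) and the plus four modification (either Thomassen's, or $H\to H_3$) send almost hypohamiltonian graphs with exceptional vertex $w\notin H$ to almost hypohamiltonian graphs with the same exceptional vertex.
\item[(2)] Produce base examples of orders $33,34,35,36$. The paper remarks that Tsai-type graphs arise from Wiener's graph by the plus three modification. I would take Wiener's order $31$ graph, which contains the configuration $H$ (its 4-gon configuration, away from the exceptional vertex), and apply the plus two modification to get order $33$. For order $34$, apply the plus three modification at a $Q$ disjoint from $w$, but choose a different $Q$ from those yielding Tsai's hypohamiltonian graphs, so that $G'-w$ stays non-Hamiltonian. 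For orders $35$ and $36$, apply the plus two modification to the $33$ and $34$ examples.
\item[(3)] Iterate. The $H\to H_3$ modification preserves a copy of $H$, so it can be applied repeatedly, adding $4$ each time. Starting from the four consecutive orders $33$--$36$, this reaches every $n\ge 33$.
\end{itemize}

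The main obstacle is the plus three step. Theorem \ref{thm:path_ext} only transfers Hamiltonicity from $G$ to $G'$, not non-Hamiltonicity, and it says nothing about $G'-v$ for $v\in Q'$. So for the order $34$ (and possibly order $33$) base cases, I would not attempt a general argument. Instead I would verify directly, by computer as the Remark in the introduction anticipates, that the specific graph is almost hypohamiltonian. That means checking that $G'$ and $G'-w$ are non-Hamiltonian and that $G'-v$ is Hamiltonian for every other $v$. After these finitely many computational checks, the inductive steps (1) and (3) are rigorous consequences of the earlier local results.

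A secondary point to confirm is that the exceptional vertex $w$ is never inside, or destroyed by, a configuration we modify. Because $H\to H_3$ leaves a fresh copy of $H$ at the center, far from $w$, this is automatic once it holds for the base examples.
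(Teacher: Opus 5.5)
\textbf{Gap in the order-34 base case.} Your overall scheme is the paper's: 31 from Wiener, 33 by the plus two modification, base cases 33--36, then plus-four iteration. Your step (1) is also fine. The problem is that the order-34 base case, as you propose it, cannot be carried out.

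The plus three modification must be anchored at a degree-4 vertex. In Wiener's order-31 graph there is a single degree-4 vertex outside $H$. The paper records that every choice of $Q$ at that vertex gives a graph isomorphic to one of Tsai's two order-34 graphs, and those graphs are \emph{hypohamiltonian}. So there is no ``different $Q$'' to pick, and no computer check can rescue it. The plus three step destroys the non-Hamiltonicity of $G'-w$; that is exactly how the paper gets hypohamiltonian graphs from Wiener's almost hypohamiltonian one, and Theorem~\ref{thm:path_ext} only pushes Hamiltonicity upward.

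As a result, your orders $34$ and $36$, and the chains $34+4k$ and $36+4k$, have no foundation. The missing idea is to go the other way. Apply the plus three modification in reverse (a minus three) to one of Tsai's order-37 hypohamiltonian graphs. Then verify by computer that the resulting order-34 graph (Figure~\ref{fig:almost34}) is almost hypohamiltonian, contains $H$, and has its exceptional vertex outside $H$. The plus two modification of that graph then gives order $36$.

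\textbf{Smaller slip at orders 35 and the $33+4k$ chain.} Your order-33 graph contains $H_1$ but no copy of $H$. The only 4-gons of Wiener's graph are the five forming $H$. No face of $H_1$ has two opposite cubic vertices together with two opposite degree-4 vertices, which is what the central face of a copy of $H$ requires. So neither the plus two modification (your route to 35) nor $H\to H_3$ (your iteration) applies to the 33 example.

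The fix is what the paper does. Obtain 35 from 31 by a plus four. Iterate using Thomassen's plus four, which applies at any 4-gon with adjacent cubic vertices (for instance the face $l\,v\,r\,BM$ of $H_1$). It is self-iterating, because the inserted 4-gon has four cubic vertices. Your step (1) already covers this, provided $w$ stays outside the modified 4-gon.
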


\begin{proof}
    By Thomassen's plus four construction, it suffices to exhibit such graphs of order 31 (and hence 35), 33, 34, and 36. Notice that the proof that the plus two construction preserves the hypohamiltonian property also shows that it preserves the almost hypohamiltonian property, provided the exceptional vertex lies in the exterior of $H$. Since this is the case in Wiener's almost hypohamiltonian planar graph of order 31, we can modify it to one of order 33. 
    
    So it remains to find an almost hypohamiltonian planar graph of order 34, containing $H$, with exceptional vertex in the exterior of $H$. 
    
    And indeed one can check that the graph of order 34 in Figure \ref{fig:almost34} is almost hypohamiltonian with exceptional vertex 34. 

    \begin{figure}
    \centering
    \includegraphics[width=0.4\linewidth]{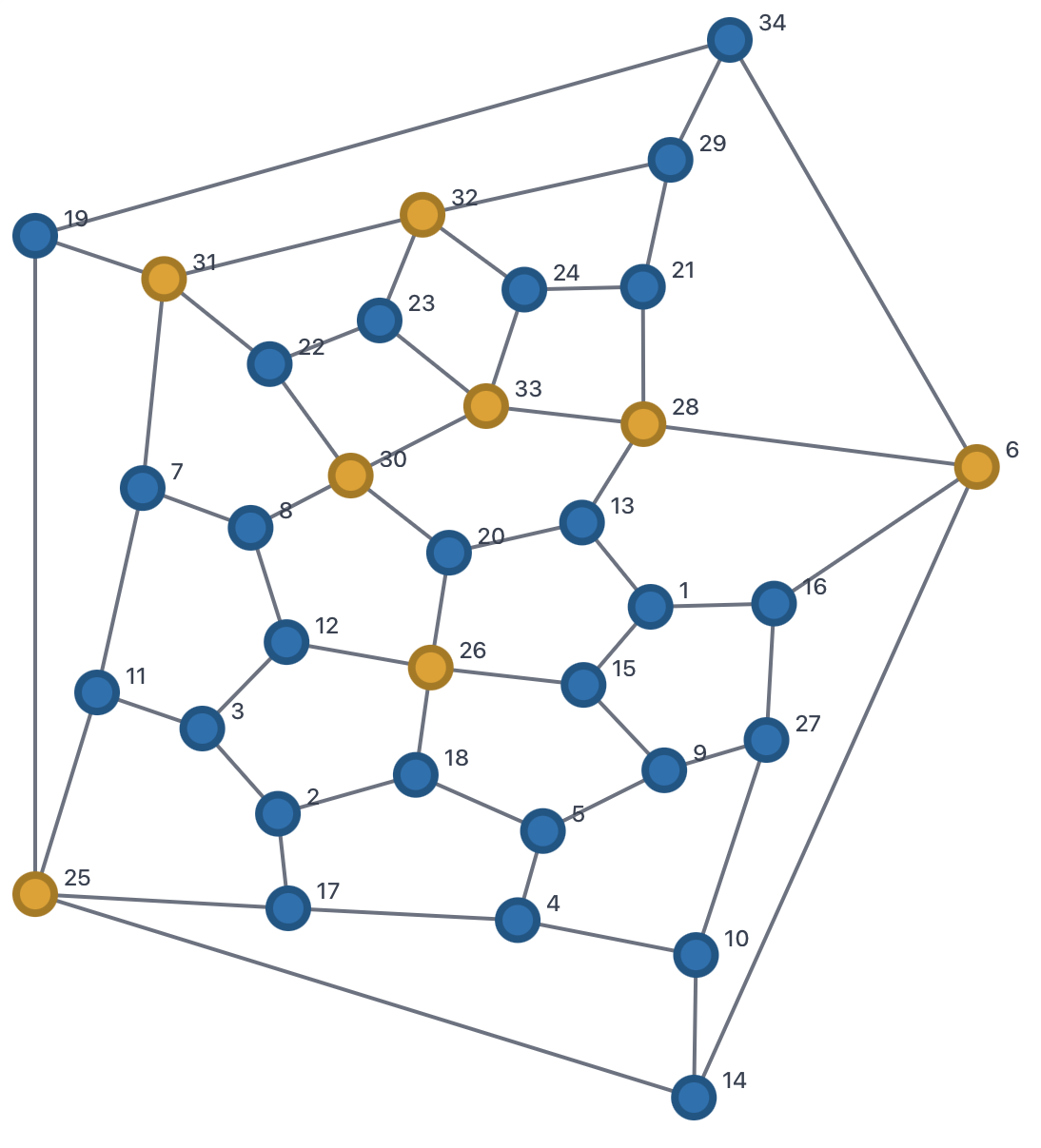}
    \caption{An almost hypohamiltonian graph of order 34, discovered by applying the minus three construction to one of Tsai's hypohamiltonian graphs of order 37.}
    \label{fig:almost34}
\end{figure}
    
\end{proof}

\section{Grinberg's Condition}

%A Hamiltonian cycle $C$ in $G$ imposes the following connectivity condition on $G$:
%\begin{quote}
%    For all $n$, removing $n$ vertices from $G$ can create at most $n-1$ components. 
%\end{quote}
%This condition is known as toughness (or 1-tough) and is written as $t(G) = 1$. Toughness for $n=2$ implies that the minimal degree of vertex of $G$ is at least two. Toughness for $n = \lfloor |V|/2 \rfloor$ implies that any subset $W \subset V$ of size greater than $|V|/2$ must contain an edge. 

%several restrictions on the connectivity of $G$. For example, if $G$ is Hamiltonian then:
%\begin{itemize}
%    \item[(a)] (minimum degree) the minimum degree of a vertex $v \in V$ is at least two ($\delta(G) \geq 2$)
%    \item[(b)] (toughness) removing $n$ vertices can create at most $n-1$ components ($t(G) \geq 1$)
   % \item[(c)] if $G$ is bipartite, then $|V|$ is even
%    \item[(c)] (independence) any subset $W \subset V$ of size $|W| > |V|/2$ must contain an edge ($\alpha(G) \leq |V|/2$)
%\end{itemize}

%For example, the Goldner--Harary graph in Figure \ref{fig:GoldnerHarary} is not Hamiltonian because it is not tough: the graph has 11 vertices with no edges between the 6 red ones

%\begin{figure}
%    \centering
%    \includegraphics[width=0.5\linewidth]{IndependentSet.jpeg}
%    \caption{The Goldner--Haray graph is not sufficiently connected to have a Hamiltonian cycle.}
%    \label{fig:GoldnerHarary}
%\end{figure}

%Toughness is a necessary but insufficient condition. Indeed, any hypohamiltonian graph is tough, since the removal of any vertex yields a Hamiltonian graph. 

Grinberg's Theorem gives a restriction on a hypothetical Hamiltonian cycle in a planar graph $G$.

\begin{thm}[Grinberg \cite{G68}]
Suppose $G$ is a planar graph with a Hamiltonian cycle $C$. Let $g_n$ denote the number of $n$-gons inside of $C$ and let $h_n$ denote the number of $n$-gons outside of $C$\footnote{including the external, unbounded face.}. Then
\begin{equation} \label{eq:Grinberg}
\sum_{n \geq 3} (n-2)(g_n - h_n) = 0.
\end{equation}
\end{thm}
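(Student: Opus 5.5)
We prove Grinberg's identity by a double count of the faces and chords of the plane graph on each side of $C$, treating the inside and the outside separately.

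Fix a plane embedding of $G$ with Hamiltonian cycle $C$ on $N = |V|$ vertices. By the Jordan curve theorem, $C$ is a simple closed curve, so every edge of $G$ not in $C$ lies either in the interior or in the exterior region of $C$. Call such edges interior chords and exterior chords. Every face of $G$ lies on one side of $C$. Let $d_{\mathrm{in}}$ be the number of interior chords, and let $F_{\mathrm{in}} = \sum_n g_n$ be the number of interior faces.

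First consider the disk bounded by $C$. Starting from the single face bounded by $C$, we add the interior chords one at a time. Each chord has both endpoints on the boundary of the current region, so it splits one face into two. Hence
\[ F_{\mathrm{in}} = d_{\mathrm{in}} + 1. \]

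Next, count edge-face incidences inside. Each interior face that is an $n$-gon contributes $n$ incidences. Each edge of $C$ borders exactly one interior face, and each interior chord borders exactly two. This gives
\[ \sum_n n\, g_n = N + 2 d_{\mathrm{in}}. \]
Subtracting twice the face count yields
\[ \sum_n (n-2) g_n = N + 2d_{\mathrm{in}} - 2(d_{\mathrm{in}}+1) = N - 2. \]

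The identical argument applies to the exterior region, which we regard as a disk on the sphere. The unbounded face is included among the $h_n$, as the footnote requires, and it is handled like any other face. This gives
\[ \sum_n (n-2) h_n = N - 2. \]
Subtracting the two equations gives \eqref{eq:Grinberg}.

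The main point needing care is the face-splitting step. We must justify that each chord added inside a disk increases the face count by exactly one, and that edge-face incidences are counted correctly. This is straightforward because the faces inside a disk dissected by chords are bounded by simple cycles, so an $n$-gon has exactly $n$ boundary edges. Alternatively, the step follows from Euler's formula $V - E + F = 2$ applied to the outerplanar graph $C \cup \{\text{interior chords}\}$. That graph has $N$ vertices, $N + d_{\mathrm{in}}$ edges, and $F_{\mathrm{in}} + 1$ faces, so $F_{\mathrm{in}} = d_{\mathrm{in}} + 1$.
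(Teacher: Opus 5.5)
The paper gives no proof of this theorem to compare against: it quotes Grinberg's 1968 result and uses it as a black box. Your argument is the classical proof, and it is correct.

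The chord-by-chord induction shows that each side of $C$ is cut by its $d$ chords into $d+1$ faces, each bounded by a simple cycle. Euler's formula applied to $C$ plus the chords on one side gives the same face count. The edge--face incidence count then yields the per-side identity
\[
\sum_n (n-2)g_n = \sum_n (n-2)h_n = |V|-2,
\]
which is slightly stronger than the stated equation and implies it immediately.

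Only two points merit an explicit word, and you essentially cover both. First, ``$n$-gon'' is well defined because a Hamiltonian graph is $2$-connected, so every face is bounded by a cycle; your induction also establishes this directly. Second, on the sphere the closure of the exterior of $C$ is a closed disk by the Jordan--Schoenflies theorem. That is what lets you rerun the interior argument verbatim on the outside, with the unbounded face treated like any other face.
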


One can view Grinberg's theorem as giving a necessary (but insufficient) condition for a graph to be Hamiltonian, as we now explain. 

\begin{defn}
    Let $G$ be a planar graph with $f_n$ $n$-gons. We say that $G$ is \emph{numerically Hamiltonian} if there exists $g_n, h_n \in \bN$ satisfying $g_n + h_n = f_n$ and Grinberg's equation
    \[
    \sum_{n \geq 3} (n-2)(g_n - h_n) = 0.
    \]
    Moreover, we say that $G$ lies in the \emph{Grinberg gap} if $G$ is numerically Hamiltonian but not Hamiltonian. 
\end{defn}

We are unaware of a systematic study of graphs in the Grinberg gap. The majority of the work in this direction comes from the hypohamiltonian community in light of the remarkable paper \cite{JMOPZ17}, proving that hypohamiltonian graphs of order less than 42 necessarily lie in the Grinberg gap. 

\begin{rem}
    A second necessary yet insufficient condition for a graph $G$ to be Hamiltonian is that the removal of $n$ vertices in $G$ separates $G$ into at most $n$ components. Such a graph is called \emph{tough} and we are primarily interested in tough graphs in the Grinberg gap. Hypohamiltonian graphs $G$ are necessarily tough, since $G-v$ is Hamiltonian for every vertex $v$.
\end{rem}

\subsection{Local constraints}

Certain graphs in Grinberg gap are sufficiently connected and with admissible face datum for a Hamiltonian cycle, yet their configuration of faces impose local restrictions preventing one. 

In more detail, we consider local constraints on Hamiltonian cycles that produce inequalities symmetrically on $h_n$ and $g_n$. For example, Figure \ref{fig:3config} contains a configuration of three 4-gons.

% Complete combined figure 33+34. Needs only TikZ and graphicx.
% The right-hand graph is EXACTLY H, rotated 90 degrees CLOCKWISE.
% Its vertex set, eight-cycle, and six internal edges are transported from
% GCoordsH / GEdgesH via (x,y) -> (y, 2.6-x) and uniform scaling 1.15.
% Thus the left inner vertex is adjacent to LM,TM,BM;
% the right inner vertex is adjacent to RM,TM,BM, with NO diamond on four additional internal vertices.
\begin{figure}[tbp]
\centering
\resizebox{.6\linewidth}{!}{%
\begin{tikzpicture}[line cap=round,line join=round]
\definecolor{graphRed}{HTML}{ED4138}
\definecolor{stubBlue}{HTML}{1670EA}
\tikzset{
 graphline/.style={draw=black,line width=1.15pt},
 external/.style={draw=stubBlue,line width=1.5pt},
 vertex/.style={circle,fill=graphRed,draw=none,inner sep=0pt,minimum size=2.6mm},
 vertex label/.style={font=\normalsize,text=black},
}
% Panel (a): original three-edge configuration, unchanged.
\begin{scope}[shift={(0,0)}]
 \coordinate (T) at (2.000,3.650);
 \coordinate (UL) at (0.680,2.470);
 \coordinate (UR) at (3.320,2.470);
 \coordinate (LL) at (0.100,0.930);
 \coordinate (LR) at (3.900,0.930);
 \coordinate (B) at (2.000,0.000);
 \coordinate (v1) at (2.000,1.850);
 \draw[graphline] (T)--(UL)--(LL)--(B)--(LR)--(UR)--cycle;
 \draw[graphline] (UL)--(v1)--(UR) (v1)--(B);
 \draw[external] (T)--++(0,.48) (LL)--++(-.53,-.14) (LR)--++(.53,-.14);
 \foreach \n in {T,UL,UR,LL,LR,B,v1}{\node[vertex] at (\n) {};}
 \node[vertex label] at (1.72,2.17) {$v$};
 \node[font=\scriptsize] at (1.13,1.90) {$e_1$};
 \node[font=\scriptsize] at (2.38,.99) {$e_2$};
 \node[font=\scriptsize] at (2.92,1.92) {$e_3$};
 \node[font=\normalsize] at (2,-.65) {(a)};
\end{scope}
% Panel (b): H ROTATED CLOCKWISE THROUGH EXACTLY 90 DEGREES.
% Coordinates before translation are the images of the old H coordinates
% (TL,TM,TR,RM,BR,BM,BL,LM,u,v) under (x,y) |-> (y,2.6-x),
% multiplied by 1.15. The old interior v maps to v1 at the left,
% while the old interior u maps to v2 at the right.
\begin{scope}[shift={(6.1,0)}]
 \coordinate (TL) at (0,3.29);
 \coordinate (TM) at (2.0125,3.29);
 \coordinate (TR) at (4.025,3.29);
 \coordinate (RM) at (4.025,1.795);
 \coordinate (BR) at (4.025,0.30);
 \coordinate (BM) at (2.0125,0.30);
 \coordinate (BL) at (0,0.30);
 \coordinate (LM) at (0,1.795);
 \coordinate (v1) at (1.1155,1.795); % image of original v
 \coordinate (v2) at (2.9325,1.795); % image of original u
 % Rotated outer eight-cycle; no shortcuts or missing midpoints.
 \draw[graphline] (TL)--(TM)--(TR)--(RM)--(BR)--(BM)--(BL)--(LM)--cycle;
 % Six exact rotated H edges: TM-v1, LM-v1, BM-v1;
 %                          TM-v2, RM-v2, BM-v2.
 \draw[graphline] (TM)--(v1) (LM)--(v1) (BM)--(v1)
                  (TM)--(v2) (RM)--(v2) (BM)--(v2);
 % Same four outward-facing stubs as the handwritten picture.
 \draw[external] (TL)--++(-.40,.40);
 \draw[external] (TR)--++(.40,.40);
 \draw[external] (BR)--++(.40,-.40);
 \draw[external] (BL)--++(-.40,-.40);
 \foreach \n in {TL,TM,TR,RM,BR,BM,BL,LM,v1,v2}{\node[vertex] at (\n) {};}
 \node[vertex label] at (.94,2.15) {$v_1$};
 \node[vertex label] at (3.10,2.15) {$v_2$};
 \node[font=\normalsize] at (2.0125,1.795) {$f$};
 \node[font=\normalsize] at (2.0125,-.65) {(b)};
\end{scope}
\end{tikzpicture}%
}
 \caption{On the left is a local configuration of three 4-gons and on the right is a local configuration of five 4-gons.}
\label{fig:3config}
\end{figure}

%\begin{figure}
%    \centering
%    \includegraphics[width=0.3\linewidth]{Fig33-3config2}
%    \caption{A local configuration of three 4-gons}
%    \label{fig:3config}
%\end{figure}
 
Assume this configuration is completed to a graph with no additional 4-gons. Then $g_4$ and $h_4$ satisfying $g_4 + h_4 = f_4 = 3$. After exchanging the inside and outside, we may assume that $g_4 \leq h_4$ giving two possibilities $(g_4, h_4) = (0, 3)$ or $(1, 2)$. The former configuration cannot occur, as we now explain. 

Any Hamiltonian cycle must reach the central vertex $v$ and therefore use two of the three edges among $\{ e_1, e_2, e_3 \}$. In particular, the Hamiltonian cycle must separate two 4-gons, so $g_4 \neq 0$. 

Now assume the completed graph $G$ has exclusively pentagons outside of this configuration. Then Grinberg's theorem requires
\[
0 = 2(g_4-h_4) + 3(g_5-h_5) = -2 + 3(g_5-h_5).  
\]
which has no solutions since modulo 3 it reads $0 = 1$.

Yet, if there are an even number of pentagons, $f_5 = 2n$, then without the local constraint $g_4, h_4 \geq 1$, one could take $(g_4, g_5) = (3, n-1)$ and $(h_4, h_5) = (0, n+1)$ to avoid this obstruction modulo 3. In other words, Grinberg's theorem is susceptible to solutions $\{ g_n, h_n \}$ that are infeasible geometrically due to local restrictions. 

\begin{rem}
    The graph $G$ cannot be hypohamiltonian, since the removal of $v$ gives a single hexagon and remaining faces pentagons, which violates Grinberg's theorem modulo 3. Conversely, the configuration does appear in an almost hypohamiltonian graph with exceptional vertex $v$, see Figure 7 in \cite{Z15}.    
\end{rem}

More generally, if a graph has $m$ vertices of degree 3 with all neighboring faces distinct $n$-gons, then $g_n, h_n \geq m$.\\ 
\\
The distinctness of the $n$-gons can be weakened for certain configurations. For example, notice that the graph on the right of Figure \ref{fig:3config} has two cubic vertices $v_1$ and $v_2$. They share the central face $f$, but a Hamiltonian cycle cannot contain $f$ without also containing a neighboring face. Therefore, if this local configuration is completed to a graph $G$ with a Hamiltonian cycle then $g_4, h_4 \geq 2$. Further, if the remainder of $G$ consists of pentagons then Grinberg's theorem modulo 3 reads $2(g_4 - h_4) \equiv 0$ and hence if $g_4 \leq h_4$ then one needs $g_4 = 1$ for a Hamiltonian cycle, a contradiction. 

Figure \ref{fig:locconfig} shows some small examples of configurations with an odd number of 4-gons, say $2k+1$, where any Hamiltonian cycle must separate them into $k$ and $k+1$.

% Complete figure: requires only \usepackage{tikz} and \usepackage{graphicx}.
% No custom commands or styles are needed in the document preamble.
\begin{figure}[tbp]
\centering
\resizebox{.65\linewidth}{!}{%
\begin{tikzpicture}[line cap=round,line join=round]
\definecolor{figRed}{HTML}{ED4138}
\definecolor{figBlue}{HTML}{1670EA}
\definecolor{figGreen}{HTML}{19834D}
\definecolor{figOrange}{HTML}{F6BF65}
\definecolor{figViolet}{HTML}{B023B9}
\definecolor{figLime}{HTML}{55B752}
\tikzset{
 fedge/.style={black,line width=1.05pt},
 fboundary/.style={figBlue,line width=1.15pt},
 fstubs/.style={figBlue,line width=1.15pt},
 fvertex/.style={circle,fill=figRed,inner sep=0pt,minimum size=2.3mm},
 fgreen/.style={fvertex,fill=figGreen},
 fnew/.style={fvertex,fill=figRed},
 fhighlight/.style={figOrange,line width=5.2pt,opacity=.67,line cap=round,line join=round},
 fviolet/.style={figViolet,line width=1.8pt,line cap=round},
 flime/.style={figLime,line width=1.8pt,line cap=round},
}
% Six distinct local planar configurations
% Configuration 1
\begin{scope}[shift={(0.00,5.70)}]
\coordinate (TL) at (0.250,3.750);
\coordinate (TR) at (3.450,3.750);
\coordinate (BL) at (0.250,0.450);
\coordinate (BR) at (3.450,0.450);
\coordinate (p) at (1.120,2.830);
\coordinate (q) at (2.580,2.830);
\coordinate (r) at (1.120,1.650);
\coordinate (s) at (2.580,1.650);
\draw[fedge] (TL)--(TR);
\draw[fedge] (TR)--(BR);
\draw[fedge] (BR)--(BL);
\draw[fedge] (BL)--(TL);
\draw[fedge] (TL)--(p);
\draw[fedge] (TR)--(q);
\draw[fedge] (BL)--(r);
\draw[fedge] (BR)--(s);
\draw[fedge] (p)--(q);
\draw[fedge] (q)--(s);
\draw[fedge] (s)--(r);
\draw[fedge] (r)--(p);
\draw[fstubs] (TL)--++(-0.46,0.46);
\draw[fstubs] (TR)--++(0.46,0.46);
\draw[fstubs] (BL)--++(-0.46,-0.46);
\draw[fstubs] (BR)--++(0.46,-0.46);
\node[fvertex] at (TL) {};
\node[fvertex] at (TR) {};
\node[fvertex] at (BL) {};
\node[fvertex] at (BR) {};
\node[fvertex] at (p) {};
\node[fvertex] at (q) {};
\node[fvertex] at (r) {};
\node[fvertex] at (s) {};
\end{scope}
% Configuration 2
\begin{scope}[shift={(5.20,5.70)}]
\coordinate (T) at (1.800,3.840);
\coordinate (L) at (0.230,2.180);
\coordinate (R) at (3.420,2.180);
\coordinate (LL) at (0.290,0.700);
\coordinate (RR) at (3.420,0.700);
\coordinate (B) at (1.800,0.100);
\coordinate (A) at (1.800,2.910);
\coordinate (C) at (1.300,1.920);
\coordinate (D) at (2.380,1.920);
\draw[fedge] (T)--(L);
\draw[fedge] (T)--(R);
\draw[fedge] (L)--(LL);
\draw[fedge] (R)--(RR);
\draw[fedge] (LL)--(B);
\draw[fedge] (RR)--(B);
\draw[fedge] (T)--(A);
\draw[fedge] (A)--(C);
\draw[fedge] (A)--(D);
\draw[fedge] (L)--(C);
\draw[fedge] (R)--(D);
\draw[fedge] (C)--(B);
\draw[fedge] (D)--(B);
\draw[fstubs] (T)--++(0,0.55);
\draw[fstubs] (LL)--++(-0.47,-0.45);
\draw[fstubs] (RR)--++(0.46,-0.45);
\node[fvertex] at (T) {};
\node[fvertex] at (L) {};
\node[fvertex] at (R) {};
\node[fvertex] at (LL) {};
\node[fvertex] at (RR) {};
\node[fvertex] at (B) {};
\node[fvertex] at (A) {};
\node[fvertex] at (C) {};
\node[fvertex] at (D) {};
\end{scope}
% Configuration 3
\begin{scope}[shift={(10.40,5.70)}]
\coordinate (T) at (1.840,4.150);
\coordinate (a) at (0.260,2.930);
\coordinate (b) at (1.840,2.930);
\coordinate (c) at (3.420,2.930);
\coordinate (d) at (0.260,1.500);
\coordinate (e) at (1.840,1.500);
\coordinate (f) at (3.420,1.500);
\coordinate (g) at (0.260,0.130);
\coordinate (h) at (1.840,0.130);
\coordinate (i) at (3.420,0.130);
\draw[fedge] (T)--(a);
\draw[fedge] (T)--(c);
\draw[fedge] (a)--(b);
\draw[fedge] (b)--(c);
\draw[fedge] (a)--(d);
\draw[fedge] (b)--(e);
\draw[fedge] (c)--(f);
\draw[fedge] (d)--(e);
\draw[fedge] (e)--(f);
\draw[fedge] (d)--(g);
\draw[fedge] (e)--(h);
\draw[fedge] (f)--(i);
\draw[fedge] (g)--(h);
\draw[fedge] (h)--(i);
\draw[fstubs] (T)--++(0,0.51);
\draw[fstubs] (g)--++(-0.45,-0.44);
\draw[fstubs] (i)--++(0.46,-0.44);
\node[fvertex] at (T) {};
\node[fvertex] at (a) {};
\node[fvertex] at (b) {};
\node[fvertex] at (c) {};
\node[fvertex] at (d) {};
\node[fvertex] at (e) {};
\node[fvertex] at (f) {};
\node[fvertex] at (g) {};
\node[fvertex] at (h) {};
\node[fvertex] at (i) {};
\end{scope}
% Configuration 4: H_1, exactly the earlier graph, with one external blue stub at each corner.
\begin{scope}[shift={(0.00,0.00)}]
\coordinate (TL) at (0.250,3.840);
\coordinate (TM) at (1.850,3.840);
\coordinate (TR) at (3.450,3.840);
\coordinate (LM) at (0.250,2.020);
\coordinate (RM) at (3.450,2.020);
\coordinate (BL) at (0.250,0.360);
\coordinate (BM) at (1.850,0.360);
\coordinate (BR) at (3.450,0.360);
\coordinate (u) at (1.850,2.800);
\coordinate (v) at (1.850,1.850);
\coordinate (l) at (1.050,1.210);
\coordinate (r) at (2.650,1.210);
% Outer eight-cycle of H_1.
\draw[fedge] (TL)--(TM)--(TR)--(RM)--(BR)--(BM)--(BL)--(LM)--cycle;
% Ten interior edges of H_1 (not H or H_1').
\draw[fedge] (TM)--(u) (LM)--(u) (RM)--(u) (u)--(v)
   (LM)--(l) (l)--(v) (v)--(r) (r)--(RM)
   (l)--(BM) (r)--(BM);
% Exactly one external stub from each corner.
\draw[fstubs] (TL)--++(-0.46,0.46);
\draw[fstubs] (TR)--++(0.46,0.46);
\draw[fstubs] (BL)--++(-0.46,-0.46);
\draw[fstubs] (BR)--++(0.46,-0.46);
\foreach \p in {TL,TM,TR,LM,RM,BL,BM,BR,u,v,l,r}{\node[fvertex] at (\p) {};}
\end{scope}
% Configuration 5: reflection-symmetric about the vertical line x=1.950.
% Every noncentral coordinate and every edge has a mirror partner.
\begin{scope}[shift={(5.20,0.00)}]
\coordinate (T) at (1.950,4.340);
\coordinate (UL) at (0.200,3.030);
\coordinate (UC) at (1.150,3.030);
\coordinate (UR) at (2.750,3.030);
\coordinate (TR) at (3.700,3.030);
\coordinate (LM) at (0.200,1.740);
\coordinate (L) at (1.150,1.740);
\coordinate (C) at (1.950,1.740);
\coordinate (R) at (2.750,1.740);
\coordinate (RM) at (3.700,1.740);
\coordinate (BL) at (0.200,0.130);
\coordinate (BM) at (1.950,0.130);
\coordinate (BR) at (3.700,0.130);
% Symmetric upper outline and lower boundary.
\draw[fedge] (T)--(UC) (T)--(UR);
\draw[fedge] (UL)--(UC) (UR)--(TR);
\draw[fedge] (UL)--(LM)--(BL)--(BM)--(BR)--(RM)--(TR);
% Paired edges on the two halves and central spoke.
\draw[fedge] (LM)--(L) (RM)--(R);
\draw[fedge] (UC)--(L) (UR)--(R);
\draw[fedge] (UC)--(C) (UR)--(C);
\draw[fedge] (L)--(BM) (R)--(BM) (C)--(BM);
% One blue stub per external terminal; mirror-compatible slopes.
\draw[fstubs] (T)--++(0,0.49);
\draw[fstubs] (UL)--++(-0.49,0.42);
\draw[fstubs] (TR)--++(0.49,0.42);
\draw[fstubs] (BL)--++(-0.49,-0.43);
\draw[fstubs] (BR)--++(0.49,-0.43);
\foreach \p in {T,UL,UC,UR,TR,LM,L,C,R,RM,BL,BM,BR}{\node[fvertex] at (\p) {};}
\end{scope}
% Configuration 6
\begin{scope}[shift={(10.40,0.00)}]
\coordinate (TL) at (0.220,3.750);
\coordinate (TM) at (1.880,3.750);
\coordinate (TR) at (3.600,3.750);
\coordinate (LM) at (0.220,2.610);
\coordinate (p) at (1.210,2.610);
\coordinate (q) at (2.770,2.610);
\coordinate (RM) at (3.600,2.010);
\coordinate (BL) at (0.220,0.360);
\coordinate (r) at (1.210,1.550);
\coordinate (s) at (2.650,1.200);
\coordinate (u) at (1.970,0.370);
\coordinate (BR) at (3.600,0.360);
\draw[fedge] (TL)--(TM);
\draw[fedge] (TM)--(TR);
\draw[fedge] (TL)--(LM);
\draw[fedge] (LM)--(BL);
\draw[fedge] (TR)--(RM);
\draw[fedge] (RM)--(BR);
\draw[fedge] (BL)--(u);
\draw[fedge] (u)--(BR);
\draw[fedge] (LM)--(p);
\draw[fedge] (TM)--(p);
\draw[fedge] (TM)--(q);
\draw[fedge] (q)--(RM);
\draw[fedge] (p)--(r);
\draw[fedge] (r)--(q);
\draw[fedge] (r)--(s);
\draw[fedge] (s)--(RM);
\draw[fedge] (BL)--(r);
\draw[fedge] (s)--(u);
\draw[fstubs] (TL)--++(-0.47,0.44);
\draw[fstubs] (TR)--++(0.47,0.44);
\draw[fstubs] (BL)--++(-0.47,-0.44);
\draw[fstubs] (BR)--++(0.47,-0.44);
\node[fvertex] at (TL) {};
\node[fvertex] at (TM) {};
\node[fvertex] at (TR) {};
\node[fvertex] at (LM) {};
\node[fvertex] at (p) {};
\node[fvertex] at (q) {};
\node[fvertex] at (RM) {};
\node[fvertex] at (BL) {};
\node[fvertex] at (r) {};
\node[fvertex] at (s) {};
\node[fvertex] at (u) {};
\node[fvertex] at (BR) {};
\end{scope}
\end{tikzpicture}%
}
    \caption{Local configurations of five and seven 4-gons}
    \label{fig:locconfig}
\end{figure}

%\begin{figure}
%    \centering
%    \includegraphics[width=0.7\linewidth]{Fig35-locconfig3.jpg}
%    \caption{Local configurations of five and seven 4-gons}
%    \label{fig:locconfig}
%\end{figure}

Note that our local configurations consist of an odd number of 4-gons by design. Since the local restrictions \emph{reduce} the possible size of $|g_4 - h_4|$, if $f_4$ is even, then one can take $g_4 = h_4$ satisfying both Grinberg's equation and the local constraints. 

\subsection{Pentagulations}

Let $C$ be a cycle. A \emph{pre-pentagulation} of $C$ is a planar graph $G$ whose faces consist exclusively of pentagons and $C$. A \emph{pentagulation} of $C$ is a 3-connected pre-pentagulation of $C$. 

More generally, if $H$ is a graph with boundary $C$, then a pentagulation $H$ is a 3-connected graph $H \cup_C G$ where $G$ is a pre-pentagulation of $C$.

\begin{prop}
    Any planar graph that is 3-connected except possibly on one face has a pentagulation. Moreover, one can find a pentagulation that uses prescribed external edges, provided each collection of four neighboring vertices have two prescribed external edges. 
    %the edges are not too sparse for pentagulation. 
\end{prop}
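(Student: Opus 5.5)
We fill the distinguished face with pentagons in three stages: put a ring of stubs around the boundary, triangulate-like reduce the leftover region with pentagonal pieces, and then prove 3-connectivity by a local argument at each new vertex. Let $H$ be the given planar graph, 3-connected except possibly along a face with boundary cycle $C$ of length $m$. Embed $H$ with $C$ as the outer face, so the pre-pentagulation $G$ is drawn outside $C$ and the new faces between them are pentagons.

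The first step is a \emph{collar} construction. Around $C$ we place a concentric cycle $C'$. Each vertex of $C$ receiving a prescribed external edge is joined to a vertex of $C'$. The arcs of $C$ between consecutive attachment vertices, together with the corresponding arcs of $C'$, bound faces, and we want each of these to be a pentagon. If two attachments are at distance $k$ along $C$, the face has $k+2$ edges besides the two rungs and the $C'$ arc. So we choose the length of the $C'$ arc to be $3-k$ when $k\le 2$. When $k\geq 3$, we subdivide or insert extra rungs to cut the face into pentagons. This is exactly where the hypothesis on prescribed edges enters: among any four consecutive vertices, at least two carry prescribed external edges, which bounds $k\le 3$. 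Gaps of length $3$ can then be closed with a single pentagon whose $C'$ arc has length $0$, i.e., adjacent rungs meeting at one vertex. Unprescribed vertices of $C$ receive no rung, so they keep degree $\ge 3$ from $H$ only if they already had it. We check that every vertex of $C$ has degree at least $3$ in the result. Those with degree $2$ in $H$ must receive a rung, and the hypothesis lets us assume this (or we add one).

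The second step is an induction on the length of the remaining outer cycle. After one collar, the new outer cycle $C'$ has all its vertices of degree $3$ except the ones lacking outward edges. We repeat the collar on $C'$, now free to choose which vertices get rungs, arranging rungs at alternate positions so the next cycle shrinks. A direct count shows that a collar with rungs on every other vertex of a cycle of length $m$ produces a cycle of length about $m/2$ plus a constant. Once the outer cycle is short, with length in a small finite list (e.g.\ $5,6,\dots,10$), we cap it by an explicit finite pentagulation of that cycle, such as the dodecahedron minus a face for length $5$ or a half-dodecahedral cap for others. These caps are verified by hand, with parity handled by adjusting one collar. Euler's formula shows which boundary lengths admit a pentagonal cap with all interior degrees $3$. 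If a length is forbidden, we insert a degree-$4$ vertex once to correct it.

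The final step is 3-connectivity of $H\cup_C G$. Each collar is a ``ladder'' between two cycles with at least three rungs, so removing two vertices cannot disconnect it. Combined with 3-connectivity of $H$ away from $C$, a standard gluing argument applies: a 2-cut would have to separate within a single collar or cap, which we exclude by checking that the faces meet properly, i.e., no two faces share two nonadjacent vertices. I expect the main obstacle to be the boundary bookkeeping: the arithmetic that simultaneously keeps every face a pentagon, makes the cycle lengths decrease to a cappable residue, and respects the prescribed edges. The degenerate short gaps ($k=0,1$) may force adjacent rungs that create $2$-cuts or digons, and I would handle these cases first by explicit local pictures.
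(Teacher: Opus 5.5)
Your first collar matches the paper's first layer $C_1$. Giving the arc between consecutive rungs length $3-k$ (length $0$ when $k=3$) is exactly what makes the faces between $C_0$ and $C_1$ pentagons, and your reading of the hypothesis as forcing $k\le 3$ is the one the construction needs.

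The genuine gap is in closing off the outer face. You claim that halving collars bring the boundary length into a list such as $5,\dots,10$, that each such length has an explicit pentagonal cap, and that Euler's formula decides which lengths are admissible, with a degree-$4$ vertex repairing the rest. None of this is supplied:
\begin{itemize}
\item Euler's formula gives only necessary counting conditions. With all new vertices of degree $3$ these are satisfiable for every boundary length, so it cannot tell you which lengths are cappable.
\item The ``half-dodecahedral'' cap has boundary length $10$ only.
\item For lengths $6$ and $7$, neither a chord nor a single new vertex closes the cycle into pentagons, and you give nothing else.
\item At length $5$ no cap is needed, since the $5$-cycle is itself the outer pentagonal face.
\end{itemize}
Nor are the later collars ``free''. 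Every degree-$2$ vertex of the current cycle must receive a rung, and two rungs at distance $1$ create a new degree-$2$ vertex. So neither the length nor its residue is controlled by your $m/2$ count; this is precisely the bookkeeping you defer.

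The paper avoids caps altogether, and this is the idea you are missing. After $C_1$ it adds $C_2$ with a rung at \emph{every} vertex of $C_1$. This doubles the length, and the new degree-$2$ vertices $b_i$ alternate with the $a_i$. It then adds $C_3$ with rungs exactly at the $b_i$, giving a cycle $c_1\cdots c_n$ all of whose vertices have degree $3$. On such a cycle a chord $c_1c_5$ bounds the pentagon $c_1c_2c_3c_4c_5$, leaves the degrees of $c_2,c_3,c_4$ untouched, and shortens the boundary by exactly $3$. Iterating stops at a $5$-cycle, which is the outer face. The only thing to arrange is $n\equiv 2 \pmod 3$, which the paper does by inserting at most two pentagons, each lengthening $C_1$ by one. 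Replacing your halving collars and caps by this doubling/halving pair followed by chords would close the gap. Your $3$-connectivity discussion, which the paper does not give, could then be kept.
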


\begin{proof}
    The condition on the prescribed external edges ensures no two are spaced five vertices apart and hence cannot be completed to a pentagon. 
    
    Let $H$ denote a planar graph that is 3-connected, except possibly along a cycle $C_0$.
    
    We add three concentric cycles of vertices: $C_1$, $C_2$, and $C_3$ (and then additional edges) as follows:
\begin{align*}
C_1 &= x_1 x_2 x_3 \cdots x_n x_1 \\
C_2 &= a_1 b_1 a_2 b_2 a_3 \cdots a_n b_n a_1 \\
C_3 &= c_1 c_2 c_3 \cdots c_n c_1 
\end{align*}
with edges 
\begin{align*}
&a_i x_i, \quad i = 1, \dots, n \\
&b_i c_i, \quad i = 1, \dots, n.
\end{align*}

    The edges between $C_0$ and $C_1$ are given by the external edges in $H$. This determines the size $n$. If $n$ is not congruent to 2 modulo 3, then we add at most two pentagons, each increasing the size of $C_1$ by one, as shown in Figure \ref{fig:pentagulation}

    \begin{figure}
    \centering
    \includegraphics[width=0.7\linewidth]{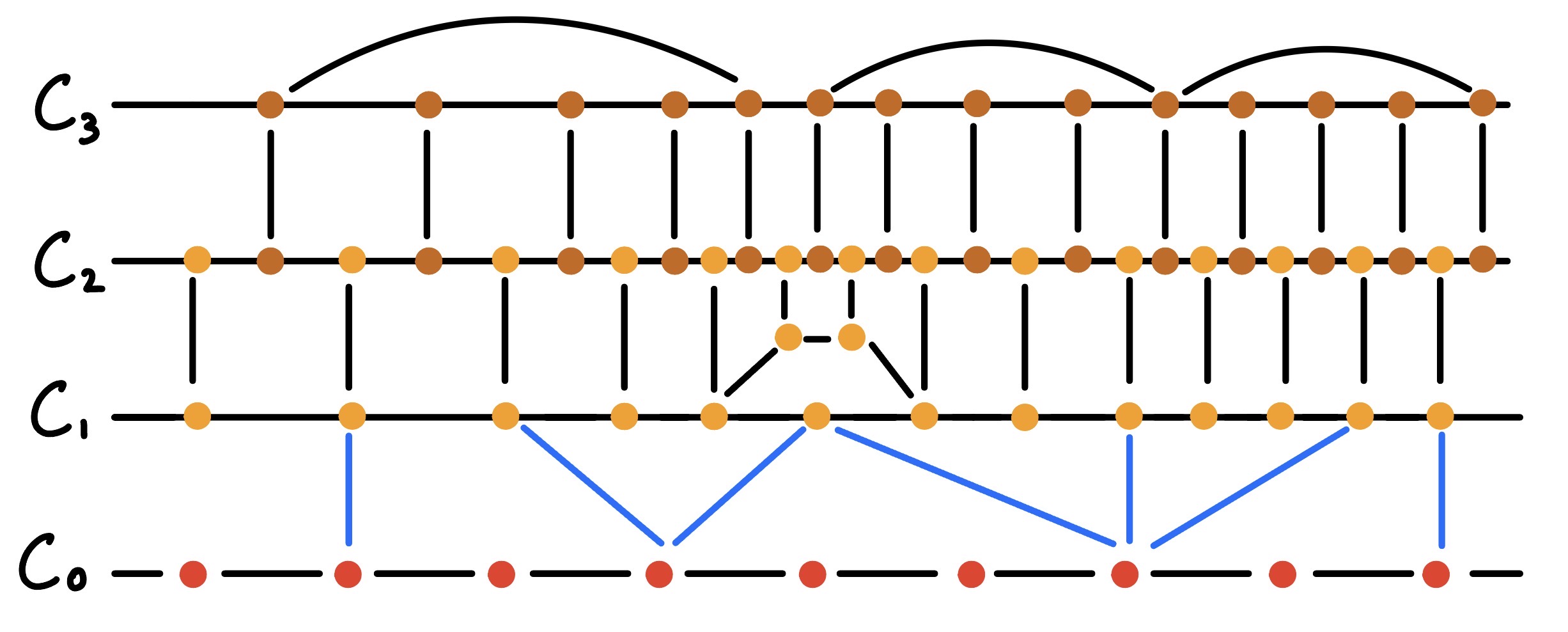}
    \caption{Each row of vertices is the cycle $C_0$, $C_1$, $C_2$, $C_3$. This gives a pentagulation of $H$ with boundary $C_0$, that uses a single pentagon to alter the size of $C_1$.}
    \label{fig:pentagulation}
\end{figure}

    Then we can add edges, e.g. $c_1 c_5$ in $C_3$. Each edge reduces the size of the external boundary by 3. Therefore, we can continue until the external boundary has length five, as desired.

\end{proof}

This proposition gives a recipe for constructing graphs in the Grinberg gap by starting with a restricted local configuration of 4-gons and pentagulating. 

 \subsection{Tutte Graph}

For graphs of small order in the Grinberg gap consisting entirely of 4-gons and 5-gons, we suspect that they arise from a restricted 4-gon configuration pentagulated. 

However, observe that one can add edges to the Tutte graph \cite{Tutte46} until it consists entirely of 4-gons and 5-gons, while preserving the non-Hamiltonian property. 

The key idea, shown in Figure \ref{fig:Tutte_edges}, is to overload the central vertex where the three Tutte fragments meet. Then, as in the original Tutte graph, one can show that any Hamiltonian cycle would necessarily extend a non-existent Hamiltonian path within one fragment. 

\begin{figure}
    \centering
    \includegraphics[width=0.4\linewidth]{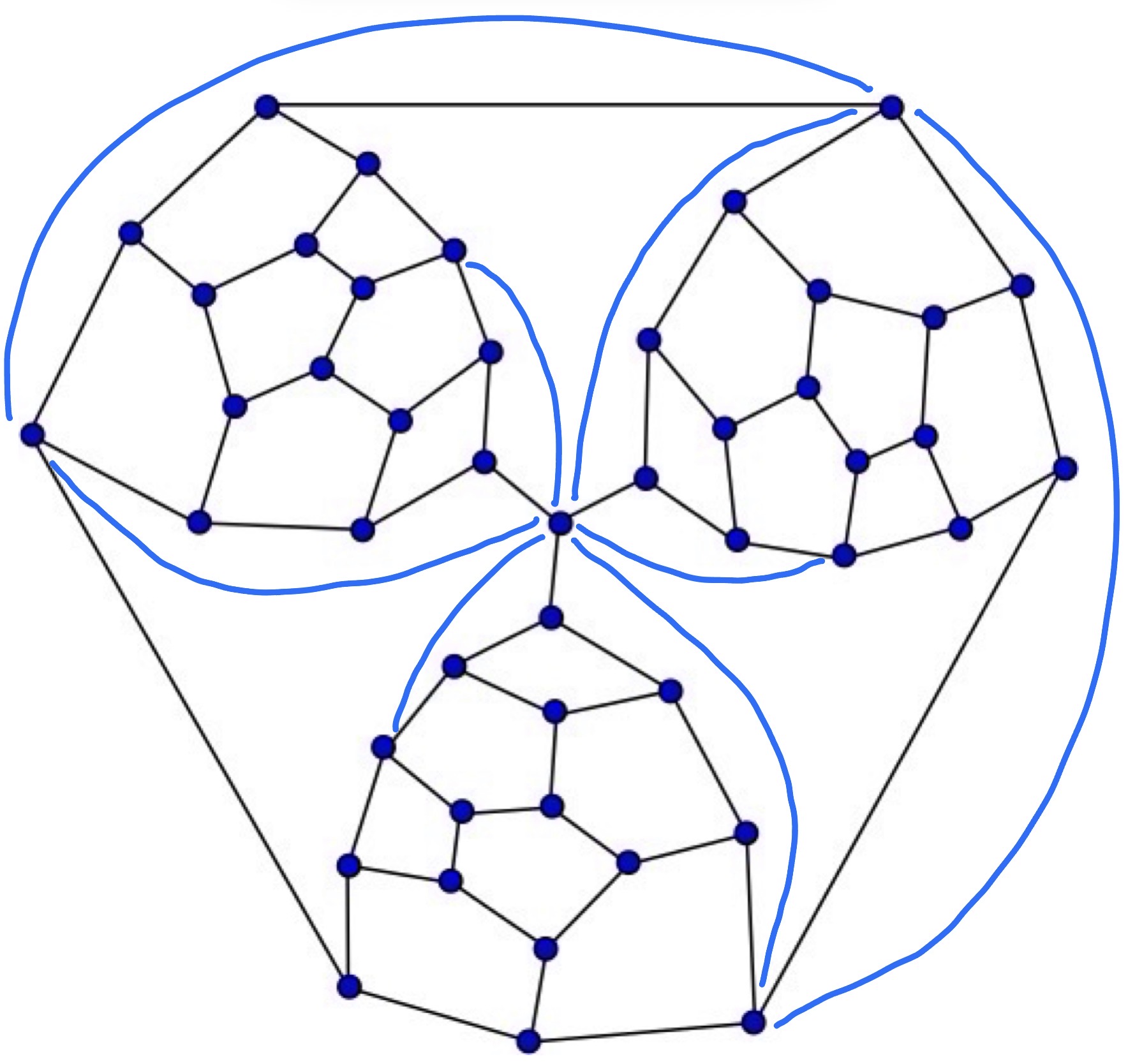}
    \caption{Starting with the Tutte graph and adding six edges to the central vertex and two external edges, one can build a graph consisting entirely of 4-gons and 5-gons without a Hamiltonian cycle.}
    \label{fig:Tutte_edges}
\end{figure}

This example illustrates that the Grinberg gap, even for graphs with solely 4-gons and 5-gons, is more subtle than restrictive 4-gon configurations. 

%Moved to a remark above
%As a final remark, Grinberg's theorem can be used to simplify the cycle fragment analysis in our plus two construction. Namely, of the fragments in $H$ in Figure \ref{fig:H1-HypoGreen}, only $1-3$ and $2-4$ satisfy Grinberg's theorem modulo 3, if the exterior of $H$ consists of pentagons. 

\section{Quadrilaterals and Pentagons}

%Grinberg's theorem is useful when a planar graph consists solely of 4-gons and 5-gons, since
%\[
%2(g_4 - h_4) + 3(g_5-h_5) = 0
%\]
%implies that $g_4-h_4$ is a multiple of 3 and $g_5-h_5$ is even. In particular, $f_5 = g_5 + h_5 = (g_5 - h_5) + 2h_5$ is even. 

Let $G$ be a graph consisting exclusively of 4-gons and 5-gons. So $f= f_4 + f_5$. By double counting the edges, Euler's formula gives 
\[
2e = 2f + 2v - 4 = 2f_4 + 2f_5 + 2v - 4
\]
while counting per face gives
\[
2e = 4 f_4 + 5 f_5.
\]
These equations together yield the following relationship among the triple $(f_4, f_5, v)$  
\[
2 f_4 + 3 f_5 = 2v - 4.
\]

In particular, solving for $f_5$ gives the whole (even) number
\[
f_5 = \frac{2v-2f_4-4}{3}.
\]
Hence modulo 3, $v -f_4 - 1 \equiv 0$ or $v -1 \equiv f_4$. 

Picking small odd values for $f_4$: 3, 5, 7, 9 we can determine the possible values for $v$ between 26 and 40, and compute the excess degree (exc. deg.), i.e., the total degree above 3 edges per vertex.

\[
\begin{tabular}{c|c|c | c | c | c}
   $f_4$ & $v$ & $f_5$ & $f$ & $e$ & exc deg.  \\
 \hline
\multirow{5}{*}{3}  & 26 & 14 & 17 & 41 & 4 \\
                    & 29 & 16 & 19 & 46 & 5 \\
                    & 32 & 18 & 21 & 51 & 6 \\
                    & 35 & 20 & 23 & 56 & 7 \\
                    & 38 & 22 & 25 & 61 & 8 \\
\hline
\multirow{5}{*}{5}  & 28 & 14 & 19 & 45 & 6 \\
                    & 31 & 16 & 21 & 50 & 7 \\
                    & 34 & 18 & 23 & 55 & 8 \\
                    & 37 & 20 & 25 & 60 & 9 \\
                    & 40 & 22 & 27 & 65 & 10 \\
\hline
\multirow{5}{*}{7}  & 27 & 12 & 19 & 44 & 7 \\
                    & 30 & 14 & 21 & 49 & 8 \\
                    & 33 & 16 & 23 & 54 & 9 \\
                    & 36 & 18 & 25 & 59 & 10 \\
                    & 39 & 20 & 27 & 64 & 11 \\
\hline
\multirow{5}{*}{9}  & 26 & 10 & 19 & 43 & 8 \\
                    & 29 & 12 & 21 & 48 & 9 \\
                    & 32 & 14 & 23 & 53 & 10 \\
                    & 35 & 16 & 25 & 58 & 11\\
                    & 38 & 18 & 27 & 63 & 12 \\
\end{tabular}
\]

Using the table, we establish the following:

\begin{prop} \label{prop:Tsai_min}
    Let $G$ be a planar hypohamiltonian graph built from the five 4-gon local configuration $H$ in Figure \ref{fig:local_config}, by pentagulation. Then $|G| \geq 34$ and $|G| \equiv 1 \text{ mod } 3$. 
\end{prop}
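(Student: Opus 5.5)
The statement has two parts, and they use different tools. The congruence comes from the Euler-formula table. The lower bound comes from Grinberg's theorem together with an exhaustive search; the paper's remark says this step uses plantri.

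For the congruence, I would note that a pentagulation of $H$ has exactly five 4-gons, namely those of $H$, and all other faces are pentagons. So $f_4 = 5$. The relation $2f_4 + 3f_5 = 2v - 4$ then gives $3f_5 = 2v - 14$. Reducing modulo 3 gives $2v \equiv 14 \equiv 2$, hence $v \equiv 1 \pmod 3$. This also recovers the $f_4 = 5$ rows of the table: $v = 28, 31, 34, \dots$, with $f_5 = (2v-14)/3$ automatically even.

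For the lower bound, it suffices to rule out $v = 28$ and $v = 31$. I would first collect necessary conditions.

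\begin{itemize}
\item \emph{Global non-Hamiltonicity.} By the discussion of the right panel of Figure \ref{fig:3config}, any Hamiltonian cycle has $g_4, h_4 \ge 2$. Grinberg's theorem modulo 3 then excludes the split $\{2,3\}$, so $G$ is never Hamiltonian. This condition therefore gives no restriction.
\item \emph{Vertex-deleted graphs.} $G - w$ must be Hamiltonian for every $w$, and these are strong constraints. Deleting a cubic vertex $w$ merges its three faces into one face. If all three faces are pentagons, the merged face is a 9-gon, and Grinberg's equation for $G - w$ is $2(g_4 - h_4) + 3(g_5 - h_5) \pm 7 = 0$. Modulo 3 this reads $2(g_4 - h_4) \pm 1 \equiv 0$. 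Since $f_4 = 5$, this forces $g_4 - h_4 \in \{\pm 1, \pm 5\}$ with the correct sign, i.e. a split of the 4-gons into $3$ and $2$, or $5$ and $0$.
\item \emph{Local constraints on those splits.} Using the argument that $v_1, v_2$ force $g_4, h_4 \ge 2$ whenever $v_1, v_2$ survive, plus the sign bookkeeping, I would derive which splits are allowed. Vertices adjacent to 4-gons give further conditions of the same type, and degree-4 vertices give a 12-gon, which must be analysed the same way.
\end{itemize}

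These conditions show which deletions are feasible, but I do not expect them to exclude orders 28 and 31 on their own. That requires a finite search, which I would run as follows.

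\begin{enumerate}
\item Use plantri to generate all 3-connected planar graphs with $v \in \{28, 31\}$, minimum degree 3, and faces only of sizes 4 and 5 with exactly five 4-gons. By the table these have excess degree 6 or 7 and $f_5 = 14$ or $16$.
\item Keep only those containing $H$ as a local configuration in the sense of Remark \ref{rem:loc_config}: the eight boundary vertices, the two internal cubic vertices, and the prescribed blue edges, with no additional edges into $H$.
\item For each survivor, check whether $G - w$ is Hamiltonian for every $w$, and show that some vertex-deleted subgraph fails. For speed, first filter by the mod-3 obstructions above and only then run an exact Hamiltonian cycle test.
\end{enumerate}

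The empty outcome at 28 and 31 vertices, combined with the congruence, gives $|G| \ge 34$. Tsai's examples show that 34 is attained.

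The main obstacle is this last step: making the enumeration of pentagulations of $H$ complete and verifiably correct. The correct plantri options are needed for face sizes 4 and 5, and planar embeddings must be matched to $H$ up to symmetry. I would cross-check the search by confirming that it reproduces Tsai's order-34 graphs when run at $v = 34$.
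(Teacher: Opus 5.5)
Your congruence argument is the paper's argument, and your finite search at orders $28$ and $31$ is the same plantri step the paper uses. That search returns only Wiener's almost hypohamiltonian graph of order $31$, which fails at its exceptional vertex. The Grinberg mod-$3$ prefilters you describe are an optional speed-up that the paper does not need.

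\textbf{The gap.} The problem is the sentence ``it suffices to rule out $v=28$ and $v=31$.'' Your congruence only gives $v\equiv 1 \pmod 3$, so the orders $v\le 25$ with $v\equiv 1 \pmod 3$ are a priori still possible. Nothing in your proposal excludes them:
\begin{itemize}
\item the table only covers $26\le v\le 40$, and it does not say that the $f_4=5$ rows must start at $28$;
\item your plantri search is explicitly restricted to $v\in\{28,31\}$.
\end{itemize}
As written, $|G|\ge 34$ does not follow.

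\textbf{How to close it.} The missing ingredient is the excess-degree count, which is how the paper obtains $|G|\ge 28$.
\begin{itemize}
\item With only $4$- and $5$-gons and $f_4=5$, combine $2e=4f_4+5f_5$ with $3f_5=2v-14$. This gives $\sum_x(\deg x-3)=2e-3v=(v-10)/3$, which is the ``exc deg.'' column of the table.
\item By Remark~\ref{rem:loc_config}, each of the four corner vertices of $H$ carries two prescribed external edges plus two boundary edges, so each has degree $4$.
\item The two side midpoints adjacent to both interior cubic vertices already have degree $4$ inside $H$.
\end{itemize}
So $G$ has at least six degree-$4$ vertices, hence excess degree at least $6$, which forces $(v-10)/3\ge 6$, i.e.\ $v\ge 28$. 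Adding this line closes the gap, and your argument then coincides with the paper's.
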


\begin{proof}
    Notice that the configuration has six degree 4 vertices, and hence $G$ has excess degree at least 6. It follows from the table above that $|G| \geq 28$. 

    Since $f_4 = 5$, $|G|-1 \equiv f_4 = 5 \equiv 2$ modulo 3, so $|G| \equiv 1$ modulo 3. 

    Plantri \cite{BM07} allowed us to perform an exhaustive search of graphs $G$ satisfying the following conditions: (I) $G$ is a pentagulation of $H$, (II) all external vertices in $H$ have degree 4, (III) $|G| = 28$ or $31$. The search returned only Wiener's almost hypohamiltonian example of order 31.
\end{proof}

\begin{rem}
    Suppose the prescribed edges in $H$ are modified so that $H$ has a cubic attaching vertex $v$. Let $w$ denote its unique neighbor in $G-H$. Then notice that $G-w$ is not Hamiltonian, so $G$ is not hypohamiltonian. 
    
    Indeed, assume that $G-w$ is Hamiltonian and fix a Hamiltonian cycle. Removing $w$ creates a $(3\deg(w)-2)$-gon, which we may assume lies outside the cycle. Then Grinberg's formula gives
    \[
    (3\deg(w) -2)(-1) + (4-2)(g_4-h_4) + (5-2)(g_5 - h_5) = 0
    \]
    which modulo 3 implies that $g_4 - h_4 \equiv 2$ modulo 3. 

    But the only option is $(g_4, h_4) = (3, 2)$ so $g_4 - h_4 = 1$, contradicting the existence of a Hamiltonian cycle. 
\end{rem}

This result explains why Tsai's examples have order 34 and 37, as their orders must be congruent to 1 modulo 3. 

\subsection{Conjecture}

Based on the paper \cite{JMOPZ17} by Jooyandeh, McKay, \"{O}stergard, Pettersson, and Zamfirescu, with a few heuristic arguments, we conjecture the following:

 \begin{conj} \label{conj:main}
    The smallest planar hypohamiltonian graph has order 34.    
\end{conj}

First we note that the smallest known lower bound for the order of a planar hypohamiltonian graph is due to Goedgebeur and Zamfirescu. They developed and implemented an algorithm to search for low order planar hypohamiltonian graphs \cite{GZ17}. They prove that the smallest such graphs have order at least 23. Moreover, if the graph has girth 4 (i.e., a 4-gon and no 3-gons) then it has order at least 27. And if the graph has girth 5, then it has order at least 45 by \cite[Theorem 4.6]{JMOPZ17}.\\
\\
Let $G$ be a planar hypohamiltonian graph of order at most 34. \\
\\
\noindent \underline{Heuristic 1}: $G$ does not contain a triangle.  \\
\\
The smallest known hypohamiltonian planar graph containing a triangle has order 216, utilizing a construction of Thomassen combined with Wiener's almost hypohamiltonian graph of order 31. While this is rather surprising at first glance, a closer inspection reveals a stringent condition. If $G$ is hypohamiltonian with $v$ a vertex of a triangle. Then the Hamiltonian cycle for $G - v$ cannot use the remaining edge of the triangle. If it did, one could detour around the triangle to obtain a Hamiltonian cycle for $G$, a contradiction. In particular, every vertex of such a triangle must have degree at least four. 

By \cite[Theorem 4.6]{JMOPZ17}, the girth of $G$ is at most four, meaning if we assume Heuristic 1, then $G$ contains a 4-gon. 

Further, in \cite[Theorem 2.3, Theorem 2.4, Corollary 3.7]{JMOPZ17}, the authors prove that $G$ lies in the Grinberg gap (i.e., is non-Hamiltonian but has a solution to Grinberg's equation). In particular, $G$ cannot contain a single 4-gon with the remaining faces of size congruent to 2 modulo 3.\\
\\
\noindent  \underline{Heuristic 2}: After adding edges, we can assume $G$ consists of 4-gons or 5-gons. \\
\\
For instance, a 6-gon can be divided into two 4-gons and a 7-gon into a 4-gon and a 5-gon. If $G$ is hypohamiltonian, then adding edges to $G$ can only destroy the hypohamiltonian condition if the graph becomes Hamiltonian. So adding the new edges yields a graph in Grinberg gap, then it remains hypohamiltonian. 

We are not conjecturing that maximal edge hypohamiltonian graphs contain exclusively 4 and 5-gons in general. But the more variability in $n$-gons and how they are connected, the more likely Grinberg's theorem will prohibit a Hamiltonian cycle \emph{after} deleting a vertex. This is particularly important for vertex removal on the boundary between 4-gons and 5-gons.  \\
\\
\noindent  \underline{Heuristic 3}: $G$ can be written in the form $G = H \cup_C P$ where $H$ is a 4-gon configuration, $P$ is a 5-gon configuration, and $C$ is the boundary cycle between them. \\
\\
For example, all of the graphs of order 40 in \cite{JMOPZ17} and all the graphs of order less than 40 in \cite{Tsai25} and this paper have this property. Note that the super graph of the Tutte graph above is a counterexample to this heuristic outside of the hypohamiltonian setting and for larger order. Yet the local configurations in Figure \ref{fig:locconfig} appear to give the smallest order 3-connected graphs in the Grinberg gap. 

Notice that the number of 4-gons in $G$, $f_4$, must be odd to prevent the choice $g_4 = h_4$. And we ruled out the case $f_4= 1$ above. \\
%one for $G$ to lie in the Grinberg gap. And $f_4$ cannot be even if $G$ is  
%Notice that the super graph of the Tutte graph above is a counterexample to this heuristic for larger, non-hypohamiltonian graphs. But for hypohamiltonian graphs consisting of 4-gons and 5-gons with at most 34 vertices, 
%is a pentagulation of a local configuration of 4-gons, 
%For $G$ to lie in the Grinberg gap with only 4-gons and 5-gons, we suspect it must be susceptible to local constraints. Therefore, the graph consists of a local configuration of 4-gons, with a pentagulation. 
%Additionally, the number $f_4$ must be odd to prevent the choice $g_4 = h_4$.\\
\\
If $f_4 = 3$ then the configuration in Figure \ref{fig:3config} is the only configuration of 4-gons that yields a graph in the Grinberg gap. However, removing the central vertex $v$ leaves a graph $G - v$ with a single 6-gon and all other faces 5-gons. So $G- v$ is not Hamiltonian by Grinberg's theorem and the original graph $G$ is not hypohamiltonian.\\
\\
If $f_4 = 5$ and $|G| < 34$, then Proposition \ref{prop:Tsai_min} implies the 4-gon configuration cannot be $H$ from Figure \ref{fig:local_config}. By Euler's formula considerations above, the order of $G$ is congruent to 1 modulo 3, hence must be 28 or 31.
The other viable local configurations do not seem to have smaller pentagulations. Further, one cannot apply the plus three construction in reverse (i.e., a minus three construction) in any of these cases. \\
\\
 If $f_4 \geq 7$ then the size of the surrounding 5-gon configuration must shrink. We suspect larger configurations of 4-gons will not lead to smaller order graphs. Note that even the smallest possible 3-connected completion of a cycle by 5-gons will likely contain at least 25 vertices in light of the examples of minimal pentagulations in \cite{K24}. However, one cannot apply those configurations directly since the 3-connected assumption needs to be weakened on the boundary. \\
 \\
 One approach to proving (or disproving) this conjecture would be to computationally check if the conjecture holds for graphs consisting of 4-gons and 5-gons and then analyze each heuristic. Ideally, this paper inspires more work on the subject. 

\section{Appendix: Cycle Fragments}

Here we provide a list of figures containing various cycle fragments:

\begin{figure}[ht]
\centering
\begin{tikzpicture}[
  orange path/.style={draw=orange!48,line width=5.2pt,
                     line cap=round,line join=round}
]

% Compact 5-by-3 arrangement. Every panel uses the same graph.
% Only the two orange paths and the heading change.
\def\SevenCoords{%
  \coordinate (v1) at (0,3.2);
  \coordinate (a)  at (1.15,3.2);
  \coordinate (v2) at (2.30,3.2);
  \coordinate (r)  at (2.30,1.6);
  \coordinate (v3) at (2.30,0);
  \coordinate (b)  at (1.15,0);
  \coordinate (v4) at (0,0);
  \coordinate (l)  at (0,1.6);
  \coordinate (u)  at (1.15,2.45);
  \coordinate (d)  at (1.15,0.85);
  \coordinate (v1u) at (0,3.55);
  \coordinate (v1l) at (-0.35,3.2);
  \coordinate (v2u) at (2.30,3.55);
  \coordinate (v2r) at (2.65,3.2);
  \coordinate (v3r) at (2.65,0);
  \coordinate (v3d) at (2.30,-0.35);
  \coordinate (v4l) at (-0.35,0);
  \coordinate (v4d) at (0,-0.35);
}
\def\SevenBase{%
  \draw[external stub]
     (v1)--(v1u) (v1)--(v1l)
     (v2)--(v2u) (v2)--(v2r)
     (v3)--(v3r) (v3)--(v3d)
     (v4)--(v4l) (v4)--(v4d);
  \draw[graph edge]
     (v1)--(a)--(v2)
     (v1)--(l)--(v4)
     (v4)--(b)--(v3)
     (v2)--(r)--(v3)
     (a)--(u) (u)--(l) (u)--(r)
     (l)--(d) (d)--(r) (d)--(b);
  \foreach \p in {v1,a,v2,r,v3,b,v4,l,u,d}{%
    \node[graph vertex] at (\p) {};
  }
}
% Arguments: x-position, y-position, label, first path, second path.
\def\SevenPanel#1#2#3#4#5{%
  \begin{scope}[shift={(#1,#2)},scale=0.66]
    \SevenCoords
    \draw[orange path] #4;
    \draw[orange path] #5;
    \SevenBase
    \node[font=\scriptsize,text=partpurple!65!black]
      at (1.15,4.06) {$#3$};
  \end{scope}
}

% Row 1
\SevenPanel{0}{7.2}{1\! -\! 2,\;3\! -\! 4}
  {(v1u)--(v1)--(l)--(u)--(a)--(v2)--(v2r)}
  {(v4l)--(v4)--(b)--(d)--(r)--(v3)--(v3d)}
\SevenPanel{2.65}{7.2}{1\! -\! 4,\;2\! -\! 3}
  {(v1u)--(v1)--(l)--(v4)--(v4l)}
  {(v2r)--(v2)--(a)--(u)--(r)--(d)--(b)--(v3)--(v3d)}
\SevenPanel{5.30}{7.2}{1,\;2\! -\! 3}
  {(v1u)--(v1)--(v1l)}
  {(v2r)--(v2)--(a)--(u)--(l)--(v4)--(b)--(d)--(r)--(v3)--(v3d)}
\SevenPanel{7.95}{7.2}{1,\;2\! -\! 4}
  {(v1u)--(v1)--(v1l)}
  {(v2r)--(v2)--(a)--(u)--(r)--(v3)--(b)--(d)--(l)--(v4)--(v4l)}
\SevenPanel{10.60}{7.2}{1,\;3\! -\! 4}
  {(v1u)--(v1)--(v1l)}
  {(v4l)--(v4)--(b)--(d)--(l)--(u)--(a)--(v2)--(r)--(v3)--(v3d)}

% Row 2
\SevenPanel{0}{3.6}{2,\;1\! -\! 3}
  {(v2u)--(v2)--(v2r)}
  {(v1u)--(v1)--(a)--(u)--(l)--(v4)--(b)--(d)--(r)--(v3)--(v3d)}
\SevenPanel{2.65}{3.6}{2,\;1\! -\! 4}
  {(v2u)--(v2)--(v2r)}
  {(v1u)--(v1)--(a)--(u)--(r)--(v3)--(b)--(d)--(l)--(v4)--(v4l)}
\SevenPanel{5.30}{3.6}{2,\;3\! -\! 4}
  {(v2u)--(v2)--(v2r)}
  {(v4l)--(v4)--(l)--(v1)--(a)--(u)--(r)--(d)--(b)--(v3)--(v3d)}
\SevenPanel{7.95}{3.6}{1\! -\! 2,\;3}
  {(v1u)--(v1)--(l)--(v4)--(b)--(d)--(r)--(u)--(a)--(v2)--(v2r)}
  {(v3r)--(v3)--(v3d)}
\SevenPanel{10.60}{3.6}{2\! -\! 4,\;3}
  {(v2r)--(v2)--(a)--(v1)--(l)--(u)--(r)--(d)--(b)--(v4)--(v4l)}
  {(v3r)--(v3)--(v3d)}

% Row 3; centred under the first two rows
\SevenPanel{1.325}{0}{1\! -\! 4,\;3}
  {(v1u)--(v1)--(a)--(v2)--(r)--(u)--(l)--(d)--(b)--(v4)--(v4l)}
  {(v3r)--(v3)--(v3d)}
\SevenPanel{3.975}{0}{1\! -\! 2,\;4}
  {(v1u)--(v1)--(l)--(d)--(b)--(v3)--(r)--(u)--(a)--(v2)--(v2r)}
  {(v4l)--(v4)--(v4d)}
\SevenPanel{6.625}{0}{2\! -\! 3,\;4}
  {(v2r)--(v2)--(a)--(v1)--(l)--(u)--(r)--(d)--(b)--(v3)--(v3d)}
  {(v4l)--(v4)--(v4d)}
\SevenPanel{9.275}{0}{1\! -\! 3,\;4}
  {(v1u)--(v1)--(a)--(v2)--(r)--(u)--(l)--(d)--(b)--(v3)--(v3d)}
  {(v4l)--(v4)--(v4d)}

\end{tikzpicture}
\caption{Cycle fragments in $H$ realize all possible endpoint decompositions. Using symmetries in $H$, it suffices to consider the cycle fragments in the first row.}
\label{fig:H-Ham}
\end{figure}

% FIGURE 08 — corrected 4-column x 2-row layout, 8 verified two-path covers
% Requires the shared styles: graph edge, graph vertex, external stub.
\begin{figure}[ht]
\centering
\begin{tikzpicture}[
  orange path/.style={draw=orange!48,line width=5.2pt,
                      line cap=round,line join=round}
]
% This entire figure is self-contained apart from shared TikZ styles.
\def\EightCoords{%
  \coordinate (TL) at (0,3.2);
  \coordinate (TM) at (1.15,3.2);
  \coordinate (TR) at (2.30,3.2);
  \coordinate (RM) at (2.30,1.6);
  \coordinate (BR) at (2.30,0);
  \coordinate (BM) at (1.15,0);
  \coordinate (BL) at (0,0);
  \coordinate (LM) at (0,1.6);
  \coordinate (u) at (1.15,2.40);
  \coordinate (v) at (1.15,1.55);
  \coordinate (l) at (0.57,0.98);
  \coordinate (r) at (1.73,0.98);
  \coordinate (TLu) at (0,3.58);
  \coordinate (TLl) at (-0.38,3.2);
  \coordinate (TRu) at (2.30,3.58);
  \coordinate (TRr) at (2.68,3.2);
  \coordinate (BRr) at (2.68,0);
  \coordinate (BRd) at (2.30,-0.38);
  \coordinate (BLl) at (-0.38,0);
  \coordinate (BLd) at (0,-0.38);
}
\def\EightBase{%
  \draw[external stub]
     (TL)--(TLu) (TL)--(TLl)
     (TR)--(TRu) (TR)--(TRr)
     (BR)--(BRr) (BR)--(BRd)
     (BL)--(BLl) (BL)--(BLd);
  \draw[graph edge]
     (TL)--(TM)--(TR)--(RM)--(BR)--(BM)--(BL)--(LM)--cycle
     (TM)--(u) (LM)--(u) (RM)--(u) (u)--(v)
     (LM)--(l) (l)--(v) (v)--(r) (r)--(RM)
     (l)--(BM) (r)--(BM);
  \foreach \q in {TL,TM,TR,RM,BR,BM,BL,LM,u,v,l,r}{%
    \node[graph vertex] at (\q) {};
  }
}
% Coordinates are shared; highlights are drawn behind edges/vertices.
\def\EightPanel#1#2#3#4#5{%
  \begin{scope}[shift={(#1,#2)},scale=0.83]
    \EightCoords
    \draw[orange path] #3;
    \draw[orange path] #4;
    \EightBase
    \node[font=\footnotesize,text=purple!75!black,inner sep=0pt] at (1.15,4.17) {$#5$};
  \end{scope}
}

% Panel 1: 1,\;3\! -\!4
\EightPanel{0.00}{4.48}
  {(TLu)--(TL)--(TLl)}
  {(BLl)--(BL)--(LM)--(u)--(TM)--(TR)--(RM)--(r)--(v)--(l)--(BM)--(BR)--(BRr)}
  {1,\;3\! -\!4}

% Panel 2: 1,\;2\! -\!4
\EightPanel{3.25}{4.48}
  {(TLu)--(TL)--(TLl)}
  {(BLl)--(BL)--(BM)--(BR)--(RM)--(r)--(v)--(l)--(LM)--(u)--(TM)--(TR)--(TRr)}
  {1,\;2\! -\!4}

% Panel 3: 1,\;2\! -\!3
\EightPanel{6.50}{4.48}
  {(TLu)--(TL)--(TLl)}
  {(TRr)--(TR)--(TM)--(u)--(LM)--(BL)--(BM)--(l)--(v)--(r)--(RM)--(BR)--(BRr)}
  {1,\;2\! -\!3}

% Panel 4: 1\! -\!2,\;4
\EightPanel{9.75}{4.48}
  {(BLl)--(BL)--(BLd)}
  {(TLu)--(TL)--(LM)--(l)--(BM)--(BR)--(RM)--(r)--(v)--(u)--(TM)--(TR)--(TRr)}
  {1\! -\!2,\;4}

% Panel 5: 1\! -\!3,\;4
\EightPanel{0.00}{0.00}
  {(BLl)--(BL)--(BLd)}
  {(TLu)--(TL)--(LM)--(l)--(v)--(u)--(TM)--(TR)--(RM)--(r)--(BM)--(BR)--(BRr)}
  {1\! -\!3,\;4}

% Panel 6: 2\! -\!3,\;4
\EightPanel{3.25}{0.00}
  {(BLl)--(BL)--(BLd)}
  {(TRr)--(TR)--(RM)--(r)--(v)--(u)--(TM)--(TL)--(LM)--(l)--(BM)--(BR)--(BRr)}
  {2\! -\!3,\;4}

% Panel 7: 1\! -\!2,\;3\! -\!4
\EightPanel{6.50}{0.00}
  {(TLu)--(TL)--(TM)--(TR)--(TRr)}
  {(BLl)--(BL)--(LM)--(u)--(v)--(l)--(BM)--(r)--(RM)--(BR)--(BRr)}
  {1\! -\!2,\;3\! -\!4}

% Panel 8: 1\! -\!4,\;2\! -\!3
\EightPanel{9.75}{0.00}
  {(TLu)--(TL)--(LM)--(BL)--(BLl)}
  {(TRr)--(TR)--(TM)--(u)--(v)--(l)--(BM)--(r)--(RM)--(BR)--(BRr)}
  {1\! -\!4,\;2\! -\!3}

\end{tikzpicture}
  \caption{For each fragment in $H$, there is one in $H_1$ with the same endpoints and vice versa.} 
    \label{fig:H1-Ham}
\end{figure}

% Figure 09 — complete paste-ready figure.
% Shared styles and graph-coordinate macros from previous figures are used.
\begin{figure}[p]
\centering
\resizebox{0.8\linewidth}{!}{%
\begin{tikzpicture}[orange highlight/.style={draw=pathorange,line width=8pt,line cap=round,line join=round}]
% Use TWO special curved edges in H_2; neither appears in orange.
\def\DrawHtwoTwoCurves{%
  \GOuterEdges
  \draw[graph edge] (TM)--(b) (b)--(RM) (a)--(b)
    (a)--(c) (a)--(d) (c)--(RM) (c)--(e)
    (d)--(e) (LM)--(d) (LM)--(f) (f)--(BM) (f)--(e);
  \draw[graph edge,line width=1.6pt] (LM) .. controls (.15,3.08) and (1.05,3.26) .. (b);
  \draw[graph edge,line width=1.6pt] (f) .. controls (1.05,.38) and (2.40,.42) .. (RM);
}
  % Panel 1: 1,\;2-3
  \begin{scope}[shift={(0.00,5.10)},scale=0.72]
    \GCoordsHtwo
    \draw[orange highlight] (TL)--++(-0.45,0);
    \draw[orange highlight] (TL)--++(0,0.45);
    \draw[orange highlight] (TR)--(TM);
    \draw[orange highlight] (TM)--(b);
    \draw[orange highlight] (b)--(RM);
    \draw[orange highlight] (RM)--(c);
    \draw[orange highlight] (c)--(a);
    \draw[orange highlight] (a)--(d);
    \draw[orange highlight] (d)--(e);
    \draw[orange highlight] (e)--(f);
    \draw[orange highlight] (f)--(LM);
    \draw[orange highlight] (LM)--(BL);
    \draw[orange highlight] (BL)--(BM);
    \draw[orange highlight] (BM)--(BR);
    \draw[orange highlight] (TR)--++(0.45,0);
    \draw[orange highlight] (BR)--++(0,-0.45);
    \DrawHtwoTwoCurves
    \GNodesHtwo
    \node[text=partpurple!70!black,font=\small] at (1.3,4.13) {$ 1,\;2-3 $};
  \end{scope}
  % Panel 2: 1,\;2-4
  \begin{scope}[shift={(3.50,5.10)},scale=0.72]
    \GCoordsHtwo
    \draw[orange highlight] (TL)--++(-0.45,0);
    \draw[orange highlight] (TL)--++(0,0.45);
    \draw[orange highlight] (TR)--(TM);
    \draw[orange highlight] (TM)--(b);
    \draw[orange highlight] (b)--(RM);
    \draw[orange highlight] (RM)--(BR);
    \draw[orange highlight] (BR)--(BM);
    \draw[orange highlight] (BM)--(f);
    \draw[orange highlight] (f)--(e);
    \draw[orange highlight] (e)--(c);
    \draw[orange highlight] (c)--(a);
    \draw[orange highlight] (a)--(d);
    \draw[orange highlight] (d)--(LM);
    \draw[orange highlight] (LM)--(BL);
    \draw[orange highlight] (TR)--++(0.45,0);
    \draw[orange highlight] (BL)--++(-0.45,0);
    \DrawHtwoTwoCurves
    \GNodesHtwo
    \node[text=partpurple!70!black,font=\small] at (1.3,4.13) {$ 1,\;2-4 $};
  \end{scope}
  % Panel 3: 1,\;3-4
  \begin{scope}[shift={(7.00,5.10)},scale=0.72]
    \GCoordsHtwo
    \draw[orange highlight] (TL)--++(-0.45,0);
    \draw[orange highlight] (TL)--++(0,0.45);
    \draw[orange highlight] (BR)--(BM);
    \draw[orange highlight] (BM)--(f);
    \draw[orange highlight] (f)--(e);
    \draw[orange highlight] (e)--(c);
    \draw[orange highlight] (c)--(RM);
    \draw[orange highlight] (RM)--(TR);
    \draw[orange highlight] (TR)--(TM);
    \draw[orange highlight] (TM)--(b);
    \draw[orange highlight] (b)--(a);
    \draw[orange highlight] (a)--(d);
    \draw[orange highlight] (d)--(LM);
    \draw[orange highlight] (LM)--(BL);
    \draw[orange highlight] (BR)--++(0,-0.45);
    \draw[orange highlight] (BL)--++(-0.45,0);
    \DrawHtwoTwoCurves
    \GNodesHtwo
    \node[text=partpurple!70!black,font=\small] at (1.3,4.13) {$ 1,\;3-4 $};
  \end{scope}
  % Panel 4: 2,\;1-3
  \begin{scope}[shift={(10.50,5.10)},scale=0.72]
    \GCoordsHtwo
    \draw[orange highlight] (TR)--++(0.45,0);
    \draw[orange highlight] (TR)--++(0,0.45);
    \draw[orange highlight] (TL)--(TM);
    \draw[orange highlight] (TM)--(b);
    \draw[orange highlight] (b)--(RM);
    \draw[orange highlight] (RM)--(c);
    \draw[orange highlight] (c)--(a);
    \draw[orange highlight] (a)--(d);
    \draw[orange highlight] (d)--(e);
    \draw[orange highlight] (e)--(f);
    \draw[orange highlight] (f)--(LM);
    \draw[orange highlight] (LM)--(BL);
    \draw[orange highlight] (BL)--(BM);
    \draw[orange highlight] (BM)--(BR);
    \draw[orange highlight] (TL)--++(0,0.45);
    \draw[orange highlight] (BR)--++(0,-0.45);
    \DrawHtwoTwoCurves
    \GNodesHtwo
    \node[text=partpurple!70!black,font=\small] at (1.3,4.13) {$ 2,\;1-3 $};
  \end{scope}
  % Panel 5: 2,\;1-4
  \begin{scope}[shift={(0.00,0.00)},scale=0.72]
    \GCoordsHtwo
    \draw[orange highlight] (TR)--++(0.45,0);
    \draw[orange highlight] (TR)--++(0,0.45);
    \draw[orange highlight] (TL)--(TM);
    \draw[orange highlight] (TM)--(b);
    \draw[orange highlight] (b)--(RM);
    \draw[orange highlight] (RM)--(BR);
    \draw[orange highlight] (BR)--(BM);
    \draw[orange highlight] (BM)--(f);
    \draw[orange highlight] (f)--(e);
    \draw[orange highlight] (e)--(c);
    \draw[orange highlight] (c)--(a);
    \draw[orange highlight] (a)--(d);
    \draw[orange highlight] (d)--(LM);
    \draw[orange highlight] (LM)--(BL);
    \draw[orange highlight] (TL)--++(0,0.45);
    \draw[orange highlight] (BL)--++(-0.45,0);
    \DrawHtwoTwoCurves
    \GNodesHtwo
    \node[text=partpurple!70!black,font=\small] at (1.3,4.13) {$ 2,\;1-4 $};
  \end{scope}
  % Panel 6: 2,\;3-4
  \begin{scope}[shift={(3.50,0.00)},scale=0.72]
    \GCoordsHtwo
    \draw[orange highlight] (TR)--++(0.45,0);
    \draw[orange highlight] (TR)--++(0,0.45);
    \draw[orange highlight] (BR)--(RM);
    \draw[orange highlight] (RM)--(b);
    \draw[orange highlight] (b)--(TM);
    \draw[orange highlight] (TM)--(TL);
    \draw[orange highlight] (TL)--(LM);
    \draw[orange highlight] (LM)--(d);
    \draw[orange highlight] (d)--(a);
    \draw[orange highlight] (a)--(c);
    \draw[orange highlight] (c)--(e);
    \draw[orange highlight] (e)--(f);
    \draw[orange highlight] (f)--(BM);
    \draw[orange highlight] (BM)--(BL);
    \draw[orange highlight] (BR)--++(0,-0.45);
    \draw[orange highlight] (BL)--++(-0.45,0);
    \DrawHtwoTwoCurves
    \GNodesHtwo
    \node[text=partpurple!70!black,font=\small] at (1.3,4.13) {$ 2,\;3-4 $};
  \end{scope}
  % Panel 7: 1-2,\;3-4
  \begin{scope}[shift={(7.00,0.00)},scale=0.72]
    \GCoordsHtwo
    \draw[orange highlight] (TL)--(TM);
    \draw[orange highlight] (TM)--(TR);
    \draw[orange highlight] (TL)--++(0,0.45);
    \draw[orange highlight] (TR)--++(0.45,0);
    \draw[orange highlight] (BR)--(RM);
    \draw[orange highlight] (RM)--(b);
    \draw[orange highlight] (b)--(a);
    \draw[orange highlight] (a)--(c);
    \draw[orange highlight] (c)--(e);
    \draw[orange highlight] (e)--(d);
    \draw[orange highlight] (d)--(LM);
    \draw[orange highlight] (LM)--(f);
    \draw[orange highlight] (f)--(BM);
    \draw[orange highlight] (BM)--(BL);
    \draw[orange highlight] (BR)--++(0,-0.45);
    \draw[orange highlight] (BL)--++(-0.45,0);
    \DrawHtwoTwoCurves
    \GNodesHtwo
    \node[text=partpurple!70!black,font=\small] at (1.3,4.13) {$ 1-2,\;3-4 $};
  \end{scope}
  % Panel 8: 1-4,\;2-3
  \begin{scope}[shift={(10.50,0.00)},scale=0.72]
    \GCoordsHtwo
    \draw[orange highlight] (TL)--(LM);
    \draw[orange highlight] (LM)--(BL);
    \draw[orange highlight] (TL)--++(0,0.45);
    \draw[orange highlight] (BL)--++(-0.45,0);
    \draw[orange highlight] (TR)--(TM);
    \draw[orange highlight] (TM)--(b);
    \draw[orange highlight] (b)--(RM);
    \draw[orange highlight] (RM)--(c);
    \draw[orange highlight] (c)--(a);
    \draw[orange highlight] (a)--(d);
    \draw[orange highlight] (d)--(e);
    \draw[orange highlight] (e)--(f);
    \draw[orange highlight] (f)--(BM);
    \draw[orange highlight] (BM)--(BR);
    \draw[orange highlight] (TR)--++(0.45,0);
    \draw[orange highlight] (BR)--++(0,-0.45);
    \DrawHtwoTwoCurves
    \GNodesHtwo
    \node[text=partpurple!70!black,font=\small] at (1.3,4.13) {$ 1-4,\;2-3 $};
  \end{scope}
\end{tikzpicture}
}
 \caption{For each fragment in $H$, there is one in $H_2$ with the same endpoints and vice versa.} 
    \label{fig:H2-Ham}
\end{figure}

%\begin{figure}
%    \centering
%    \includegraphics[width=0.6\linewidth]{Fig09-H2-Ham.jpg}
%    \caption{For each fragment in $H$, there is one in $H_2$ with the same endpoints and vice versa.} 
%    \label{fig:H2-Ham}
%\end{figure}

% Figure 10 — complete paste-ready figure.
% Shared styles and graph-coordinate macros from previous figures are used.
\begin{figure}[p]
\centering
\resizebox{0.75\linewidth}{!}{%
\begin{tikzpicture}[orange highlight/.style={draw=pathorange,line width=8pt,line cap=round,line join=round}]
  % Panel 1: 1,\;2-3
  \begin{scope}[shift={(0.00,5.10)},scale=0.72]
    \GCoordsHthree
    % singleton at 1 = TL
    \draw[orange highlight] (TL)--++(-0.45,0);
    \draw[orange highlight] (TL)--++(0,0.45);
    % path 2-3: TR ... BR
    \draw[orange highlight] (TR)--(TM);
    \draw[orange highlight] (TM)--(a);
    \draw[orange highlight] (a)--(c);
    \draw[orange highlight] (c)--(b);
    \draw[orange highlight] (b)--(RM);
    \draw[orange highlight] (RM)--(f);
    \draw[orange highlight] (f)--(d);
    \draw[orange highlight] (d)--(e);
    \draw[orange highlight] (e)--(LM);
    \draw[orange highlight] (LM)--(BL);
    \draw[orange highlight] (BL)--(BM);
    \draw[orange highlight] (BM)--(BR);
    \draw[orange highlight] (TR)--++(0.45,0);
    \draw[orange highlight] (BR)--++(0,-0.45);
    \GEdgesHthree
    \GNodesHthree
    \node[text=partpurple!70!black,font=\small] at (1.3,4.13) {$ 1,\;2-3 $};
  \end{scope}
  % Panel 2: 1,\;2-4
  \begin{scope}[shift={(3.50,5.10)},scale=0.72]
    \GCoordsHthree
    % singleton at 1 = TL
    \draw[orange highlight] (TL)--++(-0.45,0);
    \draw[orange highlight] (TL)--++(0,0.45);
    % path 2-4: TR ... BL
    \draw[orange highlight] (TR)--(TM);
    \draw[orange highlight] (TM)--(a);
    \draw[orange highlight] (a)--(c);
    \draw[orange highlight] (c)--(b);
    \draw[orange highlight] (b)--(RM);
    \draw[orange highlight] (RM)--(BR);
    \draw[orange highlight] (BR)--(BM);
    \draw[orange highlight] (BM)--(f);
    \draw[orange highlight] (f)--(d);
    \draw[orange highlight] (d)--(e);
    \draw[orange highlight] (e)--(LM);
    \draw[orange highlight] (LM)--(BL);
    \draw[orange highlight] (TR)--++(0.45,0);
    \draw[orange highlight] (BL)--++(-0.45,0);
    \GEdgesHthree
    \GNodesHthree
    \node[text=partpurple!70!black,font=\small] at (1.3,4.13) {$ 1,\;2-4 $};
  \end{scope}
  % Panel 3: 1,\;3-4
  \begin{scope}[shift={(7.00,5.10)},scale=0.72]
    \GCoordsHthree
    % singleton at 1 = TL
    \draw[orange highlight] (TL)--++(-0.45,0);
    \draw[orange highlight] (TL)--++(0,0.45);
    % path 3-4: BR ... BL
    \draw[orange highlight] (BR)--(BM);
    \draw[orange highlight] (BM)--(f);
    \draw[orange highlight] (f)--(RM);
    \draw[orange highlight] (RM)--(TR);
    \draw[orange highlight] (TR)--(TM);
    \draw[orange highlight] (TM)--(a);
    \draw[orange highlight] (a)--(c);
    \draw[orange highlight] (c)--(b);
    \draw[orange highlight] (b)--(d);
    \draw[orange highlight] (d)--(e);
    \draw[orange highlight] (e)--(LM);
    \draw[orange highlight] (LM)--(BL);
    \draw[orange highlight] (BR)--++(0,-0.45);
    \draw[orange highlight] (BL)--++(-0.45,0);
    \GEdgesHthree
    \GNodesHthree
    \node[text=partpurple!70!black,font=\small] at (1.3,4.13) {$ 1,\;3-4 $};
  \end{scope}
  % Panel 4: 2,\;1-3
  \begin{scope}[shift={(10.50,5.10)},scale=0.72]
    \GCoordsHthree
    % singleton at 2 = TR
    \draw[orange highlight] (TR)--++(0.45,0);
    \draw[orange highlight] (TR)--++(0,0.45);
    % path 1-3: TL ... BR
    \draw[orange highlight] (TL)--(TM);
    \draw[orange highlight] (TM)--(a);
    \draw[orange highlight] (a)--(c);
    \draw[orange highlight] (c)--(b);
    \draw[orange highlight] (b)--(RM);
    \draw[orange highlight] (RM)--(f);
    \draw[orange highlight] (f)--(d);
    \draw[orange highlight] (d)--(e);
    \draw[orange highlight] (e)--(LM);
    \draw[orange highlight] (LM)--(BL);
    \draw[orange highlight] (BL)--(BM);
    \draw[orange highlight] (BM)--(BR);
    \draw[orange highlight] (TL)--++(0,0.45);
    \draw[orange highlight] (BR)--++(0,-0.45);
    \GEdgesHthree
    \GNodesHthree
    \node[text=partpurple!70!black,font=\small] at (1.3,4.13) {$ 2,\;1-3 $};
  \end{scope}
  % Panel 5: 2,\;1-4
  \begin{scope}[shift={(0.00,0.00)},scale=0.72]
    \GCoordsHthree
    % singleton at 2 = TR
    \draw[orange highlight] (TR)--++(0.45,0);
    \draw[orange highlight] (TR)--++(0,0.45);
    % path 1-4: TL ... BL
    \draw[orange highlight] (TL)--(TM);
    \draw[orange highlight] (TM)--(a);
    \draw[orange highlight] (a)--(c);
    \draw[orange highlight] (c)--(b);
    \draw[orange highlight] (b)--(RM);
    \draw[orange highlight] (RM)--(BR);
    \draw[orange highlight] (BR)--(BM);
    \draw[orange highlight] (BM)--(f);
    \draw[orange highlight] (f)--(d);
    \draw[orange highlight] (d)--(e);
    \draw[orange highlight] (e)--(LM);
    \draw[orange highlight] (LM)--(BL);
    \draw[orange highlight] (TL)--++(0,0.45);
    \draw[orange highlight] (BL)--++(-0.45,0);
    \GEdgesHthree
    \GNodesHthree
    \node[text=partpurple!70!black,font=\small] at (1.3,4.13) {$ 2,\;1-4 $};
  \end{scope}
  % Panel 6: 2,\;3-4
  \begin{scope}[shift={(3.50,0.00)},scale=0.72]
    \GCoordsHthree
    % singleton at 2 = TR
    \draw[orange highlight] (TR)--++(0.45,0);
    \draw[orange highlight] (TR)--++(0,0.45);
    % path 3-4: BR ... BL
    \draw[orange highlight] (BR)--(BM);
    \draw[orange highlight] (BM)--(f);
    \draw[orange highlight] (f)--(RM);
    \draw[orange highlight] (RM)--(b);
    \draw[orange highlight] (b)--(d);
    \draw[orange highlight] (d)--(e);
    \draw[orange highlight] (e)--(c);
    \draw[orange highlight] (c)--(a);
    \draw[orange highlight] (a)--(TM);
    \draw[orange highlight] (TM)--(TL);
    \draw[orange highlight] (TL)--(LM);
    \draw[orange highlight] (LM)--(BL);
    \draw[orange highlight] (BR)--++(0,-0.45);
    \draw[orange highlight] (BL)--++(-0.45,0);
    \GEdgesHthree
    \GNodesHthree
    \node[text=partpurple!70!black,font=\small] at (1.3,4.13) {$ 2,\;3-4 $};
  \end{scope}
  % Panel 7: 1-2,\;3-4
  \begin{scope}[shift={(7.00,0.00)},scale=0.72]
    \GCoordsHthree
    % path 1-2
    \draw[orange highlight] (TL)--(TM);
    \draw[orange highlight] (TM)--(TR);
    \draw[orange highlight] (TL)--++(0,0.45);
    \draw[orange highlight] (TR)--++(0.45,0);
    % path 3-4
    \draw[orange highlight] (BR)--(BM);
    \draw[orange highlight] (BM)--(f);
    \draw[orange highlight] (f)--(RM);
    \draw[orange highlight] (RM)--(b);
    \draw[orange highlight] (b)--(d);
    \draw[orange highlight] (d)--(e);
    \draw[orange highlight] (e)--(c);
    \draw[orange highlight] (c)--(a);
    \draw[orange highlight] (a)--(LM);
    \draw[orange highlight] (LM)--(BL);
    \draw[orange highlight] (BR)--++(0,-0.45);
    \draw[orange highlight] (BL)--++(-0.45,0);
    \GEdgesHthree
    \GNodesHthree
    \node[text=partpurple!70!black,font=\small] at (1.3,4.13) {$ 1-2,\;3-4 $};
  \end{scope}
  % Panel 8: 1-4,\;2-3
  \begin{scope}[shift={(10.50,0.00)},scale=0.72]
    \GCoordsHthree
    % path 1-4
    \draw[orange highlight] (TL)--(LM);
    \draw[orange highlight] (LM)--(BL);
    \draw[orange highlight] (TL)--++(0,0.45);
    \draw[orange highlight] (BL)--++(-0.45,0);
    % path 2-3
    \draw[orange highlight] (TR)--(RM);
    \draw[orange highlight] (RM)--(b);
    \draw[orange highlight] (b)--(TM);
    \draw[orange highlight] (TM)--(a);
    \draw[orange highlight] (a)--(c);
    \draw[orange highlight] (c)--(e);
    \draw[orange highlight] (e)--(d);
    \draw[orange highlight] (d)--(f);
    \draw[orange highlight] (f)--(BM);
    \draw[orange highlight] (BM)--(BR);
    \draw[orange highlight] (TR)--++(0.45,0);
    \draw[orange highlight] (BR)--++(0,-0.45);
    \GEdgesHthree
    \GNodesHthree
    \node[text=partpurple!70!black,font=\small] at (1.3,4.13) {$ 1-4,\;2-3 $};
  \end{scope}
\end{tikzpicture}
}
 \caption{For each fragment in $H$, there is one in $H_3$ with the same endpoints and vice versa.} 
    \label{fig:H3-Ham}
\end{figure}

%\begin{figure}
%    \centering
%    \includegraphics[width=0.6\linewidth]{Fig10-H3-Ham.jpg}
%    \caption{For each fragment in $H$, there is one in $H_3$ with the same endpoints and vice versa.} 
%    \label{fig:H3-Ham}
%\end{figure}

% COMBINED FIGURE 11--13: 35 algorithmically verified Hamiltonian paths.
% A single shared H row followed by H_1, H_2, H_3 (two deleted vertices each).
% Requires existing shared graph preamble (\GCoordsH, \GEdgesH, \GNodesH etc.)
% and tikz plus graphicx. No external figure files are needed.
\begin{figure}[p]
\centering
\resizebox{!}{0.82\textheight}{%
\begin{tikzpicture}
  % Panel-local paths use existing shared graph styles.
  % 1 = top-left, 2 = top-right, 3 = bottom-right, 4 = bottom-left.
  \tikzset{combined path/.style={draw=pathorange,line width=8pt,
       line cap=round,line join=round}}

  % Row 1, column 1: H minus u, 1--3
  % Verified path: TL--TM--TR--RM--v--LM--BL--BM--BR
  \begin{scope}[shift={(0.00,0.00)},yscale=0.88]
    \GCoordsH
    \draw[combined path] (TL)--(TM);
    \draw[combined path] (TM)--(TR);
    \draw[combined path] (TR)--(RM);
    \draw[combined path] (RM)--(v);
    \draw[combined path] (v)--(LM);
    \draw[combined path] (LM)--(BL);
    \draw[combined path] (BL)--(BM);
    \draw[combined path] (BM)--(BR);
    \draw[combined path] (TL)--++(0,0.45);
    \draw[combined path] (BR)--++(0,-0.45);
    \GEdgesH
    \draw[grey edge] (TM)--(u);
    \draw[grey edge] (LM)--(u);
    \draw[grey edge] (RM)--(u);
    \GNodesH
    \node[grey vertex] at (u) {};
  \end{scope}
  % Row 1, column 2: H minus u, 1--4
  % Verified path: TL--TM--TR--RM--BR--BM--v--LM--BL
  \begin{scope}[shift={(4.10,0.00)},yscale=0.88]
    \GCoordsH
    \draw[combined path] (TL)--(TM);
    \draw[combined path] (TM)--(TR);
    \draw[combined path] (TR)--(RM);
    \draw[combined path] (RM)--(BR);
    \draw[combined path] (BR)--(BM);
    \draw[combined path] (BM)--(v);
    \draw[combined path] (v)--(LM);
    \draw[combined path] (LM)--(BL);
    \draw[combined path] (TL)--++(0,0.45);
    \draw[combined path] (BL)--++(-0.45,0);
    \GEdgesH
    \draw[grey edge] (TM)--(u);
    \draw[grey edge] (LM)--(u);
    \draw[grey edge] (RM)--(u);
    \GNodesH
    \node[grey vertex] at (u) {};
  \end{scope}
  % Row 1, column 3: H minus u, 2--3
  % Verified path: TR--TM--TL--LM--BL--BM--v--RM--BR
  \begin{scope}[shift={(8.20,0.00)},yscale=0.88]
    \GCoordsH
    \draw[combined path] (TR)--(TM);
    \draw[combined path] (TM)--(TL);
    \draw[combined path] (TL)--(LM);
    \draw[combined path] (LM)--(BL);
    \draw[combined path] (BL)--(BM);
    \draw[combined path] (BM)--(v);
    \draw[combined path] (v)--(RM);
    \draw[combined path] (RM)--(BR);
    \draw[combined path] (TR)--++(0.45,0);
    \draw[combined path] (BR)--++(0,-0.45);
    \GEdgesH
    \draw[grey edge] (TM)--(u);
    \draw[grey edge] (LM)--(u);
    \draw[grey edge] (RM)--(u);
    \GNodesH
    \node[grey vertex] at (u) {};
  \end{scope}
  % Row 1, column 4: H minus u, 2--4
  % Verified path: TR--TM--TL--LM--v--RM--BR--BM--BL
  \begin{scope}[shift={(12.30,0.00)},yscale=0.88]
    \GCoordsH
    \draw[combined path] (TR)--(TM);
    \draw[combined path] (TM)--(TL);
    \draw[combined path] (TL)--(LM);
    \draw[combined path] (LM)--(v);
    \draw[combined path] (v)--(RM);
    \draw[combined path] (RM)--(BR);
    \draw[combined path] (BR)--(BM);
    \draw[combined path] (BM)--(BL);
    \draw[combined path] (TR)--++(0.45,0);
    \draw[combined path] (BL)--++(-0.45,0);
    \GEdgesH
    \draw[grey edge] (TM)--(u);
    \draw[grey edge] (LM)--(u);
    \draw[grey edge] (RM)--(u);
    \GNodesH
    \node[grey vertex] at (u) {};
  \end{scope}
  % Row 1, column 5: H minus u, 3--4
  % Verified path: BR--RM--TR--TM--TL--LM--v--BM--BL
  \begin{scope}[shift={(16.40,0.00)},yscale=0.88]
    \GCoordsH
    \draw[combined path] (BR)--(RM);
    \draw[combined path] (RM)--(TR);
    \draw[combined path] (TR)--(TM);
    \draw[combined path] (TM)--(TL);
    \draw[combined path] (TL)--(LM);
    \draw[combined path] (LM)--(v);
    \draw[combined path] (v)--(BM);
    \draw[combined path] (BM)--(BL);
    \draw[combined path] (BR)--++(0,-0.45);
    \draw[combined path] (BL)--++(-0.45,0);
    \GEdgesH
    \draw[grey edge] (TM)--(u);
    \draw[grey edge] (LM)--(u);
    \draw[grey edge] (RM)--(u);
    \GNodesH
    \node[grey vertex] at (u) {};
  \end{scope}
  % Row 2, column 1: H1 minus l, 1--3
  % Verified path: TL--TM--TR--RM--r--v--u--LM--BL--BM--BR
  \begin{scope}[shift={(0.00,-5.15)},yscale=0.88]
    \GCoordsHone
    \draw[combined path] (TL)--(TM);
    \draw[combined path] (TM)--(TR);
    \draw[combined path] (TR)--(RM);
    \draw[combined path] (RM)--(r);
    \draw[combined path] (r)--(v);
    \draw[combined path] (v)--(u);
    \draw[combined path] (u)--(LM);
    \draw[combined path] (LM)--(BL);
    \draw[combined path] (BL)--(BM);
    \draw[combined path] (BM)--(BR);
    \draw[combined path] (TL)--++(0,0.45);
    \draw[combined path] (BR)--++(0,-0.45);
    \GEdgesHone
    \draw[grey edge] (LM)--(l);
    \draw[grey edge] (l)--(v);
    \draw[grey edge] (l)--(BM);
    \GNodesHone
    \node[grey vertex] at (l) {};
  \end{scope}
  % Row 2, column 2: H1 minus l, 1--4
  % Verified path: TL--TM--TR--RM--BR--BM--r--v--u--LM--BL
  \begin{scope}[shift={(4.10,-5.15)},yscale=0.88]
    \GCoordsHone
    \draw[combined path] (TL)--(TM);
    \draw[combined path] (TM)--(TR);
    \draw[combined path] (TR)--(RM);
    \draw[combined path] (RM)--(BR);
    \draw[combined path] (BR)--(BM);
    \draw[combined path] (BM)--(r);
    \draw[combined path] (r)--(v);
    \draw[combined path] (v)--(u);
    \draw[combined path] (u)--(LM);
    \draw[combined path] (LM)--(BL);
    \draw[combined path] (TL)--++(0,0.45);
    \draw[combined path] (BL)--++(-0.45,0);
    \GEdgesHone
    \draw[grey edge] (LM)--(l);
    \draw[grey edge] (l)--(v);
    \draw[grey edge] (l)--(BM);
    \GNodesHone
    \node[grey vertex] at (l) {};
  \end{scope}
  % Row 2, column 3: H1 minus l, 2--3
  % Verified path: TR--TM--TL--LM--BL--BM--r--v--u--RM--BR
  \begin{scope}[shift={(8.20,-5.15)},yscale=0.88]
    \GCoordsHone
    \draw[combined path] (TR)--(TM);
    \draw[combined path] (TM)--(TL);
    \draw[combined path] (TL)--(LM);
    \draw[combined path] (LM)--(BL);
    \draw[combined path] (BL)--(BM);
    \draw[combined path] (BM)--(r);
    \draw[combined path] (r)--(v);
    \draw[combined path] (v)--(u);
    \draw[combined path] (u)--(RM);
    \draw[combined path] (RM)--(BR);
    \draw[combined path] (TR)--++(0.45,0);
    \draw[combined path] (BR)--++(0,-0.45);
    \GEdgesHone
    \draw[grey edge] (LM)--(l);
    \draw[grey edge] (l)--(v);
    \draw[grey edge] (l)--(BM);
    \GNodesHone
    \node[grey vertex] at (l) {};
  \end{scope}
  % Row 2, column 4: H1 minus l, 2--4
  % Verified path: TR--TM--TL--LM--u--v--r--RM--BR--BM--BL
  \begin{scope}[shift={(12.30,-5.15)},yscale=0.88]
    \GCoordsHone
    \draw[combined path] (TR)--(TM);
    \draw[combined path] (TM)--(TL);
    \draw[combined path] (TL)--(LM);
    \draw[combined path] (LM)--(u);
    \draw[combined path] (u)--(v);
    \draw[combined path] (v)--(r);
    \draw[combined path] (r)--(RM);
    \draw[combined path] (RM)--(BR);
    \draw[combined path] (BR)--(BM);
    \draw[combined path] (BM)--(BL);
    \draw[combined path] (TR)--++(0.45,0);
    \draw[combined path] (BL)--++(-0.45,0);
    \GEdgesHone
    \draw[grey edge] (LM)--(l);
    \draw[grey edge] (l)--(v);
    \draw[grey edge] (l)--(BM);
    \GNodesHone
    \node[grey vertex] at (l) {};
  \end{scope}
  % Row 2, column 5: H1 minus l, 3--4
  % Verified path: BR--RM--TR--TM--TL--LM--u--v--r--BM--BL
  \begin{scope}[shift={(16.40,-5.15)},yscale=0.88]
    \GCoordsHone
    \draw[combined path] (BR)--(RM);
    \draw[combined path] (RM)--(TR);
    \draw[combined path] (TR)--(TM);
    \draw[combined path] (TM)--(TL);
    \draw[combined path] (TL)--(LM);
    \draw[combined path] (LM)--(u);
    \draw[combined path] (u)--(v);
    \draw[combined path] (v)--(r);
    \draw[combined path] (r)--(BM);
    \draw[combined path] (BM)--(BL);
    \draw[combined path] (BR)--++(0,-0.45);
    \draw[combined path] (BL)--++(-0.45,0);
    \GEdgesHone
    \draw[grey edge] (LM)--(l);
    \draw[grey edge] (l)--(v);
    \draw[grey edge] (l)--(BM);
    \GNodesHone
    \node[grey vertex] at (l) {};
  \end{scope}
  % Row 3, column 1: H1 minus u, 1--3
  % Verified path: TL--TM--TR--RM--r--v--l--LM--BL--BM--BR
  \begin{scope}[shift={(0.00,-9.95)},yscale=0.88]
    \GCoordsHone
    \draw[combined path] (TL)--(TM);
    \draw[combined path] (TM)--(TR);
    \draw[combined path] (TR)--(RM);
    \draw[combined path] (RM)--(r);
    \draw[combined path] (r)--(v);
    \draw[combined path] (v)--(l);
    \draw[combined path] (l)--(LM);
    \draw[combined path] (LM)--(BL);
    \draw[combined path] (BL)--(BM);
    \draw[combined path] (BM)--(BR);
    \draw[combined path] (TL)--++(0,0.45);
    \draw[combined path] (BR)--++(0,-0.45);
    \GEdgesHone
    \draw[grey edge] (TM)--(u);
    \draw[grey edge] (LM)--(u);
    \draw[grey edge] (RM)--(u);
    \draw[grey edge] (u)--(v);
    \GNodesHone
    \node[grey vertex] at (u) {};
  \end{scope}
  % Row 3, column 2: H1 minus u, 1--4
  % Verified path: TL--TM--TR--RM--BR--BM--r--v--l--LM--BL
  \begin{scope}[shift={(4.10,-9.95)},yscale=0.88]
    \GCoordsHone
    \draw[combined path] (TL)--(TM);
    \draw[combined path] (TM)--(TR);
    \draw[combined path] (TR)--(RM);
    \draw[combined path] (RM)--(BR);
    \draw[combined path] (BR)--(BM);
    \draw[combined path] (BM)--(r);
    \draw[combined path] (r)--(v);
    \draw[combined path] (v)--(l);
    \draw[combined path] (l)--(LM);
    \draw[combined path] (LM)--(BL);
    \draw[combined path] (TL)--++(0,0.45);
    \draw[combined path] (BL)--++(-0.45,0);
    \GEdgesHone
    \draw[grey edge] (TM)--(u);
    \draw[grey edge] (LM)--(u);
    \draw[grey edge] (RM)--(u);
    \draw[grey edge] (u)--(v);
    \GNodesHone
    \node[grey vertex] at (u) {};
  \end{scope}
  % Row 3, column 3: H1 minus u, 2--3
  % Verified path: TR--TM--TL--LM--BL--BM--l--v--r--RM--BR
  \begin{scope}[shift={(8.20,-9.95)},yscale=0.88]
    \GCoordsHone
    \draw[combined path] (TR)--(TM);
    \draw[combined path] (TM)--(TL);
    \draw[combined path] (TL)--(LM);
    \draw[combined path] (LM)--(BL);
    \draw[combined path] (BL)--(BM);
    \draw[combined path] (BM)--(l);
    \draw[combined path] (l)--(v);
    \draw[combined path] (v)--(r);
    \draw[combined path] (r)--(RM);
    \draw[combined path] (RM)--(BR);
    \draw[combined path] (TR)--++(0.45,0);
    \draw[combined path] (BR)--++(0,-0.45);
    \GEdgesHone
    \draw[grey edge] (TM)--(u);
    \draw[grey edge] (LM)--(u);
    \draw[grey edge] (RM)--(u);
    \draw[grey edge] (u)--(v);
    \GNodesHone
    \node[grey vertex] at (u) {};
  \end{scope}
  % Row 3, column 4: H1 minus u, 2--4
  % Verified path: TR--TM--TL--LM--l--v--r--RM--BR--BM--BL
  \begin{scope}[shift={(12.30,-9.95)},yscale=0.88]
    \GCoordsHone
    \draw[combined path] (TR)--(TM);
    \draw[combined path] (TM)--(TL);
    \draw[combined path] (TL)--(LM);
    \draw[combined path] (LM)--(l);
    \draw[combined path] (l)--(v);
    \draw[combined path] (v)--(r);
    \draw[combined path] (r)--(RM);
    \draw[combined path] (RM)--(BR);
    \draw[combined path] (BR)--(BM);
    \draw[combined path] (BM)--(BL);
    \draw[combined path] (TR)--++(0.45,0);
    \draw[combined path] (BL)--++(-0.45,0);
    \GEdgesHone
    \draw[grey edge] (TM)--(u);
    \draw[grey edge] (LM)--(u);
    \draw[grey edge] (RM)--(u);
    \draw[grey edge] (u)--(v);
    \GNodesHone
    \node[grey vertex] at (u) {};
  \end{scope}
  % Row 3, column 5: H1 minus u, 3--4
  % Verified path: BR--RM--TR--TM--TL--LM--l--v--r--BM--BL
  \begin{scope}[shift={(16.40,-9.95)},yscale=0.88]
    \GCoordsHone
    \draw[combined path] (BR)--(RM);
    \draw[combined path] (RM)--(TR);
    \draw[combined path] (TR)--(TM);
    \draw[combined path] (TM)--(TL);
    \draw[combined path] (TL)--(LM);
    \draw[combined path] (LM)--(l);
    \draw[combined path] (l)--(v);
    \draw[combined path] (v)--(r);
    \draw[combined path] (r)--(BM);
    \draw[combined path] (BM)--(BL);
    \draw[combined path] (BR)--++(0,-0.45);
    \draw[combined path] (BL)--++(-0.45,0);
    \GEdgesHone
    \draw[grey edge] (TM)--(u);
    \draw[grey edge] (LM)--(u);
    \draw[grey edge] (RM)--(u);
    \draw[grey edge] (u)--(v);
    \GNodesHone
    \node[grey vertex] at (u) {};
  \end{scope}
  % Row 4, column 1: H2 minus b, 1--3
  % Verified path: TL--TM--TR--RM--c--a--d--e--f--LM--BL--BM--BR
  \begin{scope}[shift={(0.00,-15.10)},yscale=0.88]
    \GCoordsHtwo
    \draw[combined path] (TL)--(TM);
    \draw[combined path] (TM)--(TR);
    \draw[combined path] (TR)--(RM);
    \draw[combined path] (RM)--(c);
    \draw[combined path] (c)--(a);
    \draw[combined path] (a)--(d);
    \draw[combined path] (d)--(e);
    \draw[combined path] (e)--(f);
    \draw[combined path] (f)--(LM);
    \draw[combined path] (LM)--(BL);
    \draw[combined path] (BL)--(BM);
    \draw[combined path] (BM)--(BR);
    \draw[combined path] (TL)--++(0,0.45);
    \draw[combined path] (BR)--++(0,-0.45);
    \GEdgesHtwo
    \draw[grey edge] (TM)--(b);
    \draw[grey edge] (b)--(RM);
    \draw[grey edge] (a)--(b);
    \draw[grey edge] (LM) .. controls (0.15,3.08) and (1.05,3.26) .. (b);
    \GNodesHtwo
    \node[grey vertex] at (b) {};
  \end{scope}
  % Row 4, column 2: H2 minus b, 1--4
  % Verified path: TL--TM--TR--RM--BR--BM--f--e--c--a--d--LM--BL
  \begin{scope}[shift={(4.10,-15.10)},yscale=0.88]
    \GCoordsHtwo
    \draw[combined path] (TL)--(TM);
    \draw[combined path] (TM)--(TR);
    \draw[combined path] (TR)--(RM);
    \draw[combined path] (RM)--(BR);
    \draw[combined path] (BR)--(BM);
    \draw[combined path] (BM)--(f);
    \draw[combined path] (f)--(e);
    \draw[combined path] (e)--(c);
    \draw[combined path] (c)--(a);
    \draw[combined path] (a)--(d);
    \draw[combined path] (d)--(LM);
    \draw[combined path] (LM)--(BL);
    \draw[combined path] (TL)--++(0,0.45);
    \draw[combined path] (BL)--++(-0.45,0);
    \GEdgesHtwo
    \draw[grey edge] (TM)--(b);
    \draw[grey edge] (b)--(RM);
    \draw[grey edge] (a)--(b);
    \draw[grey edge] (LM) .. controls (0.15,3.08) and (1.05,3.26) .. (b);
    \GNodesHtwo
    \node[grey vertex] at (b) {};
  \end{scope}
  % Row 4, column 3: H2 minus b, 2--3
  % Verified path: TR--TM--TL--LM--BL--BM--f--e--d--a--c--RM--BR
  \begin{scope}[shift={(8.20,-15.10)},yscale=0.88]
    \GCoordsHtwo
    \draw[combined path] (TR)--(TM);
    \draw[combined path] (TM)--(TL);
    \draw[combined path] (TL)--(LM);
    \draw[combined path] (LM)--(BL);
    \draw[combined path] (BL)--(BM);
    \draw[combined path] (BM)--(f);
    \draw[combined path] (f)--(e);
    \draw[combined path] (e)--(d);
    \draw[combined path] (d)--(a);
    \draw[combined path] (a)--(c);
    \draw[combined path] (c)--(RM);
    \draw[combined path] (RM)--(BR);
    \draw[combined path] (TR)--++(0.45,0);
    \draw[combined path] (BR)--++(0,-0.45);
    \GEdgesHtwo
    \draw[grey edge] (TM)--(b);
    \draw[grey edge] (b)--(RM);
    \draw[grey edge] (a)--(b);
    \draw[grey edge] (LM) .. controls (0.15,3.08) and (1.05,3.26) .. (b);
    \GNodesHtwo
    \node[grey vertex] at (b) {};
  \end{scope}
  % Row 4, column 4: H2 minus b, 2--4
  % Verified path: TR--TM--TL--LM--f--e--d--a--c--RM--BR--BM--BL
  \begin{scope}[shift={(12.30,-15.10)},yscale=0.88]
    \GCoordsHtwo
    \draw[combined path] (TR)--(TM);
    \draw[combined path] (TM)--(TL);
    \draw[combined path] (TL)--(LM);
    \draw[combined path] (LM)--(f);
    \draw[combined path] (f)--(e);
    \draw[combined path] (e)--(d);
    \draw[combined path] (d)--(a);
    \draw[combined path] (a)--(c);
    \draw[combined path] (c)--(RM);
    \draw[combined path] (RM)--(BR);
    \draw[combined path] (BR)--(BM);
    \draw[combined path] (BM)--(BL);
    \draw[combined path] (TR)--++(0.45,0);
    \draw[combined path] (BL)--++(-0.45,0);
    \GEdgesHtwo
    \draw[grey edge] (TM)--(b);
    \draw[grey edge] (b)--(RM);
    \draw[grey edge] (a)--(b);
    \draw[grey edge] (LM) .. controls (0.15,3.08) and (1.05,3.26) .. (b);
    \GNodesHtwo
    \node[grey vertex] at (b) {};
  \end{scope}
  % Row 4, column 5: H2 minus b, 3--4
  % Verified path: BR--RM--TR--TM--TL--LM--d--a--c--e--f--BM--BL
  \begin{scope}[shift={(16.40,-15.10)},yscale=0.88]
    \GCoordsHtwo
    \draw[combined path] (BR)--(RM);
    \draw[combined path] (RM)--(TR);
    \draw[combined path] (TR)--(TM);
    \draw[combined path] (TM)--(TL);
    \draw[combined path] (TL)--(LM);
    \draw[combined path] (LM)--(d);
    \draw[combined path] (d)--(a);
    \draw[combined path] (a)--(c);
    \draw[combined path] (c)--(e);
    \draw[combined path] (e)--(f);
    \draw[combined path] (f)--(BM);
    \draw[combined path] (BM)--(BL);
    \draw[combined path] (BR)--++(0,-0.45);
    \draw[combined path] (BL)--++(-0.45,0);
    \GEdgesHtwo
    \draw[grey edge] (TM)--(b);
    \draw[grey edge] (b)--(RM);
    \draw[grey edge] (a)--(b);
    \draw[grey edge] (LM) .. controls (0.15,3.08) and (1.05,3.26) .. (b);
    \GNodesHtwo
    \node[grey vertex] at (b) {};
  \end{scope}
  % Row 5, column 1: H2 minus c, 1--3
  % Verified path: TL--TM--TR--RM--b--a--d--e--f--LM--BL--BM--BR
  \begin{scope}[shift={(0.00,-19.90)},yscale=0.88]
    \GCoordsHtwo
    \draw[combined path] (TL)--(TM);
    \draw[combined path] (TM)--(TR);
    \draw[combined path] (TR)--(RM);
    \draw[combined path] (RM)--(b);
    \draw[combined path] (b)--(a);
    \draw[combined path] (a)--(d);
    \draw[combined path] (d)--(e);
    \draw[combined path] (e)--(f);
    \draw[combined path] (f)--(LM);
    \draw[combined path] (LM)--(BL);
    \draw[combined path] (BL)--(BM);
    \draw[combined path] (BM)--(BR);
    \draw[combined path] (TL)--++(0,0.45);
    \draw[combined path] (BR)--++(0,-0.45);
    \GEdgesHtwo
    \draw[grey edge] (a)--(c);
    \draw[grey edge] (c)--(RM);
    \draw[grey edge] (c)--(e);
    \GNodesHtwo
    \node[grey vertex] at (c) {};
  \end{scope}
  % Row 5, column 2: H2 minus c, 1--4
  % Verified path: TL--LM--f--e--d--a--b--TM--TR--RM--BR--BM--BL
  \begin{scope}[shift={(4.10,-19.90)},yscale=0.88]
    \GCoordsHtwo
    \draw[combined path] (TL)--(LM);
    \draw[combined path] (LM)--(f);
    \draw[combined path] (f)--(e);
    \draw[combined path] (e)--(d);
    \draw[combined path] (d)--(a);
    \draw[combined path] (a)--(b);
    \draw[combined path] (b)--(TM);
    \draw[combined path] (TM)--(TR);
    \draw[combined path] (TR)--(RM);
    \draw[combined path] (RM)--(BR);
    \draw[combined path] (BR)--(BM);
    \draw[combined path] (BM)--(BL);
    \draw[combined path] (TL)--++(0,0.45);
    \draw[combined path] (BL)--++(-0.45,0);
    \GEdgesHtwo
    \draw[grey edge] (a)--(c);
    \draw[grey edge] (c)--(RM);
    \draw[grey edge] (c)--(e);
    \GNodesHtwo
    \node[grey vertex] at (c) {};
  \end{scope}
  % Row 5, column 3: H2 minus c, 2--3
  % Verified path: TR--TM--TL--LM--BL--BM--f--e--d--a--b--RM--BR
  \begin{scope}[shift={(8.20,-19.90)},yscale=0.88]
    \GCoordsHtwo
    \draw[combined path] (TR)--(TM);
    \draw[combined path] (TM)--(TL);
    \draw[combined path] (TL)--(LM);
    \draw[combined path] (LM)--(BL);
    \draw[combined path] (BL)--(BM);
    \draw[combined path] (BM)--(f);
    \draw[combined path] (f)--(e);
    \draw[combined path] (e)--(d);
    \draw[combined path] (d)--(a);
    \draw[combined path] (a)--(b);
    \draw[combined path] (b)--(RM);
    \draw[combined path] (RM)--(BR);
    \draw[combined path] (TR)--++(0.45,0);
    \draw[combined path] (BR)--++(0,-0.45);
    \GEdgesHtwo
    \draw[grey edge] (a)--(c);
    \draw[grey edge] (c)--(RM);
    \draw[grey edge] (c)--(e);
    \GNodesHtwo
    \node[grey vertex] at (c) {};
  \end{scope}
  % Row 5, column 4: H2 minus c, 2--4
  % Verified path: TR--RM--BR--BM--f--e--d--a--b--TM--TL--LM--BL
  \begin{scope}[shift={(12.30,-19.90)},yscale=0.88]
    \GCoordsHtwo
    \draw[combined path] (TR)--(RM);
    \draw[combined path] (RM)--(BR);
    \draw[combined path] (BR)--(BM);
    \draw[combined path] (BM)--(f);
    \draw[combined path] (f)--(e);
    \draw[combined path] (e)--(d);
    \draw[combined path] (d)--(a);
    \draw[combined path] (a)--(b);
    \draw[combined path] (b)--(TM);
    \draw[combined path] (TM)--(TL);
    \draw[combined path] (TL)--(LM);
    \draw[combined path] (LM)--(BL);
    \draw[combined path] (TR)--++(0.45,0);
    \draw[combined path] (BL)--++(-0.45,0);
    \GEdgesHtwo
    \draw[grey edge] (a)--(c);
    \draw[grey edge] (c)--(RM);
    \draw[grey edge] (c)--(e);
    \GNodesHtwo
    \node[grey vertex] at (c) {};
  \end{scope}
  % Row 5, column 5: H2 minus c, 3--4
  % Verified path: BR--BM--f--e--d--a--b--RM--TR--TM--TL--LM--BL
  \begin{scope}[shift={(16.40,-19.90)},yscale=0.88]
    \GCoordsHtwo
    \draw[combined path] (BR)--(BM);
    \draw[combined path] (BM)--(f);
    \draw[combined path] (f)--(e);
    \draw[combined path] (e)--(d);
    \draw[combined path] (d)--(a);
    \draw[combined path] (a)--(b);
    \draw[combined path] (b)--(RM);
    \draw[combined path] (RM)--(TR);
    \draw[combined path] (TR)--(TM);
    \draw[combined path] (TM)--(TL);
    \draw[combined path] (TL)--(LM);
    \draw[combined path] (LM)--(BL);
    \draw[combined path] (BR)--++(0,-0.45);
    \draw[combined path] (BL)--++(-0.45,0);
    \GEdgesHtwo
    \draw[grey edge] (a)--(c);
    \draw[grey edge] (c)--(RM);
    \draw[grey edge] (c)--(e);
    \GNodesHtwo
    \node[grey vertex] at (c) {};
  \end{scope}
  % Row 6, column 1: H3 minus a, 1--3
  % Verified path: TL--TM--TR--RM--f--d--b--c--e--LM--BL--BM--BR
  \begin{scope}[shift={(0.00,-25.05)},yscale=0.88]
    \GCoordsHthree
    \draw[combined path] (TL)--(TM);
    \draw[combined path] (TM)--(TR);
    \draw[combined path] (TR)--(RM);
    \draw[combined path] (RM)--(f);
    \draw[combined path] (f)--(d);
    \draw[combined path] (d)--(b);
    \draw[combined path] (b)--(c);
    \draw[combined path] (c)--(e);
    \draw[combined path] (e)--(LM);
    \draw[combined path] (LM)--(BL);
    \draw[combined path] (BL)--(BM);
    \draw[combined path] (BM)--(BR);
    \draw[combined path] (TL)--++(0,0.45);
    \draw[combined path] (BR)--++(0,-0.45);
    \GEdgesHthree
    \draw[grey edge] (TM)--(a);
    \draw[grey edge] (LM)--(a);
    \draw[grey edge] (a)--(c);
    \GNodesHthree
    \node[grey vertex] at (a) {};
  \end{scope}
  % Row 6, column 2: H3 minus a, 1--4
  % Verified path: TL--TM--TR--RM--BR--BM--f--d--b--c--e--LM--BL
  \begin{scope}[shift={(4.10,-25.05)},yscale=0.88]
    \GCoordsHthree
    \draw[combined path] (TL)--(TM);
    \draw[combined path] (TM)--(TR);
    \draw[combined path] (TR)--(RM);
    \draw[combined path] (RM)--(BR);
    \draw[combined path] (BR)--(BM);
    \draw[combined path] (BM)--(f);
    \draw[combined path] (f)--(d);
    \draw[combined path] (d)--(b);
    \draw[combined path] (b)--(c);
    \draw[combined path] (c)--(e);
    \draw[combined path] (e)--(LM);
    \draw[combined path] (LM)--(BL);
    \draw[combined path] (TL)--++(0,0.45);
    \draw[combined path] (BL)--++(-0.45,0);
    \GEdgesHthree
    \draw[grey edge] (TM)--(a);
    \draw[grey edge] (LM)--(a);
    \draw[grey edge] (a)--(c);
    \GNodesHthree
    \node[grey vertex] at (a) {};
  \end{scope}
  % Row 6, column 3: H3 minus a, 2--3
  % Verified path: TR--TM--TL--LM--BL--BM--e--c--b--d--f--RM--BR
  \begin{scope}[shift={(8.20,-25.05)},yscale=0.88]
    \GCoordsHthree
    \draw[combined path] (TR)--(TM);
    \draw[combined path] (TM)--(TL);
    \draw[combined path] (TL)--(LM);
    \draw[combined path] (LM)--(BL);
    \draw[combined path] (BL)--(BM);
    \draw[combined path] (BM)--(e);
    \draw[combined path] (e)--(c);
    \draw[combined path] (c)--(b);
    \draw[combined path] (b)--(d);
    \draw[combined path] (d)--(f);
    \draw[combined path] (f)--(RM);
    \draw[combined path] (RM)--(BR);
    \draw[combined path] (TR)--++(0.45,0);
    \draw[combined path] (BR)--++(0,-0.45);
    \GEdgesHthree
    \draw[grey edge] (TM)--(a);
    \draw[grey edge] (LM)--(a);
    \draw[grey edge] (a)--(c);
    \GNodesHthree
    \node[grey vertex] at (a) {};
  \end{scope}
  % Row 6, column 4: H3 minus a, 2--4
  % Verified path: TR--TM--TL--LM--e--c--b--d--f--RM--BR--BM--BL
  \begin{scope}[shift={(12.30,-25.05)},yscale=0.88]
    \GCoordsHthree
    \draw[combined path] (TR)--(TM);
    \draw[combined path] (TM)--(TL);
    \draw[combined path] (TL)--(LM);
    \draw[combined path] (LM)--(e);
    \draw[combined path] (e)--(c);
    \draw[combined path] (c)--(b);
    \draw[combined path] (b)--(d);
    \draw[combined path] (d)--(f);
    \draw[combined path] (f)--(RM);
    \draw[combined path] (RM)--(BR);
    \draw[combined path] (BR)--(BM);
    \draw[combined path] (BM)--(BL);
    \draw[combined path] (TR)--++(0.45,0);
    \draw[combined path] (BL)--++(-0.45,0);
    \GEdgesHthree
    \draw[grey edge] (TM)--(a);
    \draw[grey edge] (LM)--(a);
    \draw[grey edge] (a)--(c);
    \GNodesHthree
    \node[grey vertex] at (a) {};
  \end{scope}
  % Row 6, column 5: H3 minus a, 3--4
  % Verified path: BR--RM--TR--TM--TL--LM--e--c--b--d--f--BM--BL
  \begin{scope}[shift={(16.40,-25.05)},yscale=0.88]
    \GCoordsHthree
    \draw[combined path] (BR)--(RM);
    \draw[combined path] (RM)--(TR);
    \draw[combined path] (TR)--(TM);
    \draw[combined path] (TM)--(TL);
    \draw[combined path] (TL)--(LM);
    \draw[combined path] (LM)--(e);
    \draw[combined path] (e)--(c);
    \draw[combined path] (c)--(b);
    \draw[combined path] (b)--(d);
    \draw[combined path] (d)--(f);
    \draw[combined path] (f)--(BM);
    \draw[combined path] (BM)--(BL);
    \draw[combined path] (BR)--++(0,-0.45);
    \draw[combined path] (BL)--++(-0.45,0);
    \GEdgesHthree
    \draw[grey edge] (TM)--(a);
    \draw[grey edge] (LM)--(a);
    \draw[grey edge] (a)--(c);
    \GNodesHthree
    \node[grey vertex] at (a) {};
  \end{scope}
  % Row 7, column 1: H3 minus b, 1--3
  % Verified path: TL--TM--TR--RM--f--d--e--c--a--LM--BL--BM--BR
  \begin{scope}[shift={(0.00,-29.85)},yscale=0.88]
    \GCoordsHthree
    \draw[combined path] (TL)--(TM);
    \draw[combined path] (TM)--(TR);
    \draw[combined path] (TR)--(RM);
    \draw[combined path] (RM)--(f);
    \draw[combined path] (f)--(d);
    \draw[combined path] (d)--(e);
    \draw[combined path] (e)--(c);
    \draw[combined path] (c)--(a);
    \draw[combined path] (a)--(LM);
    \draw[combined path] (LM)--(BL);
    \draw[combined path] (BL)--(BM);
    \draw[combined path] (BM)--(BR);
    \draw[combined path] (TL)--++(0,0.45);
    \draw[combined path] (BR)--++(0,-0.45);
    \GEdgesHthree
    \draw[grey edge] (TM)--(b);
    \draw[grey edge] (c)--(b);
    \draw[grey edge] (b)--(RM);
    \draw[grey edge] (b)--(d);
    \GNodesHthree
    \node[grey vertex] at (b) {};
  \end{scope}
  % Row 7, column 2: H3 minus b, 1--4
  % Verified path: TL--TM--TR--RM--BR--BM--f--d--e--c--a--LM--BL
  \begin{scope}[shift={(4.10,-29.85)},yscale=0.88]
    \GCoordsHthree
    \draw[combined path] (TL)--(TM);
    \draw[combined path] (TM)--(TR);
    \draw[combined path] (TR)--(RM);
    \draw[combined path] (RM)--(BR);
    \draw[combined path] (BR)--(BM);
    \draw[combined path] (BM)--(f);
    \draw[combined path] (f)--(d);
    \draw[combined path] (d)--(e);
    \draw[combined path] (e)--(c);
    \draw[combined path] (c)--(a);
    \draw[combined path] (a)--(LM);
    \draw[combined path] (LM)--(BL);
    \draw[combined path] (TL)--++(0,0.45);
    \draw[combined path] (BL)--++(-0.45,0);
    \GEdgesHthree
    \draw[grey edge] (TM)--(b);
    \draw[grey edge] (c)--(b);
    \draw[grey edge] (b)--(RM);
    \draw[grey edge] (b)--(d);
    \GNodesHthree
    \node[grey vertex] at (b) {};
  \end{scope}
  % Row 7, column 3: H3 minus b, 2--3
  % Verified path: TR--RM--f--d--e--c--a--TM--TL--LM--BL--BM--BR
  \begin{scope}[shift={(8.20,-29.85)},yscale=0.88]
    \GCoordsHthree
    \draw[combined path] (TR)--(RM);
    \draw[combined path] (RM)--(f);
    \draw[combined path] (f)--(d);
    \draw[combined path] (d)--(e);
    \draw[combined path] (e)--(c);
    \draw[combined path] (c)--(a);
    \draw[combined path] (a)--(TM);
    \draw[combined path] (TM)--(TL);
    \draw[combined path] (TL)--(LM);
    \draw[combined path] (LM)--(BL);
    \draw[combined path] (BL)--(BM);
    \draw[combined path] (BM)--(BR);
    \draw[combined path] (TR)--++(0.45,0);
    \draw[combined path] (BR)--++(0,-0.45);
    \GEdgesHthree
    \draw[grey edge] (TM)--(b);
    \draw[grey edge] (c)--(b);
    \draw[grey edge] (b)--(RM);
    \draw[grey edge] (b)--(d);
    \GNodesHthree
    \node[grey vertex] at (b) {};
  \end{scope}
  % Row 7, column 4: H3 minus b, 2--4
  % Verified path: TR--TM--TL--LM--a--c--e--d--f--RM--BR--BM--BL
  \begin{scope}[shift={(12.30,-29.85)},yscale=0.88]
    \GCoordsHthree
    \draw[combined path] (TR)--(TM);
    \draw[combined path] (TM)--(TL);
    \draw[combined path] (TL)--(LM);
    \draw[combined path] (LM)--(a);
    \draw[combined path] (a)--(c);
    \draw[combined path] (c)--(e);
    \draw[combined path] (e)--(d);
    \draw[combined path] (d)--(f);
    \draw[combined path] (f)--(RM);
    \draw[combined path] (RM)--(BR);
    \draw[combined path] (BR)--(BM);
    \draw[combined path] (BM)--(BL);
    \draw[combined path] (TR)--++(0.45,0);
    \draw[combined path] (BL)--++(-0.45,0);
    \GEdgesHthree
    \draw[grey edge] (TM)--(b);
    \draw[grey edge] (c)--(b);
    \draw[grey edge] (b)--(RM);
    \draw[grey edge] (b)--(d);
    \GNodesHthree
    \node[grey vertex] at (b) {};
  \end{scope}
  % Row 7, column 5: H3 minus b, 3--4
  % Verified path: BR--RM--TR--TM--TL--LM--a--c--e--d--f--BM--BL
  \begin{scope}[shift={(16.40,-29.85)},yscale=0.88]
    \GCoordsHthree
    \draw[combined path] (BR)--(RM);
    \draw[combined path] (RM)--(TR);
    \draw[combined path] (TR)--(TM);
    \draw[combined path] (TM)--(TL);
    \draw[combined path] (TL)--(LM);
    \draw[combined path] (LM)--(a);
    \draw[combined path] (a)--(c);
    \draw[combined path] (c)--(e);
    \draw[combined path] (e)--(d);
    \draw[combined path] (d)--(f);
    \draw[combined path] (f)--(BM);
    \draw[combined path] (BM)--(BL);
    \draw[combined path] (BR)--++(0,-0.45);
    \draw[combined path] (BL)--++(-0.45,0);
    \GEdgesHthree
    \draw[grey edge] (TM)--(b);
    \draw[grey edge] (c)--(b);
    \draw[grey edge] (b)--(RM);
    \draw[grey edge] (b)--(d);
    \GNodesHthree
    \node[grey vertex] at (b) {};
  \end{scope}
  % Column labels used for every row.
  \node[font=\normalsize,text=partpurple] at (1.30,4.67) {$ 1\! -\! 3 $};
  \node[font=\normalsize,text=partpurple] at (5.40,4.67) {$ 1\! -\! 4 $};
  \node[font=\normalsize,text=partpurple] at (9.50,4.67) {$ 2\! -\! 3 $};
  \node[font=\normalsize,text=partpurple] at (13.60,4.67) {$ 2\! -\! 4 $};
  \node[font=\normalsize,text=partpurple] at (17.70,4.67) {$ 3\! -\! 4 $};
  % A subtle rule separates the shared H reference row from the larger configurations.
  \draw[gray!55,line width=0.5pt] (-0.9,-0.78)--(19.9,-0.78);
  \node[font=\Large] at (-0.92,1.75) {$H$};
  \node[font=\Large] at (-0.95,-6.02) {$H_1$};
  \node[font=\Large] at (-0.95,-15.96) {$H_2$};
  \node[font=\Large] at (-0.95,-25.92) {$H_3$};
\end{tikzpicture}%
}
\begingroup\footnotesize\caption{The five column headings give the endpoint decomposition in the common boundary of $H$. Grey vertices and incident edges are deleted. The thick orange edges depict a cycle fragment. 
The first row records the common reference $H$,
followed by the two deletion cases for each of $H_1$, $H_2$, and $H_3$. %For each green $w_i \in H_i$, the graph $H_i - w_i$ has a collection of cycle fragments with endpoint decomposition matching $H- w$, for an internal green $w$. %Every endpoint decomposition in $H-v$ is realized by some cycle fragment in $H_i-w$
}
\label{fig:hypogreen}
\endgroup
\end{figure}

%\begin{figure}
%    \centering
%    \includegraphics[width=0.8\linewidth]{Fig11-H1-HypoGreen.jpg}
%    \caption{Each possible cycle fragment in $H_1 -v$ corresponds to a cycle fragment in $H - w$, for some green $w$. Up to symmetry, there are only two such vertices $v$.}
%    \label{fig:H1-HypoGreen}
%\end{figure}

%\begin{figure}
%    \centering
 %   \includegraphics[width=0.8\linewidth]{Fig12-H2-HypoGreen.jpg}
 %   \caption{Each possible cycle fragment in $H_2 -v$ corresponds to a cycle fragment in $H-w$, for some green $w$. Up to symmetry, there are only two such vertices $v$.}
 %   \label{fig:H2-HypoGreen}
%\end{figure}

%\begin{figure}
%    \centering
%    \includegraphics[width=0.8\linewidth]{Fig13-H3-HypoGreen.jpg}
%    \caption{Each possible cycle fragment in $H_3 -v$ corresponds to a cycle fragment in $H-w$, for some green $w$. Up to symmetry, there are only two such vertices $v$.}
%    \label{fig:H3-HypoGreen}
%\end{figure}

% Optional two-page alternative. Both parts belong to a single figure number.
\begin{figure}[p]
\centering
\resizebox{!}{0.70\textheight}{%
\begin{tikzpicture}
  \tikzset{combined path/.style={draw=pathorange,line width=8pt,line cap=round,line join=round}}
  % Row 1, column 1: H minus RM; 1-4,2,3
  \begin{scope}[shift={(0.00,0.00)},yscale=0.88]
    \GCoordsH
    \draw[combined path] (TL)--(TM);
    \draw[combined path] (TM)--(u);
    \draw[combined path] (u)--(LM);
    \draw[combined path] (LM)--(v);
    \draw[combined path] (v)--(BM);
    \draw[combined path] (BM)--(BL);
    \draw[combined path] (TL)--++(0.00,0.45);
    \draw[combined path] (BL)--++(-0.45,0.00);
    \draw[combined path] (TR)--++(0.00,0.45);
    \draw[combined path] (TR)--++(0.45,0.00);
    \draw[combined path] (BR)--++(0.45,0.00);
    \draw[combined path] (BR)--++(0.00,-0.45);
    \GEdgesH
    \draw[grey edge] (TR)--(RM);
    \draw[grey edge] (RM)--(BR);
    \draw[grey edge] (RM)--(u);
    \draw[grey edge] (RM)--(v);
    \GNodesH
    \node[grey vertex] at (RM) {};
  \end{scope}
  % Row 1, column 2: H minus RM; 1-3,2,4
  \begin{scope}[shift={(4.10,0.00)},yscale=0.88]
    \GCoordsH
    \draw[combined path] (TL)--(TM);
    \draw[combined path] (TM)--(u);
    \draw[combined path] (u)--(LM);
    \draw[combined path] (LM)--(v);
    \draw[combined path] (v)--(BM);
    \draw[combined path] (BM)--(BR);
    \draw[combined path] (TL)--++(0.00,0.45);
    \draw[combined path] (BR)--++(0.00,-0.45);
    \draw[combined path] (TR)--++(0.00,0.45);
    \draw[combined path] (TR)--++(0.45,0.00);
    \draw[combined path] (BL)--++(-0.45,0.00);
    \draw[combined path] (BL)--++(0.00,-0.45);
    \GEdgesH
    \draw[grey edge] (TR)--(RM);
    \draw[grey edge] (RM)--(BR);
    \draw[grey edge] (RM)--(u);
    \draw[grey edge] (RM)--(v);
    \GNodesH
    \node[grey vertex] at (RM) {};
  \end{scope}
  % Row 1, column 3: H minus RM; 1,2-4,3
  \begin{scope}[shift={(8.20,0.00)},yscale=0.88]
    \GCoordsH
    \draw[combined path] (TR)--(TM);
    \draw[combined path] (TM)--(u);
    \draw[combined path] (u)--(LM);
    \draw[combined path] (LM)--(v);
    \draw[combined path] (v)--(BM);
    \draw[combined path] (BM)--(BL);
    \draw[combined path] (TR)--++(0.45,0.00);
    \draw[combined path] (BL)--++(-0.45,0.00);
    \draw[combined path] (TL)--++(-0.45,0.00);
    \draw[combined path] (TL)--++(0.00,0.45);
    \draw[combined path] (BR)--++(0.45,0.00);
    \draw[combined path] (BR)--++(0.00,-0.45);
    \GEdgesH
    \draw[grey edge] (TR)--(RM);
    \draw[grey edge] (RM)--(BR);
    \draw[grey edge] (RM)--(u);
    \draw[grey edge] (RM)--(v);
    \GNodesH
    \node[grey vertex] at (RM) {};
  \end{scope}
  % Row 1, column 4: H minus RM; 1,2-3,4
  \begin{scope}[shift={(12.30,0.00)},yscale=0.88]
    \GCoordsH
    \draw[combined path] (TR)--(TM);
    \draw[combined path] (TM)--(u);
    \draw[combined path] (u)--(LM);
    \draw[combined path] (LM)--(v);
    \draw[combined path] (v)--(BM);
    \draw[combined path] (BM)--(BR);
    \draw[combined path] (TR)--++(0.45,0.00);
    \draw[combined path] (BR)--++(0.00,-0.45);
    \draw[combined path] (TL)--++(-0.45,0.00);
    \draw[combined path] (TL)--++(0.00,0.45);
    \draw[combined path] (BL)--++(-0.45,0.00);
    \draw[combined path] (BL)--++(0.00,-0.45);
    \GEdgesH
    \draw[grey edge] (TR)--(RM);
    \draw[grey edge] (RM)--(BR);
    \draw[grey edge] (RM)--(u);
    \draw[grey edge] (RM)--(v);
    \GNodesH
    \node[grey vertex] at (RM) {};
  \end{scope}
  % Row 2, column 1: H1 minus v; 1-4,2,3
  \begin{scope}[shift={(0.00,-5.10)},yscale=0.88]
    \GCoordsHone
    \draw[combined path] (TL)--(TM);
    \draw[combined path] (TM)--(u);
    \draw[combined path] (u)--(RM);
    \draw[combined path] (RM)--(r);
    \draw[combined path] (r)--(BM);
    \draw[combined path] (BM)--(l);
    \draw[combined path] (l)--(LM);
    \draw[combined path] (LM)--(BL);
    \draw[combined path] (TL)--++(0.00,0.45);
    \draw[combined path] (BL)--++(-0.45,0.00);
    \draw[combined path] (TR)--++(0.00,0.45);
    \draw[combined path] (TR)--++(0.45,0.00);
    \draw[combined path] (BR)--++(0.45,0.00);
    \draw[combined path] (BR)--++(0.00,-0.45);
    \GEdgesHone
    \draw[grey edge] (u)--(v);
    \draw[grey edge] (l)--(v);
    \draw[grey edge] (v)--(r);
    \GNodesHone
    \node[grey vertex] at (v) {};
  \end{scope}
  % Row 2, column 2: H1 minus v; 1-3,2,4
  \begin{scope}[shift={(4.10,-5.10)},yscale=0.88]
    \GCoordsHone
    \draw[combined path] (TL)--(TM);
    \draw[combined path] (TM)--(u);
    \draw[combined path] (u)--(LM);
    \draw[combined path] (LM)--(l);
    \draw[combined path] (l)--(BM);
    \draw[combined path] (BM)--(r);
    \draw[combined path] (r)--(RM);
    \draw[combined path] (RM)--(BR);
    \draw[combined path] (TL)--++(0.00,0.45);
    \draw[combined path] (BR)--++(0.00,-0.45);
    \draw[combined path] (TR)--++(0.00,0.45);
    \draw[combined path] (TR)--++(0.45,0.00);
    \draw[combined path] (BL)--++(-0.45,0.00);
    \draw[combined path] (BL)--++(0.00,-0.45);
    \GEdgesHone
    \draw[grey edge] (u)--(v);
    \draw[grey edge] (l)--(v);
    \draw[grey edge] (v)--(r);
    \GNodesHone
    \node[grey vertex] at (v) {};
  \end{scope}
  % Row 2, column 3: H1 minus v; 1,2-4,3
  \begin{scope}[shift={(8.20,-5.10)},yscale=0.88]
    \GCoordsHone
    \draw[combined path] (TR)--(TM);
    \draw[combined path] (TM)--(u);
    \draw[combined path] (u)--(RM);
    \draw[combined path] (RM)--(r);
    \draw[combined path] (r)--(BM);
    \draw[combined path] (BM)--(l);
    \draw[combined path] (l)--(LM);
    \draw[combined path] (LM)--(BL);
    \draw[combined path] (TR)--++(0.45,0.00);
    \draw[combined path] (BL)--++(-0.45,0.00);
    \draw[combined path] (TL)--++(-0.45,0.00);
    \draw[combined path] (TL)--++(0.00,0.45);
    \draw[combined path] (BR)--++(0.45,0.00);
    \draw[combined path] (BR)--++(0.00,-0.45);
    \GEdgesHone
    \draw[grey edge] (u)--(v);
    \draw[grey edge] (l)--(v);
    \draw[grey edge] (v)--(r);
    \GNodesHone
    \node[grey vertex] at (v) {};
  \end{scope}
  % Row 2, column 4: H1 minus v; 1,2-3,4
  \begin{scope}[shift={(12.30,-5.10)},yscale=0.88]
    \GCoordsHone
    \draw[combined path] (TR)--(TM);
    \draw[combined path] (TM)--(u);
    \draw[combined path] (u)--(LM);
    \draw[combined path] (LM)--(l);
    \draw[combined path] (l)--(BM);
    \draw[combined path] (BM)--(r);
    \draw[combined path] (r)--(RM);
    \draw[combined path] (RM)--(BR);
    \draw[combined path] (TR)--++(0.45,0.00);
    \draw[combined path] (BR)--++(0.00,-0.45);
    \draw[combined path] (TL)--++(-0.45,0.00);
    \draw[combined path] (TL)--++(0.00,0.45);
    \draw[combined path] (BL)--++(-0.45,0.00);
    \draw[combined path] (BL)--++(0.00,-0.45);
    \GEdgesHone
    \draw[grey edge] (u)--(v);
    \draw[grey edge] (l)--(v);
    \draw[grey edge] (v)--(r);
    \GNodesHone
    \node[grey vertex] at (v) {};
  \end{scope}
  % Row 3, column 1: H1 minus BM; 1-4,2,3
  \begin{scope}[shift={(0.00,-9.50)},yscale=0.88]
    \GCoordsHone
    \draw[combined path] (TL)--(TM);
    \draw[combined path] (TM)--(u);
    \draw[combined path] (u)--(RM);
    \draw[combined path] (RM)--(r);
    \draw[combined path] (r)--(v);
    \draw[combined path] (v)--(l);
    \draw[combined path] (l)--(LM);
    \draw[combined path] (LM)--(BL);
    \draw[combined path] (TL)--++(0.00,0.45);
    \draw[combined path] (BL)--++(-0.45,0.00);
    \draw[combined path] (TR)--++(0.00,0.45);
    \draw[combined path] (TR)--++(0.45,0.00);
    \draw[combined path] (BR)--++(0.45,0.00);
    \draw[combined path] (BR)--++(0.00,-0.45);
    \GEdgesHone
    \draw[grey edge] (BR)--(BM);
    \draw[grey edge] (BM)--(BL);
    \draw[grey edge] (l)--(BM);
    \draw[grey edge] (r)--(BM);
    \GNodesHone
    \node[grey vertex] at (BM) {};
  \end{scope}
  % Row 3, column 2: H1 minus BM; 1-3,2,4
  \begin{scope}[shift={(4.10,-9.50)},yscale=0.88]
    \GCoordsHone
    \draw[combined path] (TL)--(TM);
    \draw[combined path] (TM)--(u);
    \draw[combined path] (u)--(LM);
    \draw[combined path] (LM)--(l);
    \draw[combined path] (l)--(v);
    \draw[combined path] (v)--(r);
    \draw[combined path] (r)--(RM);
    \draw[combined path] (RM)--(BR);
    \draw[combined path] (TL)--++(0.00,0.45);
    \draw[combined path] (BR)--++(0.00,-0.45);
    \draw[combined path] (TR)--++(0.00,0.45);
    \draw[combined path] (TR)--++(0.45,0.00);
    \draw[combined path] (BL)--++(-0.45,0.00);
    \draw[combined path] (BL)--++(0.00,-0.45);
    \GEdgesHone
    \draw[grey edge] (BR)--(BM);
    \draw[grey edge] (BM)--(BL);
    \draw[grey edge] (l)--(BM);
    \draw[grey edge] (r)--(BM);
    \GNodesHone
    \node[grey vertex] at (BM) {};
  \end{scope}
  % Row 3, column 3: H1 minus BM; 1,2-4,3
  \begin{scope}[shift={(8.20,-9.50)},yscale=0.88]
    \GCoordsHone
    \draw[combined path] (TR)--(TM);
    \draw[combined path] (TM)--(u);
    \draw[combined path] (u)--(RM);
    \draw[combined path] (RM)--(r);
    \draw[combined path] (r)--(v);
    \draw[combined path] (v)--(l);
    \draw[combined path] (l)--(LM);
    \draw[combined path] (LM)--(BL);
    \draw[combined path] (TR)--++(0.45,0.00);
    \draw[combined path] (BL)--++(-0.45,0.00);
    \draw[combined path] (TL)--++(-0.45,0.00);
    \draw[combined path] (TL)--++(0.00,0.45);
    \draw[combined path] (BR)--++(0.45,0.00);
    \draw[combined path] (BR)--++(0.00,-0.45);
    \GEdgesHone
    \draw[grey edge] (BR)--(BM);
    \draw[grey edge] (BM)--(BL);
    \draw[grey edge] (l)--(BM);
    \draw[grey edge] (r)--(BM);
    \GNodesHone
    \node[grey vertex] at (BM) {};
  \end{scope}
  % Row 3, column 4: H1 minus BM; 1,2-3,4
  \begin{scope}[shift={(12.30,-9.50)},yscale=0.88]
    \GCoordsHone
    \draw[combined path] (TR)--(TM);
    \draw[combined path] (TM)--(u);
    \draw[combined path] (u)--(LM);
    \draw[combined path] (LM)--(l);
    \draw[combined path] (l)--(v);
    \draw[combined path] (v)--(r);
    \draw[combined path] (r)--(RM);
    \draw[combined path] (RM)--(BR);
    \draw[combined path] (TR)--++(0.45,0.00);
    \draw[combined path] (BR)--++(0.00,-0.45);
    \draw[combined path] (TL)--++(-0.45,0.00);
    \draw[combined path] (TL)--++(0.00,0.45);
    \draw[combined path] (BL)--++(-0.45,0.00);
    \draw[combined path] (BL)--++(0.00,-0.45);
    \GEdgesHone
    \draw[grey edge] (BR)--(BM);
    \draw[grey edge] (BM)--(BL);
    \draw[grey edge] (l)--(BM);
    \draw[grey edge] (r)--(BM);
    \GNodesHone
    \node[grey vertex] at (BM) {};
  \end{scope}
  % Row 4, column 1: H1 minus LM; 1-4,2,3
  \begin{scope}[shift={(0.00,-13.90)},yscale=0.88]
    \GCoordsHone
    \draw[combined path] (TL)--(TM);
    \draw[combined path] (TM)--(u);
    \draw[combined path] (u)--(RM);
    \draw[combined path] (RM)--(r);
    \draw[combined path] (r)--(v);
    \draw[combined path] (v)--(l);
    \draw[combined path] (l)--(BM);
    \draw[combined path] (BM)--(BL);
    \draw[combined path] (TL)--++(0.00,0.45);
    \draw[combined path] (BL)--++(-0.45,0.00);
    \draw[combined path] (TR)--++(0.00,0.45);
    \draw[combined path] (TR)--++(0.45,0.00);
    \draw[combined path] (BR)--++(0.45,0.00);
    \draw[combined path] (BR)--++(0.00,-0.45);
    \GEdgesHone
    \draw[grey edge] (BL)--(LM);
    \draw[grey edge] (LM)--(TL);
    \draw[grey edge] (LM)--(u);
    \draw[grey edge] (LM)--(l);
    \GNodesHone
    \node[grey vertex] at (LM) {};
  \end{scope}
  % Row 4, column 2: H1 minus LM; 1-3,2,4
  \begin{scope}[shift={(4.10,-13.90)},yscale=0.88]
    \GCoordsHone
    \draw[combined path] (TL)--(TM);
    \draw[combined path] (TM)--(u);
    \draw[combined path] (u)--(RM);
    \draw[combined path] (RM)--(r);
    \draw[combined path] (r)--(v);
    \draw[combined path] (v)--(l);
    \draw[combined path] (l)--(BM);
    \draw[combined path] (BM)--(BR);
    \draw[combined path] (TL)--++(0.00,0.45);
    \draw[combined path] (BR)--++(0.00,-0.45);
    \draw[combined path] (TR)--++(0.00,0.45);
    \draw[combined path] (TR)--++(0.45,0.00);
    \draw[combined path] (BL)--++(-0.45,0.00);
    \draw[combined path] (BL)--++(0.00,-0.45);
    \GEdgesHone
    \draw[grey edge] (BL)--(LM);
    \draw[grey edge] (LM)--(TL);
    \draw[grey edge] (LM)--(u);
    \draw[grey edge] (LM)--(l);
    \GNodesHone
    \node[grey vertex] at (LM) {};
  \end{scope}
  % Row 4, column 3: H1 minus LM; 1,2-4,3
  \begin{scope}[shift={(8.20,-13.90)},yscale=0.88]
    \GCoordsHone
    \draw[combined path] (TR)--(TM);
    \draw[combined path] (TM)--(u);
    \draw[combined path] (u)--(RM);
    \draw[combined path] (RM)--(r);
    \draw[combined path] (r)--(v);
    \draw[combined path] (v)--(l);
    \draw[combined path] (l)--(BM);
    \draw[combined path] (BM)--(BL);
    \draw[combined path] (TR)--++(0.45,0.00);
    \draw[combined path] (BL)--++(-0.45,0.00);
    \draw[combined path] (TL)--++(-0.45,0.00);
    \draw[combined path] (TL)--++(0.00,0.45);
    \draw[combined path] (BR)--++(0.45,0.00);
    \draw[combined path] (BR)--++(0.00,-0.45);
    \GEdgesHone
    \draw[grey edge] (BL)--(LM);
    \draw[grey edge] (LM)--(TL);
    \draw[grey edge] (LM)--(u);
    \draw[grey edge] (LM)--(l);
    \GNodesHone
    \node[grey vertex] at (LM) {};
  \end{scope}
  % Row 4, column 4: H1 minus LM; 1,2-3,4
  \begin{scope}[shift={(12.30,-13.90)},yscale=0.88]
    \GCoordsHone
    \draw[combined path] (TR)--(TM);
    \draw[combined path] (TM)--(u);
    \draw[combined path] (u)--(RM);
    \draw[combined path] (RM)--(r);
    \draw[combined path] (r)--(v);
    \draw[combined path] (v)--(l);
    \draw[combined path] (l)--(BM);
    \draw[combined path] (BM)--(BR);
    \draw[combined path] (TR)--++(0.45,0.00);
    \draw[combined path] (BR)--++(0.00,-0.45);
    \draw[combined path] (TL)--++(-0.45,0.00);
    \draw[combined path] (TL)--++(0.00,0.45);
    \draw[combined path] (BL)--++(-0.45,0.00);
    \draw[combined path] (BL)--++(0.00,-0.45);
    \GEdgesHone
    \draw[grey edge] (BL)--(LM);
    \draw[grey edge] (LM)--(TL);
    \draw[grey edge] (LM)--(u);
    \draw[grey edge] (LM)--(l);
    \GNodesHone
    \node[grey vertex] at (LM) {};
  \end{scope}
  % Row 5, column 1: H1 minus TM; 1-4,2,3
  \begin{scope}[shift={(0.00,-18.30)},yscale=0.88]
    \GCoordsHone
    \draw[combined path] (TL)--(LM);
    \draw[combined path] (LM)--(l);
    \draw[combined path] (l)--(v);
    \draw[combined path] (v)--(u);
    \draw[combined path] (u)--(RM);
    \draw[combined path] (RM)--(r);
    \draw[combined path] (r)--(BM);
    \draw[combined path] (BM)--(BL);
    \draw[combined path] (TL)--++(0.00,0.45);
    \draw[combined path] (BL)--++(-0.45,0.00);
    \draw[combined path] (TR)--++(0.00,0.45);
    \draw[combined path] (TR)--++(0.45,0.00);
    \draw[combined path] (BR)--++(0.45,0.00);
    \draw[combined path] (BR)--++(0.00,-0.45);
    \GEdgesHone
    \draw[grey edge] (TL)--(TM);
    \draw[grey edge] (TM)--(TR);
    \draw[grey edge] (TM)--(u);
    \GNodesHone
    \node[grey vertex] at (TM) {};
  \end{scope}
  % Row 5, column 2: H1 minus TM; 1-3,2,4
  \begin{scope}[shift={(4.10,-18.30)},yscale=0.88]
    \GCoordsHone
    \draw[combined path] (TL)--(LM);
    \draw[combined path] (LM)--(l);
    \draw[combined path] (l)--(BM);
    \draw[combined path] (BM)--(r);
    \draw[combined path] (r)--(v);
    \draw[combined path] (v)--(u);
    \draw[combined path] (u)--(RM);
    \draw[combined path] (RM)--(BR);
    \draw[combined path] (TL)--++(0.00,0.45);
    \draw[combined path] (BR)--++(0.00,-0.45);
    \draw[combined path] (TR)--++(0.00,0.45);
    \draw[combined path] (TR)--++(0.45,0.00);
    \draw[combined path] (BL)--++(-0.45,0.00);
    \draw[combined path] (BL)--++(0.00,-0.45);
    \GEdgesHone
    \draw[grey edge] (TL)--(TM);
    \draw[grey edge] (TM)--(TR);
    \draw[grey edge] (TM)--(u);
    \GNodesHone
    \node[grey vertex] at (TM) {};
  \end{scope}
  % Row 5, column 3: H1 minus TM; 1,2-4,3
  \begin{scope}[shift={(8.20,-18.30)},yscale=0.88]
    \GCoordsHone
    \draw[combined path] (TR)--(RM);
    \draw[combined path] (RM)--(r);
    \draw[combined path] (r)--(BM);
    \draw[combined path] (BM)--(l);
    \draw[combined path] (l)--(v);
    \draw[combined path] (v)--(u);
    \draw[combined path] (u)--(LM);
    \draw[combined path] (LM)--(BL);
    \draw[combined path] (TR)--++(0.45,0.00);
    \draw[combined path] (BL)--++(-0.45,0.00);
    \draw[combined path] (TL)--++(-0.45,0.00);
    \draw[combined path] (TL)--++(0.00,0.45);
    \draw[combined path] (BR)--++(0.45,0.00);
    \draw[combined path] (BR)--++(0.00,-0.45);
    \GEdgesHone
    \draw[grey edge] (TL)--(TM);
    \draw[grey edge] (TM)--(TR);
    \draw[grey edge] (TM)--(u);
    \GNodesHone
    \node[grey vertex] at (TM) {};
  \end{scope}
  % Row 5, column 4: H1 minus TM; 1,2-3,4
  \begin{scope}[shift={(12.30,-18.30)},yscale=0.88]
    \GCoordsHone
    \draw[combined path] (TR)--(RM);
    \draw[combined path] (RM)--(r);
    \draw[combined path] (r)--(v);
    \draw[combined path] (v)--(u);
    \draw[combined path] (u)--(LM);
    \draw[combined path] (LM)--(l);
    \draw[combined path] (l)--(BM);
    \draw[combined path] (BM)--(BR);
    \draw[combined path] (TR)--++(0.45,0.00);
    \draw[combined path] (BR)--++(0.00,-0.45);
    \draw[combined path] (TL)--++(-0.45,0.00);
    \draw[combined path] (TL)--++(0.00,0.45);
    \draw[combined path] (BL)--++(-0.45,0.00);
    \draw[combined path] (BL)--++(0.00,-0.45);
    \GEdgesHone
    \draw[grey edge] (TL)--(TM);
    \draw[grey edge] (TM)--(TR);
    \draw[grey edge] (TM)--(u);
    \GNodesHone
    \node[grey vertex] at (TM) {};
  \end{scope}
  \node[font=\normalsize,text=partpurple] at (1.30,4.60) {$1\!-\!4,\;2,\;3$};
  \node[font=\normalsize,text=partpurple] at (5.40,4.60) {$1\!-\!3,\;2,\;4$};
  \node[font=\normalsize,text=partpurple] at (9.50,4.60) {$1,\;2\!-\!4,\;3$};
  \node[font=\normalsize,text=partpurple] at (13.60,4.60) {$1,\;2\!-\!3,\;4$};
  \draw[gray!55,line width=0.5pt] (-1.15,-0.77)--(14.95,-0.77);
  \node[font=\Large] at (-0.95,1.54) {$H$};
  \node[font=\Large] at (-0.95,-10.16) {$H_1$};
\end{tikzpicture}%
}
\begingroup\footnotesize\caption{{If $v$ is purple then each $H_i - v$ has cycle fragments with endpoint decompositions spanning the four possible endpoint decompositions in $H- w$. (Continued below)}}
\label{fig:hypopurple}
\endgroup
\end{figure}
\clearpage
\begin{figure}[p]
\centering
\resizebox{!}{0.70\textheight}{%
\begin{tikzpicture}
  \tikzset{combined path/.style={draw=pathorange,line width=8pt,line cap=round,line join=round}}
  % Row 6, column 1: H2 minus TM; 1-4,2,3
  \begin{scope}[shift={(0.00,-23.40)},yscale=0.88]
    \GCoordsHtwo
    \draw[combined path] (TL)--(LM);
    \draw[combined path] (LM)--(d);
    \draw[combined path] (d)--(a);
    \draw[combined path] (a)--(b);
    \draw[combined path] (b)--(RM);
    \draw[combined path] (RM)--(c);
    \draw[combined path] (c)--(e);
    \draw[combined path] (f) .. controls (0.84,0.88) and (1.3,0.96) .. (e);
    \draw[combined path] (f)--(BM);
    \draw[combined path] (BM)--(BL);
    \draw[combined path] (TL)--++(0.00,0.45);
    \draw[combined path] (BL)--++(-0.45,0.00);
    \draw[combined path] (TR)--++(0.00,0.45);
    \draw[combined path] (TR)--++(0.45,0.00);
    \draw[combined path] (BR)--++(0.45,0.00);
    \draw[combined path] (BR)--++(0.00,-0.45);
    \GEdgesHtwo
    \draw[grey edge] (TL)--(TM);
    \draw[grey edge] (TM)--(TR);
    \draw[grey edge] (TM)--(b);
    \GNodesHtwo
    \node[grey vertex] at (TM) {};
  \end{scope}
  % Row 6, column 2: H2 minus TM; 1-3,2,4
  \begin{scope}[shift={(4.10,-23.40)},yscale=0.88]
    \GCoordsHtwo
    \draw[combined path] (TL)--(LM);
    \draw[combined path] (LM)--(d);
    \draw[combined path] (d)--(a);
    \draw[combined path] (a)--(b);
    \draw[combined path] (b)--(RM);
    \draw[combined path] (RM)--(c);
    \draw[combined path] (c)--(e);
    \draw[combined path] (f) .. controls (0.84,0.88) and (1.3,0.96) .. (e);
    \draw[combined path] (f)--(BM);
    \draw[combined path] (BM)--(BR);
    \draw[combined path] (TL)--++(0.00,0.45);
    \draw[combined path] (BR)--++(0.00,-0.45);
    \draw[combined path] (TR)--++(0.00,0.45);
    \draw[combined path] (TR)--++(0.45,0.00);
    \draw[combined path] (BL)--++(-0.45,0.00);
    \draw[combined path] (BL)--++(0.00,-0.45);
    \GEdgesHtwo
    \draw[grey edge] (TL)--(TM);
    \draw[grey edge] (TM)--(TR);
    \draw[grey edge] (TM)--(b);
    \GNodesHtwo
    \node[grey vertex] at (TM) {};
  \end{scope}
  % Row 6, column 3: H2 minus TM; 1,2-4,3
  \begin{scope}[shift={(8.20,-23.40)},yscale=0.88]
    \GCoordsHtwo
    \draw[combined path] (TR)--(RM);
    \draw[combined path] (RM)--(b);
    \draw[combined path] (b)--(a);
    \draw[combined path] (a)--(c);
    \draw[combined path] (c)--(e);
    \draw[combined path] (e)--(d);
    \draw[combined path] (d)--(LM);
    \draw[combined path] (LM)--(f);
    \draw[combined path] (f)--(BM);
    \draw[combined path] (BM)--(BL);
    \draw[combined path] (TR)--++(0.45,0.00);
    \draw[combined path] (BL)--++(-0.45,0.00);
    \draw[combined path] (TL)--++(-0.45,0.00);
    \draw[combined path] (TL)--++(0.00,0.45);
    \draw[combined path] (BR)--++(0.45,0.00);
    \draw[combined path] (BR)--++(0.00,-0.45);
    \GEdgesHtwo
    \draw[grey edge] (TL)--(TM);
    \draw[grey edge] (TM)--(TR);
    \draw[grey edge] (TM)--(b);
    \GNodesHtwo
    \node[grey vertex] at (TM) {};
  \end{scope}
  % Row 6, column 4: H2 minus TM; 1,2-3,4
  \begin{scope}[shift={(12.30,-23.40)},yscale=0.88]
    \GCoordsHtwo
    \draw[combined path] (TR)--(RM);
    \draw[combined path] (RM)--(b);
    \draw[combined path] (b)--(a);
    \draw[combined path] (a)--(c);
    \draw[combined path] (c)--(e);
    \draw[combined path] (e)--(d);
    \draw[combined path] (d)--(LM);
    \draw[combined path] (LM)--(f);
    \draw[combined path] (f)--(BM);
    \draw[combined path] (BM)--(BR);
    \draw[combined path] (TR)--++(0.45,0.00);
    \draw[combined path] (BR)--++(0.00,-0.45);
    \draw[combined path] (TL)--++(-0.45,0.00);
    \draw[combined path] (TL)--++(0.00,0.45);
    \draw[combined path] (BL)--++(-0.45,0.00);
    \draw[combined path] (BL)--++(0.00,-0.45);
    \GEdgesHtwo
    \draw[grey edge] (TL)--(TM);
    \draw[grey edge] (TM)--(TR);
    \draw[grey edge] (TM)--(b);
    \GNodesHtwo
    \node[grey vertex] at (TM) {};
  \end{scope}
  % Row 7, column 1: H2 minus LM; 1-4,2,3
  \begin{scope}[shift={(0.00,-27.80)},yscale=0.88]
    \GCoordsHtwo
    \draw[combined path] (TL)--(TM);
    \draw[combined path] (TM)--(b);
    \draw[combined path] (b)--(RM);
    \draw[combined path] (RM)--(c);
    \draw[combined path] (c)--(a);
    \draw[combined path] (a)--(d);
    \draw[combined path] (d)--(e);
    \draw[combined path] (f) .. controls (0.84,0.88) and (1.3,0.96) .. (e);
    \draw[combined path] (f)--(BM);
    \draw[combined path] (BM)--(BL);
    \draw[combined path] (TL)--++(0.00,0.45);
    \draw[combined path] (BL)--++(-0.45,0.00);
    \draw[combined path] (TR)--++(0.00,0.45);
    \draw[combined path] (TR)--++(0.45,0.00);
    \draw[combined path] (BR)--++(0.45,0.00);
    \draw[combined path] (BR)--++(0.00,-0.45);
    \GEdgesHtwo
    \draw[grey edge] (BL)--(LM);
    \draw[grey edge] (LM)--(TL);
    \draw[grey edge] (LM)--(d);
    \draw[grey edge] (LM)--(f);
    \draw[grey edge] (LM) .. controls (0.15,3.08) and (1.05,3.26) .. (b);
    \GNodesHtwo
    \node[grey vertex] at (LM) {};
  \end{scope}
  % Row 7, column 2: H2 minus LM; 1-3,2,4
  \begin{scope}[shift={(4.10,-27.80)},yscale=0.88]
    \GCoordsHtwo
    \draw[combined path] (TL)--(TM);
    \draw[combined path] (TM)--(b);
    \draw[combined path] (b)--(RM);
    \draw[combined path] (RM)--(c);
    \draw[combined path] (c)--(a);
    \draw[combined path] (a)--(d);
    \draw[combined path] (d)--(e);
    \draw[combined path] (f) .. controls (0.84,0.88) and (1.3,0.96) .. (e);
    \draw[combined path] (f)--(BM);
    \draw[combined path] (BM)--(BR);
    \draw[combined path] (TL)--++(0.00,0.45);
    \draw[combined path] (BR)--++(0.00,-0.45);
    \draw[combined path] (TR)--++(0.00,0.45);
    \draw[combined path] (TR)--++(0.45,0.00);
    \draw[combined path] (BL)--++(-0.45,0.00);
    \draw[combined path] (BL)--++(0.00,-0.45);
    \GEdgesHtwo
    \draw[grey edge] (BL)--(LM);
    \draw[grey edge] (LM)--(TL);
    \draw[grey edge] (LM)--(d);
    \draw[grey edge] (LM)--(f);
    \draw[grey edge] (LM) .. controls (0.15,3.08) and (1.05,3.26) .. (b);
    \GNodesHtwo
    \node[grey vertex] at (LM) {};
  \end{scope}
  % Row 7, column 3: H2 minus LM; 1,2-4,3
  \begin{scope}[shift={(8.20,-27.80)},yscale=0.88]
    \GCoordsHtwo
    \draw[combined path] (TR)--(TM);
    \draw[combined path] (TM)--(b);
    \draw[combined path] (b)--(RM);
    \draw[combined path] (RM)--(c);
    \draw[combined path] (c)--(a);
    \draw[combined path] (a)--(d);
    \draw[combined path] (d)--(e);
    \draw[combined path] (f) .. controls (0.84,0.88) and (1.3,0.96) .. (e);
    \draw[combined path] (f)--(BM);
    \draw[combined path] (BM)--(BL);
    \draw[combined path] (TR)--++(0.45,0.00);
    \draw[combined path] (BL)--++(-0.45,0.00);
    \draw[combined path] (TL)--++(-0.45,0.00);
    \draw[combined path] (TL)--++(0.00,0.45);
    \draw[combined path] (BR)--++(0.45,0.00);
    \draw[combined path] (BR)--++(0.00,-0.45);
    \GEdgesHtwo
    \draw[grey edge] (BL)--(LM);
    \draw[grey edge] (LM)--(TL);
    \draw[grey edge] (LM)--(d);
    \draw[grey edge] (LM)--(f);
    \draw[grey edge] (LM) .. controls (0.15,3.08) and (1.05,3.26) .. (b);
    \GNodesHtwo
    \node[grey vertex] at (LM) {};
  \end{scope}
  % Row 7, column 4: H2 minus LM; 1,2-3,4
  \begin{scope}[shift={(12.30,-27.80)},yscale=0.88]
    \GCoordsHtwo
    \draw[combined path] (TR)--(TM);
    \draw[combined path] (TM)--(b);
    \draw[combined path] (b)--(RM);
    \draw[combined path] (RM)--(c);
    \draw[combined path] (c)--(a);
    \draw[combined path] (a)--(d);
    \draw[combined path] (d)--(e);
    \draw[combined path] (f) .. controls (0.84,0.88) and (1.3,0.96) .. (e);
    \draw[combined path] (f)--(BM);
    \draw[combined path] (BM)--(BR);
    \draw[combined path] (TR)--++(0.45,0.00);
    \draw[combined path] (BR)--++(0.00,-0.45);
    \draw[combined path] (TL)--++(-0.45,0.00);
    \draw[combined path] (TL)--++(0.00,0.45);
    \draw[combined path] (BL)--++(-0.45,0.00);
    \draw[combined path] (BL)--++(0.00,-0.45);
    \GEdgesHtwo
    \draw[grey edge] (BL)--(LM);
    \draw[grey edge] (LM)--(TL);
    \draw[grey edge] (LM)--(d);
    \draw[grey edge] (LM)--(f);
    \draw[grey edge] (LM) .. controls (0.15,3.08) and (1.05,3.26) .. (b);
    \GNodesHtwo
    \node[grey vertex] at (LM) {};
  \end{scope}
  % Row 8, column 1: H2 minus a; 1-4,2,3
  \begin{scope}[shift={(0.00,-32.20)},yscale=0.88]
    \GCoordsHtwo
    \draw[combined path] (TL)--(TM);
    \draw[combined path] (TM)--(b);
    \draw[combined path] (b)--(RM);
    \draw[combined path] (RM)--(c);
    \draw[combined path] (c)--(e);
    \draw[combined path] (e)--(d);
    \draw[combined path] (d)--(LM);
    \draw[combined path] (LM)--(f);
    \draw[combined path] (f)--(BM);
    \draw[combined path] (BM)--(BL);
    \draw[combined path] (TL)--++(0.00,0.45);
    \draw[combined path] (BL)--++(-0.45,0.00);
    \draw[combined path] (TR)--++(0.00,0.45);
    \draw[combined path] (TR)--++(0.45,0.00);
    \draw[combined path] (BR)--++(0.45,0.00);
    \draw[combined path] (BR)--++(0.00,-0.45);
    \GEdgesHtwo
    \draw[grey edge] (a)--(b);
    \draw[grey edge] (a)--(c);
    \draw[grey edge] (a)--(d);
    \GNodesHtwo
    \node[grey vertex] at (a) {};
  \end{scope}
  % Row 8, column 2: H2 minus a; 1-3,2,4
  \begin{scope}[shift={(4.10,-32.20)},yscale=0.88]
    \GCoordsHtwo
    \draw[combined path] (TL)--(TM);
    \draw[combined path] (TM)--(b);
    \draw[combined path] (b)--(RM);
    \draw[combined path] (RM)--(c);
    \draw[combined path] (c)--(e);
    \draw[combined path] (e)--(d);
    \draw[combined path] (d)--(LM);
    \draw[combined path] (LM)--(f);
    \draw[combined path] (f)--(BM);
    \draw[combined path] (BM)--(BR);
    \draw[combined path] (TL)--++(0.00,0.45);
    \draw[combined path] (BR)--++(0.00,-0.45);
    \draw[combined path] (TR)--++(0.00,0.45);
    \draw[combined path] (TR)--++(0.45,0.00);
    \draw[combined path] (BL)--++(-0.45,0.00);
    \draw[combined path] (BL)--++(0.00,-0.45);
    \GEdgesHtwo
    \draw[grey edge] (a)--(b);
    \draw[grey edge] (a)--(c);
    \draw[grey edge] (a)--(d);
    \GNodesHtwo
    \node[grey vertex] at (a) {};
  \end{scope}
  % Row 8, column 3: H2 minus a; 1,2-4,3
  \begin{scope}[shift={(8.20,-32.20)},yscale=0.88]
    \GCoordsHtwo
    \draw[combined path] (TR)--(TM);
    \draw[combined path] (TM)--(b);
    \draw[combined path] (b)--(RM);
    \draw[combined path] (RM)--(c);
    \draw[combined path] (c)--(e);
    \draw[combined path] (e)--(d);
    \draw[combined path] (d)--(LM);
    \draw[combined path] (LM)--(f);
    \draw[combined path] (f)--(BM);
    \draw[combined path] (BM)--(BL);
    \draw[combined path] (TR)--++(0.45,0.00);
    \draw[combined path] (BL)--++(-0.45,0.00);
    \draw[combined path] (TL)--++(-0.45,0.00);
    \draw[combined path] (TL)--++(0.00,0.45);
    \draw[combined path] (BR)--++(0.45,0.00);
    \draw[combined path] (BR)--++(0.00,-0.45);
    \GEdgesHtwo
    \draw[grey edge] (a)--(b);
    \draw[grey edge] (a)--(c);
    \draw[grey edge] (a)--(d);
    \GNodesHtwo
    \node[grey vertex] at (a) {};
  \end{scope}
  % Row 8, column 4: H2 minus a; 1,2-3,4
  \begin{scope}[shift={(12.30,-32.20)},yscale=0.88]
    \GCoordsHtwo
    \draw[combined path] (TR)--(TM);
    \draw[combined path] (TM)--(b);
    \draw[combined path] (b)--(RM);
    \draw[combined path] (RM)--(c);
    \draw[combined path] (c)--(e);
    \draw[combined path] (e)--(d);
    \draw[combined path] (d)--(LM);
    \draw[combined path] (LM)--(f);
    \draw[combined path] (f)--(BM);
    \draw[combined path] (BM)--(BR);
    \draw[combined path] (TR)--++(0.45,0.00);
    \draw[combined path] (BR)--++(0.00,-0.45);
    \draw[combined path] (TL)--++(-0.45,0.00);
    \draw[combined path] (TL)--++(0.00,0.45);
    \draw[combined path] (BL)--++(-0.45,0.00);
    \draw[combined path] (BL)--++(0.00,-0.45);
    \GEdgesHtwo
    \draw[grey edge] (a)--(b);
    \draw[grey edge] (a)--(c);
    \draw[grey edge] (a)--(d);
    \GNodesHtwo
    \node[grey vertex] at (a) {};
  \end{scope}
  % Row 9, column 1: H3 minus TM; 1-4,2,3
  \begin{scope}[shift={(0.00,-37.30)},yscale=0.88]
    \GCoordsHthree
    \draw[combined path] (TL)--(LM);
    \draw[combined path] (LM)--(a);
    \draw[combined path] (a)--(c);
    \draw[combined path] (c)--(b);
    \draw[combined path] (b)--(RM);
    \draw[combined path] (f) .. controls (1.05,0.38) and (2.4,0.42) .. (RM);
    \draw[combined path] (f)--(d);
    \draw[combined path] (d)--(e);
    \draw[combined path] (e)--(BM);
    \draw[combined path] (BM)--(BL);
    \draw[combined path] (TL)--++(0.00,0.45);
    \draw[combined path] (BL)--++(-0.45,0.00);
    \draw[combined path] (TR)--++(0.00,0.45);
    \draw[combined path] (TR)--++(0.45,0.00);
    \draw[combined path] (BR)--++(0.45,0.00);
    \draw[combined path] (BR)--++(0.00,-0.45);
    \GEdgesHthree
    \draw[grey edge] (TL)--(TM);
    \draw[grey edge] (TM)--(TR);
    \draw[grey edge] (TM)--(a);
    \draw[grey edge] (TM)--(b);
    \GNodesHthree
    \node[grey vertex] at (TM) {};
  \end{scope}
  % Row 9, column 2: H3 minus TM; 1-3,2,4
  \begin{scope}[shift={(4.10,-37.30)},yscale=0.88]
    \GCoordsHthree
    \draw[combined path] (TL)--(LM);
    \draw[combined path] (LM)--(a);
    \draw[combined path] (a)--(c);
    \draw[combined path] (c)--(e);
    \draw[combined path] (e)--(BM);
    \draw[combined path] (BM)--(f);
    \draw[combined path] (f)--(d);
    \draw[combined path] (d)--(b);
    \draw[combined path] (b)--(RM);
    \draw[combined path] (RM)--(BR);
    \draw[combined path] (TL)--++(0.00,0.45);
    \draw[combined path] (BR)--++(0.00,-0.45);
    \draw[combined path] (TR)--++(0.00,0.45);
    \draw[combined path] (TR)--++(0.45,0.00);
    \draw[combined path] (BL)--++(-0.45,0.00);
    \draw[combined path] (BL)--++(0.00,-0.45);
    \GEdgesHthree
    \draw[grey edge] (TL)--(TM);
    \draw[grey edge] (TM)--(TR);
    \draw[grey edge] (TM)--(a);
    \draw[grey edge] (TM)--(b);
    \GNodesHthree
    \node[grey vertex] at (TM) {};
  \end{scope}
  % Row 9, column 3: H3 minus TM; 1,2-4,3
  \begin{scope}[shift={(8.20,-37.30)},yscale=0.88]
    \GCoordsHthree
    \draw[combined path] (TR)--(RM);
    \draw[combined path] (RM)--(b);
    \draw[combined path] (b)--(c);
    \draw[combined path] (c)--(a);
    \draw[combined path] (a)--(LM);
    \draw[combined path] (LM)--(e);
    \draw[combined path] (e)--(d);
    \draw[combined path] (d)--(f);
    \draw[combined path] (f)--(BM);
    \draw[combined path] (BM)--(BL);
    \draw[combined path] (TR)--++(0.45,0.00);
    \draw[combined path] (BL)--++(-0.45,0.00);
    \draw[combined path] (TL)--++(-0.45,0.00);
    \draw[combined path] (TL)--++(0.00,0.45);
    \draw[combined path] (BR)--++(0.45,0.00);
    \draw[combined path] (BR)--++(0.00,-0.45);
    \GEdgesHthree
    \draw[grey edge] (TL)--(TM);
    \draw[grey edge] (TM)--(TR);
    \draw[grey edge] (TM)--(a);
    \draw[grey edge] (TM)--(b);
    \GNodesHthree
    \node[grey vertex] at (TM) {};
  \end{scope}
  % Row 9, column 4: H3 minus TM; 1,2-3,4
  \begin{scope}[shift={(12.30,-37.30)},yscale=0.88]
    \GCoordsHthree
    \draw[combined path] (TR)--(RM);
    \draw[combined path] (RM)--(b);
    \draw[combined path] (b)--(c);
    \draw[combined path] (c)--(a);
    \draw[combined path] (a)--(LM);
    \draw[combined path] (LM)--(e);
    \draw[combined path] (e)--(d);
    \draw[combined path] (d)--(f);
    \draw[combined path] (f)--(BM);
    \draw[combined path] (BM)--(BR);
    \draw[combined path] (TR)--++(0.45,0.00);
    \draw[combined path] (BR)--++(0.00,-0.45);
    \draw[combined path] (TL)--++(-0.45,0.00);
    \draw[combined path] (TL)--++(0.00,0.45);
    \draw[combined path] (BL)--++(-0.45,0.00);
    \draw[combined path] (BL)--++(0.00,-0.45);
    \GEdgesHthree
    \draw[grey edge] (TL)--(TM);
    \draw[grey edge] (TM)--(TR);
    \draw[grey edge] (TM)--(a);
    \draw[grey edge] (TM)--(b);
    \GNodesHthree
    \node[grey vertex] at (TM) {};
  \end{scope}
  % Row 10, column 1: H3 minus c; 1-4,2,3
  \begin{scope}[shift={(0.00,-41.70)},yscale=0.88]
    \GCoordsHthree
    \draw[combined path] (TL)--(LM);
    \draw[combined path] (LM)--(a);
    \draw[combined path] (a)--(TM);
    \draw[combined path] (TM)--(b);
    \draw[combined path] (b)--(RM);
    \draw[combined path] (f) .. controls (1.05,0.38) and (2.4,0.42) .. (RM);
    \draw[combined path] (f)--(d);
    \draw[combined path] (d)--(e);
    \draw[combined path] (e)--(BM);
    \draw[combined path] (BM)--(BL);
    \draw[combined path] (TL)--++(0.00,0.45);
    \draw[combined path] (BL)--++(-0.45,0.00);
    \draw[combined path] (TR)--++(0.00,0.45);
    \draw[combined path] (TR)--++(0.45,0.00);
    \draw[combined path] (BR)--++(0.45,0.00);
    \draw[combined path] (BR)--++(0.00,-0.45);
    \GEdgesHthree
    \draw[grey edge] (a)--(c);
    \draw[grey edge] (c)--(b);
    \draw[grey edge] (c)--(e);
    \GNodesHthree
    \node[grey vertex] at (c) {};
  \end{scope}
  % Row 10, column 2: H3 minus c; 1-3,2,4
  \begin{scope}[shift={(4.10,-41.70)},yscale=0.88]
    \GCoordsHthree
    \draw[combined path] (TL)--(TM);
    \draw[combined path] (TM)--(a);
    \draw[combined path] (a)--(LM);
    \draw[combined path] (LM)--(e);
    \draw[combined path] (e)--(BM);
    \draw[combined path] (BM)--(f);
    \draw[combined path] (f)--(d);
    \draw[combined path] (d)--(b);
    \draw[combined path] (b)--(RM);
    \draw[combined path] (RM)--(BR);
    \draw[combined path] (TL)--++(0.00,0.45);
    \draw[combined path] (BR)--++(0.00,-0.45);
    \draw[combined path] (TR)--++(0.00,0.45);
    \draw[combined path] (TR)--++(0.45,0.00);
    \draw[combined path] (BL)--++(-0.45,0.00);
    \draw[combined path] (BL)--++(0.00,-0.45);
    \GEdgesHthree
    \draw[grey edge] (a)--(c);
    \draw[grey edge] (c)--(b);
    \draw[grey edge] (c)--(e);
    \GNodesHthree
    \node[grey vertex] at (c) {};
  \end{scope}
  % Row 10, column 3: H3 minus c; 1,2-4,3
  \begin{scope}[shift={(8.20,-41.70)},yscale=0.88]
    \GCoordsHthree
    \draw[combined path] (TR)--(RM);
    \draw[combined path] (RM)--(b);
    \draw[combined path] (b)--(TM);
    \draw[combined path] (TM)--(a);
    \draw[combined path] (a)--(LM);
    \draw[combined path] (LM)--(e);
    \draw[combined path] (e)--(d);
    \draw[combined path] (d)--(f);
    \draw[combined path] (f)--(BM);
    \draw[combined path] (BM)--(BL);
    \draw[combined path] (TR)--++(0.45,0.00);
    \draw[combined path] (BL)--++(-0.45,0.00);
    \draw[combined path] (TL)--++(-0.45,0.00);
    \draw[combined path] (TL)--++(0.00,0.45);
    \draw[combined path] (BR)--++(0.45,0.00);
    \draw[combined path] (BR)--++(0.00,-0.45);
    \GEdgesHthree
    \draw[grey edge] (a)--(c);
    \draw[grey edge] (c)--(b);
    \draw[grey edge] (c)--(e);
    \GNodesHthree
    \node[grey vertex] at (c) {};
  \end{scope}
  % Row 10, column 4: H3 minus c; 1,2-3,4
  \begin{scope}[shift={(12.30,-41.70)},yscale=0.88]
    \GCoordsHthree
    \draw[combined path] (TR)--(RM);
    \draw[combined path] (RM)--(b);
    \draw[combined path] (b)--(TM);
    \draw[combined path] (TM)--(a);
    \draw[combined path] (a)--(LM);
    \draw[combined path] (LM)--(e);
    \draw[combined path] (e)--(d);
    \draw[combined path] (d)--(f);
    \draw[combined path] (f)--(BM);
    \draw[combined path] (BM)--(BR);
    \draw[combined path] (TR)--++(0.45,0.00);
    \draw[combined path] (BR)--++(0.00,-0.45);
    \draw[combined path] (TL)--++(-0.45,0.00);
    \draw[combined path] (TL)--++(0.00,0.45);
    \draw[combined path] (BL)--++(-0.45,0.00);
    \draw[combined path] (BL)--++(0.00,-0.45);
    \GEdgesHthree
    \draw[grey edge] (a)--(c);
    \draw[grey edge] (c)--(b);
    \draw[grey edge] (c)--(e);
    \GNodesHthree
    \node[grey vertex] at (c) {};
  \end{scope}
  \node[font=\normalsize,text=partpurple] at (1.30,-18.85) {$1\!-\!4,\;2,\;3$};
  \node[font=\normalsize,text=partpurple] at (5.40,-18.85) {$1\!-\!3,\;2,\;4$};
  \node[font=\normalsize,text=partpurple] at (9.50,-18.85) {$1,\;2\!-\!4,\;3$};
  \node[font=\normalsize,text=partpurple] at (13.60,-18.85) {$1,\;2\!-\!3,\;4$};
  \node[font=\Large] at (-0.95,-26.26) {$H_2$};
  \node[font=\Large] at (-0.95,-37.96) {$H_3$};
\end{tikzpicture}%
}
\addtocounter{figure}{-1}
\begingroup\footnotesize\caption{If $v$ is purple then each $H_i - v$ has cycle fragments with endpoint decompositions spanning the four possible endpoint decompositions in $H- w$. (Continued from above)}\endgroup
\end{figure}

%\begin{figure}
%    \centering
%    \includegraphics[width=0.8\linewidth]{Fig14-H1-HypoPurple.jpg}
%    \caption{Each possible cycle fragment in $H_1 -v$ corresponds to a cycle fragment in $H - w$, for some purple $w$. Up to symmetry, there are four such vertices $v$.}
%    \label{fig:H1-HypoPurple}
%\end{figure}

%\begin{figure}
%    \centering
%    \includegraphics[width=0.8\linewidth]{Fig15-H2-HypoPurple.jpg}
%    \caption{Each possible cycle fragment in $H_2 -v$ corresponds to a cycle fragment in $H-w$, for some purple $w$. Up to symmetry, there are three such vertices $v$.}
%    \label{fig:H2-HypoPurple}
%\end{figure}

%\begin{figure}
%    \centering
%    \includegraphics[width=0.8\linewidth]{Fig16-H3-HypoPurple.jpg}
%    \caption{Each possible cycle fragment in $H_3 -v$ corresponds to a cycle fragment in $H-w$, for some purple $w$. Up to symmetry, there are two such vertices $v$.}
%    \label{fig:H3-HypoPurple}
%\end{figure}

% Figure 19: three green-deletion and five purple-deletion cases.
% Each orange component is a simple path; gray marks are deleted.
\begin{figure}[p]
\centering
\resizebox{\linewidth}{!}{%
\begin{tikzpicture}
\def\PrimeBase{%
 \draw[graph edge] (TL)--(TM)--(TR)--(RM)--(BR)--(BM)--(BL)--(LM)--cycle;
 \draw[graph edge] (TM)--(u) (LM)--(u) (u)--(v)
    (LM)--(l) (l)--(v) (v)--(r) (r)--(RM) (l)--(BM) (r)--(BM);
 \draw[external stub] (TL)--++(-.42,0) (TL)--++(0,.42)
     (TR)--++(.42,0) (TR)--++(0,.42)
     (BR)--++(.42,0) (BR)--++(0,-.42)
     (BL)--++(-.42,0) (BL)--++(0,-.42);
}
  % Case 1: deleted u; component boundaries (('TL', 'BR'),)
  \begin{scope}[shift={(3.35,4.85)},scale=.74]
    \GCoordsHone
    \draw[path mark] (TL)--(TM);
    \draw[path mark] (TM)--(TR);
    \draw[path mark] (TR)--(RM);
    \draw[path mark] (RM)--(r);
    \draw[path mark] (r)--(v);
    \draw[path mark] (v)--(l);
    \draw[path mark] (l)--(LM);
    \draw[path mark] (LM)--(BL);
    \draw[path mark] (BL)--(BM);
    \draw[path mark] (BM)--(BR);
    \draw[path mark] (TL)--++(-0.42,0);
    \draw[path mark] (BR)--++(0.42,0);
    \PrimeBase
    \draw[grey edge] (u)--(TM);
    \draw[grey edge] (u)--(LM);
    \draw[grey edge] (u)--(v);
    \GNodesHone
    \node[grey vertex] at (u) {};
    \node[font=\scriptsize,text=partpurple!70!black] at (1.3,4.13) {$ 1-3 $};
  \end{scope}
  % Case 2: deleted l; component boundaries (('TL', 'BR'),)
  \begin{scope}[shift={(6.70,4.85)},scale=.74]
    \GCoordsHone
    \draw[path mark] (TL)--(LM);
    \draw[path mark] (LM)--(BL);
    \draw[path mark] (BL)--(BM);
    \draw[path mark] (BM)--(r);
    \draw[path mark] (r)--(v);
    \draw[path mark] (v)--(u);
    \draw[path mark] (u)--(TM);
    \draw[path mark] (TM)--(TR);
    \draw[path mark] (TR)--(RM);
    \draw[path mark] (RM)--(BR);
    \draw[path mark] (TL)--++(-0.42,0);
    \draw[path mark] (BR)--++(0.42,0);
    \PrimeBase
    \draw[grey edge] (l)--(LM);
    \draw[grey edge] (l)--(v);
    \draw[grey edge] (l)--(BM);
    \GNodesHone
    \node[grey vertex] at (l) {};
    \node[font=\scriptsize,text=partpurple!70!black] at (1.3,4.13) {$ 1-3 $};
  \end{scope}
  % Case 3: deleted r; component boundaries (('TL', 'BR'),)
  \begin{scope}[shift={(10.05,4.85)},scale=.74]
    \GCoordsHone
    \draw[path mark] (TL)--(LM);
    \draw[path mark] (LM)--(BL);
    \draw[path mark] (BL)--(BM);
    \draw[path mark] (BM)--(l);
    \draw[path mark] (l)--(v);
    \draw[path mark] (v)--(u);
    \draw[path mark] (u)--(TM);
    \draw[path mark] (TM)--(TR);
    \draw[path mark] (TR)--(RM);
    \draw[path mark] (RM)--(BR);
    \draw[path mark] (TL)--++(-0.42,0);
    \draw[path mark] (BR)--++(0.42,0);
    \PrimeBase
    \draw[grey edge] (r)--(v);
    \draw[grey edge] (r)--(RM);
    \draw[grey edge] (r)--(BM);
    \GNodesHone
    \node[grey vertex] at (r) {};
    \node[font=\scriptsize,text=partpurple!70!black] at (1.3,4.13) {$ 1-3 $};
  \end{scope}
  % Case 4: deleted TM; component boundaries (('TL', 'BR'), ('TR',), ('BL',))
  \begin{scope}[shift={(0.00,0.00)},scale=.74]
    \GCoordsHone
    \draw[path mark] (TL)--(LM);
    \draw[path mark] (LM)--(u);
    \draw[path mark] (u)--(v);
    \draw[path mark] (v)--(l);
    \draw[path mark] (l)--(BM);
    \draw[path mark] (BM)--(r);
    \draw[path mark] (r)--(RM);
    \draw[path mark] (RM)--(BR);
    \draw[path mark] (TL)--++(-0.42,0);
    \draw[path mark] (BR)--++(0.42,0);
    \draw[path mark] (TR)--++(0.42,0);
    \draw[path mark] (TR)--++(0,0.42);
    \draw[path mark] (BL)--++(-0.42,0);
    \draw[path mark] (BL)--++(0,-0.42);
    \PrimeBase
    \draw[grey edge] (TM)--(TL);
    \draw[grey edge] (TM)--(TR);
    \draw[grey edge] (TM)--(u);
    \GNodesHone
    \node[grey vertex] at (TM) {};
    \node[font=\scriptsize,text=partpurple!70!black] at (1.3,4.13) {$ 1-3,\;2,\;4 $};
  \end{scope}
  % Case 5: deleted LM; component boundaries (('TL', 'BR'), ('TR',), ('BL',))
  \begin{scope}[shift={(3.35,0.00)},scale=.74]
    \GCoordsHone
    \draw[path mark] (TL)--(TM);
    \draw[path mark] (TM)--(u);
    \draw[path mark] (u)--(v);
    \draw[path mark] (v)--(l);
    \draw[path mark] (l)--(BM);
    \draw[path mark] (BM)--(r);
    \draw[path mark] (r)--(RM);
    \draw[path mark] (RM)--(BR);
    \draw[path mark] (TL)--++(-0.42,0);
    \draw[path mark] (BR)--++(0.42,0);
    \draw[path mark] (TR)--++(0.42,0);
    \draw[path mark] (TR)--++(0,0.42);
    \draw[path mark] (BL)--++(-0.42,0);
    \draw[path mark] (BL)--++(0,-0.42);
    \PrimeBase
    \draw[grey edge] (LM)--(BL);
    \draw[grey edge] (LM)--(TL);
    \draw[grey edge] (LM)--(u);
    \draw[grey edge] (LM)--(l);
    \GNodesHone
    \node[grey vertex] at (LM) {};
    \node[font=\scriptsize,text=partpurple!70!black] at (1.3,4.13) {$ 1-3,\;2,\;4 $};
  \end{scope}
  % Case 6: deleted RM; component boundaries (('TL', 'BR'), ('TR',), ('BL',))
  \begin{scope}[shift={(6.70,0.00)},scale=.74]
    \GCoordsHone
    \draw[path mark] (TL)--(TM);
    \draw[path mark] (TM)--(u);
    \draw[path mark] (u)--(LM);
    \draw[path mark] (LM)--(l);
    \draw[path mark] (l)--(v);
    \draw[path mark] (v)--(r);
    \draw[path mark] (r)--(BM);
    \draw[path mark] (BM)--(BR);
    \draw[path mark] (TL)--++(-0.42,0);
    \draw[path mark] (BR)--++(0.42,0);
    \draw[path mark] (TR)--++(0.42,0);
    \draw[path mark] (TR)--++(0,0.42);
    \draw[path mark] (BL)--++(-0.42,0);
    \draw[path mark] (BL)--++(0,-0.42);
    \PrimeBase
    \draw[grey edge] (RM)--(TR);
    \draw[grey edge] (RM)--(BR);
    \draw[grey edge] (RM)--(r);
    \GNodesHone
    \node[grey vertex] at (RM) {};
    \node[font=\scriptsize,text=partpurple!70!black] at (1.3,4.13) {$ 1-3,\;2,\;4 $};
  \end{scope}
  % Case 7: deleted v; component boundaries (('TL', 'BR'), ('TR',), ('BL',))
  \begin{scope}[shift={(10.05,0.00)},scale=.74]
    \GCoordsHone
    \draw[path mark] (TL)--(TM);
    \draw[path mark] (TM)--(u);
    \draw[path mark] (u)--(LM);
    \draw[path mark] (LM)--(l);
    \draw[path mark] (l)--(BM);
    \draw[path mark] (BM)--(r);
    \draw[path mark] (r)--(RM);
    \draw[path mark] (RM)--(BR);
    \draw[path mark] (TL)--++(-0.42,0);
    \draw[path mark] (BR)--++(0.42,0);
    \draw[path mark] (TR)--++(0.42,0);
    \draw[path mark] (TR)--++(0,0.42);
    \draw[path mark] (BL)--++(-0.42,0);
    \draw[path mark] (BL)--++(0,-0.42);
    \PrimeBase
    \draw[grey edge] (v)--(u);
    \draw[grey edge] (v)--(l);
    \draw[grey edge] (v)--(r);
    \GNodesHone
    \node[grey vertex] at (v) {};
    \node[font=\scriptsize,text=partpurple!70!black] at (1.3,4.13) {$ 1-3,\;2,\;4 $};
  \end{scope}
  % Case 8: deleted BM; component boundaries (('TL', 'BR'), ('TR',), ('BL',))
  \begin{scope}[shift={(13.40,0.00)},scale=.74]
    \GCoordsHone
    \draw[path mark] (TL)--(TM);
    \draw[path mark] (TM)--(u);
    \draw[path mark] (u)--(LM);
    \draw[path mark] (LM)--(l);
    \draw[path mark] (l)--(v);
    \draw[path mark] (v)--(r);
    \draw[path mark] (r)--(RM);
    \draw[path mark] (RM)--(BR);
    \draw[path mark] (TL)--++(-0.42,0);
    \draw[path mark] (BR)--++(0.42,0);
    \draw[path mark] (TR)--++(0.42,0);
    \draw[path mark] (TR)--++(0,0.42);
    \draw[path mark] (BL)--++(-0.42,0);
    \draw[path mark] (BL)--++(0,-0.42);
    \PrimeBase
    \draw[grey edge] (BM)--(BR);
    \draw[grey edge] (BM)--(BL);
    \draw[grey edge] (BM)--(l);
    \draw[grey edge] (BM)--(r);
    \GNodesHone
    \node[grey vertex] at (BM) {};
    \node[font=\scriptsize,text=partpurple!70!black] at (1.3,4.13) {$ 1-3,\;2,\;4 $};
  \end{scope}
\end{tikzpicture}
}
 \caption{Among the possible cycle fragments for each $H_1' -v$, one can find a fragment with endpoint decomposition $1-3$ or $1-3, \ 2, \ 4$ depending on the color of the vertex.}
    \label{fig:H1Prime-Hypo}
\end{figure}

 %\begin{figure}
 %   \centering
%    \includegraphics[width=0.8\linewidth]{Fig19-H1Prime-fragments.jpg}
%    \caption{Among the possible cycle fragments for each $H_1' -v$, one can find a fragment with endpoint decomposition $1-3$ or $1-3 \ 2 \ 4$ depending on the color of the vertex.}
%    \label{fig:H1Prime-Hypo}
%\end{figure}

\end{document}